\documentclass[11pt]{amsart}
\usepackage{amsmath,amssymb,amsthm,mathrsfs,enumerate,bm,xcolor,multirow,pbox,setspace}
\usepackage[colorlinks,linkcolor=red,citecolor=blue,urlcolor=blue]{hyperref}
\allowdisplaybreaks[4]

\numberwithin{equation}{section}
\newcommand{\N}{\mathbb{N}}
\newcommand{\R}{\mathbb{R}}

\newcommand{\E}{\mathbb{E}}
\newcommand{\Prob}{\mathbb{P}}

\newcommand{\pnorm}[2]{\lVert#1\rVert_{#2}}

\newcommand{\biggpnorm}[2]{\bigg\lVert#1\bigg\rVert_{#2}}
\newcommand{\abs}[1]{\lvert#1\rvert}

\newcommand{\biggabs}[1]{\bigg\lvert#1\bigg\rvert}

\renewcommand{\epsilon}{\varepsilon}

\renewcommand{\d}[1]{\mathrm{d}#1}

\newcommand{\floor}[1]{\left\lfloor #1 \right\rfloor}

\DeclareMathOperator{\rank}{rank}
\DeclareMathOperator{\trace}{tr}

\AtBeginDocument{%
	\def\MR#1{}
}


\newcommand{\beq}{\begin{equation}}
\newcommand{\eeq}{\end{equation}}
\newcommand{\beqa}{\begin{equation} \begin{aligned}}
\newcommand{\eeqa}{\end{aligned} \end{equation}}
\newcommand{\beqas}{\begin{equation*} \begin{aligned}}
\newcommand{\eeqas}{\end{aligned} \end{equation*}}

\newcommand{\bit}{\begin{itemize}}
	\newcommand{\eit}{\end{itemize}}
\newcommand{\bmat}{\begin{bmatrix}}
	\newcommand{\emat}{\end{bmatrix}}

\theoremstyle{definition}\newtheorem{problem}{Problem}[section]
\theoremstyle{definition}\newtheorem{definition}[problem]{Definition}
\theoremstyle{remark}\newtheorem{assumption}{Assumption}

\theoremstyle{remark}\newtheorem{remark}[problem]{Remark}
\theoremstyle{definition}\newtheorem{example}[problem]{Example}
\theoremstyle{plain}\newtheorem{theorem}[problem]{Theorem}
\theoremstyle{plain}\newtheorem{lemma}[problem]{Lemma}
\theoremstyle{plain}\newtheorem{proposition}[problem]{Proposition}
\theoremstyle{plain}\newtheorem{corollary}[problem]{Corollary}
\theoremstyle{plain}


%
%

\begin{document}

\title{Oracle posterior contraction rates under hierarchical priors}
\author[Q. Han]{Qiyang Han}

\address[Q. Han]{
	Department of Statistics, Rutgers University, Piscataway, NJ 08854, USA.
}
\email{qh85@stat.rutgers.edu}

\thanks{Supported in part by NSF Grant DMS-1566514. }

\keywords{Bayes nonparametrics, posterior contraction rates, adaptive estimation, model mis-specification}
\subjclass[2000]{60F17, 62E17}
\date{\today}

\maketitle

\begin{abstract}
We offer a general Bayes theoretic framework to derive posterior contraction rates under a hierarchical prior design: the first-step prior serves to assess the model selection uncertainty, and the second-step prior quantifies the prior belief on the strength of the signals within the model chosen from the first step. In particular, we establish non-asymptotic oracle posterior contraction rates under (i) a local Gaussianity condition on the log likelihood ratio of the statistical experiment, (ii) a local entropy condition on the dimensionality of the models, and  (iii) a sufficient mass condition on the second-step prior near the best approximating signal for each model. The first-step prior can be designed generically. The posterior distribution enjoys Gaussian tail behavior and therefore the resulting posterior mean also satisfies an oracle inequality, automatically serving as an adaptive point estimator in a frequentist sense. Model mis-specification is allowed in these oracle rates. 

The local Gaussianity condition serves as a unified attempt of non-asymptotic Gaussian quantification of the experiments, and can be easily verified in various experiments considered in \cite{ghosal2007convergence} and beyond. The general results are applied in various problems including: (i) trace regression, (ii) shape-restricted isotonic/convex regression, (iii) high-dimensional partially linear regression, (iv) covariance matrix estimation in the sparse factor model, (v) detection of non-smooth polytopal image boundary, and (vi) intensity estimation in a Poisson point process model. These new results serve either as theoretical justification of practical prior proposals in the literature, or as an illustration of the generic construction scheme of a (nearly) minimax adaptive estimator for a complicated experiment.
\end{abstract}


\section{Introduction}

\subsection{Overview}

Suppose we observe $X^{(n)}$ from a statistical experiment $(\mathfrak{X}^{(n)}, \mathcal{A}^{(n)}, P_{f}^{(n)})$, where $f $ belongs to a statistical model $ \mathcal{F}$ and $\{P_f^{(n)}\}_{f  \in \mathcal{F}}$ is dominated by a $\sigma$-finite measure $\mu$. In many cases, instead of using a single `big' model $\mathcal{F}$, a collection of suitably nested (sub-)models $\{\mathcal{F}_m\}_{m \in \mathcal{I}}\subset \mathcal{F}$ are available to statisticians. A hierarchical Bayesian approach assigns a first-step prior $\Lambda_n$ assessing the uncertainty in which model to use, followed by a second-step prior $\Pi_{n,m}$ quantifying the prior belief in the strength of the signals within the specific chosen model $\mathcal{F}_m$ from the first step.

Such a hierarchical prior design is intrinsic in many proposals for different problems, including the canonical Gaussian white noise/regression and density estimation \cite{arbel2013bayesian,belitser2003adaptive,deJonge2010adaptive,ghosal2008nonparametric,kruijer2010adaptive,lember2007universal,rousseau2017asymptotic,scricciolo2006convergence}, and the more recent sparse linear regression \cite{castillo2015bayesian,castillo2012needles}, trace regression \cite{alquier2014bayesian}, shape restricted regression \cite{hannah2011bayesian,holmes2003generalized}, covariance matrix estimation \cite{gao2015ratepca,pati2014posterior}, etc. Despite many contraction rates available for different models (see e.g. \cite{castillo2014bayesian,castillo2015bayesian,castillo2012needles,ghosal2000convergence,ghosal2007convergence,ghosal2017fundamentals,hoffmann2015adaptive,rousseau2010rates,shen2001rates,van2008rates,van2009adaptive} for some key contributions), a unified theoretical understanding towards the behavior of posterior distributions under the hierarchical prior design has been limited. \cite{ghosal2008nonparametric} focused on designing adaptive Bayes procedures with models primarily indexed by the smoothness level of function classes in the context of density estimation. Their conditions are complicated and seem not directly applicable to other settings. \cite{deJonge2010adaptive} uses a specific location mixture prior for regression/density estimation/classification. \cite{arbel2013bayesian} considered a more general setting where the models are indexed by functions that admit a linear $\ell_2$-basis structure (e.g. Sobolev/Besov type); see also \cite{rousseau2017asymptotic}. \cite{gao2015general} designed a prior specific to structured linear problems in the Gaussian regression model, with their main focus on high-dimensional (linear) and network problems. As such, all these results apriori require certain specific form of the prior, the model structure, or the statistical experiments.

The goal of this paper aims at giving a unified theoretical treatment of deriving posterior contraction rates under the common hierarchical prior design, without specifying particular forms for the prior, the model structure, or the experiments. More specifically, we aim at identifying common \emph{structural assumptions} on the statistical experiments $(\mathfrak{X}^{(n)}, \mathcal{A}^{(n)}, P_{f}^{(n)})$, the collection of models $\{\mathcal{F}_m\}$ and the priors $\{\Lambda_n\}$ and $\{\Pi_{n,m}\}$ such that
the posterior distribution both
\begin{enumerate}
	\item[(G1)] contracts at an \emph{oracle} rate with respect to some metric\footnote{The requirement of being a metric can be weakened.} $d_n$:
	\begin{align}\label{eqn:post_cont_rate_generic}
	\inf_{ m \in \mathcal{I}} \bigg(\inf_{g \in \mathcal{F}_m} d_n^2(f_0,g) + \mathrm{pen}(m)\bigg),
	\end{align}
	where $\mathrm{pen}(m)$\footnote{$\mathrm{pen}(m)$ may depend on $n$ but we suppress this dependence for notational convenience.} is related to the `dimension' of $\mathcal{F}_m$, and
	\item[(G2)] puts little mass on models that are substantially larger than the oracle one balancing the bias-variance tradeoff in (\ref{eqn:post_cont_rate_generic}).
\end{enumerate} 
The oracle formulation (\ref{eqn:post_cont_rate_generic}) follows the convention in the frequentist literature on model selection \cite{barron1991minimum,yang1998asymptotic,barron1999risk,massart2007concentration,tsybakov2014aggregation}, and has several advantages: (i) (\emph{minimaxity}) if the true signal $f_0$ can be well-approximated by the models $\{\mathcal{F}_m\}$, the contraction rate in (\ref{eqn:post_cont_rate_generic}) is usually (nearly) minimax optimal, (ii) (\emph{adaptivity}) if $f_0$ lies in certain low-dimensional model $\mathcal{F}_m$, the contraction rate adapts to this unknown information, and (iii) (\emph{mis-specification}) if the models $\mathcal{F}_m$ are mis-specified while $d_n^2(f_0,\cup_{m \in \mathcal{I}}\mathcal{F}_m)$ remains `small', then the contraction rate  should still be rescued by this relatively `small' bias. 

As the main abstract result of this paper (cf. Theorem \ref{thm:general_ms}), we show that our goals (G1)-(G2) can be accomplished under:
\begin{enumerate}
	\item[(i)] (\textbf{Experiment}) a local Gaussianity condition on the log likelihood ratio for the statistical experiment with respect to $d_n$;
	\item[(ii)] (\textbf{Models}) a dimensionality condition of the model $\mathcal{F}_m$ measured in terms of local entropy with respect to the metric $d_n$;
	\item[(iii)] (\textbf{Priors}) exponential weighting for the first-step prior $\Lambda_n$, and sufficient mass of the second-step prior $\Pi_{n,m}$ near the `best' approximating signal $f_{0,m}$ within the model $\mathcal{F}_m$ for the true signal $f_0$.
\end{enumerate} 
The local Gaussianity condition is rooted in the frequentist theory of the convergence rates of $M$-estimators (i.e. estimators maximizing certain likelihood) via the theory of Gaussian and empirical processes. In fact, the local Gaussianity serves as an essential ingredient for various (by-now standard) techniques, including the Gaussian concentration and the chaining with bracketing, that give a unification to the theory for, e.g. regression and density estimation \cite{birge1993rates,van2000empirical,van1996weak} (see Appendix \ref{section:MLE} for more discussions). 

From the Bayesian theoretic side, one important convention in studying posterior contraction rates in the literature has been the construction of appropriate tests with exponentially small type I and II errors with respect to certain metric, the Gaussian behavior of type II error being particularly crucial \cite{ghosal2000convergence,ghosal2007convergence}. It is rather curious if the frequentist local Gaussianity can also be useful in the Bayes theory. Our formulation in (i) can be viewed as an attempt in this regard, and seems useful in that, local Gaussianity with respect to the intrinsic metric is a rather universal property in various statistical experiments including the ones considered in \cite{ghosal2007convergence} and beyond: Gaussian/Laplace/binary/Poisson regression, density estimation, Gaussian autoregression, Gaussian time series, covariance matrix estimation, image boundary detection, and support boundary recovery in a Poisson point process model, etc. Moreover, such local Gaussianity naturally entails the Gaussian tail behavior of the posterior distribution, thereby complementing a recent result of \cite{hoffmann2015adaptive} who showed that such a Gaussian tail behavior cannot be uniformly improved under uniform posterior consistency.

Conditions (ii) and (iii) are familiar in Bayes nonparametrics literature. In particular, the first-step prior can be designed generically (cf. Proposition \ref{prop:prior_generic}). Sufficient mass of the second-step prior $\Pi_{n,m}$ is a minimal condition in the sense that using $\Pi_{n,m}$ alone should lead to a (nearly) optimal posterior contraction rate on the model $\mathcal{F}_m$.

As an illustration of the scope of our general results in concrete applications, we justify the prior proposals in (i) \cite{alquier2014bayesian,alquier2015bayesian} for the trace regression problem, and in (ii) \cite{hannah2011bayesian,holmes2003generalized} for the shape-restricted regression problems. Despite many theoretical results for Bayes high-dimensional models (cf. \cite{banerjee2014posterior,castillo2015bayesian,castillo2012needles,gao2015general,gao2015ratepca,pati2014posterior}), it seems that the important low-rank trace regression problem has not yet been successfully addressed. Our result here fills in this gap. Furthermore, to the best knowledge of the author, the theoretical results concerning shape-restricted regression problems provide the first systematic approach that bridges the gap between Bayesian nonparametrics and shape-restricted nonparametric function estimation literature in the context of adaptive estimation\footnote{Almost completed at the same time, \cite{mariucci2017bayesian} considered a Bayes approach for univariate log-concave density estimation, where they derived contraction rates without addressing the adaptation issue. }.

Several other applications are considered, including: (iii) high-dimensional partially linear regression model, (iv) covariance matrix estimation in the sparse factor model, (v) detection of polytopal image boundary, and (vi) estimation of piecewise constant intensity in a Poisson point process model. These results serve as an illustration of the generic construction scheme of a (nearly) minimax adaptive estimator in multi-structured experiments, or in experiments that seem far from Gaussian. We also revisit some density estimation problems, in particular in the location mixture models. The purpose of this is to provide some guidance of how the local Gaussianity can be applied via appropriate localization of the parameter space, when such Gaussianity may fail to hold at a global scale. 

During the preparation of this paper, we become aware of a very recent paper \cite{yang2017bayesian} who independently considered a similar problem. Both our approach and \cite{yang2017bayesian} shed light on the behavior of Bayes procedures under hierarchical priors, while differing in several important aspects (cf. Remark \ref{rmk:comparison_yang}). Moreover, our work here applies to a wide range of applications that are not covered by \cite{yang2017bayesian}.

\subsection{Notation}

Let $(\mathcal{F},\pnorm{\cdot}{})$ be a subset of the normed space of real functions $f:\mathcal{X}\to \R$. Let $\mathcal{N}(\epsilon,\mathcal{F},\pnorm{\cdot}{})$ be the $\epsilon$-covering number; see page 83 of \cite{van1996weak} for more details. For a real-valued measurable function $f$ defined on $(\mathcal{X},\mathcal{A},P)$, $\pnorm{f}{L_p(P)}\equiv \big(P\abs{f}^p)^{1/p}$ denotes the usual $L_p$-norm under $P$ (where $p\geq 1$), and will be simplified as $\pnorm{f}{p}$ when there is no potential confusion. $\pnorm{f}{\infty}\equiv\pnorm{f}{L_\infty}\equiv  \sup_{x \in \mathcal{X}} \abs{f(x)}$ denotes the supremum norm.

For any $v \in \R^d$, we use $\pnorm{v}{p}$ to denote the usual Euclidean $p$-norm. For any $\epsilon>0$, denote $B_d(v,\epsilon)\equiv \{u \in \R^d: \pnorm{u-v}{2}\leq \epsilon\}$ the Euclidean ball in $\R^d$ centered at $v$ with radius $\epsilon$.

$C_{x}$ denotes a generic constant that depends only on $x$, whose numeric value may change from line to line. $a\lesssim_{x} b$ and $a\gtrsim_x b$ mean $a\leq C_x b$ and $a\geq C_x b$ respectively, and $a\asymp_x b$ means $a\lesssim_{x} b$ and $a\gtrsim_x b$. 
For $a,b \in \R$, $a\vee b:=\max\{a,b\}$ and $a\wedge b:=\min\{a,b\}$. $P_{f}^{(n)} T$ denotes the expectation of a random variable $T=T(X^{(n)})$ under the experiment $(\mathfrak{X}^{(n)},\mathcal{A}^{(n)}, P_f^{(n)})$.

\subsection{Organization}
Section \ref{section:gen_frame} is devoted to the general results on oracle posterior contraction rates. We work out a wide range of experiments and some concrete applications that fit into our general theory in Section \ref{section:models}. Detailed proofs are deferred to the Appendix.

\section{General results}\label{section:gen_frame}
In the hierarchical prior design framework, we first put a prior $\Lambda_n$ on the model index $\mathcal{I}$, followed by a prior $\Pi_{n,m}$ on the model $\mathcal{F}_m$ chosen from the first step. The overall prior is a probability measure on $\mathcal{F}$ given by $
\Pi_n\equiv \sum_{m \in \mathcal{I}} \lambda_{n}(m)\Pi_{n,m}$.
The posterior distribution is then a random measure on $\mathcal{F}$: for a measurable subset $B \subset \mathcal{F}$, 
\begin{align}\label{eqn:post_dist}
\Pi_n(B|X^{(n)})& = {\int_B  p_f^{(n)}(X^{(n)})\ \d{\Pi_n(f)}}\bigg/{\int p_f^{(n)}(X^{(n)})\ \d{\Pi_n(f)}}
\end{align}
where $p_f^{(n)}(\cdot)$ denotes the probability density function of $P_f^{(n)}$ with respect to the dominating measure $\mu$.

\subsection{Assumptions}

For some $v>0,c\in [0,\infty)$ let
\begin{align}
\psi_{v,c}(\lambda)=v\lambda^2\cdot \bm{1}_{\abs{\lambda}\leq 1/c}+ \infty\cdot \bm{1}_{\abs{\lambda}>1/c}
\end{align}
denote the local quadratic function.

\begin{assumption}[\textbf{Experiment: Local Gaussianity condition}]\label{assump:laplace_cond_kl}
	There exist some constants $c_1>0$ and $\kappa=(\kappa_g,\kappa_\Gamma) \in (0,\infty)\times [0,\infty)$ such that for all $n \in \N, \lambda \in \R$, and $f_0,f_1 \in \mathcal{F}$, 
	\begin{align*}
	P_{f_0}^{(n)} e^{\lambda \left(\log ({p_{f_0}^{(n)}}/{p_{f_1}^{(n)}})-P_{f_0}^{(n)}\log ({p_{f_0}^{(n)}}/{p_{f_1}^{(n)}})\right)}\leq c_1 e^{\psi_{\kappa_g nd_n^2(f_0,f_1),\kappa_\Gamma }(\lambda)}.
	\end{align*}
	Here  $d_n:\mathcal{F}\times \mathcal{F}\to \R_{\geq 0}$ is a symmetric function satisfying 
	\begin{align}\label{cond:d_n_KL}
	\big(c_2\cdot  d_n^2(f_0,f_1)-d_0^2\big)_+\leq n^{-1}P_{f_0}^{(n)}\log ({p_{f_0}^{(n)}}/{p_{f_1}^{(n)}})\leq c_3 \cdot d_n^2(f_0,f_1)+d_0^2,
	\end{align}
	for some constants $c_2,c_3>0$ and $d_0\geq 0$ (possibly depending on $n$).
\end{assumption}

In Assumption \ref{assump:laplace_cond_kl}, we require the log likelihood ratio to have local Gaussian behavior with respect to the intrinsic `metric' $d_n$ in the sense of (\ref{cond:d_n_KL}). If $\kappa_\Gamma$ can be chosen to be $0$, then the log likelihood ratio exhibits global Gaussian behavior. In Section \ref{section:models}, many statistical experiments, beyond the apparent Gaussian ones, will be shown to satisfy this local Gaussianity condition in their respective intrinsic metrics. In some cases the local Gaussianity by itself may entail certain apriori compactness constraints on the parameter space, for instance boundedness requirements for the parameter space in binary/Poisson regression and density estimation. These constraints can be removed, in a technical way, by working with appropriately localized subsets of the parameter space on which the local Gaussianity holds. See Section \ref{section:localization} and Appendix \ref{section:more_examples} for more details and examples in this regard. 

As already mentioned in the Introduction, this local Gaussianity point of view has its root in the unified treatment of deriving convergence rates of $M$-estimators---a formal connection to the theory of sieved MLE under local Gaussianity will be given in Appendix \ref{section:MLE}.

A direct consequence of the local Gaussianity of the statistical experiment is the following.

\begin{lemma}\label{lem:local_test_general}
	Let Assumption \ref{assump:laplace_cond_kl} hold. For any $f_0,f_1 \in \mathcal{F}$ such that $d_n(f_0,f_1)\geq \sqrt{2/(c_2\wedge c_3)} \cdot d_0$, there exists some test $\phi_n$ such that
	\begin{align*}
	\sup_{f \in \mathcal{F}: d_n^2(f,f_1)\leq c_5d_n^2(f_0,f_1)}\big(P_{f_0}^{(n)} \phi_n+P_f^{(n)}(1-\phi_n)\big)\leq c_6e^{-c_7 nd_n^2(f_0,f_1)}
	\end{align*}
	where $c_5\leq 1/4,c_6\in [2,\infty)$ and $c_7\in (0,1)$ only depends on the constants in Assumption \ref{assump:laplace_cond_kl}.
\end{lemma}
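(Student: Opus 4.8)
The plan is to construct the test $\phi_n$ as the likelihood-ratio test between $f_0$ and $f_1$, namely $\phi_n = \mathbf{1}\{\log(p_{f_1}^{(n)}/p_{f_0}^{(n)}) > -\tfrac{1}{2} nd_n^2(f_0,f_1)\cdot t\}$ for a suitable threshold $t>0$ to be chosen depending only on $c_1,c_2,c_3,\kappa$. First I would bound the type I error $P_{f_0}^{(n)}\phi_n$: this is the probability under $f_0$ that the centered log-likelihood ratio $Z:=\log(p_{f_0}^{(n)}/p_{f_1}^{(n)}) - P_{f_0}^{(n)}\log(p_{f_0}^{(n)}/p_{f_1}^{(n)})$ deviates downward by an amount comparable to $nd_n^2(f_0,f_1)$, since by \eqref{cond:d_n_KL} the mean $P_{f_0}^{(n)}\log(p_{f_0}^{(n)}/p_{f_1}^{(n)})$ is itself between $c_2 nd_n^2$ and $c_3 nd_n^2$. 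Assumption \ref{assump:laplace_cond_kl} gives exactly a sub-gamma Laplace transform bound for $Z$ with variance proxy $\kappa_g nd_n^2(f_0,f_1)$ and scale $\kappa_\Gamma$; applying the standard Chernoff argument for sub-gamma variables (optimizing $\lambda$ in $c_1 e^{\psi_{v,c}(\lambda)}$ against the linear term), I get a bound of the form $c_1\exp(-c_7' nd_n^2(f_0,f_1))$ for the type I error, where $c_7'$ depends only on the listed constants — here I must choose $t$ small enough relative to $c_2$ so that the required deviation from the mean is genuinely a lower-tail deviation, which is where the constraint $c_5\le 1/4$ will also get pinned down.

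Next I would bound the type II error uniformly over the ball $\{f: d_n^2(f,f_1)\le c_5 d_n^2(f_0,f_1)\}$. For such an $f$, the event $1-\phi_n$ is $\{\log(p_{f_0}^{(n)}/p_{f_1}^{(n)}) \ge -\tfrac12 t\, nd_n^2(f_0,f_1)\}$, and I want this to be exponentially unlikely under $P_f^{(n)}$. The natural route is to rewrite $\log(p_{f_0}^{(n)}/p_{f_1}^{(n)}) = \log(p_{f_0}^{(n)}/p_f^{(n)}) + \log(p_f^{(n)}/p_{f_1}^{(n)})$ and to control each piece. Under $P_f^{(n)}$, the term $\log(p_f^{(n)}/p_{f_1}^{(n)})$ is centered at $P_f^{(n)}\log(p_f^{(n)}/p_{f_1}^{(n)}) \in [c_2 nd_n^2(f,f_1), c_3 nd_n^2(f,f_1)] \subset [0, c_3 c_5 nd_n^2(f_0,f_1)]$, which is small if $c_5$ is small; and its fluctuations are sub-gamma with variance proxy $\kappa_g nd_n^2(f,f_1)\le \kappa_g c_5 nd_n^2(f_0,f_1)$ by Assumption \ref{assump:laplace_cond_kl} applied to the pair $(f,f_1)$. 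The term $\log(p_{f_0}^{(n)}/p_f^{(n)})$ under $P_f^{(n)}$ has mean $-P_f^{(n)}\log(p_f^{(n)}/p_{f_0}^{(n)}) \le -c_2 nd_n^2(f,f_0)$, which by the (quasi-)triangle inequality for $d_n$ together with $d_n(f,f_1)\le \sqrt{c_5}\,d_n(f_0,f_1)\le \tfrac12 d_n(f_0,f_1)$ is at most $-c_2(1-\sqrt{c_5})^2 nd_n^2(f_0,f_1)$, i.e. a strongly negative drift of order $nd_n^2(f_0,f_1)$; and again its fluctuations are sub-gamma with variance proxy $\kappa_g nd_n^2(f,f_0)$. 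Adding the two means, the mean of $\log(p_{f_0}^{(n)}/p_{f_1}^{(n)})$ under $P_f^{(n)}$ is bounded above by $-\big(c_2(1-\sqrt{c_5})^2 - c_3 c_5\big) nd_n^2(f_0,f_1)$, which is a negative multiple of $nd_n^2(f_0,f_1)$ once $c_5$ is chosen small (here is the second place $c_5\le 1/4$, possibly shrunk further, enters). So the event $\{\log(p_{f_0}^{(n)}/p_{f_1}^{(n)}) \ge -\tfrac12 t nd_n^2\}$ requires an upward deviation of order $nd_n^2(f_0,f_1)$ from a mean that is itself $\le -c' nd_n^2(f_0,f_1)$, hence — by the sub-gamma Chernoff bound applied to the sum (whose Laplace transform factors are each controlled by Assumption \ref{assump:laplace_cond_kl}, with the cross term handled by the fact that the sum $\log(p_{f_0}^{(n)}/p_{f_1}^{(n)})$ is itself directly covered by the assumption for the pair $(f,f_0)$... wait, more cleanly: apply the assumption directly to the pair $(f_0,f_1)$ but under $P_f^{(n)}$, which is not literally covered; so instead I split and bound each sub-gamma piece's Laplace transform separately and multiply) — is at most $c_6' \exp(-c_7'' nd_n^2(f_0,f_1))$.

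Finally I would combine the two bounds, taking $c_6 = c_1 + c_6' \in [2,\infty)$ (enlarging if necessary) and $c_7 = \min\{c_7', c_7''\} \wedge (1-\epsilon) \in (0,1)$, and fix $c_5 \le 1/4$ small enough that both the type I threshold calibration and the negative-drift computation above go through; all resulting constants depend only on $c_1,c_2,c_3,\kappa_g,\kappa_\Gamma$. The main obstacle I anticipate is the type II bound: Assumption \ref{assump:laplace_cond_kl} controls the Laplace transform of the log-likelihood ratio only under the measure indexed by the \emph{numerator}, so to evaluate tail probabilities under $P_f^{(n)}$ I cannot apply it to the pair $(f_0,f_1)$ directly and must decompose into $(f_0,f)$ and $(f,f_1)$ pieces, control their Laplace transforms individually (accepting a constant blow-up $c_1^2$ and a restricted range of $\lambda$), and carefully track that the favorable negative drift from the $(f_0,f)$ piece dominates the unfavorable positive drift and the variance contributions from the $(f,f_1)$ piece — this is precisely what forces $c_5$ to be a small absolute constant and is the delicate part of the bookkeeping. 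A secondary technical point is whether $d_n$ satisfies a genuine triangle inequality or only a quasi-triangle inequality; the argument only needs $d_n(f_0,f) \ge d_n(f_0,f_1) - d_n(f,f_1) \ge (1-\sqrt{c_5}) d_n(f_0,f_1)$, so a quasi-triangle inequality with a universal constant suffices after adjusting $c_5$.
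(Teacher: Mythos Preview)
Your proposal is correct and matches the paper's proof almost exactly: the same likelihood-ratio test $\phi_n=\mathbf{1}\{\log(p_{f_0}^{(n)}/p_{f_1}^{(n)})\le -c\,nd_n^2(f_0,f_1)\}$, the same Chernoff/sub-gamma bound for the type~I error, and for type~II the same decomposition $\log(p_{f_0}^{(n)}/p_{f_1}^{(n)}) = \log(p_{f_0}^{(n)}/p_f^{(n)}) + \log(p_f^{(n)}/p_{f_1}^{(n)})$ together with the triangle-inequality drift estimate $d_n(f,f_0)\ge (1-\sqrt{c_5})\,d_n(f_0,f_1)$. The one minor difference is in how the two pieces are combined under $P_f^{(n)}$: you propose to ``multiply Laplace transforms'', which since the pieces are not independent requires a Cauchy--Schwarz/H\"older step you allude to but do not spell out; the paper instead peels off the $(f,f_1)$ piece via a high-probability truncation event $\mathcal{E}_n=\{\log(p_f^{(n)}/p_{f_1}^{(n)})<c'\,nd_n^2(f_0,f_1)\}$, bounds $P_f^{(n)}(\mathcal{E}_n^c)$ by one application of Assumption~\ref{assump:laplace_cond_kl}, and on $\mathcal{E}_n$ controls that piece deterministically so that only a single Chernoff bound on the $(f,f_0)$ piece remains. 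Both devices lead to the same constraint $c+c'<c_2(1-\sqrt{c_5})^2$ and the same final choice $c_5\le\min\{1/4,\,c_2/(16c_3)\}$.
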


Next we state the assumption on the complexity of the models $\{\mathcal{F}_m\}_{m \in \mathcal{I}}$. Let $\mathcal{I}=\N^q$ be a $q$-dimensional lattice with the natural order $(\mathcal{I},\leq)$\footnote{For any $a,b \in \mathcal{I}$, $a\leq b$ iff $a_i\leq b_i$ for all $1\leq i\leq q$. Similar definition applies to $<,\geq,>$. }. Here the dimension $q$ is understood as \emph{the number of different structures} in the models $\{\mathcal{F}_m\}_{m \in \mathcal{I}}$. For instance, in the trace regression problem (cf. Section \ref{section:trace_reg}), there is only one rank structure so $q=1$; in the covariance matrix estimation problem in the sparse factor model (cf. Section \ref{section:cov_matrix_est}), there are both rank and sparsity structures so $q=2$. In the sequel we will not explicitly mention $q$ unless otherwise specified. We require the models to be nested in the sense that $\mathcal{F}_m\subset \mathcal{F}_{m'}$ if and only if $m\leq m'$ \footnote{Nesting requirement is for simplicity; see Appendix \ref{section:more_examples} for examples of non-nesting models.}.

Let $f_{0,m}$ denote the `best' approximation of $f_0$ within the model $\mathcal{F}_m$ in the sense that $f_{0,m} \in \arg\inf_{g \in \mathcal{F}_m} d_n(f_0,g)$\footnote{We assume that $f_{0,m}$ is well-defined without loss of generality.}. Our assumption on the model complexity below, at a heuristic level, says that $\mathcal{F}_m$ has dimension $n\delta_{n,m}^2$ measured in a local entropy sense, for some $\delta_{n,m}>0$. In typical cases, $\mathcal{F}_m$ has `dimension' $m$, and $\delta_{n,m}^2\approx \frac{m}{n}\times \textrm{poly-log}$ is regarded as the contraction rate on $\mathcal{F}_m$ (up to logarithmic factors).

\begin{assumption}[\textbf{Models: Local entropy condition}]\label{assump:local_ent_general}
	Let $\{\delta_{n,m}\}_{m \in \mathcal{I}}\subset \R_{>0}$ be such that each $\delta_{n,m}$ depends on $n,m$ only, and:
	\begin{itemize}
		\item For each $m \in \mathcal{I}$, 
		\begin{align}\label{cond:entropy_essen}
		1+\sup_{\epsilon>\delta_{n,m}}\log \mathcal{N}\big(c_5 \epsilon , \{f \in \mathcal{F}_m: d_n(f,g)\leq 2\epsilon\}, d_n\big)\leq (c_7/2)n \delta_{n,m}^2
		\end{align}
		holds for all $g \in \{f_{0,m'}\}_{m'\leq m}$. 
		\item 	Furthermore there exist some constants $\mathfrak{c} \in [1,\infty), \gamma \in [1, \infty), \mathfrak{h}_0\in[1,\infty]$ such that 
		for any $m \in \mathcal{I}$, $\alpha\geq c_7/2$ and any $1\leq h\leq \mathfrak{h}_0$,
		\begin{align}\label{cond:suplinearity_delta}
		\sum_{m'\geq h m} e^{-\alpha n\delta_{n,m'}^2}\leq 2e^{-\alpha n h\delta_{n,m}^2/\mathfrak{c}^2},
		\quad \mathfrak{c}^{-2}\delta_{n,hm}^2\leq h^{\gamma}\delta_{n,m}^2.
		\end{align}
	\end{itemize}
	
\end{assumption}

Using $\delta_{n,m}$'s, the models can be divided into over-fitting or under-fitting ones according to whether $\delta_{n,m}^2\geq \inf_{g\in \mathcal{F}_m} d_n^2(f_0,g)$ or $\delta_{n,m}^2< \inf_{g\in \mathcal{F}_m} d_n^2(f_0,g)$. 

Note that if we choose all models $\mathcal{F}_m=\mathcal{F}$, then (\ref{cond:entropy_essen}) reduces to the local entropy condition in \cite{ghosal2000convergence,ghosal2007convergence}. When $\mathcal{F}_m$ is finite-dimensional, typically we can check (\ref{cond:entropy_essen}) for all $g \in \mathcal{F}_m$. Now we comment on (\ref{cond:suplinearity_delta}). The left side of (\ref{cond:suplinearity_delta}) essentially requires super linearity of the map $m \mapsto \delta_{n,m}^2$, while the right side of (\ref{cond:suplinearity_delta}) controls the degree of this super linearity. As a leading example, (\ref{cond:suplinearity_delta}) will be trivially satisfied with $\mathfrak{c}=\gamma=1, \mathfrak{h}_0=\infty$ when $n\delta_{n,m}^2= c\cdot m\log (en)$ for some absolute constant $c>2/c_7$.

Finally we state assumptions on the priors.  

\begin{assumption}[\textbf{Priors: Mass condition}]\label{assump:prior_mass_general}
	For all $m$,
	\begin{enumerate}
		\item[(P1)] (First-step prior) There exists some $\mathfrak{h}\geq 1$ such that 
		\begin{align}\label{cond:suff_prior_mass_general}
		\lambda_{n}(m)\geq e^{-2n\delta_{n,m}^2}/2,\quad
		\sum_{k>\mathfrak{h} m} \lambda_{n}(k) \leq 2e^{- n\delta_{n,m}^2}.
		\end{align}
		\item[(P2)] (Second-step prior)
		\begin{align}\label{cond:sufficient_prior_mass_local_ball_general}
		\Pi_{n,m}\left(\left\{f \in \mathcal{F}_m: d_n^2(f,f_{0,m})\leq \delta_{n,m}^2/c_3  \right\}\right)\geq e^{-2n\delta_{n,m}^2}.
		\end{align}
	\end{enumerate}
\end{assumption}

Condition (P1) can be verified by using the following generic prior $\Lambda_n$:
\begin{align}\label{eqn:prior_generic}
\lambda_{n}(m)\propto \exp(-2 n\delta_{n,m}^2).
\end{align}
\begin{proposition}\label{prop:prior_generic}
	Suppose the first condition of (\ref{cond:suplinearity_delta}) holds. Then (P1) in Assumption \ref{assump:prior_mass_general} holds for the prior (\ref{eqn:prior_generic}) with $\mathfrak{h}_0\geq \mathfrak{h}\geq 2\mathfrak{c}^2$.
\end{proposition}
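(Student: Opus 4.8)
The plan is to work with the normalising constant $Z_n := \sum_{m \in \mathcal{I}} \exp(-2n\delta_{n,m}^2)$ of the prior (\ref{eqn:prior_generic}), so that $\lambda_{n,m} = Z_n^{-1}\exp(-2n\delta_{n,m}^2)$, and to read off both inequalities in (\ref{cond:suff_prior_mass_general}) from the left-hand estimate of (\ref{cond:suplinearity_delta}) used with the exponent $\alpha = 2$; this choice is admissible since $c_7 \in (0,1)$ forces $c_7/2 < 2$.

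\emph{Step 1 (controlling $Z_n$).} Let $\underline{m}$ be the least element of the lattice $\mathcal{I} = \N^q$. Applying the left inequality of (\ref{cond:suplinearity_delta}) with $\alpha = 2$, $h = 1$ and $m = \underline{m}$, the index set $\{m' \geq \underline{m}\}$ is all of $\mathcal{I}$, whence $Z_n \leq 2\exp(-2n\delta_{n,\underline{m}}^2/\mathfrak{c}^2) \leq 2$, using $\delta_{n,\underline{m}}^2 \geq 0$. On the other hand, retaining only the $m$-th summand gives the crude lower bound $Z_n \geq \exp(-2n\delta_{n,m}^2)$ for every $m \in \mathcal{I}$. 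Dividing, $\lambda_{n,m} \geq \tfrac{1}{2}\exp(-2n\delta_{n,m}^2)$, which is the first half of (\ref{cond:suff_prior_mass_general}).

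\emph{Step 2 (the tail).} Fix $m \in \mathcal{I}$. Using $Z_n^{-1} \leq \exp(2n\delta_{n,m}^2)$ from Step 1 and enlarging the summation range from $\{k > \mathfrak{h}m\}$ to $\{k \geq \mathfrak{h}m\}$,
\[
\sum_{k > \mathfrak{h}m} \lambda_{n,k} \;\leq\; \exp(2n\delta_{n,m}^2)\sum_{k \geq \mathfrak{h}m}\exp(-2n\delta_{n,k}^2).
\]
Now apply the left inequality of (\ref{cond:suplinearity_delta}) with $\alpha = 2$ and $h = \mathfrak{h}$ (legitimate since $\mathfrak{h} \geq 2\mathfrak{c}^2 \geq 2 \geq 1$) to bound the remaining sum by $2\exp(-2n\mathfrak{h}\delta_{n,m}^2/\mathfrak{c}^2)$, so that
\[
\sum_{k > \mathfrak{h}m}\lambda_{n,k} \;\leq\; 2\exp\!\big(-2n\delta_{n,m}^2(\mathfrak{h}\mathfrak{c}^{-2} - 1)\big).
\]
Since $\mathfrak{h} \geq 2\mathfrak{c}^2$ gives $\mathfrak{h}\mathfrak{c}^{-2} - 1 \geq 1 \geq \tfrac12$, the right-hand side is at most $2\exp(-n\delta_{n,m}^2)$, which is the second half of (\ref{cond:suff_prior_mass_general}). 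Hence (P1) holds for every $\mathfrak{h} \geq 2\mathfrak{c}^2$.

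The argument uses only the first inequality of (\ref{cond:suplinearity_delta}) (the second is not needed) and no integrality of $\mathfrak{h}$, all lattice inequalities being read coordinatewise as in the footnote to Assumption \ref{assump:local_ent_general}. There is no substantive obstacle: the only point demanding a little care is to pair the trivial lower bound $Z_n \geq \exp(-2n\delta_{n,m}^2)$ with the geometric-tail estimate so that the surviving exponent has a strictly negative slope, and the threshold $\mathfrak{h} \geq 2\mathfrak{c}^2$ is exactly what guarantees $\mathfrak{h}\mathfrak{c}^{-2} - 1 \geq 1$ while leaving the prefactor $2$ untouched.
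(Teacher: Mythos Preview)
Your proof is correct and follows essentially the same route as the paper's own argument: both bound the normalising constant above by $2$ via the superlinearity condition with $\alpha=2$, $h=1$ at the minimal index, and then combine a trivial single-term lower bound on $Z_n$ with the tail estimate from (\ref{cond:suplinearity_delta}) at $h=\mathfrak{h}$. The only cosmetic difference is that you use $Z_n\geq e^{-2n\delta_{n,m}^2}$ for the particular $m$ in question, whereas the paper uses $Z_n\geq e^{-2n\delta_{n,1}^2}$ together with $\delta_{n,1}^2\leq\delta_{n,m}^2$; your version is marginally cleaner since it sidesteps any appeal to monotonicity of $m\mapsto\delta_{n,m}^2$.
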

(\ref{eqn:prior_generic}) will be the model selection (first-step) prior on the model index $\mathcal{I}$ in all examples in Section \ref{section:models}.

Condition (P2) is reminiscent of the classical prior mass condition considered in \cite{ghosal2000convergence,ghosal2007convergence}. Since $\delta_{n,m}^2$ is understood as the `posterior contraction rate' for the model $\mathcal{F}_m$, (P2) can also be viewed as a \emph{solvability condition} imposed on each model. Note that (\ref{cond:sufficient_prior_mass_local_ball_general}) only requires a sufficient prior mass on a Kullback-Leibler ball near $f_{0,m}$, where \cite{ghosal2000convergence,ghosal2007convergence} use more complicated metric balls induced by higher moments of the Kullback-Leibler divergence.

\subsection{Main abstract results}


We say an index set $\mathcal{M}\subset \mathcal{I}$ \emph{rectangular} if and only if there exist some integers $1\leq a_k\leq b_k\leq \infty (k=1,\ldots,q)$ such that $\mathcal{M}= \prod_{k=1}^q\{a_k,\ldots,b_k\}$.

\begin{theorem}\label{thm:general_ms}
	Suppose Assumptions \ref{assump:laplace_cond_kl}-\ref{assump:prior_mass_general} hold for some rectangular $\mathcal{M}\subset \mathcal{I}$ with $\mathfrak{h}\geq C_0 \mathfrak{c}^2$ and $\mathfrak{h}_0\geq C_0', d_0^2\leq \inf_{m \in \mathcal{M}} \epsilon_{n,m}^2/C_0'$, where $\epsilon_{n,m}^2 \equiv \inf_{g \in \mathcal{F}_m}d_n^2(f_0,g)\vee \delta_{n,m}^2$. Suppose $d_n$ satisfies the triangle inequality. Then:
	\begin{enumerate}
		\item For any $m \in \mathcal{M}$,
		\begin{align}\label{ineq:post_rate_general}
		& P_{f_0}^{(n)} \Pi_n\big(f \in \mathcal{F}: d_n^2(f,f_{0})> C_1 \epsilon_{n,m}^2\big\lvert X^{(n)}\big) \leq C_2e^{-n\epsilon_{n,m}^2/C_2}.
		\end{align}
		\item 
		For any $m \in \mathcal{M}$ such that $\delta_{n,m}^2\geq \inf_{g \in \mathcal{F}_m}d_n^2(f_0,g)$\footnote{We use the convention that $\mathcal{F}_m\equiv \mathcal{F}_{m\wedge b}$ where $b=(b_1,\ldots,b_q)$ where $\mathcal{M}= \prod_{k=1}^q\{a_k,\ldots,b_k\}$.},
		\begin{align}\label{ineq:post_model_selection_general}
		P_{f_0}^{(n)} \Pi_n\big(f \notin \mathcal{F}_{C_3m}\lvert X^{(n)}\big) \leq C_2 e^{-n\epsilon_{n,m}^2/C_2}.
		\end{align}
		\item 
		Let $\hat{f}_n\equiv \Pi_n(f| X^{(n)})$ be the posterior mean. If $\mathfrak{h}_0=\infty$ and $d_n(\cdot,\cdot)$ is convex in each of its arguments, then
		\begin{align}\label{ineq:post_mean_general}
		P_{f_0}^{(n)} d_n^2(\hat{f}_n,f_0)\leq C_4 \inf_{m \in \mathcal{M}} \epsilon_{n,m}^2.
		\end{align}
	\end{enumerate}
	Here the constant $C_0$ depends on $\{c_i\}_{i=1}^3,\kappa$ and $C_0', \{C_i\}_{i=1}^4$ depend on the $\{c_i\}_{i=1}^3, \kappa, \mathfrak{c},\mathfrak{h}$ and $\gamma$.
\end{theorem}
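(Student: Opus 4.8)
The plan is to run the testing-plus-prior-mass argument of \cite{ghosal2000convergence,ghosal2007convergence}, using Assumption \ref{assump:laplace_cond_kl} twice: via Lemma \ref{lem:local_test_general} it produces the local tests that are usually assumed, and via its sub-gamma tail it yields a sharp lower bound on the evidence (the denominator in \eqref{eqn:post_dist}), even under mis-specification. Write $r_{n,m}^2 := \inf_{g\in\mathcal F_m}d_n^2(f_0,g) + \delta_{n,m}^2 \asymp \epsilon_{n,m}^2$. The proof has three moving parts: (a) an evidence lower bound; (b) a chaining/testing bound for models of moderate complexity; (c) a prior-tail bound for models of large complexity. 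The index set is split accordingly into $\mathcal I_{\le} := \{k\in\mathcal I : \delta_{n,k}^2\le\eta\, r_{n,m}^2\}$ and its complement, where $\eta$ is a large absolute constant to be fixed; from the first part of \eqref{cond:suplinearity_delta} one gets $\sum_{k\in\mathcal I}e^{-(c_7/2)n\delta_{n,k}^2}\le 2$, which gives both $\#\mathcal I_{\le}\le 2\,e^{(c_7/2)\eta n r_{n,m}^2}$ and, together with \eqref{eqn:prior_generic} (equivalently the tail bound in (P1)), $\sum_{k\notin\mathcal I_{\le}}\lambda_{n,k}\le C\,e^{-(\eta/2)n r_{n,m}^2}$.

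For (a), restrict the denominator to $\lambda_{n,m}\Pi_{n,m}$ and to the Kullback--Leibler-type ball $B_{n,m} := \{f\in\mathcal F_m : d_n^2(f,f_{0,m})\le\delta_{n,m}^2/c_3\}$. Jensen's inequality gives $\log\big(\Pi_{n,m}(B_{n,m})^{-1}\int_{B_{n,m}}(p_f^{(n)}/p_{f_0}^{(n)})\,\d{\Pi_{n,m}}\big)\ge W_n$, where $W_n := \Pi_{n,m}(B_{n,m})^{-1}\int_{B_{n,m}}\log(p_f^{(n)}/p_{f_0}^{(n)})\,\d{\Pi_{n,m}}$. On $B_{n,m}$ one has $d_n(f,f_0)\le d_n(f_0,f_{0,m})+\delta_{n,m}/\sqrt{c_3}\lesssim\epsilon_{n,m}$, so by \eqref{cond:d_n_KL}, $P_{f_0}^{(n)}W_n \ge -c_3\,n\sup_{f\in B_{n,m}}d_n^2(f_0,f)\gtrsim -n r_{n,m}^2$; in particular mis-specification enters only through the bias $d_n^2(f_0,f_{0,m})\le r_{n,m}^2$. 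The fluctuations of $W_n$ are controlled at the same scale by moving the prior average inside the exponential (Jensen again) and integrating the sub-gamma bound of Assumption \ref{assump:laplace_cond_kl} (with parameters $\asymp\kappa_g n r_{n,m}^2$ and $\kappa_\Gamma$) over $\Pi_{n,m}$, then applying a Chernoff bound. Combined with $\lambda_{n,m}\ge\tfrac12 e^{-2n\delta_{n,m}^2}$ and $\Pi_{n,m}(B_{n,m})\ge e^{-2n\delta_{n,m}^2}$ from Assumption \ref{assump:prior_mass_general}, this shows that off an event $\Omega_{n,m}^c$ with $P_{f_0}^{(n)}(\Omega_{n,m}^c)\le C\,e^{-n r_{n,m}^2/C}$, the denominator in \eqref{eqn:post_dist} is at least $e^{-C' n r_{n,m}^2}$.

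For (b), fix $k\in\mathcal I_{\le}$ and put $\rho^2 := C_1 r_{n,m}^2$. The nonempty dyadic shells $\{f\in\mathcal F_k : 2^j\rho < d_n(f,f_0)\le 2^{j+1}\rho\}$ each lie in a $d_n$-ball of radius $\lesssim 2^j\rho$ (and $> \delta_{n,k}$) about $f_{0,k}$ — using that every $f\in\mathcal F_k$ has $d_n(f,f_0)\ge d_n(f_{0,k},f_0)$, so only shells with $2^{j+1}\rho\ge d_n(f_{0,k},f_0)$ are nonempty — so by \eqref{cond:entropy_essen}, valid at $g=f_{0,k}$, each is covered by at most $e^{(c_7/2)n\delta_{n,k}^2}$ balls of radius $c_5 2^j\rho$, on each of which Lemma \ref{lem:local_test_general} gives a test of error $\le c_6\,e^{-c_7 n(2^j\rho)^2}$; summing over $j$ produces a test $\phi_{n,k}$ with both errors $\le C\,e^{-c_7(C_1-\eta/2)n r_{n,m}^2}$. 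Setting $\phi_n := \max_{k\in\mathcal I_{\le}}\phi_{n,k}$ and using $\#\mathcal I_{\le}\le 2\,e^{(c_7/2)\eta n r_{n,m}^2}$, we obtain $P_{f_0}^{(n)}\phi_n\le C\,e^{-c_7(C_1-\eta)n r_{n,m}^2}$ and $P_f^{(n)}(1-\phi_n)\le C\,e^{-c_7(C_1-\eta)n r_{n,m}^2}$ for every $f\in\bigcup_{k\in\mathcal I_{\le}}\mathcal F_k$ with $d_n(f,f_0)>\rho$. To prove (1), let $U := \{f : d_n^2(f,f_0) > C_1 r_{n,m}^2\}$ and bound $\Pi_n(U\mid X^{(n)})\le\phi_n + (1-\phi_n)\mathbf{1}_{\Omega_{n,m}}\Pi_n(U\mid X^{(n)}) + \mathbf{1}_{\Omega_{n,m}^c}$; the change of measure $P_{f_0}^{(n)}[(1-\phi_n)p_f^{(n)}/p_{f_0}^{(n)}] = P_f^{(n)}(1-\phi_n)$ turns the middle term, restricted to $U\cap\bigcup_{k\in\mathcal I_{\le}}\mathcal F_k$, into a type-II error divided by the denominator bound, while on $U\cap\bigcup_{k\notin\mathcal I_{\le}}\mathcal F_k$ it is at most $e^{C' n r_{n,m}^2}\sum_{k\notin\mathcal I_{\le}}\lambda_{n,k}\le C\,e^{(C'-\eta/2)n r_{n,m}^2}$. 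Choosing $\eta > 2C'$ and then $C_1 > \eta + 2C'/c_7$ makes every term $\le C_2\,e^{-n r_{n,m}^2/C_2}$, i.e.\ \eqref{ineq:post_rate_general}. For (2), where $\epsilon_{n,m}^2\asymp\delta_{n,m}^2$, the set $\{f\notin\mathcal F_{C_3 m}\}$ is contained in $\bigcup_{\mathcal F_k\not\subseteq\mathcal F_{C_3 m}}\mathcal F_k$, whose prior mass is $\le C\,e^{-c\,C_3 n\delta_{n,m}^2}$ by the tail bound in (P1) and \eqref{cond:suplinearity_delta} once $C_3$ is taken large; dividing by the denominator bound yields \eqref{ineq:post_model_selection_general}.

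For (3), fix $m\in\mathcal M$; by convexity of $d_n^2(\cdot,f_0)$ — which holds for the metrics $d_n$ arising in the applications — and Jensen, $d_n^2(\hat{f}_n,f_0)\le\int d_n^2(f,f_0)\,\d{\Pi_n(f\mid X^{(n)})}$, and the layer-cake formula together with the refinement of the argument in (b) (which in fact gives $P_{f_0}^{(n)}[\mathbf{1}_{\Omega_{n,m}}\Pi_n(d_n^2(f,f_0)>t\mid X^{(n)})]\le C\,e^{-nt/C}$ for all $t\ge C_1 r_{n,m}^2$, since the shells run to infinity) gives $P_{f_0}^{(n)}[\mathbf{1}_{\Omega_{n,m}}d_n^2(\hat{f}_n,f_0)]\lesssim r_{n,m}^2$; the residual on $\Omega_{n,m}^c$ is negligible against $r_{n,m}^2$ since $P_{f_0}^{(n)}(\Omega_{n,m}^c)$ is exponentially small and $\sup_f d_n^2(f,f_0)$ is finite (at most polynomial in $n$) in the applications. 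Optimizing over $m\in\mathcal M$ gives \eqref{ineq:post_mean_general}. The conceptual steps are routine once Lemma \ref{lem:local_test_general} is in hand; the real work is the bookkeeping — organizing the complexity-based split of the (possibly multi-dimensional) lattice $\mathcal I$, and above all tracking the absolute constants so that the shell-test errors and the prior tails dominate the factor $e^{C' n r_{n,m}^2}$ lost in the evidence bound. This is precisely why \eqref{cond:entropy_essen} is stated with the very constants $c_5,c_7$ produced by Lemma \ref{lem:local_test_general}, and why the theorem requires $\mathfrak{h}\ge C_0\mathfrak{c}^2$; the unbounded-loss point in (3) is a minor additional complication.
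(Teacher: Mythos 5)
Your route is genuinely different from the paper's. The paper's proof first solves a \emph{localized, well‑specified} contraction problem under $P_{f_{0,m}}^{(n)}$ (Propositions~\ref{prop:overfit_general}–\ref{prop:underfit_general}), with tests and the evidence lower bound centered at $f_{0,m}$, and then transports the resulting probability estimate back to $P_{f_0}^{(n)}$ via the change‑of‑measure Lemma~\ref{lem:change_variable}; the Bernstein hypothesis is used a third time there to show the multiplicative cost $e^{c_4 nj\delta_{n,m}^2}$ of the transport is dominated by the $e^{-nj\mathfrak{h}\delta_{n,m}^2/c_8\mathfrak{c}^2}$ decay, which is exactly where the hypothesis $\mathfrak{h}\ge C_0\mathfrak{c}^2$ is spent. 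You instead run the Ghosal‑type testing argument \emph{directly} under $P_{f_0}^{(n)}$, absorbing the mis‑specification into an evidence bound over a Kullback–Leibler‑type ball of radius $\asymp r_{n,m}^2$ (bias plus variance) and building the tests around $f_0$ itself. Both routes lean on the Bernstein hypothesis, but at different steps; yours is closer in spirit to the mis‑specified analysis of Kleijn–van der Vaart, and it shows why exponential posterior tails survive mis‑specification without literally changing measure.

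That said, two parts of your outline are more than "bookkeeping." (i) The claimed prior‑tail bound $\sum_{k\notin\mathcal I_{\le}}\lambda_{n,k}\lesssim e^{-(\eta/2)n r_{n,m}^2}$ uses the \emph{explicit} prior~(\ref{eqn:prior_generic}), not Assumption~(P1); (P1) only controls tails $\sum_{k>\mathfrak h m'}\lambda_{n,k}$ along lattice order, while your split is by the sub‑level set of $k\mapsto\delta_{n,k}^2$, and for $q>1$ the complement of $\mathcal I_\le$ is not a single lattice tail. Covering it from (P1) costs factors of $\mathfrak{c}$, $\mathfrak{h}$ and $\gamma$ via~(\ref{cond:suplinearity_delta}) that you do not track. (ii) Closely related, the hypothesis $\mathfrak{h}\ge C_0\mathfrak{c}^2$ does no visible work in your sketch; if you carry out step (i) carefully, a constraint of this type reappears, which is why the paper packages that constant‑chase into Lemma~\ref{lem:change_variable}. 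These gaps are fillable, and doing so would likely rederive the same inequality between $\mathfrak{h}$, $\mathfrak{c}$ and the Bernstein constants. One further minor point: the Jensen step in part~(3) tacitly requires $d_n(\cdot,f_0)$ to be convex; the paper assumes the same thing implicitly, but since Assumption~\ref{assump:laplace_cond_kl} only posits that $d_n$ is a metric, it is worth stating.
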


\begin{remark}
	Some technical comments:
	\begin{enumerate}
		\item $f_{0,m}$ in Assumptions \ref{assump:local_ent_general} and \ref{assump:prior_mass_general} may be taken other than the minimizer of $f \mapsto d_n^2(f_0,f)$ over $\mathcal{F}_m$. In this case, the conclusion of the above theorems is valid by using $\epsilon_{n,m}^2\equiv d_n^2(f_0,f_{0,m})\vee \delta_{n,m}^2$.
		\item The constants $\{C_i\}_{i=0}^4$ do not depend on $m \in \mathcal{M}$, so the conclusions in (1)-(2) hold simultaneously for all $m\in \mathcal{M}$.
	\end{enumerate}	
\end{remark}

Theorem \ref{thm:general_ms} shows that the task of constructing Bayes procedures \emph{adaptive} to a collection of models in the intrinsic metric of a given statistical experiment, can be essentially reduced to that of designing a suitable \emph{non-adaptive} prior for each model, provided the model selection prior is chosen according to (P1). Furthermore, the resulting posterior mean serves as an automatic adaptive point estimator in a frequentist sense. Besides being rate-adaptive to the collection of models, (\ref{ineq:post_model_selection_general}) shows that the posterior distribution does not spread too much mass on overly large models. Results of this type have been derived primarily in the Gaussian regression model (cf. \cite{castillo2015bayesian,castillo2012needles,gao2015general}) and in density estimation \cite{ghosal2008nonparametric}; here our result shows that this is a general phenomenon for the hierarchical prior design. 

As mentioned in the Introduction, previous results \cite{arbel2013bayesian,deJonge2010adaptive,gao2015general,ghosal2008nonparametric,rousseau2017asymptotic} require certain specific form of the prior, model structure, or the experiments. Our Theorem \ref{thm:general_ms} can thus be viewed as a generalization of these results without such apriori requirements under a hierarchical prior design. As will be clear from concrete applications in Section \ref{section:models}, another advantage of the formulation of Theorem \ref{thm:general_ms} is that Assumptions \ref{assump:local_ent_general}-\ref{assump:prior_mass_general} typically concern \emph{finite-dimensional} models $\mathcal{F}_m$ so verification is easy and routine. 

Note that $f_0$ is arbitrary and hence our oracle inequalities (\ref{ineq:post_rate_general}) and (\ref{ineq:post_mean_general}) account for model mis-specification errors. Previous work allowing model mis-specification includes \cite{gao2015general} who mainly focuses on structured linear models in the Gaussian regression setting, and \cite{kleijn2006misspecification} who pursued generality at the cost of harder-to-check conditions.

\begin{remark}\label{rmk:main_thm}
	We make some technical remarks.
	\begin{enumerate}
		\item The probability estimate in (\ref{ineq:post_rate_general}) is of Gaussian type and is therefore sharp (up to constants) in view of the lower bound result Theorem 2.1 in \cite{hoffmann2015adaptive}. Such sharp estimates have been derived separately in the Hellinger metric \cite{ghosal2000convergence}, or in individual settings, e.g. the sparse normal mean model \cite{castillo2012needles}, the sparse PCA model \cite{gao2015ratepca}, and the structured linear model \cite{gao2015general}, to name a few. The Gaussian estimate naturally implies good behavior of the posterior mean under bounded metrics (cf. page 507 of \cite{ghosal2000convergence}). In the leading case $\mathfrak{c}=\gamma=1, \mathfrak{h}_0=\infty$ in Assumption \ref{assump:local_ent_general}, the posterior mean $\hat{f}_n$ satisfies an oracle inequality with a Gaussian tail\footnote{This can be seen by a simple modification of the proof by calculating the moment generating function.}.

		\item (\ref{ineq:post_model_selection_general}) asserts that the posterior distribution does not concentrate on overly large models. It is also of significant interest to assert the converse in some models, i.e. the posterior distribution does not concentrate on overly small models under additional problem-specific conditions. We refer to the readers to \cite{belitser2017coverage,castillo2015bayesian,rousseau2016asymptotic,yang2017bayesian} and references therein for more details in this direction.
		
		\item Assumption \ref{assump:laplace_cond_kl} implies, among other things,  the existence of a good test (cf. Lemma \ref{lem:local_test_general}). In this sense our approach here falls into the general testing approach adopted in \cite{ghosal2000convergence,ghosal2007convergence}. 
		Some alternative approaches for dealing with non-intrinsic metrics can be found in \cite{castillo2014bayesian,hoffmann2015adaptive,yoo2016supremum}.
		\item The constants $\{C_i\}_{i=1}^4$ in Theorem \ref{thm:general_ms} depend at most polynomially with respect to the constants involved in Assumption \ref{assump:laplace_cond_kl}. This will be useful in handling models where the local Gaussianity only holds locally on the parameter space (cf. Appendix \ref{section:more_examples}).
		
		\item If $d_n$ does not satisfy the triangle inequality, then (\ref{ineq:post_rate_general}) and (\ref{ineq:post_model_selection_general}) in Theorem \ref{thm:general_ms} hold if $f_0 \in \mathcal{F}_m$ for some $m$ (i.e. the form of an exact oracle inequality may be lost at a general level). 
	\end{enumerate}
\end{remark}

\begin{remark}\label{rmk:comparison_yang}
	We compare our results with Theorems 4 and 5 of \cite{yang2017bayesian}. Both their results and our Theorem \ref{thm:general_ms} shed light on the general problem of Bayes model selection, while differing in several important aspects:
	
	\begin{enumerate}
		\item Theorem 4 of \cite{yang2017bayesian} targets at exact model selection consistency, under a set of additional `separation' assumptions. Our Theorem \ref{thm:general_ms} (2) requires no extra assumptions, and shows that the posterior distribution does not concentrate on overly large models. This is significant in non-parametric problems: the true signal typically need not belong to any specific model.
		\item Theorem 5 of \cite{yang2017bayesian} contains a term involving the cardinality of the models, so their bound will be finite only if there are finitely many models. It remains open to see if this can be removed.
	\end{enumerate}
\end{remark}

\subsection{The localization (sieving) principle}\label{section:localization}

Consider a sequence of models $\{\bar{\mathcal{F}}_n\}$, where $\bar{\mathcal{F}}_n$ is regarded as the localized model of $\mathcal{F}$ at sample size $n$. Note that any prior $\Pi_n$ on $\mathcal{F}$ can be localized to a prior $\bar{\Pi}_n$ on $\bar{\mathcal{F}}_n$: for any $B \subset \bar{\mathcal{F}}_n$, define $\bar{\Pi}_n(B)\equiv \Pi_n(B\cap \bar{\mathcal{F}}_n)/\Pi_n(\bar{\mathcal{F}}_n)$. Now the quantity in Theorem \ref{thm:general_ms} concerning posterior distribution can be decomposed by
\begin{align}\label{ineq:localize_bernstein}
& P_{f_0}^{(n)} \Pi_n\big(f \in \mathcal{F}: d_n^2(f,f_{0})> C_1 \epsilon_{n,m}^2\big\lvert X^{(n)}\big) \\
&\leq P_{f_0}^{(n)} \bar{\Pi}_n\big(f \in \bar{\mathcal{F}}_n: d_n^2(f,f_{0})> C_1 \epsilon_{n,m}^2\big\lvert X^{(n)}\big)+ P_{f_0}^{(n)} \Pi_n\big(f \notin \bar{\mathcal{F}}_n\big\lvert X^{(n)}\big).\nonumber
\end{align}
In essence, (\ref{ineq:localize_bernstein}) suggests that we can use the machinery of Assumptions \ref{assump:laplace_cond_kl}-\ref{assump:prior_mass_general} to the localized model $\mathcal{F}_n$ (typically by choosing the constants $c_2,c_3,d_0$ depending on $n$), as long as the residue term $P_{f_0}^{(n)} \Pi_n\big(f \notin \bar{\mathcal{F}}_n\big\lvert X^{(n)}\big)$ is well-controlled. This typically reduces to a reasonable control of $\Pi_n(\mathcal{F}\setminus \bar{\mathcal{F}}_n)$ (cf. Lemma 1 of \cite{ghosal2007convergence}, see also examples in Appendix \ref{section:more_examples}). The localization principle is under the name `sieving' in \cite{ghosal2000convergence,ghosal2007convergence}.

\subsection{Proof sketch}\label{section:proof_sketch}

Here we sketch the main steps in the proof of our main abstract result Theorem \ref{thm:general_ms}. The details will be deferred to Appendix \ref{section:proof_main_result}. The proof can be roughly divided into two main steps.

\noindent (\textbf{Step 1}) We first solve a localized problem on the model $\mathcal{F}_m$ by `projecting' the underlying probability measure from $P_{f_0}$ to $P_{f_{0,m}}$. In particular, we establish exponential deviation inequality for the posterior contraction rate via the existence of tests guaranteed by Lemma \ref{lem:local_test_general}:
\begin{align}\label{ineq:proof_sketch_1}
P_{f_{0,m}}^{(n)}\Pi_n\big(f \in \mathcal{F}: d_n^2(f,f_{0,m}))>M\delta_{n,\tilde{m}}^2|X^{(n)}\big)\lesssim e^{-c_1 n\delta_{n,\tilde{m}}^2},
\end{align}
where $\tilde{m}$ is the smallest index $\geq m$ such that $\delta_{n,\tilde{m}}^2\gtrsim d_n^2(f_0,f_{0,m})$. This index may deviate from $m$ substantially for small indices.

\noindent (\textbf{Step 2}) We argue that, the cost of the projection in Step 1 is essentially a multiplicative $\mathcal{O}\big(\exp(c_2 n\delta_{n,\tilde{m}}^2)\big)$ factor in the probability bound (\ref{ineq:proof_sketch_1}), cf. Lemma \ref{lem:change_variable}, which is made possible by the local Gaussianity Assumption \ref{assump:laplace_cond_kl}. Then by choosing $c_1$ much larger than $c_2$ we obtain the conclusion by the definition of $\delta_{n,\tilde{m}}^2$ and the fact that $\delta_{n,\tilde{m}}^2\approx d_n^2(f_0,f_{0,m})\vee \delta_{n,m}^2$.

The existence of tests (Lemma \ref{lem:local_test_general}) is used in Step 1. Step 2 is inspired by the work of \cite{chatterjee2015risk} in the context of frequentist least squares estimator over a polyhedral cone in the Gaussian regression setting, where the localized problem therein is estimation of signals on a low-dimensional face (where `risk adaptation' happens). In the Bayesian context, \cite{castillo2015bayesian,castillo2012needles} used a change of measure argument in the Gaussian regression setting for a different purpose. Our proof strategy can be viewed as an extension of these ideas beyond the (simple) Gaussian regression model.

\section{Models and applications}\label{section:models}

In this section we work out a couple of specific statistical models that satisfy the local Gaussianity Assumption \ref{assump:laplace_cond_kl} to illustrate the scope of the general results in Section \ref{section:gen_frame}. Some of the examples come from \cite{ghosal2007convergence}; we identify the `intrinsic' metric to use in these models.  Some concrete applications are also given. The applications presented in this section serve as a demonstration of the scope of our general results in deriving new contraction rate results. More applications can be found in Appendix \ref{section:more_examples} to illustrate the localization principle (cf. Section \ref{section:localization}) and aid calculations/formulation in complicated list of models.


\subsection{Regression models}\label{section:regression}

Suppose we want to estimate $\theta=(\theta_{1},\ldots,\theta_{n})$ in a given model $\Theta_n \subset \R^n$ in the following settings: for $1\leq i\leq n$,
\begin{enumerate}
	\item (\emph{Gaussian}) $X_i = \theta_{i} + \epsilon_i$ where $\epsilon_i$'s are i.i.d. $\mathcal{N}(0,1)$ and $\Theta_n \subset\R^n$;
	\item (\emph{Laplace}) $X_i = \theta_{i} + \epsilon_i$ where $\epsilon_i$'s are i.i.d. errors with density $x\mapsto \frac{1}{2} e^{-\abs{x}}$, and $\Theta_n \subset[-M,M]^n$; 
	\item (\emph{Binary}) $X_i \sim_{} \mathrm{Bern}(\theta_{i})$ are independent, where $\Theta_n \subset [\eta,1-\eta]^n$ for some $\eta>0$;
	\item (\emph{Poisson}) $X_i\sim_{\mathrm{i.i.d.}} \mathrm{Poisson}(\theta_{i})$ where $\Theta_n \subset [1/M,M]^n$ for some $M\geq 1$;
\end{enumerate}
For any $\theta_0,\theta_1 \in \Theta_n$, $
\ell_n^2(\theta_0,\theta_1)\equiv n^{-1}\sum_{i=1}^n\big(\theta_{0,i}-\theta_{1,i}\big)^2$.

\begin{lemma}\label{lem:bernstein_regression} 
	Assumption \ref{assump:laplace_cond_kl} holds for $\ell_n$ with
	\begin{enumerate}
		\item (Gaussian) $c_1=c_2=c_3=\kappa_g=1$ and $\kappa_\Gamma=0$;
		\item (Laplace) $\kappa_\Gamma=0$, $\kappa_g$ an absolute constant and constants $\{c_i\}_{i=1}^3$ depending on $M$ only;
		\item (Binary) $\kappa_\Gamma=0$ and the constants $\{c_i\}_{i=1}^3,\kappa_g$ depend on $\eta$ only; 
		\item (Poisson) constants $\{c_i\}_{i=1}^3, \kappa$ depending on $M$ only.
	\end{enumerate} 
\end{lemma}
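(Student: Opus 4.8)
The plan is to handle the three models in parallel, exploiting the product structure $p_{\theta}^{(n)} = \prod_{i=1}^{n} p_{\theta_i}$ coming from independence of the coordinates. In each case I would first write the log likelihood ratio explicitly and center it: an elementary computation shows that, with $\ell(X) \equiv \log\big(p_{\theta_0}^{(n)}/p_{\theta_1}^{(n)}\big)(X)$, one has $\ell(X) - P_{\theta_0}^{(n)}\ell = \sum_{i=1}^{n} w_i\,(X_i - \theta_{0,i})$ in all three models, where the weights are the differences of natural parameters: $w_i = \theta_{0,i} - \theta_{1,i}$ (Gaussian), $w_i = \mathrm{logit}(\theta_{0,i}) - \mathrm{logit}(\theta_{1,i})$ (binary), and $w_i = \log(\theta_{0,i}/\theta_{1,i})$ (Poisson). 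The one preliminary fact I would record is that on the relevant compact parameter ranges the three quantities (i) the per-coordinate Kullback--Leibler divergence $\mathrm{KL}(p_{\theta_{0,i}}\,\|\,p_{\theta_{1,i}})$, (ii) the squared increment $(\theta_{0,i}-\theta_{1,i})^2$, and (iii) the squared weight $w_i^2$ are mutually comparable, with constants depending only on $\eta$ in the binary case, only on $M$ in the Poisson case, and explicit universal constants in the Gaussian case. This follows from a second-order Taylor expansion of the relevant divergence together with the Lipschitz property of $\mathrm{logit}$ on $[\eta,1-\eta]$ and of $\log$ on $[1/M,M]$.

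Granting this, the two requirements of Assumption~\ref{assump:laplace_cond_kl} separate cleanly. Condition (\ref{cond:d_n_KL}) is immediate, since $\tfrac1n P_{\theta_0}^{(n)}\ell = \tfrac1n\sum_{i=1}^n \mathrm{KL}(p_{\theta_{0,i}}\,\|\,p_{\theta_{1,i}})$ is sandwiched between $c_2\ell_n^2(\theta_0,\theta_1)$ and $c_3\ell_n^2(\theta_0,\theta_1)$ by the comparison between (i) and (ii). For the Laplace transform bound I would factor $P_{\theta_0}^{(n)}\exp\big(\lambda\sum_i w_i(X_i-\theta_{0,i})\big) = \prod_{i=1}^n P_{\theta_{0,i}}\exp\big(\lambda w_i(X_i-\theta_{0,i})\big)$ and bound each factor. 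In the Gaussian case each factor is the exact Gaussian moment generating function $\exp(\lambda^2 w_i^2/2)$, so the product equals $\exp(\lambda^2 n\ell_n^2/2)$, giving $\kappa_\Gamma = 0$, $\kappa_g = 1$, $c_1 = 1$. In the binary case $w_i(X_i-\theta_{0,i})$ is a centered two-point variable of range $|w_i|$, so Hoeffding's lemma bounds each factor by $\exp(\lambda^2 w_i^2/8)$; the product stays purely sub-Gaussian, and the comparison between (iii) and (ii) turns $\sum_i w_i^2$ into a constant multiple of $n\ell_n^2$, yielding $\kappa_\Gamma = 0$ and $\kappa_g \asymp_\eta 1$. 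In the Poisson case each factor is the exact centered-Poisson moment generating function $\exp\big(\theta_{0,i}(e^{\lambda w_i} - 1 - \lambda w_i)\big)$; using $e^{t} - 1 - t \le t^2/\big(2(1-|t|/3)\big)$ for $|t|<3$ together with $|w_i| \le 2\log M$ and $\theta_{0,i}\le M$, each factor is at most $\exp\big(\psi_{M w_i^2,\,(2\log M)/3}(\lambda)\big)$ for $|\lambda| < 3/(2\log M)$, and since $v \mapsto \psi_{v,c}$ is additive in $v$ the product is bounded by $\exp\big(\psi_{\kappa_g n\ell_n^2,\,\kappa_\Gamma}(\lambda)\big)$ with $\kappa_g,\kappa_\Gamma \asymp_M 1$ (again via the comparison between (iii) and (ii)).

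I do not expect any step to be a genuine obstacle. The only point that needs care is the bookkeeping that ties the variance proxy arising from the moment generating function --- essentially $\sum_i w_i^2$, up to the bounded per-coordinate variances --- simultaneously to $n\ell_n^2(\theta_0,\theta_1) = \sum_i(\theta_{0,i}-\theta_{1,i})^2$ and to the Kullback--Leibler divergence $\sum_i \mathrm{KL}(p_{\theta_{0,i}}\,\|\,p_{\theta_{1,i}})$, since $\kappa_g n d_n^2$ must play both roles in Assumption~\ref{assump:laplace_cond_kl}; this is precisely what the mutual-comparability fact supplies. The conceptual content is just the recognition that the discrete $\ell_2$ metric $\ell_n$ is the intrinsic metric here, and that boundedness of $\Theta_n$ away from the boundary (in the binary and Poisson cases) is exactly what makes the weights and divergences comparable to $\ell_n^2$ with model-dependent constants.
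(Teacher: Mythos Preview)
Your proposal is correct and follows essentially the same route as the paper: write the centered log-likelihood as $\sum_i w_i(X_i-\theta_{0,i})$ with $w_i$ the natural-parameter increments, bound the MGF factorwise (exact Gaussian MGF, Hoeffding's lemma, exact Poisson MGF), and use the comparability of $w_i^2$, $(\theta_{0,i}-\theta_{1,i})^2$, and the per-coordinate KL on the compact parameter ranges. The only cosmetic difference is in the Poisson Bernstein bound --- the paper restricts to $|\lambda|\le 1$ and uses $e^{\lambda t_i}-1-\lambda t_i \le C_M\lambda^2 t_i^2/(1-|\lambda|)$ (since $|t_i|\le 2\log M$), whereas you use the sharper $e^t-1-t\le t^2/(2(1-|t|/3))$; both yield $\kappa_\Gamma$ depending only on $M$.
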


\begin{corollary}\label{cor:regression_model}
	For Gaussian/Laplace/binary/Poisson regression models, let $d_n\equiv \ell_n$. If Assumptions \ref{assump:local_ent_general}-\ref{assump:prior_mass_general} hold, then (\ref{ineq:post_rate_general})-(\ref{ineq:post_mean_general}) hold.
\end{corollary}

Using similar techniques we can derive analogous results for Gaussian regression with random design and white noise model. We omit the details.

\begin{remark}
	The boundedness assumption in Laplace/binary/Poisson models is imposed here for simplicity, and can be removed using the localization principle (cf. Section \ref{section:localization}) for more concrete $\Theta_n$'s and priors. See Appendix \ref{section:more_examples} for an example.
\end{remark}

Below we give three concrete applications in the Gaussian regression model $y_i = f_0(x_i)+\epsilon_i (1\leq i\leq n)$, where $\epsilon_i$'s are i.i.d. $\mathcal{N}(0,1)$. We slightly abuse $\ell_n$ to denote $\ell_n^2(f,g)\equiv n^{-1}\sum_{i=1}^n (f(x_i)-g(x_i))^2$. 

\subsubsection{Example: Trace regression}\label{section:trace_reg}
Consider fitting the Gaussian regression model $
y_i = f_0(x_i) + \epsilon_i (1\leq i\leq n)$ by $\mathcal{F}\equiv\{f_A: A \in \R^{m_1\times m_2}\}$ where $f_A(x)=\trace(x^\top A)$ for all $x \in \mathfrak{X}\equiv \R^{m_1\times m_2}$. Let $\underline{m}\equiv m_1\wedge m_2$ and $\bar{m}\equiv m_1\vee m_2$. The index set is $\mathcal{I}= \mathcal{I}_1\cup\mathcal{I}_2\equiv \{1,\ldots, r_{\max}\}\cup\{r_{\max}+1,\ldots\} = \mathbb{N}$ where $r_{\max}\leq \underline{m}$. For $r \in \mathcal{I}_1$, let $\mathcal{F}_r\equiv\{f_A: A \in \R^{m_1\times m_2}, \rank(A)\leq r\}$, and for $r \in \mathcal{I}_2$, $\mathcal{F}_r\equiv \mathcal{F}_{r_{\max}}$\footnote{This trick of defining models for high-dimensional experiments will also used in other applications in later subsections, but we will not explicitly state it again.}.

Although various Bayesian methods have been proposed in the literature (cf. see \cite{alquier2014bayesian} for a state-to-art summary), theoretical understanding has been limited. \cite{alquier2015bayesian} derived an oracle inequality for an exponentially aggragated estimator for the matrix completion problem. Their result is purely frequentist. Below we consider a two step prior similar to \cite{alquier2014bayesian,alquier2015bayesian}, and derive the corresponding posterior contraction rates.

For a matrix $B=(b_{ij}) \in \R^{m_1\times m_2}$ let $\pnorm{B}{p}$ denote its Schatten $p$-norm\footnote{That is, $\pnorm{B}{p}\equiv \left(\sum_{j=1}^{\underline{m}} \sigma_j(B)^p\right)^{1/p}$, where $\{\sigma_j(B)\}$ are the singular values of $B$.}. $p=1$ and $2$ correspond to the \emph{nuclear norm} and  the \emph{Frobenius norm} respectively. To introduce the notion of RIP, let $\mathcal{X}:\R^{m_1\times m_2}\to \R^n$ be the linear map defined via $A \mapsto (\trace(x_i^\top A))_{i=1}^n$.

\begin{definition}
	The linear map $\mathcal{X}:\R^{m_1\times m_2}\to \R^n$ is said to satisfy \emph{RIP}$(r,\bm{\nu}_r)$ for some $1\leq r \leq r_{\max}$ and some $\bm{\nu}_r=(\underline{\nu}_r,\bar{\nu}_r)$ with $0<\underline{\nu}_r\leq \bar{\nu}_r<\infty$ iff $
	\underline{\nu}_r\leq  \frac{\pnorm{\mathcal{X}(A)}{2}}{\sqrt{n}\pnorm{A}{2}}\leq \bar{\nu}_r$
	holds for all matrices $A \in \R^{m_1\times m_2}$ such that $\rank(A)\leq r$. For $r>r_{\max}$, $\mathcal{X}$ satisfies RIP$(r,\bm{\nu}_r)$ iff $\mathcal{X}$ satisfies RIP$(r_{\max},\bm{\nu}_r)$. Furthermore,  $\mathcal{X}:\R^{m_1\times m_2}\to \R^n$ is said to satisfy \emph{uniform RIP} $(\bm{\nu};\mathcal{I})$ on an index set $\mathcal{I}$ iff $\mathcal{X}$ satisfies RIP$(2r,\bm{\nu})$ for all $r \in \mathcal{I}$.
\end{definition}
RIP$(r,\bm{\nu}_r)$ is a variant of the RIP condition introduced in \cite{candes2005decoding,candes2011tight,recht2010guaranteed} with scaling factors $\bar{\nu}_r = 1/(1-\delta_r)$ and $\underline{\nu}_r=1/(1+\delta_r)$ for some $0<\delta_r<1$. This condition quantifies the degree in which the linear map $\mathcal{X}$ behaves like an isometry between $\R^{m_1\times m_2}$ and $\R^n$ in terms of the $\ell_2$ metric. Below are two canonical examples.

\begin{example}[Matrix completion]
	Suppose that $
	x_i\in \R^{m_1\times m_2}$ takes value $1$ at one position and $0$ otherwise.
	Further assume that $\underline{A}\leq \abs{A_0}_{ij}\leq \bar{A}$ for all $1\leq i\leq m_1$ and $1\leq j\leq m_2$\footnote{This assumption is usually satisfied in applications: in fact in the Netflix problem (which is the main motivating example for matrix completion), $A_0$ is the rating matrix with rows indexing the users and columns indexing movies, and we can simply take $\underline{A}=1$ (one star) and $\bar{A}=5$ (five stars).}. Let $\Omega\equiv \Omega_{\mathcal{X}}$ denote the indices for which $\{x_i\}$'s take value $1$. Then $\pnorm{\mathcal{X}(A)}{2}=\pnorm{A\bm{1}_\Omega}{2}$. Easy calculations show that we can take $\bm{\nu}=(\bar{\nu},\underline{\nu})$ defined by $
	\bar{\nu}={(\bar{A}\sqrt{m_1m_2\wedge n})}/{(\underline{A}\sqrt{nm_1m_2})}, \underline{\nu}={ (\underline{A}\sqrt{m_1m_2\wedge n})}/{(\bar{A}\sqrt{nm_1m_2})}$
	so that $\mathcal{X}$ is uniform RIP$(\bm{\nu};\mathcal{I})$.
\end{example}

\begin{example}[Gaussian measurement ensembles]
	Suppose $x_i$'s are i.i.d. random matrices whose entries are i.i.d. standard normal. 
	Theorem 2.3 of \cite{candes2011tight} entails that $\mathcal{X}$ is uniform RIP$(\bm{\nu};\mathcal{I})$ with $\bar{\nu}=1+\delta, \underline{\nu}=1-\delta$ for some $\delta \in (0,1)$, with probability at least $1-C\exp(-cn)$\footnote{Note here we used the union bound to get a probability estimate $r_{\max}\exp(-cn)\lesssim \exp(-c'n)$ for some $c'<c$ under the assumption that $n\gtrsim \bar{m}r_{\max}$.}, provided $n\gtrsim \bar{m}r_{\max}$. 
\end{example}

Consider a prior $\Lambda_{n}$ on $\mathcal{I}$ of form
\begin{align}\label{eqn:prior_tracereg}
\lambda_{n}(r)\propto \exp\big(-c \cdot (m_1+m_2) r \log \bar{m}\big),
\end{align}
where $c>0$ is a constant to be specified later. Given the chosen index $r \in \mathcal{I}_1$, a prior on $\mathcal{F}_r$ is induced by a prior on all $m_1\times m_2$ matrices of form $\sum_{i=1}^r u_i v_i^\top$ where $u_i \in \R^{m_1}$ and $v_i \in \R^{m_2}$. Here we use a product prior distribution $G$ with Lebesgue density $(g_1\otimes g_2)^{\otimes r}$ on $(\R^{m_1}\times \R^{m_2})^r$. For simplicity we use $g_i\equiv g^{\otimes m_i}$ for $i=1,2$ where $g$ is symmetric about $0$ and non-increasing on $(0,\infty)$\footnote{We will always use such $g$ in the prior design in the examples in this section.}. Let $\tau_{r,g}^{\trace}\equiv \sup\limits_{A_{0,r}\in \arg\min_{B:\rank(B)\leq r}\ell_n^2(f_B,f_{0})} g\big(\sigma_{\max}(A_{0,r})+ 1\big)$ where $\sigma_{\max}$ denotes the largest singular value.

\begin{theorem}\label{thm:rate_tracereg}
	Fix $0<\eta<1/2$ and $r_{\max}\leq n$. Suppose that there exists some $\mathcal{M}\subset \mathcal{I}_1$ such that the linear map $\mathcal{X}:\R^{m_1\times m_2}\to \R^n$ satisfies uniform RIP$(\bm{\nu};\mathcal{M})$, and that for all $r \in \mathcal{M}$, we have
	\begin{align}\label{cond:rate_tracereg}
	\tau_{r,g}^{\trace} \geq e^{-\log \bar{m}/(2\eta)}, \quad \bar{m}\geq 3 \vee \big(2\bar{\nu} (1\vee \sigma_{\max}(A_{0,r})) n^2\big)^{2\eta}.
	\end{align}
	Then there exists some $c>0$ in (\ref{eqn:prior_tracereg}) depending on $\bar{\nu}/\underline{\nu},\eta$ such that for any $r \in \mathcal{M}$, 
	\begin{align}\label{ineq:post_rate_tracereg}
	& P_{f_0}^{(n)} \Pi_n\big(A \in \R^{m_1\times m_2}:  \ell_n^2(f_A,f_{0})>  C_1 (\epsilon_{n,r}^{\mathrm{tr}})^2\big\lvert Y^{(n)}\big) \leq C_2e^{- n(\epsilon_{n,r}^{\mathrm{tr}})^2/ C_2 }.
	\end{align}
	Here $
	(\epsilon_{n,r}^{\mathrm{tr}})^2\equiv \max\{\inf_{B: \rank(B)\leq r}\ell_n^2(f_0,f_B),  { (m_1+m_2)r \log \bar{m}}/{n}\}$, and the constants $C_i (i=1,2)$ depend on $\bar{\nu}/\underline{\nu},\eta$.
\end{theorem}

By Theorem 5 of \cite{rohde2011estimation}, the rate in (\ref{ineq:post_rate_tracereg}) is minimax optimal up to a logarithmic factor. 
To the best knowledge of the author, the theorem above is the first result in the literature that addresses the posterior contraction rate  in the context of trace regression in a fully Bayesian setup.  

(\ref{cond:rate_tracereg})  may be verified in a case-by-case manner; or generically we can take $\mathcal{M}=\{r_0,r_0+1,\ldots\}$ if the model is well specified, at the cost of sacrificing the form of oracle inequalities (but still get nearly optimal posterior contraction rates) in (\ref{ineq:post_rate_tracereg}). In particular, the first condition of (\ref{cond:rate_tracereg}) prevents the largest eigenvalue of $A_{0,r}$ from growing too fast. This is in similar spirit with Theorem 2.8 of \cite{castillo2012needles}, showing that the magnitude of the signals cannot be too large for light-tailed priors to work in the sparse normal mean model. The second condition of (\ref{cond:rate_tracereg}) is typically a mild technical condition: we only need to choose $\eta>0$ small enough.

\subsubsection{Example: Isotonic regression}\label{section:iso_reg}
Consider the isotonic regression model $Y_i=f_0(x_i)+\epsilon_i$ by $\mathcal{F}\equiv\{f:[0,1]\to \R: f \textrm{ is non-decreasing}\}$. For simplicity the design points are assumed to be $x_i=i/(n+1)$ for all $1\leq i\leq n$.  Bayesian approaches for the  isotonic regression model received considerable attention, cf. \cite{holmes2003generalized,shively2009bayesian,neelon2004bayesian,lin2014bayesian,salomond2014adaptive}. Let $
\mathcal{F}_m\equiv\big\{f \in \mathcal{F}, f\textrm{ is piecewise constant with at most } m \textrm{ constant pieces}\big\}.$
Consider the following  prior $\Lambda_n$ on $\mathcal{I}=\N$:
\begin{align}\label{eqn:prior_iso}
\lambda_{n}(m)\propto \exp\big(-c\cdot m \log(e n)\big),
\end{align}
where $c>0$ is a constant to be specified later. Let $g_m\equiv g^{\otimes m}$ where $g$ is symmetric and non-increasing on $(0,\infty)$. Then $\bar{g}_m(\bm{\mu})\equiv m! g_m \bm{1}_{\{\mu_1\leq\ldots\leq \mu_m\}}(\bm{\mu})$ is a valid density on $\{\mu_1\leq\ldots\leq \mu_m\}$. Given a chosen model $\mathcal{F}_m$ by the prior $\Lambda_n$, we randomly pick a set of change points $\{x_{i(k)}\}_{k=1}^m ( i(1)<\ldots <i(m) )$ and put a prior $\bar{g}_m$ on $\{f(x_{i(k)})\}$'s. \cite{holmes2003generalized} proposed a similar prior with $\Lambda_n$ being uniform since they assumed the maximum number of change points is known \emph{apriori}. Below we derive a theoretical result without assuming the knowledge of this. Let $\tau_{m,g}^{\textrm{iso}}=\sup\limits_{f_{0,m} \in \arg\min_{g \in \mathcal{F}_m} \ell_n^2(f_0,g)}g\big(\pnorm{f_{0,m}}{\infty}+1\big)$\footnote{The value of $f_{0,m}$ outside of $[1/(n+1),n/(n+1)]$ can be defined in a canonical way by extending $f_{0,m}(1/(n+1))$ and $f_{0,m}(n/(n+1))$ towards the endpoints.}.

\begin{theorem}\label{thm:rate_iso}
	Fix $0<\eta<1/2$. Suppose that
	\begin{align}\label{cond:iso}
	\tau_{m,g}^{\mathrm{iso}}\geq e^{- \log (en)/(2\eta) }.
	\end{align}
	Then there exists some $c>0$ in (\ref{eqn:prior_iso}) depending on $\eta$ such that
	\begin{align}\label{ineq:post_rate_iso}
	& P_{f_0}^{(n)} \Pi_n\big(f \in \mathcal{F}: \ell_n^2(f,f_{0})> C_1(\epsilon_{n,m}^{\mathrm{iso}})^2\big\lvert Y^{(n)}\big)  \leq C_2 e^{-n (\epsilon_{n,m}^{\mathrm{iso}})^2/C_2 }.
	\end{align}
	Here $
	(\epsilon_{n,m}^{\mathrm{iso}})^2\equiv \max\{\inf_{g \in \mathcal{F}_m}\ell_n^2(f_0,g),{ m\log (en)}/{n}\}$, and the constants $C_i(i=1,2)$ depend on $\eta$.
\end{theorem}

(\ref{ineq:post_rate_iso}) implies that if $f_0$ is piecewise constant, the posterior distribution contracts at nearly a parametric rate. For general isotonic signals $f_0 \in \mathcal{F}$ with $\pnorm{f_0}{\infty}<\infty$, by using Theorem 4.1 of \cite{chatterjee2015risk}, we obtain a contraction rate on the order of $n^{-2/3}\log (en)$ in $\ell_n^2$. (\ref{cond:iso}) can be checked by the following.

\begin{lemma}\label{lem:verify_cond_iso}
	If $f_0$ is square integrable, and the prior density $g$ is heavy-tailed in the sense that there exists some $\alpha>0$ such that $\liminf_{\abs{x}\to \infty} x^\alpha g(x)>0$. 
	Then for any $\eta \in (0,1/\alpha)$, (\ref{cond:iso}) holds uniformly in all $m \in \N$ for $n$ large enough depending on $\alpha$ and $\pnorm{f_0}{L_2([0,1])}$.
\end{lemma}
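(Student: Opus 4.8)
The plan is to reduce (\ref{cond:iso}) to a polynomial-in-$n$ lower bound on $\tau_{m,g}^{\mathrm{iso}}=g\big(\pnorm{f_{0,m}}{\infty}+1\big)$ that is uniform over $m\in\N$, the crux being to control $\pnorm{f_{0,m}}{\infty}$ uniformly in $m$. First I would bound the size of $f_{0,m}$: since the zero function lies in every $\mathcal{F}_m$ and $f_{0,m}$ minimizes $\ell_n(f_0,\cdot)$ over $\mathcal{F}_m$, one has $\ell_n(f_0,f_{0,m})\le\ell_n(f_0,0)$, whence $\ell_n(0,f_{0,m})\le 2\ell_n(f_0,0)=2\big(\tfrac1n\sum_{i=1}^n f_0(x_i)^2\big)^{1/2}$ by the triangle inequality. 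Because $f_{0,m}$ is monotone and piecewise constant (its off-grid values, fixed by the canonical extension, playing no role in $\ell_n$), its supremum over $[0,1]$ is attained at a grid point, so $\pnorm{f_{0,m}}{\infty}=\max_{1\le i\le n}\abs{f_{0,m}(x_i)}\le\big(\sum_i f_{0,m}(x_i)^2\big)^{1/2}=\sqrt n\,\ell_n(0,f_{0,m})\le 2\sqrt n\big(\tfrac1n\sum_i f_0(x_i)^2\big)^{1/2}$, a bound free of $m$. Next, square-integrability of $f_0$ makes the Riemann sum $\tfrac1n\sum_{i=1}^n f_0(i/(n+1))^2$ converge to $\pnorm{f_0}{L_2([0,1])}^2$, so for $n$ beyond some $n_0=n_0(f_0)$ it is at most $4\pnorm{f_0}{L_2([0,1])}^2$, giving $\pnorm{f_{0,m}}{\infty}\le 4\sqrt n\,\pnorm{f_0}{L_2([0,1])}$ for all $m$; if $f_0\equiv 0$ the lemma is trivial, so I may assume $\pnorm{f_0}{L_2}>0$.

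Then I would feed this into the heavy-tail hypothesis. From $\liminf_{\abs{x}\to\infty}\abs{x}^\alpha g(x)>0$ there are $c_0,x_0>0$ with $g(x)\ge c_0x^{-\alpha}$ for $x\ge x_0$, and since $g$ is non-increasing and (by the same hypothesis) strictly positive on $(0,\infty)$, one has $g(t)\ge\min\{g(x_0),c_0t^{-\alpha}\}$ for all $t>0$. Evaluating at $t=\pnorm{f_{0,m}}{\infty}+1\le 4\sqrt n\,\pnorm{f_0}{L_2}+1$ and noting that the polynomial term eventually dominates the constant $g(x_0)$,
\[
\tau_{m,g}^{\mathrm{iso}}=g\big(\pnorm{f_{0,m}}{\infty}+1\big)\ \ge\ c_0\big(4\sqrt n\,\pnorm{f_0}{L_2}+1\big)^{-\alpha}\ \ge\ C\,n^{-\alpha/2}
\]
for all large $n$, where $C>0$ depends only on $\alpha$, $\pnorm{f_0}{L_2([0,1])}$ and the fixed density $g$. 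Finally, since $\eta<1/\alpha$ forces $1/(2\eta)>\alpha/2$, the right-hand side of (\ref{cond:iso}), i.e. $e^{-\log(en)/2\eta}=(en)^{-1/(2\eta)}$, decays strictly faster than $n^{-\alpha/2}$, so $C\,n^{-\alpha/2}\ge(en)^{-1/(2\eta)}$ once $n$ exceeds a threshold depending only on $\alpha,\eta,\pnorm{f_0}{L_2([0,1])}$ (and $g$), which is precisely (\ref{cond:iso}); it holds uniformly in $m$ because none of the bounds above involves $m$.

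The hard part --- really the only step that is not bookkeeping --- is the uniform-in-$m$ control of $\pnorm{f_{0,m}}{\infty}$ in the first paragraph: one has to notice that retaining $0$ in each $\mathcal{F}_m$ caps the $\ell_n$-projection of $f_0$ by $f_0$ itself, and that for a monotone piecewise-constant function an $\ell_n$ bound upgrades to an $\ell_\infty$ bound at the cost of only a $\sqrt n$ factor --- precisely the slack that choosing $\eta<1/\alpha$ is meant to absorb. The Riemann-sum passage is a minor technicality and the one place square-integrability is actually used.
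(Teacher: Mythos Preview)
Your proof is correct and follows essentially the same route as the paper: the paper isolates the projection bound as a separate Lemma~\ref{lem:proj_bounded} (obtaining $\pnorm{\bm{f}_{0,m}}{2}\le 3\sqrt{2n}\,\pnorm{f_0}{L_2}$ in place of your slightly tighter constant via the triangle inequality), then uses $\pnorm{f_{0,m}}{\infty}\le\pnorm{\bm{f}_{0,m}}{2}$ and the heavy-tail bound $g(t)\gtrsim t^{-\alpha}$ exactly as you do. One small caveat: bare square-integrability of $f_0$ does not by itself guarantee that the Riemann sum $\tfrac{1}{n}\sum_i f_0(x_i)^2$ is controlled by $\int_0^1 f_0^2$, but the paper's corresponding step (asserting $\pnorm{\bm{f}_0}{2}^2\le 2n\int_0^1 f_0^2$ without further justification) has the same issue, so your argument is no less complete than the original.
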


\subsubsection{Example: Convex regression}
Consider fitting the Gaussian regression model $Y_i=f_0(x_i)+\epsilon_i $ by $\mathcal{F}$, the class of convex functions on $\mathfrak{X}=[0,1]^d$. Let $
\mathcal{F}_m\equiv \big\{f(x)=\max_{1\leq i\leq m} (a_i\cdot x+b_i): a_i \in \R^d, b_i \in \R\big\}$
denote the class of piecewise affine convex functions with at most $m$ pieces.

We will focus on the multivariate case since the univariate case can be easily derived using the techniques exploited in isotonic regression. A prior on each model  $\mathcal{F}_m$ can be induced by a prior on the slopes and the intercepts $\{(a_i,b_i) \in \R^d \times \R\}_{i=1}^m$. We use a prior with density $\bigotimes_{i=1}^m g^{\otimes d}\otimes g$ on $(\R^d\times \R)^m$ to induce a prior on $\mathcal{F}_m$. For any $f_{0,m} \in \arg\min_{g \in \mathcal{F}_m} \ell_n^2(f_0,g)$, it can be represented as $f_{0,m}(x)\equiv \max_{1\leq i\leq m}\big(a_i^{(m)}\cdot x + b_i^{(m)}\big)$. Let $\tau_{m,g}^{\mathrm{cvx}}\equiv \sup\limits_{f_{0,m} \in \arg\min_{g \in \mathcal{F}_m} \ell_n^2(f_0,g)} \min_{1\leq i\leq m} \big\{g\big(\pnorm{a_i^{(m)}}{\infty}+ 1\big), g\big(\abs{b_i^{(m)}}+ 1\big)\big\}$.

The prior $\Lambda_n$ we will use on the index $\mathcal{I}=\N$ is given by
\begin{align}\label{eqn:prior_cvxreg}
\lambda_{n}(m)\propto \exp\big(-c\cdot d m\log 3m\cdot \log n\big),
\end{align}
where $c>0$ is a constant to be specified later. The first step prior used in \cite{hannah2011bayesian} is a Poisson proposal, which slightly differs from (\ref{eqn:prior_cvxreg}) by a logarithmic factor. This would affect the contraction rate only by a logarithmic factor. 

\begin{theorem}\label{thm:rate_cvxreg}
	Fix $0<\eta<1/4$. Suppose that
	\begin{align}\label{cond:cvxreg_1}
	\tau_{m,g}^{\mathrm{cvx}}\geq e^{- \log n\cdot \log 3m/8\eta },
	\end{align}
	and $n\geq d$. Then there exists some $c>0$ in (\ref{eqn:prior_cvxreg}) depending on $\eta$ such that
	\begin{align}\label{ineq:post_rate_cvxreg}
	& P_{f_0}^{(n)} \Pi_n\big(f \in \mathcal{F}: \ell_n^2(f,f_{0})>  C_1	(\epsilon_{n,m}^{\mathrm{cvx}})^2\big\lvert Y^{(n)}\big) \leq C_2 e^{- n (\epsilon_{n,m}^{\mathrm{cvx}})^2/C_2}.
	\end{align}
	Here 
	$
	(\epsilon_{n,m}^{\mathrm{cvx}})^2\equiv \max\{\inf_{g \in \mathcal{F}_m}\ell_n^2(f_0,g),{d\log n\cdot m\log 3m}/{n}\}$, and the constants $C_i(i=1,2)$ depend on $\eta$.
\end{theorem}

The above oracle inequality shows that the posterior contraction rate of \cite{hannah2011bayesian} (Theorem 3.3 therein) is far from optimal. (\ref{cond:cvxreg_1}) can be satisfied by using heavy-tailed priors $g(\cdot)$ in the same spirit as Lemma \ref{lem:verify_cond_iso}---if $f_0$ is square integrable and the design points are regular enough (e.g. using regular grids on $[0,1]^d$). Explicit rates can be obtained using approximation techniques in \cite{han2016multivariate}. Using the same proof as Lemma 4.10 therein, if $f_0$ is Lipschitz, the contraction rate in $\ell_2^2$ becomes the familiar one in the sense that $
\inf_{m \in \N}(\epsilon_{n,m}^{\mathrm{cvx}})^2\lesssim \inf_{m \in \N}\max\{m^{-4/d},{\log n\cdot m\log 3m}/{n}\}\asymp (\log^2 n/n)^{4/(d+4)}$. 

\begin{remark}
	For univariate convex regression, the term $\log(3m)$ in (\ref{eqn:prior_cvxreg})-(\ref{ineq:post_rate_cvxreg}) can be removed. The logarithmic term is due to the fact that the \emph{pseudo-dimension} of $\mathcal{F}_m$ scales as $m\log (3m)$ for $d\geq 2$, cf. Lemma \ref{lem:pdim_cvxfcn}. 
\end{remark}

\begin{remark}\label{rmk:support_fcn}
	Using similar priors and proof techniques we can construct a (nearly) rate-optimal adaptive Bayes estimator for the support function regression problem for convex bodies \cite{guntuboyina2012optimal}. There the models $\mathcal{F}_m$ are support functions indexed by polytopes with $m$ vertices, and a prior on $\mathcal{F}_m$ is induced by a prior on the location of the $m$ vertices. The pseudo-dimension of $\mathcal{F}_m$ can be controlled using techniques developed in \cite{guntuboyina2012optimal}. Details are omitted.
\end{remark}

\subsubsection{Example: High-dimensional partially linear model}
Consider fitting the Gaussian regression model $Y_i=f_0(x_i,z_i)+\epsilon_i$ where $(x_i,z_i) \in \R^p\times [0,1]$,  by a partially linear model $\mathcal{F}\equiv \{f_{\beta,u}(x,z)=x^\top \beta+u(z)\equiv h_\beta(x)+u(z): \beta \in \R^p, u \in \mathcal{U}\}$ where the dimension of the parametric part can diverge. We consider $\mathcal{U}$ to be the class of non-decreasing functions as an illustration (cf. Section \ref{section:iso_reg}). Consider models $\mathcal{F}_{(s,m)}\equiv \{f_{\beta,u}: \beta \in B_0(s), u \in \mathcal{U}_m\}$ where $\mathcal{U}_m$ denotes the class of piecewise constant non-decreasing functions with at most $m$ constant pieces, and $B_0(s)\equiv \{v \in \R^p: \abs{\mathrm{supp}(v)}\leq s\}$.  In this example the model index $\mathcal{I}$ is a 2-dimensional lattice. Our goal here is to construct an estimator that satisfies an oracle inequality over the models $\{\mathcal{F}_{(s,m)}\}_{(s,m) \in \{1,\ldots,p\}\times \{1,\ldots,n\}}$. Consider the following model selection prior:
\begin{align}\label{eqn:prior_hp}
\lambda_{n}((s,m))\propto \exp\big(-c\cdot (s\log(ep)\wedge \rank(X)+m\log(en))\big),
\end{align}
where $c>0$ is a constant to be specified later. Here $X \in \R^{n\times p}$ is the design matrix so that $X^\top X/n$ is normalized with diagonal elements taking value $1$\footnote{This is a common assumption, cf. Section 6.1 of \cite{buhlmann2011statistics}.}. For a chosen model $\mathcal{F}_{(s,m)}$, consider the following prior $\Pi_{n,(s,m)}$: pick randomly a support $S \subset \{1,\ldots,p\}$ with $\abs{S}=s$ and a set of change points $Q\equiv \{z_{i(k)}\}_{k=1}^m(i(1)<\ldots i(m))$, and then put a prior $g_{S,Q}$ on $\beta_S$ and $u(z_{i(k)})$'s. For simplicity we use a  product prior $g_{S,Q}\equiv g^{\otimes s}\otimes \bar{g}_m$ where $\bar{g}_m$ is a prior on $\{\mu_1\leq \ldots \leq \mu_m\}\subset \R^m$ constructed in Section \ref{section:iso_reg}. For any $f_{0,(s,m)} \in \inf_{g \in \mathcal{F}_{(s,m)}} \ell_n^2(f_0,g)$, write $f_{0,(s,m)}(x,z)=x^\top \beta_{0,s}+u_{0,m}(z)\equiv h_{0,s}(x)+u_{0,m}(z)$. Let 
$\tau_{m,g}\equiv\sup\limits_{f_{0,(s,m)} \in \inf_{g \in \mathcal{F}_{(s,m)}} \ell_n^2(f_0,g) } g(\pnorm{u_{0,m}}{\infty}+1)$. 

\begin{theorem}\label{thm:rate_hp}
	Fix $0<\eta<1/4$. Suppose $p\geq n$ and $L>\big(\log (ep)/\rank(X)\big)\vee \inf\limits_{f_{0,(s,m)} \in \inf_{g \in \mathcal{F}_{(s,m)}} \ell_n^2(f_0,g) }\pnorm{\beta_{0,s}}{\infty}\vee \sigma_{\max}(X)$ for some $L>0$. Suppose that
	\begin{align}\label{cond:hpreg_1}
	g(L+1)>e^{-\log (ep)/2\eta},\quad \tau_{m,g}\geq e^{-\log(en)/(2\eta) }.
	\end{align}
	Then there exists some $c>0$ in (\ref{eqn:prior_hp}) depending on $\eta, L$ such that
	\begin{align}\label{ineq:post_rate_hp}
	& P_{f_0}^{(n)} \Pi_n\big(f \in \mathcal{F}: \ell_n^2(f,f_{0})> C_1 (\epsilon_{n,(s,m)}^{\mathrm{hp}})^2\big\lvert Y^{(n)}\big) \leq C_2 e^{-n (\epsilon_{n,(s,m)}^{\mathrm{hp}})^2/C_2}.
	\end{align}
	Here $
	(\epsilon_{n,(s,m)}^{\mathrm{hp}})^2\equiv \max\{\inf_{f_{\beta,u} \in \mathcal{F}_{(s,m)}} \ell_n^2(f_0,f_{\beta,u}),( s\log(ep)\wedge \rank(X)+m\log (en))/{n}\}$, and the constants $C_i(i=1,2)$ depend on $\eta, L$.
\end{theorem}

The condition $p\geq n$ can be replaced by $p\geq n^\delta$ for any $\delta>0$ by changing the constants. $L>0$ prevents $p$, $\pnorm{\beta_{0,s}}{\infty}$ and the maximal singular value of $X$ from being too large. 
The second condition of (\ref{cond:hpreg_1}) is the same as in (\ref{cond:iso}) (so in particular can be checked using Lemma \ref{lem:verify_cond_iso}). When the model is well-specified in the sense that $f_0(x,z)=x^\top \beta_0+u_0(z)$ for some $\beta_0 \in B_0(s_0)$ and $u_0 \in \mathcal{U}$, the oracle rate in (\ref{ineq:post_rate_hp}) becomes
\begin{align}\label{rate:hp}
\frac{s_0\log(ep)\wedge \rank(X)}{n}+ \inf_{m \in \N}\bigg(\inf_{u \in \mathcal{U}_m} \ell_n^2(u_0,u)+\frac{m \log(en)}{n}\bigg).
\end{align}
The two terms in the rate (\ref{rate:hp}) trades off two structures of the experiment: the sparsity of $h_\beta(x)$ and the smoothness level of $u(z)$. The resulting phase transition of the rate (\ref{rate:hp}) in terms of these structures is in a sense similar to the results of \cite{yu2016minimax,yuan2015minimax}. It is also easy to derive some explicit rate results from (\ref{rate:hp}). For instance, if $u_0 \in \mathcal{U}$ and $\pnorm{u_0}{\infty}<\infty$, then by using Theorem 4.1 of \cite{chatterjee2015risk}, (\ref{rate:hp}) reduces to $(s_0\log(ep)\wedge \rank(X))/{n}+ n^{-2/3}\log(en)$.

\subsection{Density estimation}

Suppose $X_1,\ldots,X_n$'s are i.i.d. samples from a density $f\in \mathcal{F}$ with respect to a measure $\nu$ on the sample space $(\mathfrak{X},\mathcal{A})$. We consider the following form of $\mathcal{F}$: $
f(x)={e^{g(x)}}/{\int_{\mathfrak{X}} e^{g}\ \d{\nu}}$ for some $g \in \mathcal{G}$ for all $x \in \mathfrak{X}$. For any $f_0,f_1\in \mathcal{F}$, $
h^2(f_0,f_1)\equiv \frac{1}{2}\int_{\mathfrak{X}} (\sqrt{f_0}-\sqrt{f_1})^2\ \d{\nu}$.
\begin{lemma}\label{lem:bernstein_density_estimation}
	Suppose that $\mathcal{G}$ is uniformly bounded. Then Assumption \ref{assump:laplace_cond_kl} is satisfied for $h$ with constants $\{c_i\}_{i=1}^3,\kappa$ depending on $\mathcal{G}$ only.
\end{lemma}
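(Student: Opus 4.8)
The plan is to verify the three requirements of Assumption \ref{assump:laplace_cond_kl} directly from the boundedness hypothesis on $\mathcal{G}$. Write $f_j = e^{g_j}/Z_j$ with $Z_j = \int_{\mathfrak{X}} e^{g_j}\,\d\nu$ and $g_j \in \mathcal{G}$, and suppose $\|g\|_\infty \leq B$ for all $g \in \mathcal{G}$. The key elementary observation is that the pointwise log-likelihood ratio $\ell(x) := \log\frac{f_0(x)}{f_1(x)} = (g_0(x) - g_1(x)) - \log(Z_0/Z_1)$ is a bounded function: $|g_0 - g_1| \leq 2B$ pointwise, and since $e^{-B} \leq Z_j \leq e^{B}$ we also have $|\log(Z_0/Z_1)| \leq 2B$, so $\|\ell\|_\infty \leq 4B =: L$. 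Under $f_0$ the summands $\ell(X_i)$ are i.i.d., bounded by $L$ in absolute value, with mean $P_{f_0}^{(1)}\ell = h$-divergence-type quantity $\mathrm{KL}(f_0\|f_1)$; the centered version $\ell(X_i) - P_{f_0}^{(1)}\ell$ is then bounded by $2L$ and has variance $\sigma^2 \leq P_{f_0}^{(1)} \ell^2 \leq L^2$. The classical one-sided Bernstein/Bennett bound on the MGF of a sum of i.i.d. bounded centered variables gives exactly $P_{f_0}^{(n)}\exp[\lambda(\log\frac{p_{f_0}^{(n)}}{p_{f_1}^{(n)}} - P_{f_0}^{(n)}\log\frac{p_{f_0}^{(n)}}{p_{f_1}^{(n)}})] \leq \exp[n\sigma^2\lambda^2/(2(1-2L|\lambda|/3))]$ for $|\lambda| < 3/(2L)$, which is of the form $\exp[\psi_{n\sigma^2, 2L/3}(\lambda)]$ with $c_1 = 1$.

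To finish I need to convert the variance factor $n\sigma^2$ into the required $\kappa_g\, n\, h^2(f_0,f_1)$, i.e. to show $\sigma^2 \lesssim_B h^2(f_0,f_1)$, and simultaneously establish the two-sided comparison (\ref{cond:d_n_KL}) between $\mathrm{KL}(f_0\|f_1)$ and $h^2(f_0,f_1)$. Both reduce to the same standard fact: for densities whose ratio is bounded above and below (here $e^{-2L} \leq f_0/f_1 \leq e^{2L}$), the Kullback–Leibler divergence, the $\chi^2$-divergence, the squared Hellinger distance, and $P_{f_0}^{(1)}(\log\frac{f_0}{f_1})^2$ are all comparable up to constants depending only on $L$ (hence only on $\mathcal{G}$). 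Concretely one uses the inequalities $h^2 \leq \frac{1}{2}\mathrm{KL}$ always, and $\mathrm{KL}(f_0\|f_1) \leq (2 + \log\frac{\|f_0/f_1\|_\infty}{1})\, h^2(f_0,f_1)$ in the bounded-ratio regime (a lemma going back to Wong–Shen / used throughout \cite{ghosal2007convergence}); similarly $P_{f_0}^{(1)}(\log\frac{f_0}{f_1})^2 \lesssim_L \chi^2(f_0\|f_1) \lesssim_L h^2(f_0,f_1)$ because $|\log t| \lesssim_L |t-1|$ for $t$ in a fixed compact subinterval of $(0,\infty)$ and $\chi^2 \asymp h^2$ when the ratio is bounded. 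Assembling these gives $c_2 h^2 \leq \frac{1}{n}P_{f_0}^{(n)}\log\frac{p_{f_0}^{(n)}}{p_{f_1}^{(n)}} = \mathrm{KL}(f_0\|f_1) \leq c_3 h^2$ and $\sigma^2 \leq \kappa_g h^2(f_0,f_1)$ with $\kappa_\Gamma := 2L/3 = 8B/3$, all constants functions of $B$ alone.

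The only mildly delicate point — and the one I would write out carefully rather than assert — is the bounded-ratio comparison lemma relating $\mathrm{KL}$, $\chi^2$ and $h^2$; everything else (boundedness of $\ell$, the i.i.d. Bernstein MGF bound) is routine. This is not really an obstacle so much as a bookkeeping step: since $\mathcal{G}$ uniformly bounded forces a uniform two-sided bound on all likelihood ratios $f_0/f_1$ over the entire model, the constants are genuinely absolute (depending on $\mathcal{G}$ only), which is exactly what the statement claims. I would also remark that, unlike in the regression examples, here $\kappa_\Gamma > 0$ is unavoidable — the log-likelihood ratio has genuine sub-gamma (not purely sub-Gaussian) tails — but this is harmless since Assumption \ref{assump:laplace_cond_kl} and Theorem \ref{thm:general_ms} accommodate any finite $\kappa_\Gamma$.
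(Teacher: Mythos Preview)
Your proposal is correct and follows essentially the same route as the paper: decompose the log-likelihood ratio into an i.i.d.\ sum of uniformly bounded summands, invoke the classical Bernstein MGF bound, and then reduce everything to the two comparisons $\mathrm{Var}_{f_0}(\log f_0/f_1)\lesssim h^2(f_0,f_1)$ and $P_{f_0}\log(f_0/f_1)\asymp h^2(f_0,f_1)$ under a bounded likelihood ratio. The only difference is cosmetic---the paper dispatches the comparison step by citing Lemma~8 of \cite{ghosal2007posterior}, whereas you sketch the bounded-ratio argument (via $\chi^2$ and $|\log t|\lesssim |t-1|$) directly.
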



\begin{corollary}\label{cor:density_est}
	For density estimation, let $d_n\equiv h$. If $\mathcal{G}$ is a class of uniformly bounded functions and Assumptions \ref{assump:local_ent_general}-\ref{assump:prior_mass_general} hold, then (\ref{ineq:post_rate_general})-(\ref{ineq:post_mean_general}) hold.
\end{corollary}

\begin{remark}
	Similar to the above remark, the uniform boundedness is included here for simplicity.  See Appendix \ref{section:more_examples} for an example on location mixture model where this restriction is removed.
\end{remark}

\subsection{Gaussian autoregression}\label{section:gaussian_autoreg}
Suppose $X_0,X_1,\ldots,X_n$ is generated from $
X_i = f(X_{i-1})+\epsilon_i$ for $1\leq i\leq n$, where $f$ belongs to a function class $\mathcal{F}$ with a uniform bound $M$, and $\epsilon_i$'s are i.i.d. $\mathcal{N}(0,1)$. Then $X_n$ is a Markov chain with transition density $p_f(y|x)=\phi(y-f(x))$ where $\phi$ is the normal density. By the arguments on page 209 of \cite{ghosal2007convergence}, this chain has a unique stationary distribution with density $q_f$ with respect to the Lebesgue measure $\lambda$ on $\R$. We assume that $X_0$ is generated from this stationary distribution under the true $f$. For any $f_0,f_1 \in \mathcal{F}$, $d_{r,M}^2(f_0,f_1)\equiv \int (f_0-f_1)^2 r_M\ \d{\lambda}$
where $r_M(x)\equiv \frac{1}{2}\left(\phi(x-M)+\phi(x+M)\right)$. 

\begin{lemma}\label{lem:bernstein_gaussian_autoregression}
	Suppose that $\mathcal{F}$ is uniformly bounded by $M$. Then Assumption \ref{assump:laplace_cond_kl} is satisfied for $d_{r,M}$ with constants $\{c_i\}_{i=1}^3,\kappa$ depending on $M$ only.
\end{lemma}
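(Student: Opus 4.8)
The plan is to check, for $d_n=d_{r,M}$, the two requirements of Assumption~\ref{assump:laplace_cond_kl}: the Kullback--Leibler sandwich~(\ref{cond:d_n_KL}) and the sub-gamma Laplace bound. The structural fact I would lean on throughout is that a bounded drift makes the chain $(X_i)$ uniformly ergodic with constants depending only on $M$: since $\abs{f(x)}\le M$ for all $x$, the transition density satisfies the uniform minorization $\phi(y-f(x))\ge\beta_M(y):=(2\pi)^{-1/2}e^{-(\abs{y}+M)^2/2}$ with $\int\beta_M\,\d{\lambda}\in(0,1)$, hence a Doeblin condition holds and the associated Markov operator has a spectral gap $1-\rho_M>0$ on $L^2(q_f)$ with $\rho_M=\rho_M(M)$.

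\emph{Likelihood ratio and~(\ref{cond:d_n_KL}).} Write $\Delta:=f_0-f_1$. Under $P_{f_0}^{(n)}$ we have $X_i=f_0(X_{i-1})+\epsilon_i$, $\epsilon_i\sim\mathcal N(0,1)$ independent of $(X_0,\dots,X_{i-1})$, so the Gaussian transitions telescope:
\[
\log\frac{p_{f_0}^{(n)}}{p_{f_1}^{(n)}}=\log\frac{q_{f_0}(X_0)}{q_{f_1}(X_0)}+\sum_{i=1}^n\Big(\tfrac12\Delta(X_{i-1})^2+\epsilon_i\Delta(X_{i-1})\Big).
\]
Since $X_0\sim q_{f_0}$ and the chain is stationary under $P_{f_0}$, taking expectations gives $P_{f_0}^{(n)}\log(p_{f_0}^{(n)}/p_{f_1}^{(n)})=\mathrm{KL}(q_{f_0}\|q_{f_1})+\tfrac n2\int\Delta^2 q_{f_0}\,\d{\lambda}$. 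To turn $\int\Delta^2 q_{f_0}$ into $d_{r,M}^2=\int\Delta^2 r_M\,\d{\lambda}$ I would use the stationary-density envelope: since $\abs{f(x)}\le M$, $\phi(\cdot-f(x))$ lies between fixed $M$-dependent multiples of $r_M$, and integrating against $q_f$ gives $q_f\asymp_M r_M$ pointwise (cf.\ the arguments on p.~209 of \cite{ghosal2007convergence}), whence $\int\Delta^2 q_{f_0}\asymp_M d_{r,M}^2$. The leftover $\mathrm{KL}(q_{f_0}\|q_{f_1})/n\le C_Mn^{-1}d_{r,M}^2$ follows from a one-step contraction/perturbation estimate using the spectral gap above, so~(\ref{cond:d_n_KL}) holds with $M$-dependent constants.

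\emph{Laplace bound.} Center the three summands and bound their joint exponential moment by Hölder. For $M_n:=\sum_{i=1}^n\epsilon_i\Delta(X_{i-1})$, whose increments are conditionally $\mathcal N(0,\Delta(X_{i-1})^2)$, the exponential-martingale identity $P_{f_0}^{(n)}e^{2\theta M_n-2\theta^2\sum_{i\le n}\Delta(X_{i-1})^2}=1$ and Cauchy--Schwarz give $P_{f_0}^{(n)}e^{\theta M_n}\le\big(P_{f_0}^{(n)}e^{\theta^2\sum_{i\le n}\Delta(X_{i-1})^2}\big)^{1/2}$, an exponential moment of the \emph{bounded} additive functional $\sum_{i\le n}\Delta(X_{i-1})^2$; the quadratic piece $\tfrac12\sum_{i\le n}\big(\Delta(X_{i-1})^2-\int\Delta^2 q_{f_0}\big)$ is a centered bounded additive functional. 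To both I would apply a Bernstein inequality for additive functionals of uniformly ergodic Markov chains (cf.\ \cite{merlevede2009bernstein,adamczak2007tail}); using the $M$-dependent mixing rate and $\mathrm{Var}_{q_{f_0}}(\Delta(X_0)^2)\le 4M^2\int\Delta^2 q_{f_0}$, this bounds the exponential moments by $\exp\big(\tfrac{\theta^2}{2}n\int\Delta^2 q_{f_0}+\psi_{v,c}(\theta)\big)$ with $v\asymp_M n\int\Delta^2 q_{f_0}$ and $c\asymp_M1$, which, since $\psi_{v,c}(\theta)\le v\theta^2$ away from the singularity and $\int\Delta^2 q_{f_0}\asymp_M d_{r,M}^2$, is of the target form. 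The initial term $\log(q_{f_0}(X_0)/q_{f_1}(X_0))-\mathrm{KL}(q_{f_0}\|q_{f_1})$ is handled by the same spectral-gap perturbation, which gives $\mathrm{Var}_{q_{f_0}}(\log(q_{f_0}/q_{f_1}))\lesssim_M d_{r,M}^2$ and a sub-exponential tail, hence an exponential moment of the form $\exp(\psi_{\kappa_g'nd_{r,M}^2,\kappa_\Gamma'}(\theta))$ (using $n\ge1$). Multiplying the three contributions and absorbing numerical factors into $c_1$ gives Assumption~\ref{assump:laplace_cond_kl} with constants depending only on $M$.

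\emph{Main obstacle.} The crux is the Bernstein inequality for $\sum_i\Delta(X_{i-1})^2$ with the \emph{sharp} variance proxy of order $n\int\Delta^2 q_{f_0}$ rather than the crude $n\pnorm{\Delta}{\infty}^2\asymp nM^2$: bounding $\Delta(X_{i-1})^2\le 4M^2$ loses the centering and leaves an uncontrolled $O(\theta)$ term in the exponent, so one must genuinely exploit that the stationary variance of $\Delta(X_0)^2$ is $O(\int\Delta^2 q_{f_0})$ together with the $M$-dependent mixing. The secondary technical ingredient is the perturbation estimate relating $q_{f_1}$ to $q_{f_0}$ (and the $L^2(r_M)$ versus $L^2(q_{f_0})$ comparisons it rests on), which is again where uniform ergodicity of the bounded-drift chain enters.
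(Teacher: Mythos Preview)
Your approach is defensible but takes a heavier route than the paper, and introduces one unnecessary complication.

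First, in the paper's formulation the likelihood $p_f^{(n)}$ does \emph{not} carry the initial-density factor: the log-likelihood ratio is simply $\sum_{i=0}^{n-1}\big[\epsilon_{i+1}\Delta(X_i)+\tfrac12\Delta(X_i)^2\big]$, and correspondingly $P_{f_0}^{(n)}\log(p_{f_0}^{(n)}/p_{f_1}^{(n)})=\tfrac{n}{2}\int\Delta^2 q_{f_0}\,\d\lambda$ exactly. Your term $\log(q_{f_0}(X_0)/q_{f_1}(X_0))$ and the ``spectral-gap perturbation'' estimates you propose for it are not needed; that part of your sketch is also the vaguest, since the stationary densities depend nonlinearly on $f$ and the claimed variance bound is not immediate.

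Second, and more substantively, the paper avoids ergodicity, spectral gaps, and Markov-chain Bernstein inequalities altogether. The ingredient you did not exploit is the \emph{transition}-density envelope $p_f(\cdot\mid x)\asymp_M r_M$ uniformly in $x$ (a strictly stronger statement than $q_f\asymp_M r_M$). With it, the paper handles the martingale part by iterative conditioning: integrate out $\epsilon_n$ to produce $e^{\mu^2\Delta(X_{n-1})^2/2}$, Cauchy--Schwarz against the next increment, and bound $E_{p(\cdot\mid X_{n-2})}e^{\mu^2\Delta(X_{n-1})^2}$ via the one-line lemma $\E e^Z\le \exp(e^{\|Z\|_\infty}\E Z)$ together with the transition envelope; iterating gives $(I)\le\exp(C_M\lambda^2 n\,d_{r,M}^2)$ directly. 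For the quadratic sum the paper uses the generalized H\"older inequality $\E\prod_i Z_i\le\prod_i(\E Z_i^n)^{1/n}$ and stationarity to reduce to a single expectation under $q_{f_0}$, then Jensen for the centering. No concentration for additive functionals is invoked.

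Your route via \cite{adamczak2007tail,merlevede2009bernstein} can be made to work---your identification of the ``sharp variance proxy'' for $\sum_i\Delta(X_{i-1})^2$ as the crux is exactly right, and uniform ergodicity via Doeblin does hold---but it imports substantial external machinery. The paper's argument uses only Cauchy--Schwarz, H\"older, Jensen, and the log-exp lemma, with all dependence structure absorbed into the pointwise transition-density comparison; this is what buys the elementary proof.
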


\begin{corollary}\label{cor:rate_auto}
	For Gaussian autoregression model, if $\mathcal{F}$ is uniformly bounded by $M$, let $d_n\equiv d_{r,M}$. If Assumptions \ref{assump:local_ent_general}-\ref{assump:prior_mass_general} hold, then (\ref{ineq:post_rate_general})-(\ref{ineq:post_mean_general}) hold.
\end{corollary}

\cite{ghosal2007convergence} (cf. Section 7.4) uses a weighted $L_s(s>2)$ norm to check the local entropy condition, and an average Hellinger metric as the loss function. Our results here use the metric $d_{r,M}$ defined as a weighted $L_2$ norm.

\subsection{Gaussian time series}
Suppose $X_1,X_2,\ldots$ is a stationary Gaussian process with spectral density $f \in \mathcal{F}$ defined on $[-\pi,\pi]$. Then the covariance matrix of $X^{(n)}=(X_1,\ldots,X_n)$ is given by  $
(T_n(f))_{kl}\equiv \int_{-\pi}^{\pi}e^{\sqrt{-1} \lambda(k-l)}f(\lambda)\ \d{\lambda}$.
We consider a special form of $\mathcal{F}$: $f\equiv f_g\equiv e^{g}$ for some $g \in \mathcal{G}$. For any $g_0,g_1 \in \mathcal{G}$, $
D_n^2(g_0,g_1)\equiv n^{-1} \pnorm{T_n(f_{g_0})-T_n(f_{g_1})}{F}^2$, 
where $\pnorm{\cdot}{F}$ denotes the matrix Frobenius norm.

\begin{lemma}\label{lem:bernstein_gaussian_time_series}
	Suppose that $\mathcal{G}$ is uniformly bounded. Then Assumption \ref{assump:laplace_cond_kl} is satisfied for  $D_n$ with constants $\{c_i\}_{i=1}^3,\kappa$ depending on $\mathcal{G}$ only.
\end{lemma}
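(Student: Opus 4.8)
The plan is to exploit the fact that this experiment is jointly Gaussian: under $P_{f_g}^{(n)}$ one has $X^{(n)}\sim\mathcal N(0,\Sigma_g)$ with $\Sigma_g:=T_n(f_g)$, so the log-likelihood ratio is an explicit quadratic form in a standard Gaussian vector, and both halves of the lemma reduce to a $\chi^2$-type computation together with uniform-in-$n$ spectral control of the Toeplitz covariances. First I would record, for $g_0,g_1\in\mathcal G$,
\[
\log\frac{p_{f_{g_0}}^{(n)}}{p_{f_{g_1}}^{(n)}}(X^{(n)})=\frac12\log\frac{\det\Sigma_{g_1}}{\det\Sigma_{g_0}}+\frac12\,(X^{(n)})^{\top}\big(\Sigma_{g_1}^{-1}-\Sigma_{g_0}^{-1}\big)X^{(n)}.
\]
Writing $X^{(n)}=\Sigma_{g_0}^{1/2}Z$ with $Z\sim\mathcal N(0,I_n)$ under $P_{f_{g_0}}^{(n)}$ and setting $A:=\Sigma_{g_0}^{1/2}\Sigma_{g_1}^{-1}\Sigma_{g_0}^{1/2}-I_n$, a symmetric matrix with eigenvalues $\mu_1,\dots,\mu_n$, one checks (diagonalizing $A$ once) that the centered log-likelihood ratio has the law of $\frac12\sum_i\mu_i(Z_i^2-1)$ and that $P_{f_{g_0}}^{(n)}\log\big(p_{f_{g_0}}^{(n)}/p_{f_{g_1}}^{(n)}\big)=\frac12\sum_i\big(\mu_i-\log(1+\mu_i)\big)$. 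Thus the whole lemma is reduced to controlling $\pnorm{A}{F}$ and $\pnorm{A}{\mathrm{op}}=\max_i|\mu_i|$ in terms of $nD_n^2(g_0,g_1)=\pnorm{\Sigma_{g_0}-\Sigma_{g_1}}{F}^2$.

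Next I would establish uniform spectral bounds. Let $\Gamma$ bound $\mathcal G$, so $e^{-\Gamma}\le f_g\le e^{\Gamma}$. The classical Toeplitz estimate $2\pi(\inf_\lambda f)\,I_n\preceq T_n(f)\preceq 2\pi(\sup_\lambda f)\,I_n$ — immediate from $x^{\top}T_n(f)x=\int_{-\pi}^{\pi}\bigl|\sum_k x_k e^{\sqrt{-1}\lambda k}\bigr|^2 f(\lambda)\,\d{\lambda}$ and Parseval — gives $2\pi e^{-\Gamma}I_n\preceq\Sigma_g\preceq 2\pi e^{\Gamma}I_n$ for all $n$ and all $g\in\mathcal G$. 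Hence $I_n+A=\Sigma_{g_0}^{1/2}\Sigma_{g_1}^{-1}\Sigma_{g_0}^{1/2}$, being similar to $\Sigma_{g_1}^{-1/2}\Sigma_{g_0}\Sigma_{g_1}^{-1/2}$, has eigenvalues in $[e^{-2\Gamma},e^{2\Gamma}]$, so $\mu_i\in[e^{-2\Gamma}-1,\,e^{2\Gamma}-1]$ and $\pnorm{A}{\mathrm{op}}\le e^{2\Gamma}-1=:L_\Gamma$. Moreover, from the identity $A=\Sigma_{g_0}^{1/2}\Sigma_{g_1}^{-1}(\Sigma_{g_0}-\Sigma_{g_1})\Sigma_{g_0}^{-1/2}$ and its inverse $\Sigma_{g_0}-\Sigma_{g_1}=\Sigma_{g_1}\Sigma_{g_0}^{-1/2}A\Sigma_{g_0}^{1/2}$, the spectral bounds yield $\pnorm{A}{F}^2\asymp_\Gamma\pnorm{\Sigma_{g_0}-\Sigma_{g_1}}{F}^2=nD_n^2(g_0,g_1)$.

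It then remains to read off the two conditions. Since each $\mu_i$ lies in a fixed interval bounded away from $-1$, $t\mapsto t-\log(1+t)$ is comparable to $t^2$ there, so $\frac1n P_{f_{g_0}}^{(n)}\log\big(p_{f_{g_0}}^{(n)}/p_{f_{g_1}}^{(n)}\big)\asymp_\Gamma\frac1n\pnorm{A}{F}^2\asymp_\Gamma D_n^2(g_0,g_1)$, which is (\ref{cond:d_n_KL}) with $c_2,c_3$ depending only on $\Gamma$. For the Bernstein bound, for $|\lambda|<1/L_\Gamma$ the product factorization $P_{f_{g_0}}^{(n)}\exp[\lambda(\cdots)]=\prod_i\E\big[e^{(\lambda\mu_i/2)(Z_i^2-1)}\big]=\prod_i e^{-\lambda\mu_i/2}(1-\lambda\mu_i)^{-1/2}$ together with the elementary inequality $-x-\log(1-x)\le x^2/\bigl(2(1-|x|)\bigr)$ for $|x|<1$ gives
\[
\log P_{f_{g_0}}^{(n)}\exp\Big[\lambda\Big(\log\tfrac{p_{f_{g_0}}^{(n)}}{p_{f_{g_1}}^{(n)}}-P_{f_{g_0}}^{(n)}\log\tfrac{p_{f_{g_0}}^{(n)}}{p_{f_{g_1}}^{(n)}}\Big)\Big]=\frac12\sum_i\big(-\lambda\mu_i-\log(1-\lambda\mu_i)\big)\le\frac{\lambda^2\pnorm{A}{F}^2}{4(1-|\lambda|L_\Gamma)},
\]
and the right side is $\le\psi_{\kappa_g nD_n^2(g_0,g_1),\,\kappa_\Gamma}(\lambda)$ once we set $\kappa_\Gamma:=L_\Gamma$ and pick $\kappa_g$ so large that $\kappa_g nD_n^2\ge\pnorm{A}{F}^2/2$ (possible by the previous paragraph). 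Hence Assumption \ref{assump:laplace_cond_kl} holds with $c_1=1$ and all constants depending on $\mathcal G$ (through $\Gamma$) only. The only delicate point is the uniform-in-$n$ two-sided spectral bound on $T_n(f_g)$ — which is precisely what keeps every constant free of $n$ — together with checking that the same symmetric $A$ governs both the mean (via $\sum_i(\mu_i-\log(1+\mu_i))$) and the fluctuations (via $\pnorm{A}{F},\pnorm{A}{\mathrm{op}}$); beyond that there is no real obstacle, the argument being a clean reduction to the sub-gamma behaviour of Gaussian quadratic forms.
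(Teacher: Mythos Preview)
Your proposal is correct and follows essentially the same route as the paper: both reduce the centered log-likelihood ratio to a Gaussian quadratic form $\tfrac12\sum_i\mu_i(Z_i^2-1)$ via the substitution $X^{(n)}=\Sigma_{g_0}^{1/2}Z$, compute the MGF exactly, and convert the resulting $\chi^2$-type bound into the Bernstein form using uniform Toeplitz spectral bounds on $\Sigma_g$ (your $A$ is the paper's $-B$). The only cosmetic differences are that you supply the Parseval argument for the spectral bound explicitly (the paper cites \cite{ghosal2007convergence}) and you write the two-sided comparison $\pnorm{A}{F}\asymp_\Gamma\sqrt{n}\,D_n$ directly via the factorization $A=\Sigma_{g_0}^{1/2}\Sigma_{g_1}^{-1}(\Sigma_{g_0}-\Sigma_{g_1})\Sigma_{g_0}^{-1/2}$.
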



\begin{corollary}\label{cor:gaussian_time_series}
	For the Gaussian time series model, if $\mathcal{G}$ is uniformly bounded, let $d_n\equiv D_n$. If Assumptions \ref{assump:local_ent_general}-\ref{assump:prior_mass_general} hold, then (\ref{ineq:post_rate_general})-(\ref{ineq:post_mean_general}) hold.
\end{corollary}

$D_n$ is bounded from above by the usual $L_2$ metric, and can be related to the $L_2$ metric from below (cf. Lemma B.3 of \cite{gao2016rate}). Our result then shows that the metric to use in the entropy condition  can be weakened to the $L_2$ norm rather than the much stronger $L_\infty$ norm as in page 202 of \cite{ghosal2007convergence}. Such improvements are particularly important in, e.g. shape constrained models that are not totally bounded in $L_\infty$ (cf. \cite{guntuboyina2013covering}). See also \cite{choudhuri2004bayesian,rousseau2012bayesian} for some related works in Bayesian spectral density estimation.

\subsection{Covariance matrix estimation}

Suppose $X_1,\ldots,X_n \in \R^p$ are i.i.d. observations from $\mathcal{N}_p(0,\Sigma)$ where $\Sigma \in \mathscr{S}_p(L)$, the set of $p\times p$ covariance matrices whose minimal and maximal eigenvalues are bounded by $L^{-1}$ and $L$ (where $L>1$), respectively. For any $\Sigma_0,\Sigma_1 \in \mathscr{S}_p(L)$, $
D_F^2(\Sigma_0,\Sigma_1)\equiv \pnorm{\Sigma_0-\Sigma_1}{F}^2$.
\begin{lemma}\label{lem:bernstein_cov_est}
	Under the above setting, Assumption \ref{assump:laplace_cond_kl} holds for the metric $D_F$ with constants $\{c_i\}_{i=1}^3,\kappa$ depending on $L$ only.
\end{lemma}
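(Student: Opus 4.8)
The plan is to verify Assumption \ref{assump:laplace_cond_kl} directly for the Gaussian covariance model with the Frobenius metric. First I would write down the log likelihood ratio explicitly: for $X_1,\dots,X_n$ i.i.d. $\mathcal{N}_p(0,\Sigma_0)$,
\[
\log\frac{p_{\Sigma_0}^{(n)}}{p_{\Sigma_1}^{(n)}}=\frac{n}{2}\log\frac{\det\Sigma_1}{\det\Sigma_0}-\frac12\sum_{i=1}^n X_i^\top(\Sigma_0^{-1}-\Sigma_1^{-1})X_i .
\]
Centering, the fluctuating part is $-\tfrac12\sum_i\big(X_i^\top A X_i - \E X_i^\top A X_i\big)$ with $A=\Sigma_0^{-1}-\Sigma_1^{-1}$, i.e. a centered sum of i.i.d. quadratic forms in Gaussians. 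Writing $X_i=\Sigma_0^{1/2}Z_i$ with $Z_i\sim\mathcal{N}_p(0,I_p)$, each term becomes $Z_i^\top B Z_i-\trace B$ with $B=\Sigma_0^{1/2}A\Sigma_0^{1/2}$, a symmetric matrix. Standard Gaussian chaos / Hanson–Wright-type moment generating function bounds give, for $|\lambda|<1/(2\|B\|_{\mathrm{op}})$,
\[
\E\exp\!\big[\lambda(Z_i^\top B Z_i-\trace B)\big]\le \exp\!\Big(\tfrac{\lambda^2\|B\|_F^2}{1-2\lambda\|B\|_{\mathrm{op}}}\Big),
\]
which after tensorizing over $i$ and matching to the definition of $\psi_{v,c}$ yields exactly the sub-gamma form required, with $v$ proportional to $n\|B\|_F^2$ and $c$ proportional to $\|B\|_{\mathrm{op}}$. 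The eigenvalue bounds $\Sigma_0,\Sigma_1\in\mathscr{S}_p(L)$ make $\Sigma_0^{-1}-\Sigma_1^{-1}$, and the conjugation by $\Sigma_0^{1/2}$, comparable to $\Sigma_0-\Sigma_1$ up to factors depending only on $L$; in particular $\|B\|_F\lesssim_L \|\Sigma_0-\Sigma_1\|_F=D_F(\Sigma_0,\Sigma_1)$ and $\|B\|_{\mathrm{op}}\lesssim_L 1$ (or $\lesssim_L D_F$, which is bounded on $\mathscr{S}_p(L)$). This gives the Bernstein bound with $\kappa_g,\kappa_\Gamma$ depending only on $L$; the constant $c_1$ can be taken to be $1$ since the bound above already has the clean exponential form.

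Next I would check condition (\ref{cond:d_n_KL}), i.e. that the Frobenius distance is comparable to the normalized KL divergence. A direct computation gives
\[
\frac1n P_{\Sigma_0}^{(n)}\log\frac{p_{\Sigma_0}^{(n)}}{p_{\Sigma_1}^{(n)}}=\frac12\Big(\trace(\Sigma_1^{-1}\Sigma_0)-p-\log\det(\Sigma_1^{-1}\Sigma_0)\Big)=\frac12\sum_{j}\big(\mu_j-1-\log\mu_j\big),
\]
where $\mu_j$ are the eigenvalues of $\Sigma_1^{-1}\Sigma_0$. Since these eigenvalues lie in $[L^{-2},L^2]$ and $t\mapsto t-1-\log t$ is comparable to $(t-1)^2$ on any compact interval bounded away from $0$, the KL divergence is $\asymp_L\sum_j(\mu_j-1)^2=\|\Sigma_1^{-1}\Sigma_0-I\|_F^2=\|\Sigma_1^{-1}(\Sigma_0-\Sigma_1)\|_F^2$, which is in turn $\asymp_L\|\Sigma_0-\Sigma_1\|_F^2=D_F^2(\Sigma_0,\Sigma_1)$ using $\|\Sigma_1^{-1}\|_{\mathrm{op}}\le L$ and $\|\Sigma_1\|_{\mathrm{op}}\le L$. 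This produces the constants $c_2,c_3$ depending only on $L$, completing the verification.

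The only mildly delicate point — and the place I would be most careful — is the chaining of comparabilities in the second step: one must keep track that the symmetrization $\mu_j-1-\log\mu_j\asymp(\mu_j-1)^2$ and the passage between $\|\Sigma_1^{-1}\Sigma_0-I\|_F$ and $\|\Sigma_0-\Sigma_1\|_F$ both rely on the two-sided eigenvalue control, and that the intermediate matrix $B=\Sigma_0^{1/2}(\Sigma_0^{-1}-\Sigma_1^{-1})\Sigma_0^{1/2}=I-\Sigma_0^{1/2}\Sigma_1^{-1}\Sigma_0^{1/2}$ has operator norm bounded by a constant depending only on $L$ (not on $p$), so that $\kappa_\Gamma$ is genuinely absolute given $L$. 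Everything else is a routine Gaussian quadratic-form computation, and no other structural assumption on $p$ or $n$ is needed.
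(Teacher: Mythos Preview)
Your proposal is correct and follows essentially the same route as the paper: the paper writes out the log likelihood ratio and then defers to the computation for the Gaussian time series model (Lemma~\ref{lem:bernstein_gaussian_time_series}), which is exactly your diagonalization / Hanson--Wright argument for the quadratic form $Z^\top B Z-\trace B$ with $B=I-\Sigma_0^{1/2}\Sigma_1^{-1}\Sigma_0^{1/2}$, together with the same $t-1-\log t\asymp (t-1)^2$ comparison for the KL part. One small slip to fix: $\Sigma_1^{-1}\Sigma_0$ is not symmetric, so $\sum_j(\mu_j-1)^2$ is \emph{not} $\|\Sigma_1^{-1}\Sigma_0-I\|_F^2$ in general; work instead with the symmetric $\Sigma_1^{-1/2}\Sigma_0\Sigma_1^{-1/2}-I$ (as the paper does), whose Frobenius norm squared is $\sum_j(\mu_j-1)^2$ and is still $\asymp_L\|\Sigma_0-\Sigma_1\|_F^2$.
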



\begin{corollary}\label{cor:rate_cov_general}
	For covariance matrix estimation in $\mathscr{S}_p(L)$ for some $L<\infty$, let $d_n\equiv D_F$. If Assumptions \ref{assump:local_ent_general}-\ref{assump:prior_mass_general} hold, then (\ref{ineq:post_rate_general})-(\ref{ineq:post_mean_general}) hold.
\end{corollary}

\subsubsection{Example: Covariance matrix estimation in the sparse factor model}\label{section:cov_matrix_est}

Suppose we observe i.i.d. $X_1,\ldots,X_n\in \R^p$ from $\mathcal{N}_p(0,\Sigma_0)$. The covariance matrix is modelled by the sparse factor model $\mathfrak{M}\equiv \cup_{(k,s)\in \N^2 }\mathfrak{M}_{(k,s)}$ where $\mathfrak{M}_{(k,s)}\equiv \{\Sigma=\Lambda \Lambda^\top +I : \Lambda \in \mathscr{R}_{(k,s)}(L)\}$ with $\mathscr{R}_{(k,s)}(L)\equiv \{\Lambda\in \R^{p\times k}, \Lambda_{\cdot j} \in B_0(s),  \abs{\sigma_j(\Lambda)}\leq L^{1/2},\forall 1\leq j\leq k\}$. In this example, the model index $\mathcal{I}$ is a 2-dimensional lattice, and the sparsity structure depends on the rank structure. Consider the following model selection prior:
\begin{align}\label{eqn:prior_cov}
\lambda_{n}((k,s))\propto \exp\left(-c \cdot ks\log(e p)\right),
\end{align}
where $c>0$ is a constant to be specified later.

\begin{theorem}\label{thm:rate_cov}
	Let $p\geq n$. There exist some $c>0$ in (\ref{eqn:prior_cov}) and some sequence of sieve priors $\Pi_{n,(k,s)}$ on $\mathfrak{M}_{(k,s)}$ depending on $L$ such that
	\begin{align}\label{ineq:post_rate_cov}
	& P_{\Sigma_0}^{(n)} \Pi_n\big(\Sigma \in \mathfrak{M}: \pnorm{\Sigma-\Sigma_0}{F}^2> C_1(\epsilon_{n,(k,s)}^{\mathrm{cov}})^2\big\lvert X^{(n)}\big) \leq C_2 e^{-n (\epsilon_{n,(k,s)}^{\mathrm{cov}})^2/C_2 }.\nonumber
	\end{align}
	Here $
	(\epsilon_{n,(k,s)}^{\mathrm{cov}})^2\equiv \max\{ \inf_{\Sigma' \in \mathfrak{M}_{(s,k)}}\pnorm{\Sigma'-\Sigma_0}{F}^2, {ks\log(e p)}/{n}\}$, and the constants $C_i(i=1,2)$ depend on $L$.
\end{theorem}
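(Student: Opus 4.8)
The plan is to derive Theorem \ref{thm:rate_cov} from the abstract oracle inequality of Theorem \ref{thm:general_ms} (equivalently its covariance specialization Theorem \ref{thm:rate_cov_general}), by verifying its three assumptions for the sparse factor model. First one observes that $\mathfrak{M}_{(k,s)}\subset\mathscr{S}_p(1+L)$ for every $(k,s)$: since $\Sigma=\Lambda\Lambda^\top+I$ with $\pnorm{\Lambda}{\mathrm{op}}=\max_j\sigma_j(\Lambda)\le L^{1/2}$, all eigenvalues of $\Sigma$ lie in $[1,1+L]$. Hence, reading $\Sigma_0\in\mathscr{S}_p(L')$ for some finite $L'$ (which may be taken $L'=1+L$ in the well-specified regime), Lemma \ref{lem:bernstein_cov_est} supplies Assumption \ref{assump:laplace_cond_kl} for $d_n\equiv D_F$ with constants depending only on $L'$. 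Moreover the models are nested, $\mathfrak{M}_{(k',s')}\subset\mathfrak{M}_{(k,s)}$ for $(k',s')\le(k,s)$ (pad with zero columns). It then remains to verify Assumptions \ref{assump:local_ent_general} and \ref{assump:prior_mass_general} with $\delta_{n,(k,s)}^2\equiv C_\delta\,ks\log(ep)/n$ for a sufficiently large absolute constant $C_\delta$ (depending only on $L$ and the constants of Assumption \ref{assump:laplace_cond_kl}), the model-selection prior \eqref{eqn:prior_cov}, and a sieve prior $\Pi_{n,(k,s)}$ described below.

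For the local entropy condition \eqref{cond:entropy_essen} I would use the one-sided Lipschitz estimate $\pnorm{\Lambda\Lambda^\top-\tilde\Lambda\tilde\Lambda^\top}{F}\le 2L^{1/2}\pnorm{\Lambda-\tilde\Lambda}{F}$, valid for $\Lambda,\tilde\Lambda\in\mathscr{R}_{(k,s)}(L)$, which transports a Frobenius net on loading space onto a $D_F$-net of $\mathfrak{M}_{(k,s)}$. Since $\mathscr{R}_{(k,s)}(L)$ is a union of $\binom{p}{s}^k$ pieces, each contained columnwise in a Euclidean ball of radius $L^{1/2}\sqrt{s}$ in an $s$-dimensional coordinate subspace, a standard volumetric bound shows that for \emph{every} scale $\epsilon>0$ and \emph{every} centre $g$, the local entropy $\log\mathcal{N}\big(c_5\epsilon,\{\Sigma\in\mathfrak{M}_{(k,s)}:D_F(\Sigma,g)\le 2\epsilon\},D_F\big)$ is at most a constant multiple of $k\log\binom{p}{s}+ks\le C\,ks\log(ep)$: the combinatorial term is scale-free, and the continuous part contributes only the dimension $ks$ times $\log(4L^{1/2}/c_5)$ since covering radius and ball radius stand in fixed ratio. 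Choosing $C_\delta$ large enough makes this $\le(c_7/2)\,n\delta_{n,(k,s)}^2$. The super-linearity condition \eqref{cond:suplinearity_delta} is immediate: for $(k',s')\ge(hk,hs)$ one has $n\delta_{n,(k',s')}^2=C_\delta k's'\log(ep)\ge h^2\,n\delta_{n,(k,s)}^2$, so the lattice sum $\sum_{(k',s')\ge h(k,s)}e^{-\alpha n\delta_{n,(k',s')}^2}$ is a geometric series bounded using $\log(ep)\ge1$ and $\alpha\ge c_7/2$, yielding an absolute $\mathfrak{c}$ and $\gamma=2$ (from $\delta_{n,h(k,s)}^2=h^2\delta_{n,(k,s)}^2$). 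Given the first half of \eqref{cond:suplinearity_delta}, Proposition \ref{prop:prior_generic} verifies (P1) in Assumption \ref{assump:prior_mass_general} for \eqref{eqn:prior_cov} upon setting $c^{\mathrm{cov}}\equiv 2C_\delta$, so that \eqref{eqn:prior_cov} is exactly the generic prior \eqref{eqn:prior_generic}; enlarging $C_\delta$ further also secures $\mathfrak{h}\ge C_0\mathfrak{c}^2$ as required by Theorem \ref{thm:general_ms}.

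For (P2) I would take $\Pi_{n,(k,s)}$ to draw the $k$ column-supports $S_1,\dots,S_k\subset\{1,\dots,p\}$, each of size $s$, independently and uniformly (so the correct pattern has probability $\binom{p}{s}^{-k}\ge\exp(-ks\log(ep))$), and then place on the $ks$ free entries of $\Lambda$ the product density $g^{\otimes ks}$, truncated and renormalised to $\{\pnorm{\Lambda}{\mathrm{op}}\le 2L^{1/2}\}$; truncation only raises the relevant mass, and this harmless enlargement of the sieve keeps everything in $\mathscr{S}_p(1+4L)$ and only \emph{decreases} the oracle bias term $\inf_{\Sigma'\in\mathfrak{M}_{(k,s)}}\pnorm{\Sigma'-\Sigma_0}{F}^2$, so the stated conclusion \eqref{ineq:post_rate_cov} still follows. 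Writing $\Sigma_{0,(k,s)}=\Lambda_0\Lambda_0^\top+I$ for the $D_F$-projection of $\Sigma_0$ onto the (enlarged) model, attained by compactness, the Lipschitz bound shows that $\{\Lambda:\pnorm{\Lambda-\Lambda_0}{F}\le\rho\}$ with $\rho\asymp_L\delta_{n,(k,s)}$ maps into $\{D_F^2(\cdot,\Sigma_{0,(k,s)})\le\delta_{n,(k,s)}^2/c_3\}$; since $g$ is symmetric, non-increasing and (as always in these priors) bounded below on compacts, this ball --- intersected with the correct support pattern --- carries $\Pi_{n,(k,s)}$-mass at least $\exp(-C_L\,ks\log(ep))$ by the Euclidean-ball volume estimate in dimension $ks$ at radius $\rho\asymp_L\sqrt{ks\log(ep)/n}$, using $p\ge n$; hence $\ge\exp(-2n\delta_{n,(k,s)}^2)$ once $C_\delta$ is large, which is \eqref{cond:sufficient_prior_mass_local_ball_general}. (A routine case split on whether $\delta_{n,(k,s)}$ is small or large relative to $L$ covers the degenerate regime where the required ball already contains an entire support-piece.) With Assumptions \ref{assump:laplace_cond_kl}--\ref{assump:prior_mass_general} in force for $\mathcal{M}=\mathcal{I}$, Theorem \ref{thm:rate_cov_general} delivers \eqref{ineq:post_rate_cov} with $\delta_{n,(k,s)}^2\asymp_L ks\log(ep)/n$. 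The main obstacle is exactly the entropy/prior-mass bookkeeping over the many-faced, non-identifiable loading parametrisation: one must be careful that $\Lambda\mapsto\Lambda\Lambda^\top$ is only \emph{one-sidedly} Lipschitz-invertible --- which is fortunately all that is needed, since nets and small balls are only transported in the favourable direction --- that the $\binom{p}{s}^k$ combinatorial factor enters the \emph{local} entropy scale-freely rather than with a $\log(1/\epsilon)$ blow-up, and that the operator-norm boundary in the prior-mass lower bound is absorbed by the slight enlargement of the sieve.
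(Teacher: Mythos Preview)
Your overall architecture is the same as the paper's---verify Assumptions~\ref{assump:laplace_cond_kl}--\ref{assump:prior_mass_general} and invoke Theorem~\ref{thm:general_ms}/\ref{thm:rate_cov_general}---but the execution differs in two substantive places, and one of them contains a genuine gap.

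\textbf{The second-step prior.} The paper does \emph{not} build a continuous prior on $\Lambda$; it takes $\Pi_{n,(k,s)}$ to be the uniform distribution on a minimal $\sqrt{\delta_{n,(k,s)}^2/c_3}$-net of $\mathfrak{M}_{(k,s)}$ in Frobenius norm. This makes (P2) essentially tautological: the best approximant $\Sigma_{0,(k,s)}$ is within $\delta_{n,(k,s)}/\sqrt{c_3}$ of some net point, and that point carries mass at least $N^{-1}$, where $N\le\exp(C''ks\log(C''p))$ is bounded directly by the covering-number lemma. There is no boundary issue, no need to enlarge the operator-norm constraint to $2L^{1/2}$, and no need to reason about the geometry of the (non-identifiable) $\Lambda$-parametrisation. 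Your continuous-prior route can be made to work, but the paper's sieve prior is shorter and avoids the fudge you introduce when you enlarge the model.

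\textbf{The local entropy bound.} Your claim that the continuous part of the local entropy is \emph{scale-free}---``dimension $ks$ times $\log(4L^{1/2}/c_5)$ since covering radius and ball radius stand in fixed ratio''---does not follow from the one-sided Lipschitz estimate you invoke. The ratio $2\epsilon:c_5\epsilon$ is fixed in $\Sigma$-space, but transporting a net from $\Lambda$-space via $\Lambda\mapsto\Lambda\Lambda^\top$ only tells you that a $c_5\epsilon/(2L^{1/2})$-net of the \emph{entire} loading piece (which has diameter $\asymp_L\sqrt{k}$, not $\asymp\epsilon$) induces a $c_5\epsilon$-net in $\Sigma$-space; the preimage of the $2\epsilon$-ball need not have diameter $\asymp\epsilon$ because the map is not bi-Lipschitz. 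The bound one actually obtains this way is $ks\log(ep/s)+ks\log(C_L\sqrt{k}/\epsilon)$, which is exactly Lemma~\ref{lem:local_ent_cov} in the paper and is \emph{not} scale-free. The argument is rescued only by combining the constraint $\epsilon>\delta_{n,(k,s)}\asymp\sqrt{ks\log(ep)/n}$ with the standing hypothesis $p\ge n$, which together force $\log(1/\epsilon)\lesssim\log p$. You should state the entropy bound with its $\epsilon$-dependence and then absorb it this way, rather than assert a scale-free estimate that your argument does not deliver.

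A minor point in your favour: you take $\gamma=2$ in \eqref{cond:suplinearity_delta}, which is what $\delta_{n,h(k,s)}^2=h^2\delta_{n,(k,s)}^2$ actually requires; the paper's stated $\gamma=1$ here appears to be a slip.
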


Since spectral norm (non-intrinsic) is dominated by Frobenius norm (intrinsic), our result shows that if the model is well-specified (i.e. $\Sigma_0 \in \mathfrak{M}$), then we can construct an adaptive Bayes estimator with convergence rates in both norms no worse than $\sqrt{ks \log p/n}$. \cite{pati2014posterior} considered the same sparse factor model, where they proved a strictly sub-optimal rate $\sqrt{k^3 s \log p \log n/n}$ in spectral norm under $ks\gtrsim \log p$. \cite{gao2015ratepca} considered a closely related sparse PCA problem, where the convergence rate under spectral norm achieves the same rate as here (cf. Theorem 4.1 therein), while a factor of $\sqrt{k}$ is lost when using Frobenius norm as a loss function (cf. Remark 4.3 therein). 

It should be mentioned that the sieve prior $\Pi_{n,(k,s)}$ is constructed using the metric entropy of $\mathfrak{M}_{(k,s)}$ and hence the resulting Bayes estimator and the posterior mean as a point estimator are purely theoretical. We use this example to illustrate (i) the construction scheme of a (nearly) optimal adaptive procedure for a multi-structured experiment based on the metric entropy of the underlying parameter space, and (ii) derivation of contraction rates in non-intrinsic metrics when these metrics can be related to the intrinsic metrics nicely.

It is also possible to use similar strategies as above in the closely related problem of estimating a sparse precision matrix (cf. \cite{banerjee2015bayesian}), but we refrain from repetitive details here.

\subsection{Image boundary detection}\label{section:image_detection}
Consider the setup in \cite{li2017bayesian} as follows. Let $\{f(\cdot;\phi): \phi \in  \R^p\}$ be a class of densities dominated by a $\sigma$-finite measure $\mu$ and indexed by a $p$-dimensional parameter $\phi$ \footnote{For instance, for the binary model considered in Section \ref{section:regression}, we may take $p=1$, $\phi \in [0,1]$ and $f(\cdot,\phi)$ to be the density of $\mathrm{Bern}(\phi)$ with respect to the counting measure on $\{0,1\}$.}. Suppose we observe $\{(X_i,Y_i) \in [0,1]^d\times \R\}_{i=1}^n$ according to the following law: $X_i$'s are i.i.d. uniformly distributed on $[0,1]^d$, and there exists a closed region $\Gamma_0 \subset [0,1]^d$ such that $Y_i \sim f(\cdot;\xi_0)\bm{1}_{X_i \in \Gamma_0}+f(\cdot;\rho_0)\bm{1}_{X_i \in \Gamma_0^c}$. Here $X_i$ can be understood as the location of $i$-th observation and $Y_i$ the corresponding pixel intensity. Let $\theta = (\xi,\rho,\Gamma)\in \Theta$ be the parameter and define for any $\theta_i = (\xi_i,\rho_i,\Gamma_i)(i=0,1)$,
\begin{align*}
d_n^2(\theta_0,\theta_1)&\equiv \pnorm{\xi_0-\xi_1}{2}^2 \lambda(\Gamma_0\cap \Gamma_1)+\pnorm{\rho_0-\rho_1}{2}^2\lambda(\Gamma_0^c\cap \Gamma_1^c)\\
&\qquad  + \pnorm{\xi_0-\rho_1}{2}^2 \lambda(\Gamma_0\cap \Gamma_1^c)+\pnorm{\rho_0-\xi_1}{2}^2 \lambda(\Gamma_0^c\cap \Gamma_1).
\end{align*}
Here $\lambda$ denotes the Lebesgue measure on $[0,1]^d$ and $\lambda(B)=\int_B\ \d{\lambda}$. Clearly $d_n$ is symmetric, but may not satisfy the triangle inequality.

\begin{lemma}\label{lem:bernstein_image}
	Suppose that $\{f(\cdot;\phi): \phi \in \Theta \subset \R^p\}$ is any parametric class considered in Section \ref{section:regression} (i.e. Gaussian/Laplace/binary/Poisson models). Then Assumption \ref{assump:laplace_cond_kl} holds for $d_n$ defined above with constants depending only through the specific parametric class.
\end{lemma}

The following lemma relates $d_n$ to the metric $\lambda(\cdot \Delta \cdot)$ of interest when two elements in $\Theta$ are close to each other in $d_n$.

\begin{lemma}\label{lem:lower_bound_metric_image}
	Suppose that $\pnorm{\xi_0-\rho_0}{2}^2=r_0^2>0$ and $ \lambda(\Gamma_0^c\cap \Gamma_1^c)\geq \lambda_0^2>0$. If $d_n^2(\theta_0,\theta_1)\leq (\frac{\lambda_0^2}{4}\wedge \frac{\lambda(\Gamma_0)}{8})r_0^2$, then $\lambda(\Gamma_0\Delta\Gamma_1)\leq (8/r_0^2)\cdot d_n^2(\theta_0,\theta_1)$.
\end{lemma}

Now we can state our main result in this section. Let $\Theta_1\subset\ldots\subset\Theta_m\subset\ldots\subset \Theta$ be a sequence of nested models.

\begin{corollary}\label{cor:rate_image}
	Suppose that $\{f(\cdot;\phi): \phi \in \Theta \subset \R^p\}$ is any parametric class considered in Section \ref{section:regression}, and that there exist some $m \in \N, \eta>0$ such that $\theta_0=(\xi_0,\rho_0,\Gamma_0) \in \Theta_m$ with $\Gamma_0 \subset [\eta,1-\eta]^d$ and $\xi_0\neq \rho_0$, and $\Pi_n(\Gamma \subset [\eta,1-\eta]^d)=1$. If Assumptions \ref{assump:local_ent_general}-\ref{assump:prior_mass_general} hold for $d_n$ described above with $\theta_{0,m}$ replaced by $\theta_0$, then for $n$ large enough (depending only on $\xi_0,\rho_0,\eta$), we have
	\begin{align*}
	P_{\theta_0}^{(n)}\Pi_n\big(\Gamma: \lambda(\Gamma\Delta\Gamma_0)>C_1\delta_{n,m}^2\big\lvert \big(X^{(n)},Y^{(n)}\big) \big)\leq C_2 e^{-n\delta_{n,m}^2/C_2}.
	\end{align*}
	Here the constants $\{C_i\}_{i=1}^2>0$ depend on $\xi_0,\rho_0,\eta$.
\end{corollary}


Our result can be used for smooth boundaries as studied in \cite{li2017bayesian}, but we will be mainly interested in non-smooth boundaries. Indeed, we will propose a hierarchical prior (cf. Section \ref{section:application_detection_polytopal_boundary}) so that the posterior distribution is nearly parametrically rate-adaptive to non-smooth polytopal regions $\Gamma$.

\subsubsection{Example: Detection of polytopal image boundaries}\label{section:application_detection_polytopal_boundary}

For simplicity of presentation, we specify the binary model for $\{f(\cdot;\phi):\phi \in [\eta,1-\eta]\}$, and consider $d=2$. Suppose that $\theta_0= (\xi_0,\rho_0,\Gamma_0)$ where $\Gamma_0 \subset [\eta,1-\eta]^2$ is a convex polytope. A natural nested sequence of models $\{\Theta_m\}_{m \in \N}$ is given by $\Theta_m\equiv \{(\xi,\rho,\Gamma): \xi\neq \rho, \Gamma \in \mathscr{C}_m\}$ where $\mathscr{C}_m$ contains all convex polytopes in $[\eta,1-\eta]^2$ with at most $m$ vertices. Consider the following model selection prior:
\begin{equation}\label{eqn:prior_im}
\lambda_n(m)\propto \exp\big(-c\cdot m \log (en)\big),
\end{equation}
where $c>0$ is a constant to be specified later. A prior $\Pi_{n,m}$ on the model $\Theta_m$ can be induced by a product prior on $(\xi,\rho,\Gamma)$. In particular, we put priors on $\xi$ and $\rho$ with densities $g_\xi$ and $g_\rho$ respectively, and a prior on $\Gamma$ can be induced by taking the convex hull of randomly generated $m$ points in $[\eta,1-\eta]^2$ with density $g^{\otimes m}_\Gamma$. For simplicity, we assume that $g_\xi,g_\rho,g_\Gamma$ all follow the uniform distribution on $[\eta,1-\eta]$.

\begin{theorem}\label{thm:rate_image_polytope}
	In the above setting, if $\theta_0 \in \Theta_m$ with $\xi_0\neq \rho_0$, then there exists some $c>0$ in (\ref{eqn:prior_im}) such that for $n$ large enough,
	\begin{align*}
	P_{\theta_0}^{(n)}\Pi_n\big(\Gamma: \lambda(\Gamma\Delta\Gamma_0)>C_1 {m\log n}/{n}\big\lvert \big(X^{(n)},Y^{(n)}\big)\big)\leq C_2 e^{-m\log n/C_2 }.
	\end{align*}
	Here the constants $\{C_i \}_{i=1}^2$ depend on $\xi_0,\rho_0,\eta$.
\end{theorem}

\subsection{Intensity estimation in a Poisson point process model}\label{section:intensity_ppp}

Suppose we observe $\{(X_i,Y_i) \in [0,1]\times \R\}$ from a Poisson point process $N$ defined on $[0,1]\times \R$ with intensity $
\lambda(x,y)\equiv \lambda_f(x,y) = n \bm{1}_{f(x)\leq y}$. 
The goal is to recover the boundary $f:[0,1]\to \R$ of the support of the intensity $\lambda$ \footnote{This model can be regarded as a continuous analogue of the regression problem with irregular errors \cite{meister2013asymptotic}.}.

Note that a dominating measure $\mu$ is not well-specified for all probability distributions $P_f^{(n)}$, and the likelihood ratio $\d{P_{f_0}^{(n)}}/\d{P_{f_1}^{(n)}}$ is well-defined only if $f_1\leq f_0$. Indeed, \cite{reiss2017nonparametric} showed (cf. Lemma 2.1 therein) that for $f_1\leq f_0$, $
{\d{P_{f_0}^{(n)}}}/{\d{P_{f_1}^{(n)}}}= e^{n \pnorm{f_0-f_1}{1}} \bm{1}_{\forall i: f_0(X_i)\leq Y_i}$, 
and therefore the Kullback-Leibler divergence is given by
\begin{align*}
\bar{L}_1(f_0,f_1)&=
\begin{cases}
\pnorm{f_0-f_1}{1}, & f_1\leq f_0;\\
\infty, & \hbox{otherwise}.
\end{cases}
\end{align*}
%
The technical problem here is that $\bar{L}_1$ is not symmetric---fortunately by a slight modification, our machinery can still be applied. To this end, suppose $\inf_{g \in \mathcal{F}_m} \bar{L}_1(f_0,g)<\infty$, and let $f_{0,m} \in \arg\min_{g \in \mathcal{F}_m} \bar{L}_1(f_0,g)$ (so that $f_{0,m}\leq f_0$), assumed to be well-defined.

\begin{corollary}\label{cor:rate_support_boundary}
	For the support boundary recovery problem described above, let $d_n^2\equiv  \bar{L}_1$. If (i) Assumption \ref{assump:local_ent_general} holds under entropy with left bracketing\footnote{For a generic function class $\mathcal{G}$ defined on $[0,1]$, the left bracketing number $\mathcal{N}_{[}(\epsilon,\mathcal{G},\bar{L}_1)$ is the smallest number $M$ of functions $g_1,\ldots,g_M$ such that for any $g \in \mathcal{G}$ there exists some $j \in \{1,\ldots,M\}$ with $g_j\leq g$ and $\int_0^1 (g-g_j)\leq \epsilon$. Note that in this definition $g_j$ need not belong to $\mathcal{G}$.
	} and the set in (\ref{cond:entropy_essen}) restricted to $f\geq f_0$; (ii) Assumption \ref{assump:prior_mass_general} holds with the set in (P2) restricted to $f\geq f_{0,m}$, then (\ref{ineq:post_rate_general})-(\ref{ineq:post_mean_general}) hold with the posterior distribution restricted to $f\geq f_0$.
\end{corollary}

In Section \ref{section:application_intensity} we will use the above result to derive oracle contraction rates for estimating piecewise constant intensities. 

It is also possible to consider the two-sided $L_1$ loss, at the expense of stronger conditions. Below is a result in this direction.

\begin{corollary}\label{cor:rate_support_boundary_L1}
	Suppose that for $m \in \mathcal{M}$, (i) $\log \mathcal{N}(\delta_{n,m}^2, \mathcal{F}_m, L_\infty)\leq C_1 n\delta_{n,m}^2$ and (ii) $\Pi_{n,m}(f \in \mathcal{F}_m:L_1(f,f_{0,m})\leq C_2 \delta_{n,m}^2, f\leq f_{0,m} )\geq e^{-C_2n\delta_{n,m}^2}$ hold for some $f_{0,m}\leq f_0$. Then using the prior (\ref{eqn:prior_generic}), there exists some constant $C'>0$ such that for any $m \in \mathcal{M}$,
	\begin{align*}
	P_{f_0}^{(n)} \Pi_n \big(f \in \mathcal{F}: L_1(f,f_0)\geq C' \epsilon_{n,m}^2|(X^{(n)},Y^{(n)})\big)\leq C' e^{-\epsilon_{n,m}^2/C'}.
	\end{align*}
	Here $\epsilon_{n,m}^2\equiv \max\{L_1(f_0,f_{0,m}),\delta_{n,m}^2\}$. 
\end{corollary}

\subsubsection{Example: Estimating piecewise constant intensity in a Poisson point process model}\label{section:application_intensity}

Consider fitting the intensity $\lambda_f$ in the Poisson point process model by the class of piecewise constant functions $\mathcal{F}\equiv \cup_{m=1}^\infty \mathcal{F}_m\equiv  \{f: f=\sum_{j=1}^m a_j \bm{1}_{[t_{j-1},t_j)}, 0=t_0<t_1<\ldots<t_{m-1}<t_m=1\}$. A prior on $\mathcal{F}_m$ can be induced by a prior $\Pi_{n,m}^t$ on $\{t_1<\ldots<t_{m-1}\}$ followed by a prior $\Pi_{n,m}^a$ on $\{a_j\}_{j=1}^m$. More specifically, we choose $\Pi_{n,m}^t$ with density $\bm{t}=(t_1,\ldots,t_{m-1})\mapsto (m-1)! \bm{1}_{t_1<\ldots<t_{m-1}}(\bm{t})$, and $\Pi_{n,m}^a$ with product density $g_a^{\otimes m}$. As before, we assume that $g_a$ is symmetric, non-increasing and satisfies the following: $g_a$ has full support, and there exists some sequence $\{R_n\}$ with $\log R_n\lesssim \log n$, and a large enough absolute constant $C'>0$ such that
\begin{align}\label{cond:prior_density_intensity}
\int_{\abs{x}>R_n} g_a(x)\ \d{x}\leq n^{-C'}.
\end{align}
It is easily seen that this condition is very weak, and essentially does not require any tail condition on $g_a$. The reason for this to occur is that the information geometry of the model studied here does not change with the $L_\infty$ size of the model---the impact of this only occurs through the complexity of the model by logarithmic factors.

Consider the following prior $\Lambda_n$ on the model index $\mathcal{I}\equiv \N$:
\begin{align}\label{eqn:prior_intensity}
\lambda_{n}(m)\propto \exp\big(-c\cdot m  \log (en)\big),
\end{align}
where $c>0$ is a constant to be specified later. 

\begin{theorem}\label{thm:rate_intensity}
	Suppose that $\pnorm{f_0}{\infty}<\infty$ and (\ref{cond:prior_density_intensity}) holds for the prior density $g_a$. There exists some $c>0$ in (\ref{eqn:prior_intensity}) such that for $n$ large enough (depending only on $f_0$ and the prior $g_a$), with $(\epsilon_{n,m}^{\mathrm{int}})^2\equiv \max\{ \inf_{g \in \mathcal{F}_m}\bar{L}_1(f_0,g), { m\log (en)}/{n}\}$, 
	\begin{align*}
	& P_{f_0}^{(n)} \Pi_n\big(f \geq f_0: \bar{L}_1(f,f_{0})> C_1 (\epsilon_{n,m}^{\mathrm{int}})^2\big\lvert N\big)  \leq C_2 e^{- m\log (en)/C_2}.\nonumber
	\end{align*}
	Here the constants $C_i(i=1,2)$ are absolute.
\end{theorem}

Compared with Theorem 5.3 of \cite{reiss2017nonparametric}, our Theorem \ref{thm:rate_intensity} works with a slightly weaker one-sided $L_1$ loss, but enjoys an exact form of an oracle posterior contraction rate. From here it is straightforward to derive rate result assuming H\"older smoothness on $f_0$ (as in \cite{reiss2017nonparametric}). Note that here we do not require the technical condition $\log m\gtrsim \log n$ as in \cite{reiss2017nonparametric}, so our result here shows rate-adaptivity of the posterior distribution to intensities with fixed number of constant pieces.

\appendix

\section{Proofs for Section 2}\label{section:proof_main_result}

\subsection{Proof of Theorem \ref{thm:general_ms}: main steps}

First we need a lemma allowing a change-of-measure argument.
\begin{lemma}\label{lem:change_variable}
	Let Assumption \ref{assump:laplace_cond_kl} hold. There exists some constant $c_4\geq 1$ only depending on $c_1, c_3$ and $\kappa$ such that for any random variable $U \in [0,1]$, any $\delta_n\geq d_n(f_0,f_1)$ and any $j \in \N$, 
	\begin{align*}
	P_{f_0}^{(n)} U \leq  c_4[P_{f_1}^{(n)} U\cdot e^{c_4 nj\delta_n^2}+e^{-c_4^{-1} nj\delta_n^2}].
	\end{align*} 
\end{lemma}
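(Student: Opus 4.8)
The plan is to prove Lemma \ref{lem:change_variable} via a single application of H\"older's inequality together with the Laplace-transform bound supplied by Assumption \ref{assump:laplace_cond_kl}. First I would write, for the likelihood ratio $R_n \equiv p_{f_1}^{(n)}/p_{f_0}^{(n)}$ and any $U \in [0,1]$,
\[
P_{f_0}^{(n)} U = P_{f_1}^{(n)}\!\left[ U \cdot R_n^{-1} \right] = P_{f_1}^{(n)}\!\left[ U \cdot \exp\!\Big( {\textstyle\log\frac{p_{f_0}^{(n)}}{p_{f_1}^{(n)}}} \Big) \right],
\]
and split the exponent into its mean $D \equiv P_{f_0}^{(n)}\log\frac{p_{f_0}^{(n)}}{p_{f_1}^{(n)}}$, which by (\ref{cond:d_n_KL}) satisfies $D \le c_3 n d_n^2(f_0,f_1) \le c_3 n\delta_n^2$, plus the centered part $Z \equiv \log\frac{p_{f_0}^{(n)}}{p_{f_1}^{(n)}} - D$. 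This gives $P_{f_0}^{(n)} U \le e^{c_3 n\delta_n^2}\, P_{f_1}^{(n)}\!\big[ U e^{Z} \big]$; wait — I must be careful about which measure controls $Z$. The cleaner route is to keep everything under $P_{f_1}^{(n)}$ from the start and instead bound $P_{f_1}^{(n)}[Ue^{Z}]$ by splitting the expectation over the events $\{Z \le t\}$ and $\{Z > t\}$ for a threshold $t$ to be chosen proportional to $nj\delta_n^2$.

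Concretely, the second step is the truncation: for any threshold $t>0$,
\[
P_{f_1}^{(n)}\!\big[ U e^{Z}\big] \le e^{t}\, P_{f_1}^{(n)} U + P_{f_1}^{(n)}\!\big[ e^{Z} \mathbf{1}_{\{Z > t\}} \big] \le e^{t}\, P_{f_1}^{(n)} U + e^{-t} \cdot P_{f_1}^{(n)}\big[ e^{2Z} \big],
\]
using $e^{Z}\mathbf 1_{\{Z>t\}} \le e^{-t} e^{2Z}$. The remaining task is to bound the exponential moment $P_{f_1}^{(n)} e^{2Z}$. Here I would note that $2Z = 2\log\frac{p_{f_0}^{(n)}}{p_{f_1}^{(n)}} - 2D$, so $P_{f_1}^{(n)} e^{2Z} = e^{-2D}\, P_{f_1}^{(n)}\big[(p_{f_0}^{(n)}/p_{f_1}^{(n)})^2\big] = e^{-2D} P_{f_0}^{(n)}\big[p_{f_0}^{(n)}/p_{f_1}^{(n)}\big] = e^{-2D} P_{f_0}^{(n)} e^{Z+D} = e^{-D} P_{f_0}^{(n)} e^{Z}$, which by Assumption \ref{assump:laplace_cond_kl} applied with $\lambda = 1$ (legitimate once $\kappa_\Gamma < 1$, and otherwise after rescaling — see below) is at most $c_1 e^{-D} \exp\big[\psi_{\kappa_g n d_n^2(f_0,f_1),\kappa_\Gamma}(1)\big] \le c_1 \exp\big[ \tfrac{\kappa_g n\delta_n^2}{2(1-\kappa_\Gamma)}\big]$, using $D \ge 0$. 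Thus $P_{f_1}^{(n)} e^{2Z} \le c_1 e^{\kappa_g n\delta_n^2/(2(1-\kappa_\Gamma))}$.

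Putting the pieces together, $P_{f_0}^{(n)} U \le e^{c_3 n\delta_n^2}\big( e^{t} P_{f_1}^{(n)} U + c_1 e^{-t} e^{\kappa_g n\delta_n^2/(2(1-\kappa_\Gamma))}\big)$; choosing $t = \tfrac12 nj\delta_n^2 \cdot K$ for a large enough absolute multiple $K$ of $\max\{1, c_3, \kappa_g/(1-\kappa_\Gamma)\}$ makes the first term $\le c_4 P_{f_1}^{(n)} U e^{c_4 nj\delta_n^2}$ and the second $\le c_4 e^{-c_4^{-1} nj\delta_n^2}$, after absorbing $j \ge 1$ and the prefactor $e^{c_3 n\delta_n^2}$ into a single constant $c_4 \ge 1$ depending only on $c_1, c_3, \kappa$. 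The one genuine wrinkle — and the step I expect to need the most care — is the case $\kappa_\Gamma$ not small (in particular $\kappa_\Gamma \ge 1$), where $\lambda = 1$ is outside the admissible range $|\lambda| < 1/\kappa_\Gamma$; there I would instead invoke Assumption \ref{assump:laplace_cond_kl} with $\lambda = 1/(2\kappa_\Gamma)$, use H\"older with conjugate exponents $2\kappa_\Gamma$ and its dual to reconstruct the exponential moment of $Z$ itself (at the cost of a $\kappa_\Gamma$-dependent constant in the exponent), and otherwise run the same truncation argument. Everything else is routine bookkeeping of constants.
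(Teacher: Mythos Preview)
Your route works cleanly when $\kappa_\Gamma < 1$, but the patch you sketch for $\kappa_\Gamma \geq 1$ does not go through as written. You cannot ``reconstruct the exponential moment of $Z$ itself'' from the bound at $\lambda = 1/(2\kappa_\Gamma)$ via H\"older: H\"older (or Jensen) only gives $P_{f_0}^{(n)} e^{\lambda_0 Z} \leq (P_{f_0}^{(n)} e^{Z})^{\lambda_0}$ for $\lambda_0 < 1$, which is a \emph{lower} bound on $P_{f_0}^{(n)} e^{Z}$, and Assumption~\ref{assump:laplace_cond_kl} gives no control whatsoever on $P_{f_0}^{(n)} e^{Z}$ once $\kappa_\Gamma \geq 1$. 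The correct fix is not H\"older but a different truncation exponent: replace $e^Z\bm{1}_{\{Z>t\}} \leq e^{-t}e^{2Z}$ by $e^Z\bm{1}_{\{Z>t\}} \leq e^{-\lambda_0 t}e^{(1+\lambda_0)Z}$ for $\lambda_0 = 1/(2\kappa_\Gamma)$; your own change-of-measure identity then gives $P_{f_1}^{(n)} e^{(1+\lambda_0)Z} = e^{-D}P_{f_0}^{(n)} e^{\lambda_0 Z}$, and the Laplace bound applies. With that correction the argument is complete.

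The paper's proof is shorter and sidesteps the issue entirely by reversing your order of operations: it truncates \emph{first} under $P_{f_0}^{(n)}$ on the event $\mathcal{E}_n = \{\log(p_{f_0}^{(n)}/p_{f_1}^{(n)}) < 2c_3 nj\delta_n^2\}$, changes measure on $\mathcal{E}_n$ to obtain $P_{f_0}^{(n)}[U\bm{1}_{\mathcal{E}_n}] \leq e^{2c_3 nj\delta_n^2} P_{f_1}^{(n)} U$, and on $\mathcal{E}_n^c$ uses $U \leq 1$ together with the Bernstein \emph{tail} inequality (Lemma~\ref{lem:berstein_ineq}) to get $P_{f_0}^{(n)}(\mathcal{E}_n^c) \leq C e^{-C^{-1}nj\delta_n^2}$, valid for all $\kappa_\Gamma \geq 0$ with no restriction on $\lambda$. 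In fact your own outline contains this argument in disguise: undoing the change of measure on your tail term gives the exact identity $P_{f_1}^{(n)}[e^Z\bm{1}_{\{Z>t\}}] = e^{-D}P_{f_0}^{(n)}(Z > t)$, so the most natural completion of your sketch is to recognize this and invoke the tail bound directly rather than detouring through the $e^{2Z}$ moment.
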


The next propositions solve the posterior contraction problem for the `local' model $\mathcal{F}_m$.

\begin{proposition}\label{prop:overfit_general}
	Fix $m \in \mathcal{M}$ such that $\delta_{n,m}^2\geq d_n^2(f_0,f_{0,m})$. Then there exists some constant $c_8\geq 1$ (depending on the constants in Assumption \ref{assump:laplace_cond_kl}) such that for $j\geq 8\mathfrak{c}^2/c_7\mathfrak{h}$,
	\begin{align}\label{ineq:overfit_oracle}
	P_{f_{0,m}}^{(n)} \Pi_n(f \in \mathcal{F}: d_n^2(f,f_{0,m})> \mathfrak{c}^2 (j\mathfrak{h})^\gamma \delta_{n,m}^2 \big\lvert X^{(n)})\leq  c_8 e^{-{n j \mathfrak{h} \delta_{n,m}^2}/{c_8 \mathfrak{c}^2}}.
	\end{align}
\end{proposition}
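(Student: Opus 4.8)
The plan is to run the classical testing-plus-prior-mass machinery of \cite{ghosal2000convergence,ghosal2007convergence}, but with three twists: it is centered at $f_{0,m}$ (so that $P_{f_{0,m}}^{(n)}$ plays the role of the ``truth''), it is localized across the nested family $\{\mathcal{F}_k\}_{k\in\mathcal{I}}$, and every Chebyshev-type step is upgraded to an exponential one using the Bernstein Assumption \ref{assump:laplace_cond_kl}. Write $\Pi_n(\cdot\,\lvert X^{(n)})=N_n/D_n$ with $D_n=\int (p_f^{(n)}/p_{f_{0,m}}^{(n)})\,\d{\Pi_n(f)}$ and $N_n$ the same integral restricted to $\{f\in\mathcal{F}: d_n^2(f,f_{0,m})>\mathfrak{c}^2(j\mathfrak{h})^\gamma\delta_{n,m}^2\}$.

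\textbf{Step 1 (evidence lower bound).} Let $B\equiv\{f\in\mathcal{F}_m: d_n^2(f,f_{0,m})\le \delta_{n,m}^2/c_3\}$, so by (\ref{cond:d_n_KL}) each $f\in B$ has $\frac1n P_{f_{0,m}}^{(n)}\log(p_{f_{0,m}}^{(n)}/p_f^{(n)})\le \delta_{n,m}^2$. Jensen's inequality applied to $\Pi_{n,m}$ restricted and renormalized to $B$ gives $\log D_n\ge \log\big(\lambda_{n,m}\Pi_{n,m}(B)\big)-n\delta_{n,m}^2-W_n$, where $W_n$ is the $\Pi_{n,m}|_B$-average of the centered log-likelihood ratios. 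By Fubini and convexity of the exponential, the Laplace transform of $W_n$ is dominated by $c_1\exp[\psi_{\kappa_g n\delta_{n,m}^2/c_3,\kappa_\Gamma}(\lambda)]$, so $W_n$ is sub-gamma; hence for a suitably large constant $K$, $P_{f_{0,m}}^{(n)}(W_n>Kjn\delta_{n,m}^2)\le c_1 e^{-c' jn\delta_{n,m}^2}$. Together with (P1)--(P2) this produces a good event $\mathcal{A}_n$ with $P_{f_{0,m}}^{(n)}(\mathcal{A}_n^c)\le c_1 e^{-c' jn\delta_{n,m}^2}$ on which $D_n\ge e^{-K' jn\delta_{n,m}^2}$. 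The essential gain over \cite{ghosal2000convergence,ghosal2007convergence}, made possible because (\ref{cond:d_n_KL}) controls all moments through $d_n^2$, is that the failure probability is \emph{exponentially small in $j$}, which is what keeps the $j$-dependence of the final bound consistent.

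\textbf{Step 2 (numerator upper bound).} Split the alternative set by model index into ``low'' indices $k\le h_\ast m$ and ``high'' indices $k>h_\ast m$, choosing $h_\ast\asymp j\mathfrak{h}$ via the second half of (\ref{cond:suplinearity_delta}) so that $\mathfrak{c}^2(j\mathfrak{h})^\gamma\delta_{n,m}^2\ge \delta_{n,h_\ast m}^2$. On the high-index part I bound the integrated likelihood ratio by its $P_{f_{0,m}}^{(n)}$-expectation $\sum_{k>h_\ast m}\lambda_{n,k}$; by (P1) (applied at index $\asymp jm$) and the first half of (\ref{cond:suplinearity_delta}) this is $\le 2e^{-\mathfrak{c}^{-2}h_\ast n\delta_{n,m}^2}$, geometrically small in $j$, so no tests are needed there. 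On the low-index part I peel the shells $\{d_n(f,f_{0,m})\in(2^\ell R,2^{\ell+1}R]\}$ with $R^2\equiv \mathfrak{c}^2(j\mathfrak{h})^\gamma\delta_{n,m}^2$, intersect each with $\mathcal{F}_k$, cover by $c_5 2^\ell R$-balls using the local entropy bound (\ref{cond:entropy_essen}), and feed each ball into Lemma \ref{lem:local_test_general} with null $f_{0,m}$. The union test $\phi_n$ has type-I error $\lesssim\sum_{k\le h_\ast m,\ell\ge0} e^{(c_7/2)n\delta_{n,k}^2}c_6 e^{-c_7 n 2^{2\ell}R^2}$ and dominates all type-II errors by $c_6 e^{-c_7 n2^{2\ell}R^2}$; the threshold $j\ge 8\mathfrak{c}^2/(c_7\mathfrak{h})$ ensures $R^2$ exceeds the relevant $\delta_{n,k}^2$-scale by a fixed multiplicative margin, so the entropy factor $e^{(c_7/2)n\delta_{n,k}^2}$ is beaten and the double sum collapses to $\lesssim e^{-c_7 nR^2/2}$.

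\textbf{Step 3 (assembling).} On $\mathcal{A}_n$ one has $\Pi_n(\{d_n^2(f,f_{0,m})>R^2\}\,\lvert X^{(n)})\le \phi_n+e^{K' jn\delta_{n,m}^2}\int (p_f^{(n)}/p_{f_{0,m}}^{(n)})(1-\phi_n)\,\d{\Pi_n(f)}$ over the alternative; taking $P_{f_{0,m}}^{(n)}$-expectations, adding $P_{f_{0,m}}^{(n)}(\mathcal{A}_n^c)$ and $P_{f_{0,m}}^{(n)}\phi_n$, and inserting Step 2 yields a bound of the form $\mathrm{const}\cdot e^{-(c_\star-K')jn\delta_{n,m}^2}$ from the low-index/test part plus $\lesssim e^{-(\mathfrak{c}^{-2}h_\ast-K')n\delta_{n,m}^2}$ from the high-index part. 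It then remains to check, using $R^2\ge\delta_{n,h_\ast m}^2\gtrsim \mathfrak{c}^{-2}j\mathfrak{h}\,\delta_{n,m}^2$ (first half of (\ref{cond:suplinearity_delta})) and $\gamma\ge1$, that $c_\star$ and $\mathfrak{c}^{-2}h_\ast$ both dominate $K'$, after which every exponent is $\gtrsim jn\mathfrak{h}\delta_{n,m}^2/\mathfrak{c}^2$ and all terms collect into $c_8 e^{-nj\mathfrak{h}\delta_{n,m}^2/(c_8\mathfrak{c}^2)}$ on choosing $c_8$ large. The main obstacle is precisely this constant bookkeeping: keeping the denominator loss $K'$ below the rates delivered by the local-entropy/test budget \emph{and} the prior tail, extracting an honest $e^{-cj(\cdot)}$ decay (not merely $e^{-c(\cdot)}$) from all three mechanisms so the $j\to\infty$ regime is genuine, and threading (\ref{cond:suplinearity_delta}) and the threshold $j\ge 8\mathfrak{c}^2/(c_7\mathfrak{h})$ consistently through both the low- and high-index sums; everything else is a routine instantiation of the testing argument.
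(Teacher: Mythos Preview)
Your proposal is correct and follows essentially the same route as the paper: an exponential evidence lower bound via the Bernstein assumption (the paper packages this as Lemma~\ref{lem:posterior_denum_control}), a global test built from local tests and the local-entropy condition (the paper packages the peeling as Lemma~\ref{lem:global_test_general}), and the prior-tail bound for indices beyond $j\mathfrak{h}m$. The only organizational difference is that you build separate tests for each $k\le h_\ast m$, whereas by nestedness $\cup_{k\le h_\ast m}\mathcal{F}_k=\mathcal{F}_{h_\ast m}$, so a single test at level $h_\ast m=j\mathfrak{h}m$ suffices (this is what the paper effectively uses for the power bound in (\ref{ineq:generic_pdim_4})); dropping your sum over $k$ also removes the stray polynomial prefactor $|\{k\le h_\ast m\}|$ in your type-I estimate.
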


\begin{proposition}\label{prop:underfit_general}
	Fix $m \in \mathcal{M}$ such that $\delta_{n,m}^2<d_n^2(f_0,f_{0,m})$. Let $
	\tilde{m}\equiv \tilde{m}(m)\equiv \inf\{m' \in \mathcal{M}, m'\geq m: \delta_{n,m'}\geq d_n(f_0,f_{0,m})\}$. Then for $j\geq 8\mathfrak{c}^2/c_7\mathfrak{h}$,
	\begin{align}\label{ineq:underfit_oracle}
	P_{f_{0,m}}^{(n)} \Pi_n(f \in \mathcal{F}: d_n^2(f,f_{0,m})>\mathfrak{c}^4 (2j\mathfrak{h})^\gamma d_n^2(f_0,f_{0,m}) \big\lvert X^{(n)})\leq  c_8 e^{-{n j \mathfrak{h} \delta_{n,\tilde{m}}^2}/{c_8 \mathfrak{c}^2}}.
	\end{align}
\end{proposition}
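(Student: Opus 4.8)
The plan is to reduce the statement to the overfit-type estimate of Proposition~\ref{prop:overfit_general}, read for the model $\mathcal{F}_{\tilde m}$ with $f_{0,m}$ in the role of the truth. Since $m\le\tilde m$ and the models are nested, $f_{0,m}\in\mathcal{F}_m\subseteq\mathcal{F}_{\tilde m}$, so the best approximation of $f_{0,m}$ within $\mathcal{F}_{\tilde m}$ is $f_{0,m}$ itself, which puts us in an ``overfit'' situation with effective rate $\delta_{n,\tilde m}^2$. The one preliminary fact I would record is the two-sided comparison
\[
d_n^2(f_0,f_{0,m})\ \le\ \delta_{n,\tilde m}^2\ \le\ \mathfrak{c}^2\,2^{\gamma}\,d_n^2(f_0,f_{0,m}),
\]
the left inequality being the definition of $\tilde m$, and the right one following from the second half of \eqref{cond:suplinearity_delta} applied between $\tilde m$ and an index of $\mathcal{M}$ lying just below $\tilde m$ along a chain from $m$ (which still has $\delta_n^2<d_n^2(f_0,f_{0,m})$ by minimality of $\tilde m$). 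Given this, the target radius $\mathfrak{c}^4(2j\mathfrak{h})^{\gamma}d_n^2(f_0,f_{0,m})$ dominates $\mathfrak{c}^2(j\mathfrak{h})^{\gamma}\delta_{n,\tilde m}^2$, the target probability $\exp(-nj\mathfrak{h}\delta_{n,\tilde m}^2/c_8\mathfrak{c}^2)$ is exactly the bound of Proposition~\ref{prop:overfit_general} at index $\tilde m$, and it suffices to prove that bound for $\mathcal{F}_{\tilde m}$ centered at $f_{0,m}$.

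The proof of Proposition~\ref{prop:overfit_general} then applies essentially verbatim after three checks; note we cannot invoke it directly because the prior-mass condition \eqref{cond:sufficient_prior_mass_local_ball_general} is stated around $f_{0,\tilde m}$, not $f_{0,m}$. Writing $\Pi_n(B\mid X^{(n)})=N/Z$ with $B=\{f:d_n^2(f,f_{0,m})>R\}$ and $R:=\mathfrak{c}^2(j\mathfrak{h})^{\gamma}\delta_{n,\tilde m}^2$: (i) the evidence $Z$ draws its mass from the $\mathcal{F}_m$-component of the prior, since by \eqref{cond:suff_prior_mass_general}--\eqref{cond:sufficient_prior_mass_local_ball_general} one has $\Pi_n(\{d_n^2(f,f_{0,m})\le\delta_{n,m}^2/c_3\})\ge\tfrac12 e^{-4n\delta_{n,m}^2}$ and on this set \eqref{cond:d_n_KL} bounds the per-observation Kullback--Leibler divergence from $f_{0,m}$ by $\delta_{n,m}^2\le\delta_{n,\tilde m}^2$; (ii) for every $m'\ge m$ the local entropy bound \eqref{cond:entropy_essen} is available centered at $f_{0,m}$, because $f_{0,m}\in\{f_{0,m''}\}_{m''\le m'}$ (for $m'$ incomparable to $m$, replace $\mathcal{F}_{m'}$ by $\mathcal{F}_{m'\vee m}$); (iii) for $m'\ge m$, $d_n(f_{0,m},f_{0,m'})\le 2\,d_n(f_0,f_{0,m})\le2\delta_{n,\tilde m}$ by nestedness and the triangle inequality, the exact analogue of the bound $d_n^2(f_0,f_{0,m})\le\delta_{n,m}^2$ used in Proposition~\ref{prop:overfit_general}. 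The remaining steps --- lower bounding $Z$ on a high-probability event via the sub-gamma tail of the centered log-likelihood ratios (Assumption~\ref{assump:laplace_cond_kl}, the computation behind Lemma~\ref{lem:change_variable}), and controlling $P_{f_{0,m}}^{(n)}N$ by splitting $\mathcal{I}$ at a $j$-dependent threshold, handling small models through the tests of Lemma~\ref{lem:local_test_general} together with \eqref{cond:entropy_essen} and large models through the prior-tail bound in \eqref{cond:suff_prior_mass_general} combined with the first half of \eqref{cond:suplinearity_delta} --- are carried out as there, every occurrence of $\delta_{n,m}^2$ being replaced by $\delta_{n,\tilde m}^2$.

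I expect the bulk of the work, and the main obstacle, to be the same bookkeeping of constants as in Proposition~\ref{prop:overfit_general}: one asks only for the weak evidence bound $Z\ge e^{-C'nj\delta_{n,\tilde m}^2}$, so that $P_{f_{0,m}}^{(n)}(Z<e^{-C'nj\delta_{n,\tilde m}^2})\le c_1 e^{-c'nj\delta_{n,\tilde m}^2}$ is a genuine large deviation, and one then needs the exponential gain coming from the tests and from the prior-tail estimate on large models to beat the factor $e^{C'nj\delta_{n,\tilde m}^2}$ in
\[
P_{f_{0,m}}^{(n)}\Pi_n(B\mid X^{(n)})\ \le\ e^{C'nj\delta_{n,\tilde m}^2}\,P_{f_{0,m}}^{(n)}N\ +\ P_{f_{0,m}}^{(n)}\bigl(Z<e^{-C'nj\delta_{n,\tilde m}^2}\bigr)
\]
uniformly in $j$; this is where the hypothesis $j\ge 8\mathfrak{c}^2/(c_7\mathfrak{h})$ and the various absolute constants enter, and one reads off $c_8$. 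The only point genuinely new relative to Proposition~\ref{prop:overfit_general} is securing the right-hand comparison $\delta_{n,\tilde m}^2\lesssim\mathfrak{c}^2 d_n^2(f_0,f_{0,m})$, i.e.\ controlling the jump of $m'\mapsto\delta_{n,m'}^2$ across $\tilde m$ over the partial order of $\mathcal{M}$ --- precisely what \eqref{cond:suplinearity_delta} is there to guarantee.
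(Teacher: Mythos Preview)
Your proposal is correct and follows essentially the same route as the paper: replace $m$ by $\tilde m$ in the test construction, keep the evidence lower bound anchored at the $\mathcal{F}_m$-component of the prior (since the mass condition (P2) is stated around $f_{0,m}$, and $\delta_{n,m}\le\delta_{n,\tilde m}$ makes the resulting bound compatible with the $\delta_{n,\tilde m}$-scale), and use the right half of \eqref{cond:suplinearity_delta} to pass from $\delta_{n,\tilde m}^2$ back to $d_n^2(f_0,f_{0,m})$ via $\delta_{n,\tilde m}^2\le\mathfrak{c}^2 2^{\gamma}\delta_{n,\tilde m-1}^2<\mathfrak{c}^2 2^{\gamma}d_n^2(f_0,f_{0,m})$, which is exactly how the target radius $\mathfrak{c}^4(2j\mathfrak{h})^{\gamma}d_n^2(f_0,f_{0,m})$ is shown to dominate $\mathfrak{c}^2(j\mathfrak{h})^{\gamma}\delta_{n,\tilde m}^2$. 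Your check (iii) on $d_n(f_{0,m},f_{0,m'})$ is not actually needed anywhere in the argument---neither the tests nor the evidence bound call for it---but it does no harm.
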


The proofs of these results will be detailed in later subsections.

\begin{proof}[Proof of Theorem \ref{thm:general_ms}: main steps]
	Instead of (\ref{ineq:post_rate_general}), we will prove a slightly stronger statement as follows: for any $ j\geq 8\mathfrak{c}^2/c_7\mathfrak{h}$, and $\mathfrak{h}\geq 2c_4c_8\mathfrak{c}^2$,
	\begin{align}\label{ineq:post_rate_general_j}
	P_{f_0}^{(n)} \Pi_n\big(f \in \mathcal{F}: d_n^2(f,f_{0})> \mathfrak{c}_1 j^\gamma \epsilon_{n,m}^2\big\lvert X^{(n)}\big) \leq \mathfrak{c}_2 e^{-jn\epsilon_{n,m}^2/\mathfrak{c}_2}.
	\end{align}
	Here the constants $\mathfrak{c}_i(i=1,2)$ depends on the constants involved in Assumption \ref{assump:laplace_cond_kl} and $\mathfrak{c},\mathfrak{h}$.
	
	\noindent \underline{\textbf{Proof of  (\ref{ineq:post_rate_general_j}).}}  
	
	First consider the overfitting case. By Proposition \ref{prop:overfit_general} and Lemma \ref{lem:change_variable}, we see that when $\delta_{n,m}^2\geq d_n^2(f_0,f_{0,m})$ holds, for $ j\geq 8\mathfrak{c}^2/c_7\mathfrak{h}$, it holds that 
	\begin{align*}
	&P_{f_0}^{(n)} \Pi_n\big(f \in \mathcal{F}: d_n^2(f,f_{0})>2d_n^2(f_0,f_{0,m})+2\mathfrak{c}^2 (j\mathfrak{h})^\gamma \delta_{n,m}^2\big\lvert X^{(n)}\big)\\
	& \leq P_{f_0}^{(n)} \Pi_n\big(f \in \mathcal{F}: d_n^2(f,f_{0,m})>\mathfrak{c}^2 (j\mathfrak{h})^\gamma \delta_{n,m}^2 \big\lvert X^{(n)}\big)\\ 
	& \leq c_4\big[P_{f_{0,m}}^{(n)} \Pi_n\big(f \in \mathcal{F}: d_n^2(f,f_{0,m})>\mathfrak{c}^2 (j\mathfrak{h})^\gamma \delta_{n,m}^2 \big\lvert X^{(n)}\big) e^{c_4 nj\delta_{n,m}^2}+e^{-c_4^{-1} nj\delta_{n,m}^2}\big]\\
	& \leq c_8 c_4 e^{-nj\delta_{n,m}^2\left(\frac{  \mathfrak{h} }{c_8 \mathfrak{c}^2}-c_4\right)}+c_4 e^{-c_4^{-1} nj\delta_{n,m}^2}\leq 2c_8 c_4 e^{-jn\delta_{n,m}^2\min\{c_4,c_4^{-1}\}}.
	\end{align*}
	Here in the second line we used the fact that $d_n^2(f,f_{0,m})\geq d_n^2(f,f_0)/2 - d_n^2(f_0,f_{0,m})$. 
	
	Next consider the underfitting case: fix $m \in \mathcal{M}$ such that $\delta_{n,m}^2<d_n^2(f_0,f_{0,m})$. Apply Proposition \ref{prop:underfit_general} and Lemma \ref{lem:change_variable}, and use similar arguments to see that for $ j\geq 8\mathfrak{c}^2/c_7\mathfrak{h}$, 
	\begin{align*}
	& P_{f_0}^{(n)} \Pi_n\big(f \in \mathcal{F}: d_n^2(f,f_{0})> \big[2\mathfrak{c}^4 (2j\mathfrak{h})^\gamma+2\big]d_n^2(f_0,f_{0,m}) \big\lvert X^{(n)}\big)\\
	& \leq c_4\big[P_{f_{0,m}}^{(n)} \Pi_n\big(f \in \mathcal{F}: d_n^2(f,f_{0,m})> \mathfrak{c}^4 (2j\mathfrak{h})^\gamma d_n^2(f_0,f_{0,m}) \big\lvert X^{(n)}\big) e^{c_4nj\delta_{n,\tilde{m}}^2 }+e^{-c_4^{-1} jn \delta_{n,\tilde{m}}^2 }\big]\\
	& \leq 2c_8c_4 e^{-nj\delta_{n,\tilde{m}}^2\min\{c_4,c_4^{-1}\}}.
	\end{align*}
	Here in the second line we used (i) $2d_n^2(f,f_{0,m})\geq d_n^2(f,f_0)-2d_n^2(f_0,f_{0,m})$, and (ii) $\delta_{n,\tilde{m}}\geq d_n(f_0,f_{0,m})$. The claim of (\ref{ineq:post_rate_general_j}) follows by combining the estimates.

	\noindent \underline{\textbf{Proof of  (\ref{ineq:post_mean_general}).}} The proof is essentially integration of tail estimates by a peeling device. Let the event $A_j$ be defined via
	\begin{align*}
	A_j:=\{\mathfrak{c}_1 j^\gamma \big( d_n^2(f_0,f_{0,m})+\delta_{n,m}^2\big)<d_n^2(f,f_0)\leq  \mathfrak{c}_1 (j+1)^\gamma \big(d_n^2(f_0,f_{0,m})+\delta_{n,m}^2\big)\}.
	\end{align*}  
	Then,
	\begin{align*}
	&P_{f_0}^{(n)} d_n^2(\hat{f}_n,f_0) = P_{f_0}^{(n)} d_n^2\left(\Pi_n(f|X^{(n)}),f_0\right)\leq P_{f_0}^{(n)} \Pi_n \left(d_n^2(f,f_0)|X^{(n)}\right) \\ 
	&\leq C_{\mathfrak{c}_1,\mathfrak{c},c_7,\mathfrak{h},\gamma} \big( d_n^2(f_0,f_{0,m})+\delta_{n,m}^2\big)+\sum_{j\geq 8\mathfrak{c}^2/c_7\mathfrak{h}} P_{f_0}^n \Pi_n\big(d_n^2(f,f_0)\bm{1}_{A_j}\big\lvert X^{(n)}\big)\\ 
	&\leq C_{\mathfrak{c}_1,\mathfrak{c},c_7,\mathfrak{h},\gamma} \big( d_n^2(f_0,f_{0,m})+\delta_{n,m}^2\big)+ \frac{2^{\gamma+1} \mathfrak{c}_1 \mathfrak{c}_2}{n} \sum_{j\geq 8\mathfrak{c}^2/c_7\mathfrak{h}}  j^\gamma n\epsilon_{n,m}^2  e^{-jn\epsilon_{n,m}^2/\mathfrak{c}_2}.
	\end{align*}
	The inequality in the first line of the above display is due to Jensen's inequality applied with $d_n^2(\cdot,f_0)$ (the convexity follows since $f\mapsto d_n(f,f_0)$ is non-negatively convex, so is its square), followed by Cauchy-Schwarz inequality. The summation can be bounded up to a constant depending on $\gamma,\mathfrak{c}_1,\mathfrak{c}_2$ by 
	\begin{align*}
	\sum_{j\geq 8\mathfrak{c}^2/c_7\mathfrak{h}}  (j n\epsilon_{n,m}^2)^\gamma  e^{-jn\epsilon_{n,m}^2/\mathfrak{c}_2}\leq \sum_{j\geq 8\mathfrak{c}^2/c_7\mathfrak{h}}  (j n\epsilon_{n,m}^2)^\gamma  e^{-jn\epsilon_{n,m}^2/\mathfrak{c}_2} \big((j+1)n\epsilon_{n,m}^2-jn\epsilon_{n,m}^2\big),
	\end{align*} 
	where the inequality follows since $n\epsilon_{n,m}^2\geq n\epsilon_{n,1}^2\geq 1$. This quantity can be bounded by a constant multiple of $\int_0^\infty x^\gamma e^{-x/\mathfrak{c}_2}\ \d{x}$ independent of $m$. Now the proof is complete by noting that $\delta_{n,m}^2$ majorizes $1/n$ up to a constant, and then taking infimum over $m \in \mathcal{M}$.
\end{proof}

\subsection{Proofs of Propositions \ref{prop:overfit_general} and \ref{prop:underfit_general}}

We will need several lemmas before the proof of Propositions \ref{prop:overfit_general} and \ref{prop:underfit_general}.

\begin{lemma}\label{lem:global_test_general}
	Let Assumption \ref{assump:laplace_cond_kl} hold. Let $\mathcal{F}$ be a function class defined on the sample space $\mathfrak{X}$. Suppose that $N :\R_{\geq 0}\to \R_{\geq 0}$ is a non-increasing function such that for some $\epsilon_0\geq \sqrt{2/(c_2\wedge c_3)} \cdot d_0$ and every $\epsilon\geq \epsilon_0$, the following entropy estimate holds: 
	\begin{align*}
	\mathcal{N}\left(c_5\epsilon, \{f \in \mathcal{F}: \epsilon<d_n(f,f_0)\leq 2\epsilon\},d_n\right)\leq N(\epsilon).
	\end{align*} 
	Then for any $\epsilon\geq \epsilon_0$, there exists some test $\phi_n$ such that 
	\begin{align*}
	P_{f_0}^{(n)} \phi_n \leq   {c_6 N (\epsilon)e^{-c_7 n\epsilon^2}}/(1-e^{-c_7n\epsilon^2 }),\quad  \sup_{f\in \mathcal{F}: d_n(f,f_0)\geq \epsilon} P_f^{(n)} (1-\phi_n)\leq c_6 e^{-c_7 n\epsilon^2}.
	\end{align*}
	The constants $c_5,c_6,c_7$ are taken from Lemma \ref{lem:local_test_general}.
\end{lemma}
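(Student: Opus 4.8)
The plan is the standard peeling/shell construction: build $\phi_n$ by taking a supremum of the local tests from Lemma~\ref{lem:local_test_general} over a dyadic decomposition of $\{f\in\mathcal{F}:d_n(f,f_0)\ge\epsilon\}$. Fix $\epsilon\ge\epsilon_0$ and write this set as $\bigcup_{j\ge 0}S_j$ with $S_j\equiv\{f\in\mathcal{F}:2^j\epsilon<d_n(f,f_0)\le 2^{j+1}\epsilon\}$ (the single level $d_n(f,f_0)=\epsilon$ being folded into $S_0$, a harmless modification). Applying the entropy hypothesis at scale $2^j\epsilon\ge\epsilon_0$, I cover $S_j$ by balls $\{g:d_n(g,f_1^{(j,k)})\le c_5 2^j\epsilon\}$ with centers $f_1^{(j,k)}\in S_j$, $1\le k\le K_j$, and $K_j\le N(2^j\epsilon)$.

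\textbf{Local tests and the patched test.} For each center, Lemma~\ref{lem:local_test_general} with $f_1=f_1^{(j,k)}$ yields a test $\phi_{j,k}$ with $P_{f_0}^{(n)}\phi_{j,k}+P_f^{(n)}(1-\phi_{j,k})\le c_6 e^{-c_7 n d_n^2(f_0,f_1^{(j,k)})}$ uniformly over $\{f:d_n^2(f,f_1^{(j,k)})\le c_5 d_n^2(f_0,f_1^{(j,k)})\}$. Since $d_n(f_0,f_1^{(j,k)})>2^j\epsilon$ and $c_5\le 1/4$ forces $c_5\le\sqrt{c_5}$, this set contains the covering ball of radius $c_5 2^j\epsilon$, and moreover $e^{-c_7 n d_n^2(f_0,f_1^{(j,k)})}\le e^{-c_7 n 4^j\epsilon^2}$. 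I then set $\phi_n\equiv\sup_{j\ge 0}\max_{1\le k\le K_j}\phi_{j,k}$.

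\textbf{The two error bounds.} The type-II bound is immediate: any $f$ with $d_n(f,f_0)\ge\epsilon$ lies in some $S_j$, hence in a covering ball around some $f_1^{(j,k)}$, so $P_f^{(n)}(1-\phi_n)\le P_f^{(n)}(1-\phi_{j,k})\le c_6 e^{-c_7 n 4^j\epsilon^2}\le c_6 e^{-c_7 n\epsilon^2}$. For the type-I bound, a union bound together with monotonicity of $N$ and the elementary inequality $4^j\ge j+1$ gives
\[
P_{f_0}^{(n)}\phi_n\le\sum_{j\ge 0}K_j\,c_6 e^{-c_7 n 4^j\epsilon^2}\le c_6 N(\epsilon)\sum_{j\ge 0}e^{-c_7 n(j+1)\epsilon^2}=\frac{c_6 N(\epsilon)e^{-c_7 n\epsilon^2}}{1-e^{-c_7 n\epsilon^2}},
\]
valid when $e^{-c_7 n\epsilon^2}<1$, which is precisely the claimed bound with the $(\cdot)_+$ in place only to make the statement vacuously true otherwise.

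\textbf{Main obstacle.} There is no substantive difficulty; the work is all bookkeeping. The two places that demand care are: (i) matching the entropy scale $c_5 2^j\epsilon$ against the radius $\sqrt{c_5}\,d_n(f_0,f_1^{(j,k)})$ of the local-test ball, which is exactly why one needs $c_5\le 1$ (guaranteed by $c_5\le 1/4$ in Lemma~\ref{lem:local_test_general}); and (ii) the bound $4^j\ge j+1$ for $j\ge 0$, which is what turns the type-I series into the stated $1/(1-e^{-c_7 n\epsilon^2})$ factor rather than a larger absolute constant.
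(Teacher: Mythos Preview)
Your proof is correct and follows essentially the same peeling-plus-local-test strategy as the paper's own argument. The only cosmetic difference is that the paper uses linear shells $\mathcal{F}_j=\{f:j\epsilon<d_n(f,f_0)\le 2j\epsilon\}$ (for $j\ge 1$) rather than your dyadic shells, and correspondingly bounds the type-I series via $j^2\ge j$ instead of your $4^j\ge j+1$; both routes yield the identical geometric-series factor $e^{-c_7 n\epsilon^2}/(1-e^{-c_7 n\epsilon^2})$.
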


\begin{lemma}\label{lem:posterior_denum_control}
	Fix $\epsilon>0$. Let Assumption \ref{assump:laplace_cond_kl} holds for some $d_0$ such that $\epsilon\geq \sqrt{2/(c_2\wedge c_3)}\cdot d_0$. Suppose that $\Pi$ is a probability measure on $\{f \in \mathcal{F}: d_n(f,f_0)\leq \epsilon\}$. Then for every $C>0$, there exists some $C'>0$ depending on $C,\kappa$ such that $
	P_{f_0}^{(n)}\big(\int  {p_f^{(n)}}/{p_{f_0}^{(n)}}\ \d{\Pi(f)}\leq e^{-(C+c_3)n\epsilon^2}\big)\leq c_1 e^{-C' n\epsilon^2}$.
\end{lemma}

The proof of these lemmas can be found in Appendix \ref{section:proof_lemma_main_result}.

\begin{proof}[Proof of Proposition \ref{prop:overfit_general}]
	Fix $m' \in \mathcal{M}$ with $m'\geq m$. 
	Now we invoke Lemma \ref{lem:global_test_general} with $\mathcal{F}\equiv \mathcal{F}_{m'}$, $f_0\equiv {f_{0,m}} \in \mathcal{F}_m\subset \mathcal{F}_{m'}$ [since $m'\geq m$], $\epsilon_{0}\equiv \delta_{n,m'}$ and $\log N (\epsilon) \equiv (c_7/2)n\delta_{n,m'}^2$ for $\epsilon= \epsilon_0$  to see that, there exists some test $\phi_{n,m'}$ such that
	\begin{align}\label{ineq:generic_pdim_1}
	P_{f_{0,m}}^{(n)}\phi_{n,m'} \leq {c_6 e^{\log N(\epsilon)-c_7n \delta_{n,m'}^2}}/(1-e^{-c_7n\delta_{n,m'}^2})\leq 2c_6 e^{-c_7 n\delta_{n,m'}^2/2},
	\end{align}
	and that 
	\begin{align}\label{ineq:generic_pdim_2}
	\sup_{f \in \mathcal{F}_{m'}:d_n^2(f,f_{0,m})\geq  \delta_{n,m'}^2} P_f^{(n)}(1-\phi_{n,m'})\leq c_6 e^{-c_7 n\delta_{n,m'}^2}.
	\end{align}
	Note that here in (\ref{ineq:generic_pdim_1}) we used the fact that $n\delta_{n,m'}^2\geq 2/c_7$ by definition of $\delta_{n,m'}$. Now for the fixed $j, m $ as in the statement of the proposition, we let $
	\phi_n:= \sup_{m' \in \mathcal{I}: m'\geq j\mathfrak{h}m} \phi_{n,m'}$ 
	be a global test for big models. Then by (\ref{ineq:generic_pdim_1}), 
	\begin{align*}
	P_{f_{0,m}}^{(n)} \phi_n \leq \sum_{m' \geq j \mathfrak{h} m } P_{f_{0,m}}^{(n)} \phi_{n,m'}\leq \sum_{m' \geq j \mathfrak{h} m } 2c_6 e^{-c_7 n\delta_{n,m'}^2/2}  \leq  4c_6 e^{-(c_7/2\mathfrak{c}^2)nj \mathfrak{h}\delta_{n,m}^2}.
	\end{align*} 
	Here we used the left side of (\ref{cond:suplinearity_delta}). This implies that for any random variable $U \in [0,1]$, we have
	\begin{align}\label{ineq:generic_pdim_3}
	P_{f_{0,m}}^{(n)}U\cdot  \phi_n \leq P_{f_{0,m}}^{(n)}\phi_n\leq  4c_6 e^{-(c_7/2\mathfrak{c}^2)nj \mathfrak{h}\delta_{n,m}^2}.
	\end{align}
	On the power side, with $m'=j\mathfrak{h}m$ applied to (\ref{ineq:generic_pdim_2}) we see that 
	\begin{align}\label{ineq:generic_pdim_4}
	&\sup_{ \substack{f \in \mathcal{F}_{j\mathfrak{h}m}:\\ d_n^2(f,f_{0,m})\geq \mathfrak{c}^2 (j\mathfrak{h})^\gamma \delta_{n,m}^2}} P_{f}^{(n)}(1-\phi_n) \leq \sup_{\substack{f \in \mathcal{F}_{j\mathfrak{h}m}:\\ d_n^2(f,f_{0,m})\geq \delta_{n,j\mathfrak{h}m}^2}} P_{f}^{(n)}(1-\phi_n) \\
	&\leq c_6e^{- c_7n\delta_{n,j\mathfrak{h}m}^2}\leq 2c_6 e^{-(c_7/\mathfrak{c}^2)nj \mathfrak{h}\delta_{n,m}^2}.\nonumber
	\end{align}
	The first inequality follows from the right side of (\ref{cond:suplinearity_delta}) since $\mathfrak{c}^2 (j\mathfrak{h})^\gamma \delta_{n,m}^2\geq \delta_{n,j\mathfrak{h}m}^2$, and the last inequality follows from the left side of (\ref{cond:suplinearity_delta}). On the other hand, by applying Lemma \ref{lem:posterior_denum_control} with $C=c_3$ and  $\epsilon^2\equiv {c_7 j\mathfrak{h}\delta_{n,m}^2}/{8c_3\mathfrak{c}^2}$, we see that there exists some event $\mathcal{E}_n$ such that 
	\begin{align*}
	P_{f_{0,m}}^{(n)}(\mathcal{E}_n^c) \leq c_1 e^{- C' {c_7 n j\mathfrak{h}\delta_{n,m}^2}/{8c_3\mathfrak{c}^2}}
	\end{align*}
	and it holds on the event $\mathcal{E}_n$ that
	\begin{align}\label{ineq:generic_pdim_5}
	&\int {p_f^{(n)}}/{p_{f_{0,m}}^{(n)} }\ \d{\Pi(f)}\geq \lambda_{n}(m) \int_{\{f \in \mathcal{F}_m: d_n^2(f,f_{0,m})\leq {c_7 j\mathfrak{h}\delta_{n,m}^2}/{8c_3\mathfrak{c}^2}  \}}  {p_f^{(n)}}/{p_{f_{0,m}}^{(n)} }\ \d{\Pi_{n,m}(f)}\\
	&\geq \lambda_{n}(m)e^{-\frac{c_7n j\mathfrak{h}\delta_{n,m}^2}{4\mathfrak{c}^2}} \Pi_{n,m}\left(\left\{f \in \mathcal{F}_m: d_n^2(f,f_{0,m})\leq {c_7 j\mathfrak{h}\delta_{n,m}^2}/{8c_3\mathfrak{c}^2} \right\}\right).\nonumber
	\end{align}
	Note that
	\begin{align}
	\label{ineq:generic_pdim_6}
	& P_{f_{0,m}}^{(n)} \Pi_n\big(f \in \mathcal{F}: d_n^2(f,f_{0,m})>  \mathfrak{c}^2 (j\mathfrak{h})^\gamma \delta_{n,m}^2 \big\lvert X^{(n)}\big)(1-\phi_n)\bm{1}_{\mathcal{E}_n}\\
	& = P_{f_{0,m}}^{(n)} \bigg[\frac{\int_{ f \in \mathcal{F}: d_n^2(f,f_{0,m})>  \mathfrak{c}^2 (j\mathfrak{h})^\gamma \delta_{n,m}^2}  {p_f^{(n)}}/{p_{f_{0,m}}^{(n)} }\ \d{\Pi_n(f)} }{\int  {p_f^{(n)}}/{p_{f_{0,m} }^{(n)} }\ \d{\Pi_n(f)}}(1-\phi_n)\bm{1}_{\mathcal{E}_n}\bigg]\nonumber\\
	& \leq \frac{e^{{c_7n j\mathfrak{h}\delta_{n,m}^2}/{4\mathfrak{c}^2}}}{\lambda_{n}(m) \Pi_{n,m}(\{f \in \mathcal{F}_m: d_n^2(f,f_{0,m})\leq {c_7 j\mathfrak{h}\delta_{n,m}^2}/{8c_3\mathfrak{c}^2} \}) }\nonumber\\
	&\qquad \qquad\times  P_{f_{0,m}}^{(n)} \bigg[ \int_{ f \in \mathcal{F}: d_n^2(f,f_{0,m})> \mathfrak{c}^2 (j\mathfrak{h})^\gamma \delta_{n,m}^2 }  {p_f^{(n)}}/{p_{f_{0,m}}^{(n)} }\ \d{\Pi_n(f)} (1-\phi_n)\bigg] \nonumber\\
	& \equiv (I)\cdot (II)\nonumber
	\end{align}
	where the inequality follows from (\ref{ineq:generic_pdim_5}). On the other hand, the expectation term in the above display can be further calculated as follows: 
	\begin{align*}
	&(II) =\int_{ f \in \mathcal{F}: d_n^2(f,f_{0,m})>  \mathfrak{c}^2 (j\mathfrak{h})^\gamma \delta_{n,m}^2 } P_f^{(n)}(1-\phi_n)\ \d{\Pi_n(f)}\\
	& \leq \sup_{ f \in \mathcal{F}_{j\mathfrak{h}m}: d_n^2(f,f_{0,m})> \mathfrak{c}^2 (j\mathfrak{h})^\gamma \delta_{n,m}^2  }P_f^{(n)}(1-\phi_n) + \Pi_n\big(\mathcal{F}\setminus \mathcal{F}_{j\mathfrak{h} m}\big)\\ 
	&\leq 2c_6 e^{-(c_7/\mathfrak{c}^2)nj \mathfrak{h}\delta_{n,m}^2}+  4 e^{- (1/\mathfrak{c}^2) n j\mathfrak{h}\delta_{n,m}^2}\leq 6c_6 e^{-(c_7/\mathfrak{c}^2)nj \mathfrak{h}\delta_{n,m}^2}.
	\end{align*} 
	The first term in the second inequality follows from (\ref{ineq:generic_pdim_4}) and the second term follows from (P1) in Assumption \ref{assump:prior_mass_general} along with the left side of (\ref{cond:suplinearity_delta}).  By (P1)-(P2) in Assumption \ref{assump:prior_mass_general} and $j\geq 8\mathfrak{c}^2/c_7\mathfrak{h}$, 
	\begin{align*}
	P_{f_{0,m}}^{(n)} \Pi_n\big(f \in \mathcal{F}: d_n^2(f,f_{0,m})>  \mathfrak{c}^2 (j\mathfrak{h})^\gamma \delta_{n,m}^2 \big\lvert X^{(n)}\big)(1-\phi_n)\bm{1}_{\mathcal{E}_n}  \leq C e^{-(c_7/4\mathfrak{c}^2)nj \mathfrak{h}\delta_{n,m}^2}.
	\end{align*} 
	We conclude (\ref{ineq:overfit_oracle}) from (\ref{ineq:generic_pdim_3}), probability estimate on $\mathcal{E}_n^c$.
\end{proof}

\begin{proof}[Proof of Proposition \ref{prop:underfit_general}]
	The proof largely follows the same lines as that of Proposition \ref{prop:overfit_general}. See Appendix \ref{section:proof_lemma_main_result} for details.
\end{proof}

\subsection{Completion of proof of Theorem \ref{thm:general_ms}}

\begin{proof}[Proof of  (\ref{ineq:post_model_selection_general})]
	For any $m \in \mathcal{M}$ such that $\delta_{n,m}^2\geq d_n^2(f_0,f_{0,m})$, following the similar reasoning in (\ref{ineq:generic_pdim_6}) with $j=8\mathfrak{c}^2/c_7\mathfrak{h}$, 
	\begin{align*}
	&P_{f_{0,m}}^{(n)} \Pi_n\big(f \notin \mathcal{F}_{j\mathfrak{h}m}\lvert X^{(n)}\big)\bm{1}_{\mathcal{E}_n}\\
	&\leq \frac{e^{{c_7n j\mathfrak{h}\delta_{n,m}^2}/{4\mathfrak{c}^2}}}{\lambda_{n}(m) \Pi_{n,m}\left(\left\{f \in \mathcal{F}_m: d_n^2(f,f_{0,m})\leq {c_7 j\mathfrak{h}\delta_{n,m}^2}/{8c_3\mathfrak{c}^2} \right\}\right) }\cdot \Pi\big(\mathcal{F}\setminus \mathcal{F}_{j\mathfrak{h}m}\big)\\
	&\leq C e^{-(c_7/4\mathfrak{c}^2)nj \mathfrak{h}\delta_{n,m}^2}.
	\end{align*} 
	From here (\ref{ineq:post_model_selection_general}) can be established by controlling the probability estimate for $\mathcal{E}_n^c$ as in Proposition \ref{prop:overfit_general}, and a change of measure argument using Lemma \ref{lem:change_variable}.
\end{proof}

\subsection{Proof of Lemma \ref{lem:local_test_general}}

\begin{proof}[Proof of Lemma \ref{lem:local_test_general}]
	Without loss of generality, we assume that $d_0=0$. Let $c>0$ be a constant to be specified later. Consider the test statistics $
	\phi_n\equiv \bm{1}\big(\log ({p_{f_0}^{(n)}}/{p_{f_1}^{(n)}})\leq -c n d_n^2(f_0,f_1)\big).$
	We first consider type I error. Under the null hypothesis, we have for any $\lambda_1\in(0,1/\kappa_\Gamma)$, 
	\begin{align*}
	P_{f_0}^{(n)} \phi_n  &\leq P_{f_0}^{(n)}\big[\big(\log ({p_{f_0}^{(n)}}/{p_{f_1}^{(n)}})-P_{f_0}\log ({p_{f_0}^{(n)}}/{p_{f_1}^{(n)}})\big)\leq -(c+c_2) n d_n^2(f_0,f_1) \big]\\ 
	&\leq c_1 e^{\psi_{\kappa_g nd_n^2(f_0,f_1),\kappa_\Gamma}(-\lambda_1)}\cdot e^{-\lambda_1(c+c_2)nd_n^2(f_0,f_1)}.
	\end{align*} 
	Choosing $\lambda_1= \min\{1/(\kappa_\Gamma), (c+c_2)/(2\kappa_g)\}$ we get $
	P_{f_0}^{(n)} \phi_n \leq c_1 e^{-C_1 nd_n^2(f_0,f_1)}$
	where $C_1=\lambda_1(c+c_2)/2$. Next we handle the type II error. To this end, for a constant $c'>c_3c_5$ to be specified later, consider the event $
	\mathcal{E}_n\equiv \bm{1}\big( \log({p_f^{(n)}}/{p_{f_1}^{(n)}})<c' n d_n^2(f_0,f_1)\big)$, where $f \in \mathcal{F}$ is such that $d_n^2(f,f_1)\leq c_5 d_n^2(f_0,f_1)$, and $\lambda_2 \in (0,1/\kappa_\Gamma)$, 
	\begin{align*}
	&P_{f}^{(n)} (\mathcal{E}_n^c)  \leq P_f^{(n)} \big(\log ({p_f^{(n)}}/{p_{f_1}^{(n)}})-P_f^{(n)}\log ({p_f^{(n)}}/{p_{f_1}^{(n)}})>c'n d_n^2(f_0,f_1)-c_3nd_n^2(f,f_1)\big)\\
	& \leq  P_f^{(n)} \big(\log({p_f^{(n)}}/{p_{f_1}^{(n)}})-P_f^{(n)}\log({p_f^{(n)}}/{p_{f_1}^{(n)}})>(c'-c_3c_5)n d_n^2(f_0,f_1)\big)\\
	& \leq e^{-\lambda_2 (c'-c_3c_5)n d_n^2(f_0,f_1)}\cdot c_1 e^{\psi_{\kappa_g n d_n^2(f,f_1),\kappa_\Gamma}(\lambda_2)}.
	\end{align*} 
	By choosing $\lambda_2= \min\{1/(\kappa_\Gamma), (c'-c_3c_5)/(2\kappa_g)\}$, we see that $
	P_f^{(n)}(\mathcal{E}_n^c)\leq c_1 e^{-C_2 n d_n^2(f_0,f_1)}$
	where $C_2=\lambda_2(c'-c_3c_5)/2$. On the other hand, using the symmetry of $d_n(\cdot,\cdot)$ and for $0<c<c_2$, $\lambda_3\in (0,1/\kappa_\Gamma)$, 
	\begin{align*}
	&P_{f_1}^{(n)}\big(1-\phi_n)  = P_{f_1}^{(n)}\big(\log ({p_{f_1}^{(n)}}/{p_{f_0}^{(n)}})< c n d_n^2(f_0,f_1)\big) \\ 
	&= P_{f_1}^{(n)}\big(\log ({p_{f_1}^{(n)}}/{p_{f_0}^{(n)}})-P_{f_1}^{(n)}\log ({p_{f_1}^{(n)}}/{p_{f_0}^{(n)}})< -(c_2-c) n d_n^2(f_0,f_1)\big)\\ 
	&\leq e^{-\lambda_3 (c_2-c)nd_n^2(f_0,f_1)}\cdot c_1 e^{\psi_{\kappa_g nd_n^2(f_0,f_1),\kappa_\Gamma}(-\lambda_3)}.
	\end{align*} 
	Choosing $\lambda_3= \min\{1/(\kappa_\Gamma), (c_2-c)/(2\kappa_g)\}$ we see that $
	P_{f_1}^{(n)}(1-\phi_n)\leq c_1 e^{-C_3 n d_n^2(f_0,f_1)}$, 
	where $C_3 = \lambda_3(c_2-c)/2$. Hence it follows that 
	\begin{align*}
	P_f^{(n)}(1-\phi_n)&= P_{f_1}^{(n)}\big[\big(1-\phi_n\big)\cdot ({p_f^{(n)}}/{p_{f_1}^{(n)}})\big(\bm{1}_{\mathcal{E}_n}+\bm{1}_{\mathcal{E}_n^c}\big)\big]\\
	&\leq e^{c' n d_n^2(f_0,f_1)} P_{f_1}^{(n)}\big(1-\phi_n)+ c_1e^{-C_2 n d_n^2(f_0,f_1)}\\
	&\leq 2c_1 e^{-\min\{(C_3-c'),C_2\}nd_n^2(f_0,f_1)}. 
	\end{align*} 
	Now it suffices to choose $c,c',c_5$ such that $c'>c_3c_5$, $c<c_2$ and $c'<C_3$. To this end, we choose $c=c_2/2$, $
	c' \equiv \frac{C_3}{2}=\frac{\lambda_3(c_2-c)}{4}=\frac{\lambda_3c_2}{8}=\frac{c_2}{8\kappa_\Gamma}\wedge \frac{c_2^2}{32 \kappa_g}$, and $
	c_5 = \frac{c'}{2c_3}\wedge \frac{1}{4} = \frac{c_2}{16c_3\kappa_\Gamma}\wedge \frac{c_2^2}{64c_3 \kappa_g}\wedge \frac{1}{4}$, 
	completing the proof.
\end{proof}

\subsection{Proof of Lemma \ref{lem:change_variable}}
We recall a standard fact.
\begin{lemma}\label{lem:berstein_ineq}
	If a random variable $X$ satisfies $\E e^{\lambda X}\leq e^{\psi_{v,c}(\lambda)}$, then for $t>0$, $
	\Prob(X\geq t)\vee \Prob(X\leq -t)\leq e^{-\frac{t^2}{2(2v+ct)}}$. 
\end{lemma}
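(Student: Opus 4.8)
The plan is to use the standard Cram\'er--Chernoff method. Since $\psi_{v,c}$ is only defined on $(-1/c,1/c)$, the hypothesis is understood as $\E\exp(\lambda X)\leq \exp(\psi_{v,c}(\lambda))$ for all $\abs{\lambda}<1/c$. For the upper tail, fix $t>0$ and apply Markov's inequality to $e^{\lambda X}$ for $\lambda\in[0,1/c)$:
\beqas
\Prob(X\geq t)\leq e^{-\lambda t}\,\E e^{\lambda X}\leq \exp\big(-\lambda t+\psi_{v,c}(\lambda)\big)=\exp\Big(-\lambda t+\frac{v\lambda^2}{2(1-c\lambda)}\Big).
\eeqas
It then remains to choose $\lambda$ to make the exponent as negative as possible.

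Rather than performing the full minimization, I would simply substitute the minimizer $\lambda^\ast:=t/(v+ct)$. This is admissible: $\lambda^\ast<t/(ct)=1/c$ because $v>0$. A direct computation gives $1-c\lambda^\ast=v/(v+ct)$, hence $\psi_{v,c}(\lambda^\ast)=\frac{v(\lambda^\ast)^2}{2(1-c\lambda^\ast)}=\frac{t^2}{2(v+ct)}$ and $\lambda^\ast t=\frac{t^2}{v+ct}$, so that
\beqas
-\lambda^\ast t+\psi_{v,c}(\lambda^\ast)=-\frac{t^2}{v+ct}+\frac{t^2}{2(v+ct)}=-\frac{t^2}{2(v+ct)},
\eeqas
which is exactly the claimed bound for $\Prob(X\geq t)$.

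For the lower tail, I would note that $\psi_{v,c}$ is even in $\lambda$, so the hypothesis also reads $\E e^{\lambda(-X)}=\E e^{-\lambda X}\leq e^{\psi_{v,c}(-\lambda)}=e^{\psi_{v,c}(\lambda)}$ for $\abs{\lambda}<1/c$; applying the argument of the previous paragraph to $-X$ yields $\Prob(X\leq -t)=\Prob(-X\geq t)\leq \exp\big(-t^2/2(v+ct)\big)$. Taking the maximum of the two identical bounds gives the statement. There is no genuine obstacle here; the only point requiring a line of care is verifying that the candidate optimizer $\lambda^\ast$ lies in the admissible range $[0,1/c)$, which is immediate from $v>0$.
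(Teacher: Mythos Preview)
Your argument is correct: the Cram\'er--Chernoff bound with the explicit choice $\lambda^\ast=t/(v+ct)$ yields the stated tail, and the evenness of $\psi_{v,c}$ handles the lower tail by symmetry. The paper itself does not supply a proof of this lemma---it is introduced with ``We recall a standard fact''---so there is nothing to compare against; your derivation is exactly the standard one (cf.\ \cite{boucheron2013concentration}, Section~2.4). One cosmetic remark: $\lambda^\ast$ is not literally the minimizer of $-\lambda t+\psi_{v,c}(\lambda)$ when $c>0$ (the true Legendre dual gives a slightly sharper bound), but since any admissible $\lambda$ suffices for an upper bound, this does not affect the validity of your proof.
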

\begin{proof}
	Noting that $\E e^{\lambda X}\leq e^{\psi_{v,c}(\lambda)}\leq e^{\frac{(2v)\lambda^2}{2(1-c\abs{\lambda})}}$. Then using arguments in page 29 of \cite{boucheron2013concentration} and Exercise 2.8 therein, we obtain the claim.
\end{proof}

\begin{proof}[Proof of Lemma \ref{lem:change_variable}]
	For $c=2c_3$, consider the event $
	\mathcal{E}_n\equiv \big\{\log ({p_{f_0}^{(n)}}/{p_{f_1}^{(n)}})<  cj n\delta_n^2\big\}.$
	By Lemma \ref{lem:berstein_ineq}, we have for some constant $C>0$ depending on $c_1,c_3$ and $\kappa$, 
	\begin{align*}
	P_{f_0}^{(n)}(\mathcal{E}_n^c)&\leq P_{f_0}^{(n)}\big(\log ({p_{f_0}^{(n)}}/{p_{f_1}^{(n)}})-P_{f_0}^{(n)}\log ({p_{f_0}^{(n)}}/{p_{f_1}^{(n)}})\geq cjn\delta_n^2-c_3nd_n^2(f_0,f_1)\big)\\
	&\stackrel{(*)}{\leq} P_{f_0}^{(n)}\big(\log ({p_{f_0}^{(n)}}/{p_{f_1}^{(n)}})-P_{f_0}^{(n)}\log ({p_{f_0}^{(n)}}/{p_{f_1}^{(n)}})\geq c_3 j n \delta_n^2\big)\leq Ce^{-C^{-1}nj\delta_n^2}.
	\end{align*} 
	Here in $(\ast)$ we used $d_n(f_0,f_1)\leq \delta_n$. Then 
	\begin{align*}
	P_{f_0}^{(n)} U&=P_{f_0}^{(n)} U\bm{1}_{\mathcal{E}_n} + P_{f_0}^{(n)} U\bm{1}_{\mathcal{E}_n^c}\leq P_{f_1}^{(n)} \big[U ({p_{f_0}^{(n)}}/{p_{f_1}^{(n)}})\bm{1}_{\mathcal{E}_n}\big] + Ce^{-C^{-1}nj\delta_n^2}\\
	&\leq P_{f_1}^{(n)}U\cdot e^{cnj \delta_n^2}+Ce^{-C^{-1}nj\delta_n^2},
	\end{align*}
	completing the proof.
\end{proof}

\subsection{Proof of Proposition \ref{prop:prior_generic}}

\begin{proof}[Proof of Proposition \ref{prop:prior_generic}]
	Let $\Sigma_n=\sum_{m}e^{-2n\delta_{n,m}^2}$ be the total mass. Then $
	e^{-2n\delta_{n,1}^2}\leq \Sigma_n\leq 2e^{-2n\delta_{n,1}^2/\mathfrak{c}^2}\leq 2$. The first condition of (P1) is trivial. We only need to verify the second condition of (P1): 
	\begin{align*}
	\sum_{k>\mathfrak{h}m}\lambda_{n}(k)=\Sigma_n^{-1}\sum_{k>\mathfrak{h}m} e^{-2n\delta_{n,k}^2}\leq e^{2n\delta_{n,1}^2} \cdot 2e^{-(2\mathfrak{h}/\mathfrak{c}^2)n\delta_{n,m}^2}\leq 2e^{-2n\delta_{n,m}^2},
	\end{align*}
	where the first inequality follows from (\ref{cond:suplinearity_delta}) and the second by the condition $\mathfrak{h}\geq 2\mathfrak{c}^2$. 
\end{proof}

\section{Proofs in Section \ref{section:models} Part I: results for models}

\begin{proof}[Proof of Lemma \ref{lem:bernstein_regression}]
	Let $P_{\theta_0}^{(n)}$ denote the probability measure induced by the joint distribution of $(X_1,\ldots,X_n)$ when the underlying signal is $\theta_0$.
	
	First consider Gaussian regression case. Since 
	\begin{align*}
	\log ({p_{\theta_0}^{(n)}}/{p_{\theta_1}^{(n)}})(X^{(n)})&=\sum_{i=1}^n \bigg[-\frac{1}{2}(X_i-\theta_{0,i})^2+\frac{1}{2}(X_i-\theta_{1,i})^2\bigg],\\
	P_{\theta_0}^{(n)} \log ({p_{\theta_0}^{(n)}}/{p_{\theta_1}^{(n)}})&= \frac{1}{2} n \ell_n^2(\theta_0,\theta_1).
	\end{align*}
	we have 
	\begin{align*}
	& P_{\theta_0}^{(n)} e^{\lambda\big(\log ({p_{\theta_0}^{(n)}}/{p_{\theta_1}^{(n)}})(X^{(n)})-P_{\theta_0}^{(n)}\log ({p_{\theta_0}^{(n)}}/{p_{\theta_1}^{(n)}})\big)} \\
	& \leq P e^{\sum_{i=1}^n \epsilon_i \lambda\big(\theta_{0,i}-\theta_{1,i}\big)}\leq e^{\lambda^2 n\ell_n^2(\theta_0,\theta_1)/2}.
	\end{align*}

	Secondly consider Laplace regression. Note 
	\begin{align*}
	\log ({p_{\theta_0}^{(n)}}/{p_{\theta_1}^{(n)}})(X^{(n)})&=\sum_{i=1}^n \big[\abs{X_i-\theta_{1,i}}-\abs{X_i-\theta_{0,i}}\big],\\
	P_{\theta_0}^{(n)} \log ({p_{\theta_0}^{(n)}}/{p_{\theta_1}^{(n)}}) &= \sum_{i=1}^n \E \big[\abs{\epsilon_i+\theta_{0,i}-\theta_{1,i}}-\abs{\epsilon_i}\big].
	\end{align*}
	For any $v \in \R$, let $\varphi(v)\equiv \E \big(\abs{\epsilon+v}-\abs{\epsilon}\big)$. Clearly $\varphi$ is twice differentiable with a strictly positive second derivative on compacta. Since $\abs{\theta_{0,i}-\theta_{1,i}}\leq M$, this implies that there exists some $C_M>1$ such that $C_M^{-1} \abs{\theta_{0,i}-\theta_{1,i}}^2\leq \varphi(\theta_{0,i}-\theta_{1,i})\leq C_M\abs{\theta_{0,i}-\theta_{1,i}}^2$. Hence $
	P_{\theta_0}^{(n)} \log ({p_{\theta_0}^{(n)}}/{p_{\theta_1}^{(n)}}) \asymp_M n \ell_n^2(\theta_0,\theta_1)$.
	To verify the local Gaussianity condition, note that 
	\begin{align*}
	\abs{Z_i}\equiv \abs{ \big[\abs{\epsilon_i+\theta_{0,i}-\theta_{1,i}}-\abs{\epsilon_i}\big]- \E \big[\abs{\epsilon_i+\theta_{0,i}-\theta_{1,i}}-\abs{\epsilon_i}\big]  }\leq 2\abs{\theta_{0,i}-\theta_{1,i}},
	\end{align*} 
	so it follows from the Hoffmann-Jorgensen inequality (cf. Proposition A.1.6 of \cite{van1996weak}) that 
	\begin{align*}
	&\biggpnorm{\log \log ({p_{\theta_0}^{(n)}}/{p_{\theta_1}^{(n)}})-P_{\theta_0}^{(n)}\log ({p_{\theta_0}^{(n)}}/{p_{\theta_1}^{(n)}})}{\psi_2} \\
	&= \biggpnorm{\sum_{i=1}^n Z_i }{\psi_2} \lesssim \biggpnorm{\sum_{i=1}^n Z_i}{1}+\bigg(\sum_{i=1}^n \pnorm{Z_i}{\psi_2}^2\bigg)^{1/2}\\
	& \lesssim \bigg(\sum_{i=1}^n Z_i^2\bigg)^{1/2}+\left(\sum_{i=1}^n \pnorm{Z_i}{\psi_2}^2\right)^{1/2}  \lesssim \big(n \ell_n^2(\theta_0,\theta_1)\big)^{1/2},
	\end{align*} 
	where $\pnorm{\cdot}{\psi}$ denotes the usual Orlicz norm given by $\pnorm{X}{\psi}\equiv \inf \{C>0: \E \psi(\abs{X}/C)\leq 1\}$, and $\psi_2(x)= e^{x^2}-1$. Hence 
	\begin{align*}
	P_{\theta_0}^{(n)} e^{\lambda(\log ({p_{\theta_0}^{(n)}}/{p_{\theta_1}^{(n)}})-P_{\theta_0}^{(n)}\log ({p_{\theta_0}^{(n)}}/{p_{\theta_1}^{(n)}}) } \leq e^{C\lambda^2 n\ell_n^2(\theta_0,\theta_1)},
	\end{align*}
	where $C>0$ is an absolute constant.

	Next consider binary regression. Note 
	\begin{align*}
	\log ({p_{\theta_0}^{(n)}}/{p_{\theta_1}^{(n)}})(X^{(n)}) &=\sum_{i=1}^n X_i \log \frac{\theta_{0,i}}{\theta_{1,i}}+(1-X_i) \log \frac{1-\theta_{0,i}}{1-\theta_{1,i}},\\
	P_{\theta_0^{(n)}} \log ({p_{\theta_0}^{(n)}}/{p_{\theta_1}^{(n)}})  &= \sum_{i=1}^n \theta_{0,i} \log \frac{\theta_{0,i}}{\theta_{1,i}}+(1-\theta_{0,i}) \log \frac{1-\theta_{0,i}}{1-\theta_{1,i}}.
	\end{align*}
	Using the inequality $cx\leq \log(1+x)\leq x$ for all $-1<x\leq c'$ for some $c>0$ depending on $c'>-1$ only, we have shown $P_{\theta_0^{(n)}} \log ({p_{\theta_0}^{(n)}}/{p_{\theta_1}^{(n)}})\asymp n\ell_n^2(\theta_0,\theta_1)$ under the assumed condition that $\Theta_n\subset [\eta,1-\eta]^n$. Now we verify the local Gaussianity condition: 
	\begin{align*}
	P_{\theta_0}^{(n)} e^{\lambda \big(\log ({p_{\theta_0}^{(n)}}/{p_{\theta_1}^{(n)}})-P_{\theta_0^{(n)}} \log ({p_{\theta_0}^{(n)}}/{p_{\theta_1}^{(n)}}) \big)} = P_{\theta_0}^{(n)} e^{\lambda \sum_{i=1}^n (X_i-\theta_{0,i})t_i}\leq e^{\lambda^2 \sum_{i=1}^n t_i^2/8}
	\end{align*}
	where $t_i\equiv t_i(\theta_0,\theta_1) =\log \big(\frac{\theta_{0,i}}{1-\theta_{0,i}}\cdot \frac{1-\theta_{1,i}}{\theta_{1,i}}\big)$ and the last inequality follows from Hoeffding's inequality (cf. Section 2.6 of \cite{boucheron2013concentration}). The claim follows by noting that $
	t_i^2 = \big[\log \big(\frac{\theta_{0,i}-\theta_{1,i}}{(1-\theta_{0,i})\theta_{1,i}}+1\big)\big]^2\asymp (\theta_{0,i}-\theta_{1,i})^2$
	by the assumed condition and the aforementioned inequality $\log(1+x)\asymp x$ in a constrained range.
	
	Finally consider Poisson regression. 
	It is easy to see that 
	\begin{align*}
	\log ({p_{\theta_0}^{(n)}}/{p_{\theta_1}^{(n)}})(X^{(n)}) &=\sum_{i=1}^n X_i \log \frac{\theta_{0,i}}{\theta_{1,i}} + (\theta_{1,i}-\theta_{0,i}),\\
	P_{\theta_0}^{(n)} \log ({p_{\theta_0}^{(n)}}/{p_{\theta_1}^{(n)}}) &=\sum_{i=1}^n \theta_{0,i} \log \frac{\theta_{0,i}}{\theta_{1,i}} + (\theta_{1,i}-\theta_{0,i}).
	\end{align*} 
	Note that for any $1/M\leq p,q\leq M$, 
	\begin{align*}
	p\log\frac{p}{q}-(p-q)=p\big(-\log\frac{q}{p}-1+\frac{q}{p}\big)\asymp p\cdot (\frac{q}{p}-1)^2\asymp (p-q)^2,
	\end{align*} 
	where in the middle we used the fact that $-\log x-1+x \asymp (x-1)^2$ for $x$ bounded away from $0$ and $\infty$. This shows that $P_{\theta_0}^{(n)} \log ({p_{\theta_0}^{(n)}}/{p_{\theta_1}^{(n)}})\asymp n\ell_n^2(\theta_0,\theta_1)$. Hence 
	\begin{align*}
	P_{\theta_0}^{(n)} e^{\lambda \big(\log ({p_{\theta_0}^{(n)}}/{p_{\theta_1}^{(n)}})-P_{\theta_0^{(n)}} \log ({p_{\theta_0}^{(n)}}/{p_{\theta_1}^{(n)}}) \big)}\leq P_{\theta_0}^{(n)} e^{\lambda \sum_{i=1}^n (X_i-\theta_{0,i})t_i}\leq e^{\sum_{i=1}^n \theta_{0,i}(e^{\lambda t_i}-1-\lambda t_i)},
	\end{align*} 
	where $t_i = \log(\theta_{0,i}/\theta_{1,i})$. Now for any $\abs{\lambda}\leq 1$, we have $e^{\lambda t_i}-1-\lambda t_i\asymp \lambda^2 t_i^2$. On the other hand, $\theta_{0,i}t_i^2 =\theta_{0,i}\left(\log (\theta_{0,i}/\theta_{1,i})\right)^2\asymp (\theta_{0,i}-\theta_{1,i})^2$, completing the proof.
\end{proof}

\begin{proof}[Proof of Corollary \ref{cor:regression_model}]
	The claim follows from Lemma \ref{lem:bernstein_regression} and Theorem \ref{thm:general_ms}.
\end{proof}

\begin{proof}[Proof of Lemma \ref{lem:bernstein_density_estimation}]
	Since the log-likelihood ratio for $X_1,\ldots,X_n$ can be decomposed into sums of the log-likelihood ratio for single samples, and the log-likelihood ratio is uniformly bounded over $\mathcal{F}$ (since $\mathcal{G}$ is bounded), classical Bernstein inequality applies to see that for any couple $(f_0,f_1)$, the local Gaussianity condition in Assumption \ref{assump:laplace_cond_kl} holds with $v=\kappa_g n \mathrm{Var}_{f_0}(\log f_0/f_1), c=\kappa_\Gamma$ where $\kappa_g,\kappa_\Gamma$ depend only on $\mathcal{G}$. Hence we only need to verify that $
	\mathrm{Var}_{f_0}(\log f_0/f_1)\lesssim h^2(f_0,f_1)$ and $ P_{f_0}(\log f_0/f_1)\asymp h^2(f_0,f_1)$. 
	This can be seen by Lemma 8 of \cite{ghosal2007posterior} and the fact that Hellinger metric is dominated by the Kullback-Leiber divergence.
\end{proof}

\begin{proof}[Proof of Corollary \ref{cor:density_est}]
	The claim follows from Lemma  \ref{lem:bernstein_density_estimation} and Theorem \ref{thm:general_ms}.
\end{proof}

\begin{lemma}\label{lem:log_exp_bound}
	Let $Z\geq 0$ be a non-negative random variable bounded by $M>0$. Then $\E \exp(Z)\leq \exp(e^M \E Z)$.
\end{lemma}
\begin{proof}
	Note that $
	\log \E \exp(Z)=\log (\E[\exp(Z)-1]+1)\leq \E[\exp(Z)-1]\leq e^M \E Z$, 
	where the last inequality follows from Taylor expansion $e^{x}-1=\sum_{k=1}^n x^k/k!\leq x\sum_{k\geq 1} M^{k-1}/k!\leq x e^M$ for $x\geq 0$.
\end{proof}

\begin{proof}[Proof of Lemma \ref{lem:bernstein_gaussian_autoregression}]
	We omit explicit dependence of $M$ on the notation $d_{r,M}$ and $r_M$ in the proof. Let $P_{f_0}^{(n)}$ denote the probability measure induced by the joint distribution of $(X_0,\ldots,X_n)$ where $X_0$ is distributed according to the stationary density $q_{f_0}$. Easy computation shows that 
	\begin{align*}
	\log ({p_{f_0}^{(n)}}/{p_{f_1}^{(n)}}) &= \sum_{i=0}^{n-1} \bigg[\epsilon_{i+1} (f_0(X_i)-f_1(X_i)) +\frac{1}{2}(f_0(X_i)-f_1(X_i))^2\bigg],\\
	P_{f_0}^{(n)} \log ({p_{f_0}^{(n)}}/{p_{f_1}^{(n)}}) &= \frac{n}{2}\int (f_0-f_1)^2 q_{f_0}\ \d{\lambda}.
	\end{align*} 
	Here $\lambda$ denotes the Lebesgue measure on $\R$. By the arguments on page 209 of \cite{ghosal2007convergence}, we see that $r\lesssim q_{f_0}\lesssim r$. Hence we only need to verify the local Gaussianity condition. By Cauchy-Schwarz,
	\begin{align}\label{ineq:berstein_autoregression_0}
	\big[P_{f_0}^{(n)}  e^{\lambda \log ({p_{f_0}^{(n)}}/{p_{f_1}^{(n)}})}\big]^2&\leq P_{f_0}^{(n)} e^{2\lambda \sum_{i=0}^{n-1}\epsilon_{i+1}(f_0(X_i)-f_1(X_i))}\\
	&\quad \times P_{f_0}^{(n)} e^{\lambda \sum_{i=0}^{n-1}(f_0(X_i)-f_1(X_i))^2}\equiv (I)\times (II).\nonumber
	\end{align}
	The first term $(I)$ can be handled by an inductive calculation. First note that for any $\abs{\mu}\leq 2$ and $X_1 \in \R$, 
	\begin{align}\label{ineq:berstein_autoregression_1}
	P_{p(\cdot|X_1)} e^{\mu^2 (f_0(X_2)-f_1(X_2)^2 }
	&\leq e^{e^{16M^2} \mu^2 P_{p(\cdot|X_1)} (f_0-f_1)(X_2)^2 }\leq e^{C_M\mu^2 d_r^2(f_0,f_1)}
	\end{align}
	where the first inequality follows from Lemma \ref{lem:log_exp_bound} and the second inequality follows from $r(\cdot)\lesssim p_f(\cdot|x)\lesssim r(\cdot)$ holds for all $x \in \R$ where the constant involved depends only on $M$. Let $S_n\equiv \sum_{i=0}^{n-1} \epsilon_{i+1}(f_0(X_i)-f_1(X_i))$ and $\bm{\epsilon}_n\equiv (\epsilon_1,\ldots,\epsilon_n)$. Then for $\abs{\lambda}\leq 1$, let $\mu\equiv 2\lambda$, 
	\begin{align*}
	&P_{f_0}^{(n)} e^{2\lambda S_n}=P_{f_0}^{(n)} e^{\mu S_n}= \E_{X_0, \bm{\epsilon}_{n-1}}\big[e^{\mu S_{n-1}} \E_{\epsilon_n}e^{ \mu \epsilon_n(f_0(X_{n-1})-f_1(X_{n-1}))}\big]\\
	&\leq \E_{X_0, \bm{\epsilon}_{n-1}}\big[e^{\mu S_{n-1}} e^{\mu^2(f_0(X_{n-1})-f_1(X_{n-1}) )^2/2}\big]\\
	&\leq \E_{X_0,\bm{\epsilon}_{n-2}} [e^{\mu S_{n-2}}\E_{\epsilon_{n-1}}e^{\mu \epsilon_{n-1}(f_0(X_{n-2})-f_1(X_{n-2}))+\mu^2(f_0(X_{n-1})-f_1(X_{n-1}) )^2/2} ]\\ 
	&\leq \E_{X_0,\bm{\epsilon}_{n-2}} \big[e^{\mu S_{n-2}}(\E_{\epsilon_{n-1}}e^{2\mu \epsilon_{n-1}(f_0(X_{n-2})-f_1(X_{n-2}))} )^{1/2}\\ 
	&\qquad\qquad \times (\E_{p(\cdot|X_{n-2})} e^{\mu^2(f_0(X_{n-1})-f_1(X_{n-1}) )^2})^{1/2}\big]\\
	&\leq \E_{X_0,\bm{\epsilon}_{n-2}} \big[e^{\mu S_{n-2}}e^{\mu^2 (f_0(X_{n-2})-f_1(X_{n-2}))^2} \big]\cdot e^{C_M \mu^2 d_r^2(f_0,f_1)/2}
	\end{align*} 
	where the last inequality follows from (\ref{ineq:berstein_autoregression_1}). Now we can iterate the above calculation to see that $
	(I)\leq e^{C_M \lambda^2 n d_r^2(f_0,f_1)}$.
	Next we consider $(II)$. Since for any non-negative random variables $Z_1,\ldots,Z_n$, we have $\E \prod_{i=1}^n Z_i \leq \prod_{i=1}^n (\E Z_i^n)^{1/n}$. So 
	\begin{align*}
	(II)\leq \prod_{i=1}^n (P_{f_0}^{(n)}e^{n\lambda (f_0(X_i)-f_1(X_i))^2} )^{1/n}=P_{q_{f_0}} e^{n\lambda (f_0(X_0)-f_1(X_0))^2},
	\end{align*}
	where the last inequality follows by stationarity. On the other hand, by Jensen's inequality,
	\begin{align*}
	e^{-\lambda P_{f_0}^{(n)} \log ({p_{f_0}^{(n)}}/{p_{f_1}^{(n)}}) }\leq e^{-\frac{\lambda n}{2}P_{q_{f_0}} (f_0-f_1)^2}\leq P_{q_{f_0}} e^{-\lambda n (f_0(X_0)-f_1(X_0))^2/2}.
	\end{align*} 
	Collecting the above estimates, we see that for $\abs{\lambda}\leq 1$, 
	\begin{align*}
	&P_{f_0^{(n)}}  e^{\lambda \log ({p_{f_0}^{(n)}}/{p_{f_1}^{(n)}}) -P_{f_0^{(n)}}\log ({p_{f_0}^{(n)}}/{p_{f_1}^{(n)}}) }\\
	&\leq \sqrt{(I)\cdot (II)}e^{-\lambda P_{f_0}^{(n)} \log ({p_{f_0}^{(n)}}/{p_{f_1}^{(n)}})}
	\leq e^{C_M'\lambda^2 n d_r^2(f_0,f_1)},
	\end{align*} 
	completing the proof.
\end{proof}

\begin{proof}[Proof of Corollary \ref{cor:rate_auto}]
	The claim follows from Lemma \ref{lem:bernstein_gaussian_autoregression} and Theorem \ref{thm:general_ms}.
\end{proof}

\begin{proof}[Proof of Lemma \ref{lem:bernstein_gaussian_time_series}]
	For any $g \in \mathcal{G}$, let $p_g^{(n)}$ denote the probability density function of a $n$-dimensional multivariate normal distribution with covariance matrix $\Sigma_g\equiv T_n(f_g)$, and $P_g^{(n)}$ the expectation taken with respect to the density $p_g^{(n)}$. Then for any $g_0, g_1 \in \mathcal{G}$,
	\begin{align}\label{ineq:bernstein_spectral_density_0}
	\log \frac{p_{g_0}^{(n)}}{p_{g_1}^{(n)}}(X^{(n)})& = -\frac{1}{2} (X^{(n)})^\top (\Sigma_{g_0}^{-1}-\Sigma_{g_1}^{-1})X^{(n)}-\frac{1}{2}\log\det(\Sigma_{g_0}\Sigma_{g_1}^{-1}),\\
	P_{g_0}^{(n)}\log \frac{p_{g_0}^{(n)}}{p_{g_1}^{(n)}}& = -\frac{1}{2} \trace(I-\Sigma_{g_0}\Sigma_{g_1}^{-1})-\frac{1}{2}\log\det(\Sigma_{g_0}\Sigma_{g_1}^{-1})\nonumber
	\end{align}
	where we used the fact that for a random vector $X$ with covariance matrix $\Sigma$, $\E X^\top A X=\trace(\Sigma A)$. Let $G\equiv \Sigma_{g_0}^{-1/2}X^{(n)}\sim \mathcal{N}(0,I)$ under $P_{g_0}^{(n)}$, and $B\equiv I-\Sigma_{g_0}^{1/2}\Sigma_{g_1}^{-1}\Sigma_{g_0}^{1/2}$, then 
	\begin{align*}
	Y_n&\equiv \log ({p_{g_0}^{(n)}}/{p_{g_1}^{(n)}})(X^{(n)})-P_{g_0}^{(n)}\log ({p_{g_0}^{(n)}}/{p_{g_1}^{(n)}})\\
	&=-\frac{1}{2} [ (X^{(n)})^\top (\Sigma_{g_0}^{-1}-\Sigma_{g_1}^{-1})X^{(n)}-\trace(I-\Sigma_{g_0}\Sigma_{g_1}^{-1})]=-\frac{1}{2}[G^\top B G-\trace (B)].
	\end{align*} 
	Let $B=U^\top \Lambda U$ be the spectral decomposition of $B$ where $U$ is orthonormal and $\Lambda=\mathrm{diag}(\lambda_1,\ldots,\lambda_n)$ is a  diagonal matrix. Then we can further compute 
	\begin{align*}
	-2Y_n =_d G^\top \Lambda G-\trace(\Lambda) =\sum_{i=1}^n \lambda_i (g_i^2-1),
	\end{align*}
	where $g_1,\ldots,g_n$'s are i.i.d. standard normal. Note that for any $\abs{t}<1/2$, 
	\begin{align*}
	\frac{1}{\sqrt{2\pi}}\int_{-\infty}^{\infty} e^{t (x^2-1)} e^{-x^2/2}\ \d{x} = \frac{e^{-t}}{\sqrt{1-2t}}=e^{\frac{1}{2}(-\log(1-2t)-2t)}\leq e^{t^2/(1-2t)},
	\end{align*}
	where the inequality follows from
	\begin{align*}
	-\log(1-2t)-2t = \sum_{k\geq 2} \frac{1}{k}(2t)^k= 4t^2 \sum_{k\geq 0}\frac{1}{k+2} (2t)^k\leq \frac{2t^2}{1-2t}.
	\end{align*}
	With $t=-\lambda \lambda_i/2$, we have that for any $\abs{\lambda}<1/\max_i \lambda_i$,
	\begin{align*}
	\E e^{\lambda Y_n}&=\prod_{i=1}^n \E e^{-\lambda \cdot \lambda_i(g_i^2-1)/2}=\prod_{i=1}^n \frac{1}{\sqrt{2\pi}}\int_{-\infty}^{\infty} e^{-\lambda\cdot \lambda_i (x^2-1)/2} e^{-x^2/2}\ \d{x}\\
	&\leq \prod_{i=1}^n e^{\frac{\lambda^2 \lambda_i^2}{4+4\lambda \lambda_i}}\leq \exp\bigg(\frac{\lambda^2 \sum_i \lambda_i^2}{4-4\abs{\lambda} \max_i \abs{\lambda_i}}\bigg).
	\end{align*} 
	Denote $\pnorm{\cdot}{}$ and $\pnorm{\cdot}{F}$ the matrix operator norm and Frobenius norm respectively. By the arguments on page 203 of \cite{ghosal2007convergence}, we have $\pnorm{\Sigma_g}{}\leq 2\pi\pnorm{e^g}{\infty}$ and $\pnorm{\Sigma_g^{-1}}{}\leq (2\pi)^{-1}\pnorm{e^{-g}}{\infty}$. Since $\mathcal{G}$ is a class of uniformly bounded function classes, the spectrum of the covariance matrices $\Sigma_g$ and their inverses running over $g$ must be bounded. Hence 
	\begin{align*}
	\max_i \abs{\lambda_i}=\pnorm{B}{}=\pnorm{(\Sigma_{g_1}-\Sigma_{g_0})\Sigma_{g_1}^{-1}}{}\leq \pnorm{\Sigma_{g_1}-\Sigma_{g_0}}{}\pnorm{\Sigma_{g_1}^{-1}}{}\leq C_{\mathcal{G}}<\infty.
	\end{align*} 
	Next, note that 
	\begin{align*}
	\bigg(\sum_i\lambda_i^2\bigg)^{1/2} &=(\trace(BB^\top))^{1/2}=\pnorm{B}{F}=\pnorm{(\Sigma_{g_1}-\Sigma_{g_0})\Sigma_{g_1}^{-1}}{F}\\
	&\leq \pnorm{\Sigma_{g_1}^{-1}}{}\pnorm{\Sigma_{g_1}-\Sigma_{g_0}}{F}\leq C'_{\mathcal{G}}\sqrt{n D_n^2(g_0,g_1)},
	\end{align*} 
	where in the first inequality we used $\pnorm{MN}{F}=\pnorm{NM}{F}$ for symmetric matrices $M,N$ and the general rule $\pnorm{PQ}{F}\leq \pnorm{P}{}\pnorm{Q}{F}$. Collecting the above estimates we see that Assumption \ref{assump:laplace_cond_kl} is satisfied for $v=\kappa_gnD_n^2(g_0,g_1)$ and $c=\kappa_\Gamma$ for constants $\kappa_g,\kappa_\Gamma$ depending on $\mathcal{G}$ only.
	
	Finally we relate $n^{-1} P_{g_0}^{(n)}\log ({p_{g_0}^{(n)}}/{p_{g_1}^{(n)}})$ and $D_n^2(g_0,g_1)$. First by (\ref{ineq:bernstein_spectral_density_0}), we have 
	\begin{align*}
	&P_{g_0}^{(n)}\log ({p_{g_0}^{(n)}}/{p_{g_1}^{(n)}})= -\frac{1}{2} \trace(I-\Sigma_{g_0}\Sigma_{g_1}^{-1})-\frac{1}{2}\log\det(\Sigma_{g_0}\Sigma_{g_1}^{-1}) \\
	&= \frac{1}{2}( \trace(\Sigma_{g_1}^{-1/2}(\Sigma_{g_0}-\Sigma_{g_1})\Sigma_{g_1}^{-1/2})-\log \det(I+\Sigma_{g_1}^{-1/2}(\Sigma_{g_0}-\Sigma_{g_1})\Sigma_{g_1}^{-1/2}))\\
	&\leq \frac{1}{4} \pnorm{I-\Sigma_{g_0}\Sigma_{g_1}^{-1}}{F}^2\leq \frac{1}{4}\pnorm{\Sigma_{g_1}-\Sigma_{g_0}}{F}^2\pnorm{\Sigma_{g_1}^{-1}}{}^2\leq C''_{\mathcal{G}} nD_n^2(g_0,g_1). 
	\end{align*} 
	Here in the second line we used the fact that $\det(AB^{-1})=\det(I+B^{-1/2}(A-B) B^{-1/2})$, and in the third line  we used the fact $-\log\det (I+A)+\trace(A)\leq \frac{1}{2}\trace(A^2)$ for any p.s.d. matrix $A$, due to the inequality $\log(1+x)-x\geq -\frac{1}{2}x^2$ for all $x\geq 0$. On the other hand, by using the reversed inequality $\log(1+x)-x\leq -cx^2$ for all $0\leq x\leq c'$ where $c$ is a constant depending only on $c'$, we can establish $
	P_{g_0}^{(n)}\log ({p_{g_0}^{(n)}}/{p_{g_1}^{(n)}})\geq C'''_{\mathcal{G}} n D_n^2(g_0,g_1)$, thereby
	completing the proof.
\end{proof}

\begin{proof}[Proof of Corollary \ref{cor:gaussian_time_series}]
	The claim follows from Lemma \ref{lem:bernstein_gaussian_time_series} and Theorem \ref{thm:general_ms}.
\end{proof}

\begin{proof}[Proof of Lemma \ref{lem:bernstein_cov_est}]
	Note that 
	\begin{align*}
	\log ({p_{\Sigma_0}^{(n)}}/{p_{\Sigma_1}^{(n)}})(X^{(n)})& = -\sum_{i=1}^n\bigg[\frac{1}{2} X_i^\top (\Sigma_{0}^{-1}-\Sigma_{1}^{-1})X_i-\frac{1}{2}\log\det(\Sigma_{0}\Sigma_{1}^{-1})\bigg],\\
	P_{\Sigma_0}^{(n)}\log ({p_{\Sigma_0}^{(n)}}/{p_{\Sigma_1}^{(n)}}) &= -\frac{n}{2} \trace(I-\Sigma_{0}\Sigma_{1}^{-1})-\frac{n}{2}\log\det(\Sigma_{0}\Sigma_{1}^{-1}).
	\end{align*} 
	The rest of the proof proceeds along the same line as in Lemma \ref{lem:bernstein_gaussian_time_series}. 
\end{proof}

\begin{proof}[Proof of Corollary \ref{cor:rate_cov_general}]
	The claim follows from Lemma \ref{lem:bernstein_cov_est} and Theorem \ref{thm:general_ms}.
\end{proof}

\begin{proof}[Proof of Lemma \ref{lem:bernstein_image}]
	Note that 
	\begin{align*}
	\log ({p_{\theta_0}^{(n)}}/{p_{\theta_1}^{(n)}})(X^{(n)},Y^{(n)}) &= \sum_{X_i \in \Gamma_0\cap \Gamma_1} \log \frac{f(Y_i;\xi_0)}{f(Y_i;\xi_1)} + \sum_{X_i \in \Gamma_0^c\cap \Gamma_1^c} \log \frac{f(Y_i;\rho_0)}{f(Y_i;\rho_1)}\\
	&\qquad\qquad + \sum_{X_i \in \Gamma_0\cap \Gamma_1^c} \log \frac{f(Y_i;\xi_0)}{f(Y_i;\rho_1)} +\sum_{X_i \in \Gamma_0^c\cap \Gamma_1} \log \frac{f(Y_i;\rho_0)}{f(Y_i;\xi_1)}.
	\end{align*}
	Then we may verify Assumption \ref{assump:laplace_cond_kl} along the lines in the proof of Lemma \ref{lem:bernstein_regression}, by considering each of the terms above by virtue of independence of $X_i$'s.
\end{proof}

\begin{proof}[Proof of Lemma \ref{lem:lower_bound_metric_image}]
	Let $r>0$ be such that $d_n^2(\theta_0,\theta_1)=r^2$. By definition of $d_n$, we have $\pnorm{\rho_0-\rho_1}{2}^2\leq r^2/\lambda(\Gamma_0^c\cap \Gamma_1^c)\leq r^2/\lambda_0^2$. This implies that 
	\begin{align*}
	\pnorm{\xi_0-\rho_1}{2}\geq \pnorm{\xi_0-\rho_0}{2}-\pnorm{\rho_0-\rho_1}{2}\geq r_0 - \frac{r}{\lambda_0}\geq \frac{r_0}{2}
	\end{align*}
	under the condition $r^2\leq \lambda_0^2r_0^2/4$. Hence $
	\lambda(\Gamma_0\cap \Gamma_1^c)\leq \frac{r^2}{\pnorm{\xi_0-\rho_1}{2}^2}\leq \frac{4r^2}{r_0^2}$,
	implying
	\begin{align*}
	\lambda(\Gamma_0\cap \Gamma_1) = \lambda(\Gamma_0)-\lambda(\Gamma_0\cap \Gamma_1^c)\geq \lambda(\Gamma_0) -  \frac{4r^2}{r_0^2}\geq \frac{\lambda(\Gamma_0)}{2}
	\end{align*}
	under the condition $r^2\leq \lambda(\Gamma_0) r_0^2/8$. This further implies that $
	\pnorm{\xi_0-\xi_1}{2}^2 \leq \frac{r^2}{\lambda(\Gamma_0\cap \Gamma_1)}\leq \frac{2r^2}{\lambda(\Gamma_0)}$, 
	whence
	\begin{align*}
	\pnorm{\rho_0-\xi_1}{2}\geq \pnorm{\rho_0-\xi_0}{2}-\pnorm{\xi_0-\xi_1}{2}\geq r_0 - \sqrt{2/\lambda(\Gamma_0)} r\geq \frac{r_0}{2}
	\end{align*}
	under the condition $r^2\leq \lambda(\Gamma_0) r_0^2/8$. Hence 
	\begin{align*}
	\lambda(\Gamma_0^c\cap \Gamma_1)\leq \frac{r^2}{\pnorm{\rho_0-\xi_1}{2}^2}\leq \frac{4r^2}{r_0^2}.
	\end{align*} 
	The claim follows by noting that $\lambda(\Gamma_0\Delta \Gamma_1) = \lambda(\Gamma_0^c\cap \Gamma_1)+\lambda(\Gamma_0\cap \Gamma_1^c) $.
\end{proof}

\begin{proof}[Proof of Corollary \ref{cor:rate_image}]
	By Lemma \ref{lem:bernstein_image} and Theorem \ref{thm:general_ms}, the claim of the corollary holds for $d_n$. Using Lemma \ref{lem:lower_bound_metric_image}, for $n$ large, we may replace $d_n$ with $\lambda(\cdot \Delta\cdot)$. 
\end{proof}

\begin{proof}[Proof of Corollary \ref{cor:rate_support_boundary}]
	The main modification of the proof lies in part of Lemma \ref{lem:local_test_general}. The modified Lemma \ref{lem:local_test_general} takes the following form: fix $f_0\leq f_1 $, there exists some test $\phi_n$ such that 
	\begin{align*}
	\sup_{f \geq f_1: \bar{L}_1(f,f_1)\leq c_5^2\bar{L}_1(f_1,f_0)}(P_{f_0}^{(n)} \phi_n+P_f^{(n)}(1-\phi_n))\leq c_6 e^{-c_7 n\bar{L}_1(f_1,f_0)},
	\end{align*} 
	where $c_5\leq 1/4,c_6\in [2,\infty),c_7\in (0,1)$ are absolute constants.

	In particular, the test $\phi_n$ is constructed in the `same way' as in the proof of Lemma \ref{lem:local_test_general} with a modified way of writing: 
	\begin{align*}
	\phi_n \equiv \bm{1}\big(\log ({\d{P_{f_1}^{(n)}}}/{\d{P_{f_0}^{(n)}}})\geq c n \bar{L}_1(f_1,f_0) \big).
	\end{align*} 
	Now for type I error,  
	\begin{align*}
	P_{f_0}^{(n)}\phi_n= P_{f_0}^{(n)} (\forall i: f_1(X_i)\leq Y_i )
	= P_{f_0}^{(n)} (N(\{(x,y): y\leq f_1(x)\})=0)= e^{-n \bar{L}_1(f_1,f_0)}.
	\end{align*} 
	Here the last equality follows as 
	\begin{align*}
	\int_{(x,y): y\leq f_1(x)} \lambda_{f_0}(x,y)\ \d{x}\d{y}= \int_0^1 \d{x} \int_{-\infty}^{f_1(x)} n\bm{1}_{f_0(x)\leq y}\ \d{y} = n \int_{0}^1 \big(f_1(x)-f_0(x)\big)\ \d{x}.
	\end{align*} 
	For type II error, note that as soon as $f\geq f_1$, 
	\begin{align*}
	&P_{f}^{(n)}(1-\phi_n) = P_f^{(n)} \big(\log ({\d{P_{f_1}^{(n)}}}/{\d{P_{f_0}^{(n)}}})<cn \bar{L}_1(f_1,f_0)\big) \\
	&= P_f^{(n)} (\bm{1}_{\forall i: f_1(X_i)\leq Y_i}<e^{-(1-c)n \bar{L}_1(f_1,f_0)}) = P_f^{(n)} (\exists i: f_1(X_i)>Y_i)=0.
	\end{align*} 
	This proves the modified version of Lemma \ref{lem:local_test_general} in the current setting. Then in the proof of Lemma \ref{lem:global_test_general}, the entropy condition needs to be replaced by the entropy with left bracketing, due to the reasoning towards the last display in the proof of Lemma \ref{lem:global_test_general}. Now in the proof of Proposition \ref{prop:overfit_general}, we apply Lemma \ref{lem:global_test_general} with the set restricted to $f\geq f_0$. The set in the control of denominator in (\ref{ineq:generic_pdim_5}) can be restricted to $f\geq f_{0,m}$. The rest of the proofs carry over exactly so we omit the details.
\end{proof}

\begin{proof}[Proof of Corollary \ref{cor:rate_support_boundary_L1}]
	The proof is a combination of the change of measure idea in the current paper combined with the results in \cite{reiss2017nonparametric}. Let $m \in \mathcal{M}$ be such that $\delta_{n,m}^2\geq L_1(f_0,f_{0,m})$. Note that condition (ii) entails that 
	\begin{align*}
	&\Pi_{n}(f \in \mathcal{F}:L_1(f,f_{0,m})\leq C_2 \delta_{n,m}^2, f\leq f_{0,m} )\\
	&\geq \lambda_n(m) \Pi_{n,m}(f\in \mathcal{F}_m:L_1(f,f_{0,m})\leq C_2 \delta_{n,m}^2, f\leq f_{0,m} ) \geq e^{-C_2' n\delta_{n,m}^2}
	\end{align*}
	Then use Theorem 2.3 of \cite{reiss2017nonparametric}, we conclude that
	\begin{align*}
	P_{f_{0,m}}^{(n)}\Pi_n(f: L_1(f,f_{0,m})\geq C_3 K\delta_{n,m}^2|(X^{(n)},Y^{(n)}))\leq C_3e^{-nK\delta_{n,m}^2/C_3},
	\end{align*}
	where $K>0$ is a constant to be chosen later. Hence
	\begin{align*}
	&P_{f_0}^{(n)} \Pi_n(f: L_1(f,f_0)\geq L_1(f_0,f_{0,m})+ C_3K\delta_{n,m}^2|(X^{(n)},Y^{(n)}))\\
	&\leq P_{f_0}^{(n)} \Pi_n(f: L_1(f,f_{0,m})\geq C_3K\delta_{n,m}^2|(X^{(n)},Y^{(n)})) \\ 
	&= P_{f_{0,m}}^{(n)} \Pi_n(f: L_1(f,f_{0,m})\geq C_3K\delta_{n,m}^2|(X^{(n)},Y^{(n)})) \big(\d{P_{f_0}^{(n)}}/ \d{P_{f_{0,m}}^{(n)}} \big)\\ 
	&\leq C_3 e^{-nK\delta_{n,m}^2/C_3+nL_1(f_0,f_{0,m})} \leq C_3 e^{-n\delta_{n,m}^2},
	\end{align*}
	by choosing $K=2C_3$. We may similar consider $m \in \mathcal{M}$ such that $\delta_{n,m}^2<L_1(f_0,f_{0,m})$.
\end{proof}

\section{Proofs in Section \ref{section:models} Part II: results for applications}\label{section:proof_remaining_application}

\subsection{Proof of Theorem \ref{thm:rate_tracereg}}

\begin{lemma}\label{lem:local_ent_tracereg}
	Let $r \in \mathcal{I}$. Suppose that the linear map $\mathcal{X}:\R^{m_1\times m_2}\to \R^n$ is uniform RIP$(\bm{\nu};\mathcal{I})$. Then for any $\epsilon>0$ and $A_0 \in \R^{m_1\times m_2}$ such that $\rank(A_0)\leq r$, we have 
	\begin{align*}
	\log \mathcal{N}\big(c_5\epsilon, \{f_A \in \mathcal{F}_r: \ell_n(f_A,f_{A_0})\leq 2\epsilon\}, \ell_n\big)\leq 2(m_1+m_2)r \cdot \log\big(18\bar{\nu}/c_5\underline{\nu}\big).
	\end{align*} 
\end{lemma}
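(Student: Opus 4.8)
The plan is to exploit the RIP condition to show that, on each model $\mathcal{F}_r$, the empirical metric $\ell_n$ is two-sided comparable to the Frobenius metric on the underlying matrices, so that the local-entropy bound reduces to the classical volumetric estimate for the manifold of bounded-rank matrices.

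Concretely, since $f_A(x_i)-f_B(x_i)=\trace\big(x_i^\top(A-B)\big)=\big(\mathcal{X}(A-B)\big)_i$, we have $\ell_n^2(f_A,f_B)=\tfrac1n\pnorm{\mathcal{X}(A-B)}{2}^2$, where $\pnorm{\cdot}{2}$ on $\R^n$ is the Euclidean norm. For $A,B\in\mathcal{F}_r$ the difference $A-B$ has rank at most $2r$, so uniform RIP$(\bm{\nu};\mathcal{I})$ --- which by definition supplies RIP$(2r,\bm{\nu})$ --- gives
\[
\underline{\nu}\,\pnorm{A-B}{2}\;\le\;\ell_n(f_A,f_B)\;\le\;\bar{\nu}\,\pnorm{A-B}{2},
\]
where the inner $\pnorm{\cdot}{2}$ is now the Schatten-$2$, i.e. Frobenius, norm. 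Two consequences follow: first, $\{f_A\in\mathcal{F}_r:\ell_n(f_A,f_{A_0})\le 2\epsilon\}$ is the image under $A\mapsto f_A$ of a subset of $\{A:\rank(A)\le r,\ \pnorm{A-A_0}{2}\le 2\epsilon/\underline{\nu}\}$; second, a family of matrices that is $\eta$-dense in Frobenius norm induces a family of $f_A$'s that is $\bar{\nu}\eta$-dense in $\ell_n$. Choosing $\eta=c_5\epsilon/\bar{\nu}$ and translating by $A_0$, it remains to bound the Frobenius covering number at scale $c_5\epsilon/\bar{\nu}$ of the set $\{D:\rank(D)\le 2r,\ \pnorm{D}{2}\le 2\epsilon/\underline{\nu}\}$.

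For this last step I would invoke the standard net for bounded-rank matrices: parametrizing $D$ by a balanced low-rank factorization and netting the $m_1\times 2r$ and $2r\times m_2$ factors together with the singular values, one obtains a Frobenius $\eta$-net of $\{D:\rank(D)\le 2r,\ \pnorm{D}{2}\le R\}$ of cardinality at most $(9R/\eta)^{2r(m_1+m_2)}$ (the matrix analogue of the elementary volumetric bound for Euclidean balls; cf. the net constructions in \cite{candes2011tight}). Taking $R=2\epsilon/\underline{\nu}$ and $\eta=c_5\epsilon/\bar{\nu}$ makes the ratio $9R/\eta$ equal to $18\bar{\nu}/(c_5\underline{\nu})$, which yields exactly
\[
\log \mathcal{N}\big(c_5\epsilon,\{f_A\in\mathcal{F}_r:\ell_n(f_A,f_{A_0})\le 2\epsilon\},\ell_n\big)\le 2(m_1+m_2)r\log\big(18\bar{\nu}/(c_5\underline{\nu})\big).
\]
Note that the bound is independent of $\epsilon$ and of $A_0$ (only its rank enters), which is precisely the form needed to verify the local-entropy condition \eqref{cond:entropy_essen} with centers $g$ ranging over $\{f_{0,m'}\}_{m'\le r}$, since each such $f_{0,m'}$ corresponds to a matrix of rank $\le m'\le r$.

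There is no substantive obstacle here; the only point requiring care is the constant-chasing --- keeping the RIP level at $2r$ (not $r$) for differences of rank-$\le r$ matrices, and checking that the bounded-rank covering-number constant really collapses to $18\bar{\nu}/(c_5\underline{\nu})$ after the choices of $R$ and $\eta$. The conceptual content is simply that the RIP condition renders $\ell_n$ and the Frobenius metric equivalent on every $\mathcal{F}_r$.
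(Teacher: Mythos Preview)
Your proposal is correct and follows essentially the same route as the paper: both use the RIP lower bound to embed the local $\ell_n$-ball into a Frobenius ball around $A_0$, translate to $S(2r,2\epsilon/\underline{\nu})$, then invoke the Cand\`es--Plan covering bound $\mathcal{N}(\eta,S(2r,R),\pnorm{\cdot}{2})\le(9R/\eta)^{2r(m_1+m_2)}$ (stated in the paper as Lemma~\ref{lem:local_ent_lr_F_mat}) and the RIP upper bound to pull the cover back to $\ell_n$. The only cosmetic difference is that the paper works in the image space $\mathcal{X}(S(2r,\cdot))\subset\R^n$ rather than directly in matrix space, but the logic and constants are identical.
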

We will need the following result.
\begin{lemma}\label{lem:local_ent_lr_F_mat}
	Let $S(r,B)=\{A \in \R^{m_1\times m_2}:\rank(A)\leq r,\pnorm{A}{2}\leq B\}$. Then $
	\mathcal{N}\big(\epsilon,S(r,B),\pnorm{\cdot}{2}\big)\leq \left(\frac{9B}{\epsilon}\right)^{(m_1+m_2-1)r}$.
\end{lemma}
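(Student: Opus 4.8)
The plan is to build an $\epsilon$-net of $S(r,B)$ by exploiting the singular value decomposition: any $A$ with $\rank(A)\le r$ can be written as $A = U D V^\top$ where $U \in \R^{m_1\times r}$ and $V \in \R^{m_2 \times r}$ have orthonormal columns and $D$ is a diagonal $r\times r$ matrix with $\pnorm{D}{F}=\pnorm{A}{2}\le B$. So the strategy is to net each of the three pieces separately and then argue that the product of the nets gives a net for $S(r,B)$, paying the usual Lipschitz price. First I would fix $\delta$-nets: a net $\mathcal{U}$ for the set of $m_1\times r$ matrices with orthonormal columns, a net $\mathcal{V}$ for the analogous $m_2\times r$ set, and a net $\mathcal{D}$ for $\{D \text{ diagonal } r\times r: \pnorm{D}{F}\le B\}$. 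The diagonal part is just an $r$-dimensional Euclidean ball of radius $B$, so $\abs{\mathcal{D}}\le (3B/\delta)^r$ by the standard volumetric bound. For the Stiefel-type pieces, the cheap route is to embed them into the Frobenius ball of radius $\sqrt{r}$ in $\R^{m_1\times r}$ (resp.\ $\R^{m_2\times r}$) — since each column is a unit vector — and use the volumetric bound again, giving $\abs{\mathcal{U}}\le (3\sqrt{r}/\delta)^{m_1 r}$ and similarly for $\mathcal{V}$.

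The key estimate is the Lipschitz bound: if $U,U'$ (resp.\ $V,V'$, $D,D'$) are $\delta$-close in Frobenius norm, then
\beqas
\pnorm{UDV^\top - U'D'V'^\top}{F}\le \pnorm{U-U'}{F}\pnorm{D}{\mathrm{op}}\pnorm{V}{\mathrm{op}} + \pnorm{U'}{\mathrm{op}}\pnorm{D-D'}{F}\pnorm{V}{\mathrm{op}} + \pnorm{U'}{\mathrm{op}}\pnorm{D'}{\mathrm{op}}\pnorm{V-V'}{F},
\eeqas
using submultiplicativity and a telescoping decomposition. Since $\pnorm{U'}{\mathrm{op}}=\pnorm{V}{\mathrm{op}}=1$ and $\pnorm{D}{\mathrm{op}},\pnorm{D'}{\mathrm{op}}\le B$, this is at most $(2B+1)\delta$ (or $3B\delta$ if we assume $B\ge 1$, which is harmless here since we only need an upper bound and can always inflate $B$). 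Choosing $\delta \asymp \epsilon/B$ then makes the product net an $\epsilon$-net of $S(r,B)$, and the total cardinality is $\abs{\mathcal{U}}\cdot\abs{\mathcal{V}}\cdot\abs{\mathcal{D}}\le (C\sqrt{r}B/\epsilon)^{(m_1+m_2)r + r}$. A slightly more careful bookkeeping — using that the columns of $U$ need only be netted as unit vectors on the sphere $S^{m_1-1}$, which costs $(3/\delta)^{m_1-1}$ per column rather than treating the whole block volumetrically — tightens the exponent to $(m_1+m_2-1)r$ as stated, and absorbing $\sqrt{r}$ and constants into the base gives the clean bound $(9B/\epsilon)^{(m_1+m_2-1)r}$.

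The main obstacle is purely cosmetic: matching the exact constant $9$ and the exact exponent $(m_1+m_2-1)r$ rather than a cruder $(m_1+m_2+1)r$ with a worse constant. This requires being slightly economical — netting spheres column-by-column and being careful that the rank-$r$ ambiguity in the SVD does not cost extra, and checking that the diagonal block contributes an exponent $r$ that can be folded into the $(m_1+m_2-1)r$ count (e.g.\ by noting $m_1+m_2-1\ge r+ (\text{something})$ is false in general, so one instead absorbs the diagonal net's $r$ exponent by observing $(m_1-1)+(m_2-1)+r \le (m_1+m_2-1)r$ whenever $r\ge 1$ and $m_1,m_2\ge 1$, since the right side dominates). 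None of this is deep; it is the standard SVD-net argument (as in Candès--Plan or Rohde--Tsybakov) and the only real work is constant-chasing, which I would not grind through in detail here.
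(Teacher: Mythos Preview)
Your approach is correct and is precisely the SVD-net argument that underlies Lemma~3.1 of \cite{candes2011tight}; the paper's own proof simply cites that lemma for the case $B=1$ and then rescales, so you are unpacking what the paper invokes as a black box. One minor slip: in your final accounting the exponent from column-wise sphere nets is $(m_1-1)r + (m_2-1)r + r = (m_1+m_2-1)r$ on the nose, not ``$(m_1-1)+(m_2-1)+r$'' bounded by something --- and with that counting there is no stray $\sqrt{r}$ to absorb, so the constant $9$ falls out directly from choosing $\delta = \epsilon/(3B)$.
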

\begin{proof}[Proof of Lemma \ref{lem:local_ent_lr_F_mat}]
	The case for $B=1$ follows from Lemma 3.1 of \cite{candes2011tight} and the general case follows by a scaling argument. We omit the details.
\end{proof}
\begin{proof}[Proof of Lemma \ref{lem:local_ent_tracereg}]
	We only need to consider the case $r\leq r_{\max}$. First note that the entropy in question equals 
	\begin{align*}
	\log \mathcal{N}\big(c_5\sqrt{n}\epsilon, \{\mathcal{X}(A-A_0): \pnorm{\mathcal{X}(A-A_0)}{2}\leq 2\sqrt{n}\epsilon,\rank{A}\leq r\}, \pnorm{\cdot}{2}\big). 
	\end{align*}
	By uniform RIP$(\bm{\nu};\mathcal{I})$, the set to be covered is contained in
	\begin{align*}
	\{\mathcal{X}(A-A_0): \pnorm{A-A_0}{2}\leq 2\epsilon/\underline{\nu},\rank{A}\leq r\}\subset \mathcal{X}(S(2r,2\epsilon/\underline{\nu})).
	\end{align*}
	On the other hand, again by uniform RIP$(\bm{\nu};\mathcal{I})$, a $c_5\epsilon/\bar{\nu}$-cover of the set $S(2r,2\epsilon/\underline{\nu})$ under the Frobenius norm $\pnorm{\cdot}{2}$ induces a $c_5\sqrt{n}\epsilon$-cover of $\mathcal{X}(S(2r,2\epsilon/\underline{\nu}))$ under the Euclidean $\pnorm{\cdot}{2}$ norm. This implies that the entropy can be further bounded from above by 
	\begin{align*}
	\log \mathcal{N}\big(c_5\epsilon/\bar{\nu},S(2r,2\epsilon/\underline{\nu}),\pnorm{\cdot}{2}\big)\leq 2(m_1+m_2)r \cdot \log\big(18\bar{\nu}/c_5\underline{\nu}\big),
	\end{align*} 
	where the last inequality follows from Lemma \ref{lem:local_ent_lr_F_mat}.
\end{proof}

Now we take $
\delta_{n,r}^2 = \big(\frac{4\log(18\bar{\nu}/c_5\underline{\nu})}{c_7}\vee \frac{1}{\eta} \big)\frac{\cdot (m_1+m_2)r \log \bar{m}}{n}.$
Clearly $\delta_{n,r}^2$ satisfies (\ref{cond:suplinearity_delta}) with $\mathfrak{c}=\gamma=1, \mathfrak{h}_0=\infty$.

\begin{lemma}\label{lem:submodel_suff_prior_tracereg}
	Suppose that $\mathcal{X}:\R^{m_1\times m_2}\to \R^n$ is uniform RIP$(\bm{\nu};\mathcal{I})$, and that (\ref{cond:rate_tracereg}) holds. Then (P2) in Assumption \ref{assump:prior_mass_general} holds.
\end{lemma}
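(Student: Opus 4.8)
The plan is to reduce (P2) to placing prior mass on a small Frobenius-norm ball around $A_{0,r}$, then to exhibit an explicit rank-$r$ factorization of $A_{0,r}$ and to lower-bound the induced mass one scalar coordinate at a time. For Gaussian regression Lemma~\ref{lem:bernstein_regression} gives $c_3=1$, so (P2) for the model $\mathcal{F}_r$ asks that $\Pi_{n,r}(\mathcal{B}_r)\ge e^{-2n\delta_{n,r}^2}$, where $\mathcal{B}_r\equiv\{f_A:\ell_n^2(f_A,f_{A_{0,r}})\le\delta_{n,r}^2\}$. Because $A-A_{0,r}$ has rank at most $2r$, uniform RIP$(\bm{\nu};\mathcal{I})$ gives $\ell_n^2(f_A,f_{A_{0,r}})=\tfrac1n\pnorm{\mathcal{X}(A-A_{0,r})}{2}^2\le\bar\nu^2\pnorm{A-A_{0,r}}{2}^2$, so it suffices to bound below the mass, under the fully product prior $(g_1\otimes g_2)^{\otimes r}$, of the set of factor matrices $(u_i,v_i)_{i=1}^r$ with $\pnorm{\sum_i u_iv_i^\top-A_{0,r}}{2}\le\delta_{n,r}/\bar\nu$.

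First I would fix a singular value decomposition $A_{0,r}=\sum_{i=1}^r\sigma_i p_iq_i^\top$ (padding with zero terms if $\rank(A_{0,r})<r$) and split each singular value evenly, setting $u_i^\ast=\sqrt{\sigma_i}\,p_i$, $v_i^\ast=\sqrt{\sigma_i}\,q_i$, so that $\sum_i u_i^\ast(v_i^\ast)^\top=A_{0,r}$ and $\pnorm{u_i^\ast}{\infty}\vee\pnorm{v_i^\ast}{\infty}\le\sqrt{\sigma_{\max}(A_{0,r})}$. From $u_iv_i^\top-u_i^\ast(v_i^\ast)^\top=(u_i-u_i^\ast)v_i^\top+u_i^\ast(v_i-v_i^\ast)^\top$ and the triangle inequality, whenever $\pnorm{u_i-u_i^\ast}{2}\vee\pnorm{v_i-v_i^\ast}{2}\le\rho$ for all $i$ with $\rho\le1/2$, one gets $\pnorm{\sum_i u_iv_i^\top-A_{0,r}}{2}\le r\rho\big(2\sqrt{\sigma_{\max}(A_{0,r})}+\rho\big)\le3r\rho(1\vee\sigma_{\max}(A_{0,r}))$. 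Hence I choose $\rho\equiv\tfrac12\wedge\dfrac{\delta_{n,r}}{3r\bar\nu(1\vee\sigma_{\max}(A_{0,r}))}$, which places the whole $\ell_\infty$-box $\{\pnorm{u_i-u_i^\ast}{\infty}\le\rho/\sqrt{m_1},\ \pnorm{v_i-v_i^\ast}{\infty}\le\rho/\sqrt{m_2}\ (1\le i\le r)\}$ inside the target set.

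Next I would bound this box's prior mass coordinatewise. Since $g$ is symmetric and non-increasing on $(0,\infty)$, $\int_{a-\delta}^{a+\delta}g\ge2\delta\,g(\abs{a}+\delta)$; taking $a$ a coordinate of $u_i^\ast$ or $v_i^\ast$ and $\delta=\rho/\sqrt{m_1}$ or $\rho/\sqrt{m_2}$, the bounds $\abs{a}\le\sqrt{\sigma_{\max}(A_{0,r})}$ and $\delta\le1/2$ together with $\sqrt{\sigma_{\max}(A_{0,r})}\le\tfrac12(\sigma_{\max}(A_{0,r})+1)$ give $\abs{a}+\delta\le\sigma_{\max}(A_{0,r})+1$, so by monotonicity each one-dimensional factor is at least $2(\rho/\sqrt{m_j})\tau_{r,g}^{\trace}$. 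Multiplying the $(m_1+m_2)r$ factors and taking logarithms yields $\log\Pi_{n,r}(\mathcal{B}_r)\ge(m_1+m_2)r\log\tau_{r,g}^{\trace}+m_1 r\log(2\rho/\sqrt{m_1})+m_2 r\log(2\rho/\sqrt{m_2})$. The first term is $\ge-\tfrac12 n\delta_{n,r}^2$ since the first part of (\ref{cond:rate_tracereg}) gives $\log\tau_{r,g}^{\trace}\ge-(\log\bar m)/(2\eta)$ while the definition of $\delta_{n,r}^2$ forces $(m_1+m_2)r\log\bar m\le\eta\,n\delta_{n,r}^2$. For the remaining terms, $\log(2\rho/\sqrt{m_j})\ge-\tfrac12\log\bar m-\log(1/(2\rho))$, and a short computation using $r_{\max}\le n$ and $\delta_{n,r}^2\ge\big((m_1+m_2)r\log\bar m\big)/(\eta n)$ shows $1/(2\rho)\le3\bar\nu(1\vee\sigma_{\max}(A_{0,r}))n$; the second part of (\ref{cond:rate_tracereg}) then gives $\log(1/(2\rho))\le(\log\bar m)/(2\eta)$, so (using $\eta<1/2$) $\log(2\rho/\sqrt{m_j})\ge-(\log\bar m)/\eta$ and the last two terms sum to $\ge-\big((m_1+m_2)r\log\bar m\big)/\eta\ge-n\delta_{n,r}^2$. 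Altogether $\log\Pi_{n,r}(\mathcal{B}_r)\ge-\tfrac32 n\delta_{n,r}^2\ge-2n\delta_{n,r}^2$, which is (P2).

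The main obstacle is the bookkeeping in the last step: one must keep the perturbed coordinates inside the regime where $g$ can be bounded below by $\tau_{r,g}^{\trace}$ (which forces the $\rho\le1/2$ cap and the AM--GM estimate $\sqrt{\sigma_{\max}(A_{0,r})}\le\tfrac12(\sigma_{\max}(A_{0,r})+1)$), and calibrate $\rho$ so that, after invoking $r_{\max}\le n$ and the lower bound on $\delta_{n,r}^2$, the coarse estimate $\log(1/(2\rho))\le(\log\bar m)/(2\eta)$ coming from the exponent $2\eta$ in (\ref{cond:rate_tracereg}) is still strong enough to absorb the $\sqrt{m_j}$, $\bar\nu$ and $\sigma_{\max}(A_{0,r})$ factors into the permitted budget $2n\delta_{n,r}^2$; the one genuinely delicate design choice is the balanced split of $\sigma_i$ between $u_i^\ast$ and $v_i^\ast$, which is what makes both their sup-norms controllable by $\sqrt{\sigma_{\max}(A_{0,r})}$.
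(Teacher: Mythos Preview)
Your proof is correct and follows essentially the same route as the paper: reduce to a Frobenius ball via RIP, take the balanced SVD factorization $u_i^\ast=\sqrt{\sigma_i}\,p_i$, $v_i^\ast=\sqrt{\sigma_i}\,q_i$, control $\pnorm{\sum_i u_iv_i^\top-A_{0,r}}{2}$ by a Lipschitz-type estimate, and then lower-bound the product-prior mass of a small neighborhood. The only noteworthy technical difference is that the paper works with Euclidean balls $B_{m_1}(u_i,\epsilon)\times B_{m_2}(v_i,\epsilon)$ and invokes the crude volume bound $v_d\ge d^{-d/2}$, whereas you use $\ell_\infty$-boxes of side $2\rho/\sqrt{m_j}$ and bound each one-dimensional integral directly by $2\delta\,g(|a|+\delta)$; these two bookkeeping schemes yield the same exponent $(m_1+m_2)r\log\bar m$ and are interchangeable. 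Your AM--GM step $\sqrt{\sigma_{\max}}\le\tfrac12(\sigma_{\max}+1)$ is in fact slightly cleaner than the paper's implicit handling of the case $\sigma_{\max}<1$ when matching $g(|a|+\delta)$ to $\tau_{r,g}^{\trace}=g(\sigma_{\max}+1)$.
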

\begin{proof}[Proof of Lemma \ref{lem:submodel_suff_prior_tracereg}]
	We only need to consider $r\leq r_{\max}$. First note that
	\begin{align}\label{ineq:suff_mass_tracereg_1}
	&\Pi_{n,r}\left(\left\{f_A \in \mathcal{F}_r: \ell_n^2(f_A,f_{A_{0,r}})\leq \delta_{n,r}^2/c_3\right\}\right)\\
	& = \Pi_G\left(\left\{A \in \R^{m_1\times m_2}: \pnorm{\mathcal{X}(A-A_{0,r})}{2} \leq \sqrt{n}\delta_{n,r}/\sqrt{c_3}, \rank(A)\leq r\right\}\right)\nonumber\\
	&\geq \Pi_G\left(\left\{A \in \R^{m_1\times m_2}: \pnorm{A-A_{0,r}}{2} \leq \delta_{n,r}/\bar{\nu}\sqrt{c_3}, \rank(A)\leq r \right\}\right).\nonumber
	\end{align}
	Let $A_{0,r}\equiv \sum_{i=1}^r \sigma_i \bar{u}_i \bar{v}_i^\top$ be the spectral decomposition of $A_{0,r}$, and let $u_i\equiv \sqrt{\sigma_i}\bar{u}_i$ and $v_i\equiv \sqrt{\sigma_i}\bar{v}_i$. Then $A_{0,r}\equiv \sum_{i=1}^r u_i v_i^\top$. Now for $u_i^\ast \in B_{m_1}(u_i,\epsilon)$ and $v_i^\ast \in B_{m_2}(v_i,\epsilon)$, $i=1,\ldots, r$, let $A^\ast\equiv \sum_{i=1}^r  u_i^\ast (v_i^\ast)^\top$, then by noting that the Frobenius norm is sub-multiplicative and that $\pnorm{u_i}{2}=\pnorm{v_i}{2}=\sqrt{\sigma_i}$, we have for $\epsilon\leq 1$,
	\begin{align*}
	\pnorm{A^\ast-A_{0,r}}{2}&\leq \sum_{i=1}^r \big( \lVert(u_i-u_i^\ast)v_i^\top\rVert_{2}+ \lVert u_i^\ast(v_i-v_i^\ast)^\top\rVert_{2}\big) \\
	&\leq \sum_{i=1}^r  \left(\epsilon \sqrt{\sigma_i}+(\sqrt{\sigma_i}+\epsilon)\epsilon\right)\leq \rho_r \epsilon,
	\end{align*}
	where $\rho_r\equiv \sum_{i=1}^r (2\sqrt{\sigma_i}+1)$. Now with $\bar{\epsilon}_{n,r}\equiv \frac{\delta_{n,r}}{\bar{\nu}\sqrt{c_3} \rho_r} \wedge 1$ we see that (\ref{ineq:suff_mass_tracereg_1}) can be further bounded from below by 
	\begin{align*}
	&\Pi_G\big(\cap_{i=1}^r\left\{({u}_i^\ast,{v}_i^\ast): {u}_i^\ast \in B_{m_1}({u}_i, \bar{\epsilon}_{n,r}), {v}_i^\ast \in B_{m_2}({v}_i,\bar{\epsilon}_{n,r})\right\}\big)\\ 
	& \geq  (\tau_{r,g}^{\trace})^{(m_1+m_2)r} \prod_{i=1}^ r \mathrm{vol}\left({B_{m_1}({u}_i,\bar{\epsilon}_{n,r})}\right)\cdot \mathrm{vol}\left(B_{m_2}({v}_i,\bar{\epsilon}_{n,r})\right) \\
	& \geq (\tau_{r,g}^{\trace}\cdot \bar{\epsilon}_{n,r})^{(m_1+m_2)r}v_{m_1}^r v_{m_2}^r\geq e^{-(m_1+m_2)r\cdot  \left(\log \bar{m}/2+\log \tau_{r,g}^{-1} + \log (\bar{\epsilon}_{n,r}^{-1} \vee 1)\right)},
	\end{align*} 
	where $v_d=\mathrm{vol}(B_d(0,1))$, and $v_d\geq (1/\sqrt{d})^d$. The right side of the above display is bounded from below by $e^{-2n\delta_{n,r}^2}$, if we require 
	\begin{align*}
	\max\left\{\log \tau_{r,g}^{-1}, \log (\bar{\epsilon}_{n,r}^{-1} \vee 1) \right\}\leq {\log \bar{m}}/{(2\eta)}.
	\end{align*} 
	It is easy to calculate that  
	\begin{align*}
	(\bar{\epsilon}_{n,r})^{-2} &\leq \big(\bar{\nu}^2 c_3 \rho_r^2\eta n\big)\vee 1\leq  8\eta \bar{\nu}^2 c_3(1\vee \sigma_{\max}(A_{0,r})) r_{\max}^2 n\\
	&\leq 4\bar{\nu}^2  (1\vee \sigma_{\max}(A_{0,r})) n^3\leq 4\bar{\nu}^2  (1\vee \sigma_{\max}(A_{0,r}))^2 n^4,
	\end{align*}
	by using $r_{\max}\leq n$ and $c_3=1$. 
	Now the conclusion follows by noting that (\ref{cond:rate_tracereg}) implies the requirement.
\end{proof}

\begin{proof}[Proof of Theorem \ref{thm:rate_tracereg}]
	The theorem follows by Corollary \ref{cor:regression_model}, Proposition \ref{prop:prior_generic} coupled with Lemmas \ref{lem:local_ent_tracereg} and \ref{lem:submodel_suff_prior_tracereg}.
\end{proof}

\subsection{Proof of Theorem \ref{thm:rate_iso}}

\begin{lemma}\label{lem:local_ent_iso}
	Let $n\geq 2$. Then for any $g \in \mathcal{F}_m$, $
	\log \mathcal{N}\big(c_5\epsilon, \{f \in \mathcal{F}_m: \ell_n(f,g)\leq 2\epsilon\}, \ell_n\big)\leq 2\log(6/c_5)\cdot m\log(en)$. 
\end{lemma}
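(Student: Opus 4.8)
The plan is to realize $\mathcal{F}_m$, evaluated at the design points $x_i=i/(n+1)$, as a union of low-dimensional pieces. A function in $\mathcal{F}_m$ restricted to $\{x_1,\dots,x_n\}$ is a non-decreasing vector in $\R^n$ that is constant on each block of some partition of $\{1,\dots,n\}$ into at most $m$ consecutive intervals. First I would count such partitions: a partition into exactly $k$ nonempty consecutive blocks is obtained by choosing $k-1$ of the $n-1$ gaps, so there are $\sum_{k=1}^{m}\binom{n-1}{k-1}\le\sum_{j=0}^{m}\binom{n}{j}\le (en/m)^m\le (en)^m$ of them (assuming $m\le n$, which is harmless: if $m>n$ then $\mathcal{F}_m$ agrees with $\mathcal{F}_n$ on the design points, so the asserted bound only weakens).

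Next, fix one admissible partition $\mathcal{B}$ with $k\le m$ blocks and let $V_{\mathcal{B}}\subset\R^n$ be the $k$-dimensional subspace of vectors constant on each block of $\mathcal{B}$. For $f\in V_{\mathcal{B}}$ the Pythagorean identity in $\ell_n$ gives $\ell_n^2(f,g)=\ell_n^2(f,g_{\mathcal{B}})+\ell_n^2(g_{\mathcal{B}},g)$, where $g_{\mathcal{B}}$ is the $\ell_n$-orthogonal projection of $g$ onto $V_{\mathcal{B}}$; hence $\{f\in V_{\mathcal{B}}:\ell_n(f,g)\le 2\epsilon\}$ is contained in the $k$-dimensional $\ell_n$-ball of radius $2\epsilon$ centred at $g_{\mathcal{B}}$ (we simply discard the monotonicity constraint, which only enlarges the set). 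The standard volumetric estimate then yields $\mathcal{N}\big(c_5\epsilon,\{f\in V_{\mathcal{B}}:\ell_n(f,g)\le 2\epsilon\},\ell_n\big)\le (1+4/c_5)^k\le (6/c_5)^m$, using $c_5\le 1/4$.

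Summing over the at most $(en)^m$ partitions, the covering number in the statement is at most $(en)^m(6/c_5)^m$, so its logarithm is bounded by $m\log(en)+m\log(6/c_5)$. Since $n\ge 2$ gives $\log(en)\ge 1$ and $c_5\le 1/4$ gives $\log(6/c_5)\ge\log 24>1$, each summand is at most $m\log(en)\log(6/c_5)$, and the claimed bound $2\log(6/c_5)\,m\log(en)$ follows. I expect no real obstacle here; the only point requiring a little care is the reduction to a fixed subspace via the projection $g_{\mathcal{B}}$, together with keeping the constants consistent with the normalization $c_5\le 1/4$ inherited from Lemma \ref{lem:local_test_general}.
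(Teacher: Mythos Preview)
Your proof is correct and follows essentially the same approach as the paper: decompose $\mathcal{F}_m$ (evaluated at the design points) as a union over at most $(en)^m$ partitions into consecutive blocks, project $g$ onto each resulting subspace via Pythagoras, apply the standard volumetric covering bound on the $m$-dimensional ball, and then combine the two factors into the product form using $\log(en)\ge 1$ and $\log(6/c_5)\ge 1$. The paper's version is slightly terser---it counts only partitions into exactly $m$ blocks (using $\binom{n}{m-1}$) and quotes the covering constant $6/c_5$ directly---but the argument is the same in substance.
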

\begin{proof}[Proof of Lemma \ref{lem:local_ent_iso}]
	Let $\mathscr{Q}_m$ denote all $m$-partitions of the design points $x_1,\ldots,x_n$. Then it is easy to see that $\abs{\mathscr{Q}_m}=\binom{n}{m-1}$. For a given $m$-partition $Q \in \mathscr{Q}_m$, let $\mathcal{F}_{m,Q}\subset \mathcal{F}_m$ denote all monotonic non-decreasing functions that are constant on the partition $Q$. Then the entropy in question can be bounded by 
	\begin{align*}
	\log \bigg[\binom{n}{m-1}\max_{Q \in \mathscr{Q}_m} \mathcal{N}\big(c_5 \epsilon , \{f \in \mathcal{F}_{m,Q}:\ell_n(f,g)\leq 2\epsilon\},\ell_n\big)\bigg].
	\end{align*} 
	On the other hand, for any fixed $m$-partition $Q \in \mathscr{Q}_m$, the entropy term above equals
	$
	\mathcal{N}\big(c_5\sqrt{n}\epsilon , \{\bm{\gamma} \in \mathcal{P}_{n,m,Q}:\pnorm{\bm{\gamma}-\bm{g}}{2}\leq 2\sqrt{n}\epsilon\},\pnorm{\cdot}{2}\big),$ where $\mathcal{P}_{n,m,Q}\equiv \{(f(x_1),\ldots,f(x_n)): f \in \mathcal{F}_{m,Q}\}$.
	By Pythagoras theorem, the set involved in the entropy is included in $\{\bm{\gamma} \in \mathcal{P}_{n,m,Q}:\pnorm{\bm{\gamma}-\pi_{\mathcal{P}_{n,m,Q}}(\bm{g})}{2}\leq 2\sqrt{n}\epsilon\}$ where $\pi_{\mathcal{P}_{n,m,Q}}$ is the natural projection from $\R^n$ onto the subspace $\mathcal{P}_{n,m,Q}$. 
	Clearly $\mathcal{P}_{n,m,Q}$ is contained in a linear subspace with dimension no more than $m$. Using entropy result for the finite-dimensional space [Problem 2.1.6 in \cite{van1996weak}, page 94 combined with the discussion in page 98 relating the packing number and covering number], 
	\begin{align*}
	\log \mathcal{N}\big(c_5\epsilon , \{f \in \mathcal{F}_{m,Q}:\ell_n(f,f_{0,m})\leq 2\epsilon\},\ell_n\big)\leq \log \big(\frac{3\cdot 2\sqrt{n}\epsilon}{c_5\sqrt{n}\epsilon}\big)^m =m\log(6/c_5). 
	\end{align*} 
	The claim follows by combining the estimates and $\log\binom{n}{m-1}\leq m\log(en)$.
\end{proof}
Hence we can take $
\delta_{n,m}^2\equiv \big(\frac{4\log(6/c_5)}{c_7}\vee \frac{1}{\eta}\big) \frac{ m \log(en)}{n}.$
It is clear that (\ref{cond:suplinearity_delta}) is satisfied with $\mathfrak{c}=\gamma=1, \mathfrak{h}_0=\infty$.
\begin{lemma}\label{lem:submodel_mass_iso}
	Suppose that (\ref{cond:iso}) holds . Then (P2) in Assumption \ref{assump:prior_mass_general} holds.
\end{lemma}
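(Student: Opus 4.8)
The plan is to bound below the two-stage prior mass $\Pi_{n,m}\big(\{f\in\mathcal F_m:\ell_n^2(f,f_{0,m})\le\delta_{n,m}^2/c_3\}\big)$ by restricting first to the event that the randomly drawn change points are compatible with $f_{0,m}$, and then estimating the conditional $\bar g_m$-mass of a small box around the block values of $f_{0,m}$ (this mirrors the argument of Lemma \ref{lem:submodel_suff_prior_tracereg}). Since $\ell_n$ depends only on the values at the design points, $\big(f_{0,m}(x_1),\dots,f_{0,m}(x_n)\big)$ is a non-decreasing vector that is constant on at most $m$ consecutive blocks; refining to exactly $m$ blocks if necessary, let $Q_0\in\mathscr Q_m$ be the corresponding partition and $\mu_1^0\le\cdots\le\mu_m^0$ the block values, so that $\max_k|\mu_k^0|\le\pnorm{f_{0,m}}{\infty}$. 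The (uniform) first-stage choice of change points assigns $Q_0$ probability at least $\binom{n}{m-1}^{-1}\ge e^{-m\log(en)}$, and conditionally on $Q=Q_0$ any block values $\bm\mu$ with $\max_k|\mu_k-\mu_k^0|\le\rho:=\delta_{n,m}/\sqrt{c_3}$ produce $\ell_n^2(f,f_{0,m})=\tfrac1n\sum_k n_k(\mu_k-\mu_k^0)^2\le\rho^2=\delta_{n,m}^2/c_3$, using $\sum_k n_k=n$.

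It then remains to lower-bound $\bar g_m\big(\{\bm\mu:\mu_1\le\cdots\le\mu_m,\ \max_k|\mu_k-\mu_k^0|\le r\}\big)$ with $r:=\rho\wedge1$. The key observation is that the axis-aligned box $\prod_{k=1}^m\big[\mu_k^0-r+(k-1)\tfrac{2r}{m},\ \mu_k^0-r+k\tfrac{2r}{m}\big]$ lies both inside $\prod_k[\mu_k^0-r,\mu_k^0+r]$ and inside the monotone cone $\{\mu_1\le\cdots\le\mu_m\}$ (orderedness holds because $\mu_k^0\le\mu_{k+1}^0$), and each of its factor intervals has length $2r/m$. Using that $g$ is symmetric and non-increasing on $(0,\infty)$, so that $\inf_{|\mu-\mu_k^0|\le r}g(\mu)=g(|\mu_k^0|+r)\ge g(\pnorm{f_{0,m}}{\infty}+1)=\tau_{m,g}^{\mathrm{iso}}$ (this is where $r\le1$ enters), together with $m!\ge(m/e)^m$, I obtain $\bar g_m(\cdots)\ge m!\,(2r/m)^m(\tau_{m,g}^{\mathrm{iso}})^m\ge\big(2r\,\tau_{m,g}^{\mathrm{iso}}/e\big)^m$.

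Combining the two steps yields $\Pi_{n,m}(\cdots)\ge\exp\big\{-m\log(en)-m\log(e/2r)-m\log(1/\tau_{m,g}^{\mathrm{iso}})\big\}$, and what is left is to check this exceeds $\exp(-2n\delta_{n,m}^2)=\exp\{-2C\,m\log(en)\}$ for $C=\tfrac{4\log(6/c_5)}{c_7}\vee\tfrac1\eta$ (and $c_3=1$ here via Lemma \ref{lem:bernstein_regression}). Now $\log(1/\tau_{m,g}^{\mathrm{iso}})\le\log(en)/2\eta$ by (\ref{cond:iso}), while $\log(e/2r)\lesssim\log(en)$ since $r^{-2}\le c_3 n/(Cm\log(en))\lesssim n$; hence the exponent is $\lesssim_\eta m\log(en)$ and the desired inequality follows from the definition of $\delta_{n,m}$ — it is exactly the $1/\eta$ term in $C$ that absorbs the $\tau$-contribution. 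The only mildly delicate point is the monotone-cone volume bound of the second paragraph; everything else is routine bookkeeping.
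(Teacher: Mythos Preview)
Your proof is correct and follows the same overall strategy as the paper: restrict to the event that the random partition equals the partition $Q_{0,m}$ underlying $f_{0,m}$ (cost $\binom{n}{m-1}^{-1}$), then lower-bound the $\bar g_m$-mass of an $\ell_\infty$-box around the block values $(\mu_{0,1},\dots,\mu_{0,m})$ intersected with the monotone cone, and finally check the resulting exponent is dominated by $2n\delta_{n,m}^2$ using (\ref{cond:iso}) and $c_3=1$.

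The one genuine (minor) difference is in the monotone-cone volume step. You carve out a ``staircase'' sub-box $\prod_{k=1}^m\big[\mu_k^0-r+(k-1)\tfrac{2r}{m},\,\mu_k^0-r+k\tfrac{2r}{m}\big]$ of volume $(2r/m)^m$ and then invoke $m!\ge (m/e)^m$ to get $(2r\tau/e)^m$. The paper instead uses the translated simplex $\{\bm\mu=\bm\mu_0+\bm\epsilon:\,0\le\epsilon_1\le\cdots\le\epsilon_m\le r\}$, whose volume $r^m/m!$ makes the $m!$ in $\bar g_m=m!\,g_m\bm 1_{\{\mu_1\le\cdots\le\mu_m\}}$ cancel \emph{exactly}, yielding the cleaner bound $(\tau r)^m$ with no extra $e^{-m}$ factor. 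Your staircase works because $\eta<1/2$ still absorbs the additional constant, but the simplex is the more natural object here and makes the bookkeeping in the final line tighter. Otherwise the arguments coincide.
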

\begin{proof}[Proof of Lemma \ref{lem:submodel_mass_iso}]
	Let $Q_{0,m}=\{I_k\}_{k=1}^m$ be the associated $m$-partition of $\{x_1,\ldots,x_n\}$ of $f_{0,m} \in \mathcal{F}_m$ with the convention that $\{I_k\}\subset \{x_1,\ldots,x_n\}$ is ordered from smaller values to bigger ones. Then it is easy to see that  $\bm{\mu}_{0,m}=(\mu_{0,1},\ldots,\mu_{0,m})\equiv \big(f_{0,m}(x_{i(1)}),\ldots, f_{0,m}(x_{i(m)})\big) \in \R^m$ is well-defined and $\mu_{0,1}\leq \ldots \leq \mu_{0,m}$. It is easy to see that any $f \in \mathcal{F}_{m,Q_{0,m}}$ satisfying the property that $\sup_{1\leq k\leq m}\abs{f(x_{i(k)})-\mu_{0,k}}\leq \delta_{n,m}/\sqrt{c_3}$ leads to the error estimate $\ell_n^2(f,f_{0,m})\leq \delta_{n,m}^2/c_3$. Hence 
	\begin{align*}
	&\Pi_{n,m}(\{f \in \mathcal{F}_m: \ell_n^2(f,f_{0,m})\leq \delta_{n,m}^2/c_3\})\\  
	&\geq \binom{n}{m-1}^{-1}\Pi_{\bar{g}_m}(\{f \in \mathcal{F}_{m,Q_{0,m}}: \ell_n^2(f,f_{0,m})\leq \delta_{n,m}^2/c_3 \})\\ 
	&\geq  \binom{n}{m-1}^{-1}\Pi_{\bar{g}_m}(\{\bm{\mu} \in \R^m: \bm{\mu}\equiv \big(\mu_{0,k}+\epsilon_k\big)_{k=1}^m, 0\leq \epsilon_1\leq\ldots\leq \epsilon_m \leq \delta_{n,m}/\sqrt{c_3}\})\\
	&\geq \binom{n}{m-1}^{-1} \cdot \inf_{ \substack{\bm{\mu} \in \R^m: \bm{\mu}\equiv (\mu_{0,k}+\epsilon_k)_{k=1}^m,  0\leq \epsilon_1\leq \ldots \leq \epsilon_m \leq 1\wedge \delta_{n,m}/\sqrt{c_3}} } \bar{g}_m(\bm{\mu}) (1\wedge \delta_{n,m}/\sqrt{c_3} )^m \frac{1}{m!}\\ 
	&\geq \binom{n}{m-1}^{-1}\cdot  (\tau_{m,g}^{\textrm{iso}})^m (1\wedge \delta_{n,m}/\sqrt{c_3} )^m\\
	&\geq e^{- m\log(en) - m \log \big((\tau_{m,g}^{\textrm{iso}})^{-1}\vee 1\big)-m \log \big(\frac{\sqrt{c_3}}{\delta_{n,m}}\vee 1\big)}
	\end{align*} 
	Here the first inequality in the last line follows from the definition of $\bar{g}_m$ and $\tau_{m,g}^{\textrm{iso}}$. The claim follows by verifying (\ref{cond:iso}) implies that the second and third term in the exponent above are both bounded by $\frac{1}{2\eta} \cdot m\log (en)$ [the third term does not contribute to the condition since $\sqrt{c_3}\delta_{n,m}^{-1}\leq n$ by noting $c_3=1$ in the Gaussian regression setting and definition of $\eta$].
\end{proof}

\begin{proof}[Proof of Theorem \ref{thm:rate_iso}]
	The theorem follows by Corollary \ref{cor:regression_model}, Proposition \ref{prop:prior_generic} coupled with Lemmas \ref{lem:local_ent_iso} and \ref{lem:submodel_mass_iso}.
\end{proof}

We now prove Lemma \ref{lem:verify_cond_iso}. We need the following result.

\begin{lemma}\label{lem:proj_bounded}
	Let $\bm{f}_0:=(f_0(x_1),\ldots,f_0(x_n))\in \R^n$, and \newline $\bm{f}_{0,m}:=(f_{0,m}(x_1),\ldots,f_{0,m}(x_n)) \in \R^n$ where $f_{0,m} \in \arg\min_{g \in \mathcal{F}_m} \ell_n^2(f_0,g)$. Suppose that $\pnorm{\bm{f}_0}{2}\leq L$, and that there exists some element $f \in \mathcal{F}_m$ such that $\bm{f}\equiv (f(x_1),\ldots, f(x_n)) $ satisfies $\pnorm{\bm{f}}{2}\leq L$. Then  $\pnorm{\bm{f}_{0,m}}{2}\leq 3L$.
\end{lemma}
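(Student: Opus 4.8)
The plan is to recast the minimization defining $f_{0,m}$ as a Euclidean nearest-point problem in $\R^n$ and then apply the triangle inequality twice. Write $\mathcal{P}_{n,m}\equiv\{(g(x_1),\ldots,g(x_n)):g\in\mathcal{F}_m\}\subset\R^n$ for the image of the model at the design points. Since $\ell_n^2(f_0,g)=\frac{1}{n}\pnorm{\bm{f}_0-\bm{g}}{2}^2$ with $\bm{g}\equiv(g(x_1),\ldots,g(x_n))$, the vector $\bm{f}_{0,m}$ is, by definition of $f_{0,m}$, a minimizer of $\bm{v}\mapsto\pnorm{\bm{f}_0-\bm{v}}{2}$ over $\bm{v}\in\mathcal{P}_{n,m}$. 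Testing this minimality against the competitor $\bm{f}\in\mathcal{P}_{n,m}$ supplied by hypothesis yields the one-sided optimality inequality $\pnorm{\bm{f}_0-\bm{f}_{0,m}}{2}\leq\pnorm{\bm{f}_0-\bm{f}}{2}$.

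From here the conclusion is immediate. First, $\pnorm{\bm{f}_0-\bm{f}}{2}\leq\pnorm{\bm{f}_0}{2}+\pnorm{\bm{f}}{2}\leq 2L$ by the triangle inequality and the two assumed bounds. Combining this with the optimality inequality and one more application of the triangle inequality gives $\pnorm{\bm{f}_{0,m}}{2}\leq\pnorm{\bm{f}_{0,m}-\bm{f}_0}{2}+\pnorm{\bm{f}_0}{2}\leq 2L+L=3L$, as claimed.

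I expect no real obstacle here; the only point worth flagging is that $\mathcal{P}_{n,m}$ is a union of finitely many closed convex cones (one for each $m$-partition of the design points) rather than a convex set, so one cannot invoke non-expansiveness of the metric projection directly. However, only the one-sided comparison $\pnorm{\bm{f}_0-\bm{f}_{0,m}}{2}\leq\pnorm{\bm{f}_0-\bm{f}}{2}$ is needed, and this holds for any minimizer over any set containing $\bm{f}$, so the argument goes through verbatim. This bound is exactly what feeds the tail-assumption verification in Lemma \ref{lem:verify_cond_iso}, since $\pnorm{f_{0,m}}{\infty}\leq\sqrt{n}\,\ell_n$-type control of $\bm{f}_{0,m}$ then follows from $\pnorm{\bm{f}_{0,m}}{2}$ for monotone step functions.
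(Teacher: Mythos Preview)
Your proof is correct and follows essentially the same route as the paper's: recast the problem as Euclidean nearest-point minimization over $\mathcal{P}_{n,m}$, use the competitor $\bm{f}$ to bound $\pnorm{\bm{f}_0-\bm{f}_{0,m}}{2}\leq 2L$, and finish with the triangle inequality. The only cosmetic difference is that the paper phrases the last step as a proof by contradiction, whereas you argue directly; the content is identical.
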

\begin{proof}[Proof of Lemma \ref{lem:proj_bounded}]
	It can be seen that 
	\begin{align*}
	\bm{f}_{0,m} \in \arg\min_{\bm{\gamma} \in \mathcal{P}_{n,m}}\mathcal{L}_{f_0}(\bm{\gamma})\equiv \arg\min_{\bm{\gamma} \in \mathcal{P}_{n,m}} \pnorm{\bm{f}_0-\bm{\gamma}}{2},
	\end{align*}
	where $\mathcal{P}_{n,m}\equiv \{(f(x_1),\ldots,f(x_n)):f \in \mathcal{F}_m\}$. 
	For any $\bm{\gamma} \in \mathcal{P}_{n,m}$ such that $\pnorm{\bm{\gamma}}{2}\leq L$, the loss function satisfies $\mathcal{L}_{f_0}(\bm{\gamma})\leq 2L$ by triangle inequality. If $\pnorm{\bm{f}_{0,m}}{2}> 3L$, then 
	\begin{align*}
	\mathcal{L}_{f_0}(\bm{f}_{0,m})=\pnorm{\bm{f}_0-\bm{f}_{0,m}}{2}\geq \pnorm{\bm{f}_{0,m}}{2}-\pnorm{\bm{f}_0}{2}>3L - L=2L,
	\end{align*}
	contradicting the definition of $\bm{f}_{0,m}$ as a minimizer of $\mathcal{L}_{f_0}(\cdot)$ over $\mathcal{P}_{m,n}$. This shows the claim. 
\end{proof}

\begin{proof}[Proof of Lemma \ref{lem:verify_cond_iso}]
	Let $L=\int_{0}^1 f^2$. Note that $
	\pnorm{\bm{f}_0}{2}^2\leq 2n \int_0^1 f^2(x)\ \d{x} =2n L^2$. 
	By Lemma \ref{lem:proj_bounded}, we see that $\pnorm{\bm{f}_{0,m}}{2}\leq 3\sqrt{2n}L$ which entails that $\pnorm{f_{0,m}}{\infty}\leq 3\sqrt{2n}L$. Now the conclusion follows from $g(3\sqrt{2nL}+1)\geq (en)^{-1/(2\eta)}$ while the left side is at least on the order of $n^{-\alpha/2}$ as $n \to \infty$.
\end{proof}

\subsection{Proof of Theorem \ref{thm:rate_cvxreg}}\label{section:proof_cvx}

Checking the local entropy assumption \ref{assump:local_ent_general} requires some additional work. The notion of \emph{pseudo-dimension} will be useful in this regard. Following \cite{pollard1990empirical} Section 4, a subset $V$ of $\R^d$ is said to have \emph{pseudo-dimension} $t$, denoted as $\mathrm{pdim}(V)=t$, if for every $x \in \R^{t+1}$ and indices $I=(i_1,\cdots,i_{t+1})\in\{1,\cdots,n\}^{t+1}$ with $i_{\alpha}\neq i_{\beta}$ for all $\alpha\neq \beta$, we can always find a sub-index set $J\subset I$ such that no $v \in V$ satisfies both $
v_i> x_i  \textrm{ for all } i \in J$  and $
v_i< x_i  \textrm{ for all } i \in I\setminus J$.

\begin{lemma}\label{lem:local_ent_Fm}
	Let $n\geq 2$. Suppose that $\mathrm{pdim}(\mathcal{P}_{n,m})\leq D_m$ where $\mathcal{P}_{n,m}:=\{\big(f(x_1),\ldots,f(x_n)\big)\in \R^n: f \in \mathcal{F}_m\}$. Then for all $g \in \mathcal{F}_m$, 
	\begin{align*}
	\log \mathcal{N}\big(c_5\epsilon, \{f \in \mathcal{F}_m: \ell_n(f,g)\leq 2\epsilon\},\ell_n) \leq C\cdot D_m \log n
	\end{align*}
	for some constant $C>0$ depending on $c_5$.
\end{lemma}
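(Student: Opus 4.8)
The plan is to pass from functions to their values at the design points and then invoke a classical covering bound driven by pseudo-dimension. Write $\mathcal{P}_{n,m}\equiv\{(f(x_1),\ldots,f(x_n)):f\in\mathcal{F}_m\}\subset\R^n$ and, for the fixed $g\in\mathcal{F}_m$, set $\bm{g}\equiv(g(x_1),\ldots,g(x_n))$. Under evaluation at the design points, the local ball $W\equiv\{f\in\mathcal{F}_m:\ell_n(f,g)\le 2\epsilon\}$ corresponds to $\{\bm{f}\in\mathcal{P}_{n,m}:\pnorm{\bm{f}-\bm{g}}{2}\le 2\sqrt{n}\,\epsilon\}$, and covering $W$ at $\ell_n$-radius $c_5\epsilon$ is the same as covering this set at Euclidean radius $c_5\sqrt{n}\,\epsilon$, i.e.\ in $L_2(Q)$ with $Q$ the uniform measure on $\{x_1,\ldots,x_n\}$. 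Two observations set up the count: (a) the centered family $\tilde W\equiv\{\bm{f}-\bm{g}:\bm{f}\in W\}$, regarded as a set of functions on the finite index set $\{1,\ldots,n\}$, still has pseudo-dimension at most $D_m$, since $W\subset\mathcal{P}_{n,m}$ and, as one checks directly from the definition, subtracting the fixed vector $\bm{g}$ merely translates the test points in the shattering condition; and (b) the constraint $\pnorm{\bm{f}-\bm{g}}{2}\le 2\sqrt{n}\,\epsilon$ forces the coordinatewise bound $\abs{f(x_i)-g(x_i)}\le 2\sqrt{n}\,\epsilon$ for every $i$, so $\tilde W$ admits the constant envelope $F\equiv 2\sqrt{n}\,\epsilon$.

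With these in hand I would apply the standard pseudo-dimension covering bound (\cite{pollard1990empirical}, Section~4; see also the subgraph-class formulation recorded in \cite{van1996weak}): a class of pseudo-dimension at most $V$ with envelope $F$ can be covered in $L_2(Q)$ at scale $\tau\pnorm{F}{Q,2}$ by at most $KV(16e)^V\tau^{-2V}$ balls for $0<\tau<1$. Taking $Q$ uniform on $\{x_1,\ldots,x_n\}$ so that $L_2(Q)=\ell_n$, envelope $F\equiv 2\sqrt{n}\,\epsilon$ (hence $\pnorm{F}{Q,2}=2\sqrt{n}\,\epsilon$), $V=D_m$, and $\tau\equiv c_5/(2\sqrt{n})$ so that $\tau\pnorm{F}{Q,2}=c_5\epsilon$, gives
\beqas
\mathcal{N}\big(c_5\epsilon,\{f\in\mathcal{F}_m:\ell_n(f,g)\le 2\epsilon\},\ell_n\big)\le K D_m(16e)^{D_m}\big(2\sqrt{n}/c_5\big)^{2D_m}.
\eeqas
Taking logarithms and using $n\ge 2$ yields $\log\mathcal{N}(c_5\epsilon,\{f\in\mathcal{F}_m:\ell_n(f,g)\le 2\epsilon\},\ell_n)\le C D_m\log n$ with $C$ depending only on $c_5$, as claimed.

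The step that needs the most care — and the source of the extra $\log n$ over a naive dimension count — is observation (b): the local ball has $\ell_n$-diameter of order $\epsilon$ but only an $L_\infty$-envelope of order $\sqrt{n}\,\epsilon$, so the pseudo-dimension bound must be applied with the larger envelope $\sqrt{n}\,\epsilon$ rather than with the diameter, and the ratio envelope/scale $\asymp\sqrt{n}$ is exactly what produces the $\log n$ factor. I do not expect a genuine obstacle beyond tracking constants; an alternative route through a global truncation-and-cover of $\mathcal{P}_{n,m}$ would give the same bound with no simplification, so the localized-envelope argument above is the cleanest. This $D_m\log n$ estimate is then what feeds the choice of $\delta_{n,m}^2$ in Theorem~\ref{thm:rate_cvxreg} once $D_m=\mathrm{pdim}(\mathcal{P}_{n,m})$ itself is controlled (cf.\ Lemma~\ref{lem:pdim_cvxfcn}).
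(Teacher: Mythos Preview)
Your proposal is correct and follows essentially the same route as the paper: translate to a Euclidean covering problem for $\{\mathcal{P}_{n,m}-\bm{g}\}\cap B_n(0,2\sqrt{n}\epsilon)$, note that translation preserves pseudo-dimension, extract the $L_\infty$-envelope $2\sqrt{n}\epsilon$ from the $\ell_2$-constraint, and apply a pseudo-dimension covering bound. The only cosmetic difference is that the paper invokes the bound in the form of Lemma~\ref{lem:entropy_pdim} (stated for subsets of $\R^n$ under $\pnorm{\cdot}{2}$), whereas you cite Pollard's $L_2(Q)$ formulation directly; these are the same estimate up to the $\sqrt{n}$ rescaling between $\ell_2$ and $L_2(Q)$.
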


To prove Lemma \ref{lem:local_ent_Fm}, we need the following result, cf. Theorem B.2 \cite{guntuboyina2012optimal}.
\begin{lemma}\label{lem:entropy_pdim}
	Let $V$ be a subset of $\R^n$ with $\sup_{v \in V}\pnorm{v}{\infty}\leq B$ and pseudo-dimension at most $t$. Then, for every $\epsilon>0$, we have
	\begin{align*}
	\mathcal{N}(\epsilon,A,\pnorm{\cdot}{2})\leq\bigg(4+\frac{2B\sqrt{n}}{\epsilon}\bigg)^{\kappa t},
	\end{align*}
	holds for some absolute constant $\kappa\geq 1$.
\end{lemma}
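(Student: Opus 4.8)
The plan is to view $V$ as a class of real-valued functions on the finite index set $\{1,\dots,n\}$ equipped with the uniform probability measure $Q_n=\frac1n\sum_{i=1}^n\delta_i$, so that the $L_2(Q_n)$ distance between $v,w\in V$ equals exactly $n^{-1/2}\pnorm{v-w}{2}$. Hence $\mathcal N(\epsilon,V,\pnorm{\cdot}{2})=\mathcal N(\epsilon/\sqrt n,V,L_2(Q_n))$, and the constant $B$ is an envelope for $V$ with $\pnorm{B}{L_2(Q_n)}=B$. The statement thus reduces to a uniform $L_2$ metric entropy bound for a uniformly bounded function class in terms of its combinatorial complexity.

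The key step is to read the hypothesis $\mathrm{pdim}(V)\le t$ as a VC-type condition: by the definition recalled above, the family of subgraph sets $\big\{\{(i,s)\in\{1,\dots,n\}\times\R:\ s<v_i\}:v\in V\big\}$ is a VC class of subsets of $\{1,\dots,n\}\times\R$ with VC index at most $t+1$, i.e.\ $V$ is a VC-subgraph class. I would then invoke the standard uniform entropy bound for VC-subgraph classes (Theorem 2.6.7 of \cite{van1996weak}, or Haussler's sharper packing inequality): for every probability measure $Q$ and every $0<\eta\le1$,
\[
\mathcal N\big(\eta\,\pnorm{B}{L_2(Q)},\,V,\,L_2(Q)\big)\ \le\ K\,(t+1)\,(16e)^{t+1}\,\eta^{-2t},
\]
with $K$ an absolute constant. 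Applying this with $Q=Q_n$ and $\eta=\epsilon/(B\sqrt n)$ (when $\epsilon\ge B\sqrt n$ one uses instead monotonicity of covering numbers in the scale, replacing $\eta$ by $1$) gives $\mathcal N(\epsilon/\sqrt n,V,L_2(Q_n))\le K(t+1)(16e)^{t+1}\,(B\sqrt n/\epsilon)^{2t}$.

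It then remains to massage this into the advertised shape. Writing $R=B\sqrt n/\epsilon$ and taking $t$-th roots, one checks that $(K(t+1)(16e)^{t+1})^{1/t}\le C_1$ for all $t\ge1$ and some absolute $C_1$, so the bound is at most $\big(C_1 R^2\big)^{t}$; since $4+2R\ge 2R$ and $4+2R\ge4$, we have $(4+2R)^{\kappa}\ge 2^{\kappa} R^{\kappa-2}$, which exceeds $C_1R^2$ once $\kappa\ge2$ is an absolute constant with $2^{\kappa}\ge C_1$. Composing the inequalities yields $\mathcal N(\epsilon,V,\pnorm{\cdot}{2})\le(4+2B\sqrt n/\epsilon)^{\kappa t}$; the case $t=0$ is immediate.

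The only genuinely substantive ingredient, and therefore the main obstacle, is the VC-subgraph uniform entropy bound itself, proved by the classical route of extracting a maximal packing, passing to the set system it induces, and controlling that system's shatter function (Dudley--Haussler). Since this is a well-known result — and is in fact Theorem B.2 of \cite{guntuboyina2012optimal}, which the paper cites — I would quote it as a black box rather than reproduce the probabilistic-extraction and chaining argument; everything else in the proof of the lemma is the elementary reduction described above.
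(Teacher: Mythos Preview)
Your approach matches the paper's: the paper does not prove this lemma at all but simply cites it as Theorem~B.2 of \cite{guntuboyina2012optimal}, and you explicitly propose to do the same, correctly identifying the VC-subgraph reduction (via Theorem~2.6.7 of \cite{van1996weak}) as the standard argument behind that reference. Your final algebraic massaging is slightly loose---the inequality $(4+2R)^{\kappa}\ge 2^{\kappa} R^{\kappa-2}$ as written does not follow from $4+2R\ge \max(4,2R)$---but splitting into the cases $R\le 1$ (use $(4+2R)^\kappa\ge 4^\kappa$) and $R>1$ (use $(4+2R)^\kappa\ge (2R)^\kappa$) immediately gives the desired bound for any absolute $\kappa\ge \max(2,\log_2 C_1)$.
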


\begin{proof}[Proof of Lemma \ref{lem:local_ent_Fm}]
	Note that the entropy in question can be bounded by $
	\log \mathcal{N}\big(c_5\epsilon\sqrt{n} , \{\mathcal{P}_{n,m}-\bm{g}\}\cap B_n(0,2\sqrt{n}\epsilon),\pnorm{\cdot}{2}\big)$. 
	Since translation does not change the pseudo-dimension of a set, $\mathcal{P}_{n,m}-\bm{g}$ has the same pseudo-dimension with that of $\mathcal{P}_{n,m}$, which is bounded from above by $D_m$ by assumption. Further note that $\{\mathcal{P}_{n,m}-\bm{g}\}\cap B_n(0,2\sqrt{n}\epsilon)$ is uniformly bounded by $2\sqrt{n}\epsilon$, hence an application of Lemma \ref{lem:entropy_pdim} yields that the entropy can be further bounded as follows: 
	\begin{align*}
	\log \mathcal{N}\big(c_5\epsilon,  \{f \in \mathcal{F}_m: \ell_n(f,g)\leq 2\epsilon\} ,\ell_n)  \leq \kappa D_m \log \big(4+4n/c_5)\leq C\cdot  D_m\log n
	\end{align*}
	for some constant $C>0$ depending on $c_5$ whenever $n\geq 2$. 
\end{proof}

The pseudo-dimension of the class of piecewise affine functions $\mathcal{F}_m$ can be well controlled, as the following lemma shows.

\begin{lemma}[Lemma 4.9 in \cite{han2016multivariate}]\label{lem:pdim_cvxfcn}
	$\mathrm{pdim}(\mathcal{P}_{n,m})\leq 6md\log 3m$.
\end{lemma}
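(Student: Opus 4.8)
The plan is to bound $\mathrm{pdim}(\mathcal{F}_m)$ directly, where $\mathcal{F}_m$ is the class of functions $x\mapsto\max_{1\le i\le m}(a_i\cdot x+b_i)$ on $[0,1]^d$, and then note that $\mathcal{P}_{n,m}$ is merely the coordinate projection of $\mathcal{F}_m$ onto the $n$ design points, so passing to finitely many points cannot increase the pseudo-dimension: $\mathrm{pdim}(\mathcal{P}_{n,m})\le\mathrm{pdim}(\mathcal{F}_m)$. Recall that $\mathrm{pdim}(\mathcal{F}_m)$ equals the VC dimension of the family of (strict) subgraphs $\mathrm{sg}(f)=\{(x,t)\in[0,1]^d\times\R:\ t<f(x)\}$, $f\in\mathcal{F}_m$.

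The first step is to identify the base class: a single affine piece contributes the set $\{(x,t):\ t<a\cdot x+b\}$, and as $(a,b)$ ranges over $\R^{d+1}$ these are exactly the subgraphs of the $(d+1)$-dimensional vector space of affine functions on $\R^d$; by the classical fact that the subgraph class of a $D$-dimensional function space has VC dimension at most $D$, this base family $\mathcal{H}$ satisfies $\mathrm{VC}(\mathcal{H})=d+1$. The key structural observation is that $\mathrm{sg}(f_\theta)=\bigcup_{i=1}^m\{(x,t):\ t<a_i\cdot x+b_i\}$ is an $m$-fold union of members of $\mathcal{H}$. I would then invoke the standard bound $\mathrm{VC}(\mathcal{C}^{\sqcup k})\le 2kv\log_2(3k)$ for the VC dimension of $k$-fold unions of a class of VC dimension $v$. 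Applying it with $v=d+1$, $k=m$, and then using $d+1\le 2d$ (valid since $d\ge 1$) and $2/\log 2\le 3$ (natural logarithm) gives $\mathrm{pdim}(\mathcal{F}_m)\le 2(d+1)m\log_2(3m)\le 3(d+1)m\log(3m)\le 6md\log(3m)$.

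If a self-contained argument is preferred in place of the union lemma, the same conclusion follows by a hyperplane-arrangement count. Suppose the subgraph class shatters $s$ points $(z_1,t_1),\dots,(z_s,t_s)$. For $\theta=(a_1,b_1,\dots,a_m,b_m)\in\R^{m(d+1)}$ the above/below pattern $\big(\bm{1}\{f_\theta(z_\alpha)>t_\alpha\}\big)_{\alpha=1}^s$ depends only on the signs of the $sm$ affine functions $\theta\mapsto a_i\cdot z_\alpha+b_i-t_\alpha$, since $f_\theta(z_\alpha)>t_\alpha$ iff $a_i\cdot z_\alpha+b_i>t_\alpha$ for some $i$; hence the number of realizable patterns is at most the number of cells of an arrangement of $sm$ hyperplanes in $\R^{m(d+1)}$, namely $\sum_{j=0}^{m(d+1)}\binom{sm}{j}\le\big(es/(d+1)\big)^{m(d+1)}$. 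Forcing this quantity to be at least $2^s$ yields $s\log 2\le m(d+1)\log\!\big(es/(d+1)\big)$, and the elementary estimate that $x\le a\log(bx)$ forces $x\le 2a\log(ab)$ once $ab$ is bounded below converts this into $s=O(md\log m)$; careful bookkeeping of the constants again lands on $6md\log(3m)$.

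The main obstacle is not conceptual but the numerical constant: every route produces a bound of the shape $Cmd\log(cm)$, and landing exactly on $6md\log(3m)$ requires fixing one convention for pseudo-dimension versus subgraph VC dimension throughout, using the sharp value $\mathrm{VC}(\mathcal{H})=d+1$ (rather than $d+2$, which an unrestricted-halfspace argument in $\R^{d+1}$ would give), and being attentive to the precise form of the $k$-fold union bound and to the $\log_2$-to-$\log$ conversion. All the structural ingredients—the projection step, the subgraph characterization of pseudo-dimension, the $m$-fold union bound, and the arrangement count—are standard, so that part is routine.
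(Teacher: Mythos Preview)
The paper does not prove this lemma; it simply imports it as Lemma~4.9 of \cite{han2016multivariate} without argument. Your proposal supplies a correct proof along entirely standard lines: reduce $\mathrm{pdim}(\mathcal{P}_{n,m})$ to $\mathrm{pdim}(\mathcal{F}_m)$ via coordinate projection, identify the latter with the VC dimension of the subgraph class, observe that $\mathrm{sg}\bigl(\max_{1\le i\le m}(a_i\cdot x+b_i)\bigr)=\bigcup_{i=1}^m\mathrm{sg}(a_i\cdot x+b_i)$ is an $m$-fold union drawn from a base class of VC dimension $d+1$, and then invoke the classical bound for the VC dimension of $k$-fold unions (or, equivalently, the hyperplane-arrangement/sign-pattern count you sketch as an alternative). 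Both routes are textbook, and your constant bookkeeping---$2/\log 2\le 3$ and $d+1\le 2d$---checks out to yield $6md\log(3m)$. This is almost certainly the same argument as in the cited reference, so there is nothing to compare.
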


As an immediate result of Lemmas \ref{lem:local_ent_Fm} and \ref{lem:pdim_cvxfcn}, we can take for $n\geq 2$, $
\delta_{n,m}^2:=(C \vee 1/\eta)d\cdot \frac{\log n}{n}\cdot m\log 3m$ for some $C\geq 2/c_7$ depending on $c_5,c_7$.

\begin{lemma}\label{lem:submodel_mass_cvxreg}
	Suppose that (\ref{cond:cvxreg_1}) holds and $n\geq d$. Then (P2) in Assumption \ref{assump:prior_mass_general} holds.
\end{lemma}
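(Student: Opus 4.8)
The plan is to mimic the structure of the corresponding proofs for isotonic regression (Lemma \ref{lem:submodel_mass_iso}) and trace regression (Lemma \ref{lem:submodel_suff_prior_tracereg}): parametrize the prior on $\mathcal{F}_m$ by the slope-intercept pairs $\{(a_i,b_i)\}_{i=1}^m$, locate a neighborhood of the representing parameters of $f_{0,m}$ on which the $\ell_n$-distance to $f_{0,m}$ is small, and lower-bound the prior mass of that neighborhood using the fact that $g$ is non-increasing on $(0,\infty)$. Concretely, write $f_{0,m}(x)=\max_{1\le i\le m}(a_i^{(m)}\cdot x+b_i^{(m)})$. First I would show a Lipschitz-type estimate: if $(a_i^\ast,b_i^\ast)$ satisfies $\pnorm{a_i^{(m)}-a_i^\ast}{\infty}\le\varepsilon$ and $\abs{b_i^{(m)}-b_i^\ast}\le\varepsilon$ for all $i$, then for every design point $x=x_j\in[0,1]^d$,
\[
\bigl|\max_i(a_i^\ast\cdot x+b_i^\ast)-\max_i(a_i^{(m)}\cdot x+b_i^{(m)})\bigr|\le \max_i\bigl(\abs{(a_i^\ast-a_i^{(m)})\cdot x}+\abs{b_i^\ast-b_i^{(m)}}\bigr)\le (d+1)\varepsilon,
\]
using that $\abs{\max_i u_i-\max_i v_i}\le\max_i\abs{u_i-v_i}$ and $\abs{x_k}\le 1$; hence $\ell_n^2(f_{a^\ast,b^\ast},f_{0,m})\le (d+1)^2\varepsilon^2$. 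Choosing $\varepsilon_{n,m}\equiv \bigl(\delta_{n,m}/\sqrt{c_3}\bigr)/(d+1)\wedge 1$ guarantees that the whole $\ell_\infty$-box of radius $\varepsilon_{n,m}$ around $\{(a_i^{(m)},b_i^{(m)})\}$ lies inside the Kullback-Leibler ball appearing in (P2).

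Next I would lower-bound the $\Pi_{n,m}$-mass of this box. Since the prior density on $(\R^d\times\R)^m$ is $\bigotimes_{i=1}^m g^{\otimes d}\otimes g$ and $g$ is symmetric and non-increasing on $(0,\infty)$, on the box around $(a_i^{(m)},b_i^{(m)})$ the density is at least $\prod_{i=1}^m g(\pnorm{a_i^{(m)}}{\infty}+1)^d\, g(\abs{b_i^{(m)}}+1)\ge (\tau_{m,g}^{\mathrm{cvx}})^{(d+1)m}$, whenever $\varepsilon_{n,m}\le 1$, by the definition of $\tau_{m,g}^{\mathrm{cvx}}$. The box has Lebesgue volume $(2\varepsilon_{n,m})^{(d+1)m}\ge (2/(d+1))^{(d+1)m}(\delta_{n,m}/\sqrt{c_3})^{(d+1)m}\wedge 1$, so altogether
\[
\Pi_{n,m}\bigl(\{f\in\mathcal{F}_m: \ell_n^2(f,f_{0,m})\le\delta_{n,m}^2/c_3\}\bigr)\ge \exp\Bigl(-(d+1)m\bigl[\log\tfrac{d+1}{2}+\log\tfrac{1}{\tau_{m,g}^{\mathrm{cvx}}}+\log(\varepsilon_{n,m}^{-1}\vee 1)\bigr]\Bigr).
\]

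To finish, I must check that the exponent is bounded below by $-2n\delta_{n,m}^2$, i.e. that each of the three terms in brackets is $\lesssim n\delta_{n,m}^2/((d+1)m)$. Recalling $n\delta_{n,m}^2=(C\vee 1/\eta)\, d\,\log n\cdot m\log 3m$, it suffices that each bracketed term is $\le\tfrac{1}{8\eta}\log n\cdot\log 3m$ (say). The term $\log\frac{d+1}{2}$ is handled by $n\ge d$ (so $\log(d+1)\lesssim\log n\le\log n\cdot\log 3m$); the term $\log(1/\tau_{m,g}^{\mathrm{cvx}})$ is exactly controlled by hypothesis (\ref{cond:cvxreg_1}), which states $\tau_{m,g}^{\mathrm{cvx}}\ge e^{-\log n\cdot\log 3m/8\eta}$; and the term $\log(\varepsilon_{n,m}^{-1}\vee 1)$ is bounded because $\varepsilon_{n,m}^{-1}\lesssim (d+1)\sqrt{c_3}/\delta_{n,m}\lesssim (d+1)\sqrt{c_3 n}$, whose logarithm is $O(\log n + \log d)=O(\log n)$ under $n\ge d$. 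Choosing the constant $C$ in $\delta_{n,m}^2$ large enough absorbs all numeric constants. The main obstacle is purely bookkeeping: keeping the powers $(d+1)m$ versus the $d\,m\log 3m$ scaling of $n\delta_{n,m}^2$ consistent, and making sure the logarithmic slack $\log(\varepsilon_{n,m}^{-1})$ and $\log(d+1)$ terms are genuinely dominated — this is where the hypothesis $n\ge d$ and the particular choice of $\delta_{n,m}^2$ (with its $\log n$ factor) are used. There is no analytic difficulty beyond the elementary Lipschitz estimate for the max of affine functions.
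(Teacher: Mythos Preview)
Your proposal is correct and follows essentially the same route as the paper's proof: a Lipschitz estimate for $\max_i$ of affine functions to identify a neighborhood in parameter space that lands inside the $\ell_n$-ball, followed by a lower bound on the prior mass via the monotonicity of $g$ and a volume computation, and finally bookkeeping to check the exponent against $2n\delta_{n,m}^2$ using $n\ge d$ and (\ref{cond:cvxreg_1}). The only cosmetic difference is that the paper works with $\ell_2$-balls of radius $\delta_{n,m}/(2\sqrt{c_3 d})$ on the slope vectors (using Cauchy--Schwarz and the unit-ball volume estimate $v_d\ge d^{-d/2}$), whereas you use $\ell_\infty$-boxes (using $\abs{a\cdot x}\le d\pnorm{a}{\infty}$ on $[0,1]^d$); both choices produce a $(d+1)m\cdot O(\log d+\log\delta_{n,m}^{-1}+\log(\tau_{m,g}^{\mathrm{cvx}})^{-1})$ exponent and the verification goes through identically.
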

\begin{proof}[Proof of Lemma \ref{lem:submodel_mass_cvxreg}]
	We write $f_{0,m}\equiv \max_{1\leq i\leq m}\big(a_i\cdot x+b_i\big)$ throughout the proof. We first claim that for any $a_i^\ast \in B_d(a_i,\delta_{n,m}/2\sqrt{c_3 d})$ and $b_i^\ast \in B_1(b_i, \delta_{n,m}/2\sqrt{c_3})$, let $g_m^\ast(x):=\max_{1\leq i\leq m} (a_i^\ast \cdot x+ b_i^\ast)$, then $\ell_\infty(g_m^\ast,f_{0,m})\leq \delta_{n,m}/\sqrt{c_3}$. To see this, for any $x \in \mathfrak{X}$, there exists some index $i_x \in \{1,\ldots,m\}$ such that $g_m^\ast(x) = a_{i_x}^\ast\cdot x+b_{i_x}^\ast$. Hence
	\begin{align*}
	g_m^\ast(x)-f_{0,m}(x)&\leq \big(a_{i_x}^\ast - a_{i_x}\big)\cdot x + \big(b_{i_x}^\ast -b_{i_x}\big)\leq \pnorm{a_{i_x}^\ast - a_{i_x}}{2}\pnorm{x}{2}+\abs{b_{i_x}^\ast -b_{i_x}} \\
	& \leq \frac{\delta_{n,m}}{2\sqrt{c_3 d}}\cdot \sqrt{d} +  \frac{\delta_{n,m}}{2\sqrt{c_3 }}= \frac{\delta_{n,m}}{\sqrt{c_3}}. 
	\end{align*} 
	The reverse direction can be shown similarly, whence the claim follows by taking supremum over $x \in \mathfrak{X}$. This entails that 
	\begin{align*}
	&\Pi_{n,m}(\{f \in \mathcal{F}_m: \ell_n^2(f,f_{0,m})\leq {\delta_{n,m}^2}/{c_3}\})\\
	& \geq\Pi_{G} (\cap_{i=1}^m\{(a_i^\ast,b_i^\ast): a_i^\ast \in B_d(a_i,\delta_{n,m}/2\sqrt{c_3 d}), b_i^\ast \in B_1(b_i, \delta_{n,m}/2\sqrt{c_3})\} ) \\ 
	&=\prod_{i=1}^m \Pi_{g^{\otimes d}} \big(B_d(a_i,\delta_{n,m}/2\sqrt{c_3 d})\big)\cdot \Pi_g \big(B_1(b_i,\delta_{n,m}/2\sqrt{c_3})\big)\\ 
	&\geq \prod_{i=1}^m g(\pnorm{a_i}{\infty}+1)^d\cdot g(\abs{b_i}+ 1)\cdot \bigg(\frac{\delta_{n,m}}{\sqrt{4c_3 d}}\wedge 1\bigg)^d v_d \bigg(\frac{\delta_{n,m}}{\sqrt{4c_3}}\wedge 1\bigg)\\ 
	&\geq \exp\bigg(-2m(d+1)\log\big(\tau_{m,g}^{-1} \vee 1\big)-m(d+1) \log \big(\frac{\sqrt{4c_3 d}}{\delta_{n,m}} \vee 1\big)-\frac{1}{2} md \log d\bigg),
	\end{align*} 
	where $v_d\equiv \mathrm{vol}(B_d(0,1))$ and we used the fact that $v_d\geq (1/\sqrt{d})^d$. Now by requiring that $n\geq d$ and 
	\begin{align*}
	\max\bigg\{2m(d+1)\log\big(\tau_{m,g}^{-1} \vee 1\big), m(d+1) \log \bigg(\frac{\sqrt{4c_3 d}}{\delta_{n,m}} \vee 1\bigg) \bigg\} \leq \frac{d}{2\eta} \log n\cdot m\log 3m, 
	\end{align*} 
	the claim follows by verifying (\ref{cond:cvxreg_1}) implies this requirement [since $\sqrt{4c_3 d}\delta_{n,m}^{-1}\leq \sqrt{n}$, the second term is bounded by $md\log n$. The inequality follows by noting $\eta<1/4$]. 
\end{proof}

\begin{lemma}\label{lem:c_cvx}
	For $n\geq 2$, (\ref{cond:suplinearity_delta}) is satisfied for $\mathfrak{c}=1,\gamma=2, \mathfrak{h}_0=\infty$.
\end{lemma}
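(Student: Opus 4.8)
The plan is to verify both parts of (\ref{cond:suplinearity_delta}) by hand from the explicit choice $\delta_{n,m}^2=(C\vee 1/\eta)\,d\,\frac{\log n}{n}\,m\log 3m$; no tests or entropy estimates are needed. Writing $K\equiv(C\vee 1/\eta)d\log n$ so that $n\delta_{n,m}^2=Km\log 3m$, I will treat $m\mapsto n\delta_{n,m}^2$ as the restriction to $\N$ of the increasing convex function $x\mapsto Kx\log 3x$ on $[1,\infty)$, and read $\delta_{n,hm}^2$ off this same formula for non-integer $hm$. Two elementary observations will carry the argument: first, $\log h=\log(1+(h-1))\le h-1$ for every $h\ge1$; second, $\alpha K\ge\log 2$ whenever $\alpha\ge c_7/2$ and $n\ge2$, because $\alpha C\ge(c_7/2)(2/c_7)=1$, $d\ge1$, and $\log n\ge\log 2$, so that $3^{-\alpha K}\le 3^{-\log 2}\le\tfrac12$ (since $3^{\log 2}=e^{(\log 2)(\log 3)}\ge e^{\log 2}=2$, using $\log 3\ge1$).

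For the growth bound $\mathfrak{c}^{-2}\delta_{n,hm}^2\le h^\gamma\delta_{n,m}^2$ with $\mathfrak{c}=1,\gamma=2$, I would reduce it to $\log(3hm)\le h\log(3m)$, and then close it using $\log(3m)\ge\log 3>1$ together with the first observation:
\[
\log(3hm)=\log(3m)+\log h\le\log(3m)+(h-1)\le\log(3m)+(h-1)\log(3m)=h\log(3m),
\]
with equality at $h=1$.

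For the summability bound I would fix $m$, $h\ge1$, $\alpha\ge c_7/2$, set $M\equiv\lceil hm\rceil$ (the smallest integer $\ge hm$, so the sum runs over $m'=M,M+1,\dots$), and use convexity — or just $M\ge1$ — to get $n\delta_{n,M+j}^2-n\delta_{n,M}^2\ge Kj\log 3$ for $j\ge0$. Summing the resulting geometric series and invoking the second observation,
\[
\sum_{m'\ge hm}e^{-\alpha n\delta_{n,m'}^2}\le e^{-\alpha n\delta_{n,M}^2}\sum_{j\ge0}3^{-\alpha Kj}=\frac{e^{-\alpha n\delta_{n,M}^2}}{1-3^{-\alpha K}}\le 2\,e^{-\alpha n\delta_{n,M}^2}.
\]
Then monotonicity of $x\mapsto x\log 3x$ together with $M\ge hm$ gives $n\delta_{n,M}^2=KM\log(3M)\ge K\,hm\log(3m)=h\,n\delta_{n,m}^2$, so the bound becomes $2e^{-\alpha nh\delta_{n,m}^2}=2e^{-\alpha nh\delta_{n,m}^2/\mathfrak{c}^2}$ with $\mathfrak{c}=1$, which is exactly the first part of (\ref{cond:suplinearity_delta}).

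The whole argument is bookkeeping, and the one spot that deserves care — and the only place where the hypothesis $n\ge2$ enters — is the geometric-series step: I must make sure $\alpha K$ is large enough that $3^{-\alpha K}\le\tfrac12$, so that the series sums to at most the prescribed constant $2$ rather than some larger number; this uses $n\ge2$ through $\log n\ge\log 2$, together with $C\ge2/c_7$ and $\alpha\ge c_7/2$. A minor secondary point worth flagging is that the check of $\gamma=2$ relies on reading $\delta_{n,hm}$ from the continuous formula; under the alternative convention $\delta_{n,\lceil hm\rceil}$ one would need a slightly larger exponent $\gamma$ for small $m$, so this convention should be stated explicitly.
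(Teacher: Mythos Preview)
Your proof is correct and follows essentially the same route as the paper: both reduce the growth bound to the elementary inequality $\log(3hm)\le h\log(3m)$, and both control the tail sum by a geometric series (the paper replaces $\log 3m'$ by the constant $\log 3m$ before summing, whereas you bound successive increments by $K\log 3$ --- the same idea in slightly different packaging). Your handling of non-integer $hm$ via $M=\lceil hm\rceil$ is in fact more careful than the paper's, which tacitly treats $hm$ as an integer.
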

\begin{proof}
	For fixed $n\geq 2$ and $\eta>0$, write $n\delta_{n,m}^2=c \log n(m\log 3m)$ throughout the proof, where $c\geq 2/c_7$. Then for any $\alpha\geq c_7/2$ and $h\geq 1$, since $\log(3m')\geq \log(3hm)\geq \log(3m)$ for any $m'\geq hm$, we have 
	\begin{align*}
	\sum_{m'\geq hm} e^{-\alpha n\delta_{n,m'}^2} \leq  \sum_{m'\geq hm} e^{-\alpha c m' (\log n\cdot \log 3m)}  = \frac{e^{-\alpha c hm \log n\log 3m}}{1-e^{-\alpha c \log n \log 3m}}\leq 2e^{-\alpha h n \delta_{n,m}^2}. 
	\end{align*} 
	For the second condition of (\ref{cond:suplinearity_delta}), note that for $\gamma=2$, in order to verify $\delta_{n,hm}^2\leq h^2 \delta_{n,m}^2$, it suffices to have $
	hm \log (3hm)\leq h^2 m \log(3m)$, equivalently $3hm\leq (3m)^h$, and hence $3^{h-1}\geq h$ for all $h\geq 1$ suffices. This is valid and hence completing the proof.
\end{proof}

\begin{proof}[Proof of Theorem \ref{thm:rate_cvxreg}]
	This is a direct consequence of Corollary \ref{cor:regression_model}, Lemma \ref{lem:submodel_mass_cvxreg} and \ref{lem:c_cvx}, combined with Proposition \ref{prop:prior_generic}.
\end{proof}

\subsection{Proof of Theorem \ref{thm:rate_hp}}

\begin{lemma}\label{lem:local_ent_hp}
	Let $n\geq 2$, then for any $g \in \mathcal{F}_{(s,m)}$, 
	\begin{align*}
	&\log \mathcal{N}(c_5\epsilon,\{f \in \mathcal{F}_{(s,m)}:\ell_n(f,g)\leq 2\epsilon \},\ell_n)\\
	&\leq 2\log(6/c_5)\big(s\log(ep)\wedge \rank(X)+m\log(en)\big).
	\end{align*} 
	\begin{proof}
		The proof borrows notation from the proof of Lemma \ref{lem:local_ent_iso}. Further let $\mathscr{S}_s$ denote all subsets of $\{1,\ldots,p\}$ with cardinality at most $s$. Then the entropy in the statement of the lemma can be further bounded by 
		\begin{align*}
		&\log \bigg[\binom{p}{s}\binom{n}{m-1}\max_{S \in \mathscr{S}_s,Q \in \mathscr{Q}_m}\mathcal{N}(c_5\epsilon,\{f \in \mathcal{F}_{(s,m),(S,Q)}: \ell_n(f,g)\leq 2\epsilon \},\ell_n)\bigg]\\
		&\leq s\log(ep)+m\log(en)\\
		&\qquad\qquad +\max_{S \in \mathscr{S}_s,Q \in \mathscr{Q}_m}\log \mathcal{N}(c_5\sqrt{n}\epsilon, \{\bm{\gamma}\in \mathcal{P}_{n,(S,Q)}: \pnorm{\bm{\gamma}-\bm{g}}{2}\leq 2\sqrt{n}\epsilon\}, \pnorm{\cdot}{2})
		\end{align*} 
		where $\mathcal{P}_{n,(S,Q)}\equiv \{(x_i^\top \beta+u(z_i))_{i=1}^n \in \R^n: \mathrm{supp}(\beta)=S, \newline u \textrm{ is constant on the partitions of } Q\}$ is contained in a linear subspace of dimension no more than $s+m$. The entropy can also be bounded by 
		\begin{align*}
		m\log(en)+\max_{Q \in \mathscr{Q}_m}\log \mathcal{N}(c_5\sqrt{n}\epsilon, \{\bm{\gamma}\in \mathcal{P}_{n,(\{1,\ldots,p\},Q)}: \pnorm{\bm{\gamma}-\bm{g}}{2}\leq 2\sqrt{n}\epsilon\}, \pnorm{\cdot}{2}),	
		\end{align*}
		which is contained in a linear subspace of dimension no more than $\rank(X)+m$. Now using similar arguments as in Lemma \ref{lem:local_ent_iso} proves the claim.
	\end{proof}
\end{lemma}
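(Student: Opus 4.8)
The plan is to mirror the proof of Lemma~\ref{lem:local_ent_iso}, inserting one extra combinatorial layer to handle the sparse linear component. First I would stratify $\mathcal{F}_{(s,m)}$ according to the support of $\beta$ and the partition carried by $u$: let $\mathscr{S}_s$ be the family of $s$-subsets $S\subseteq\{1,\dots,p\}$, and $\mathscr{Q}_m$ the family of partitions of the sorted design points $z_{(1)}\le\cdots\le z_{(n)}$ into at most $m$ consecutive blocks (as in Lemma~\ref{lem:local_ent_iso}), and write $\mathcal{F}_{(s,m),(S,Q)}$ for the subclass of $f_{\beta,u}$ with $\mathrm{supp}(\beta)\subseteq S$ and $u$ constant on the blocks of $Q$. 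Since $\mathcal{F}_{(s,m)}=\bigcup_{S\in\mathscr{S}_s,\,Q\in\mathscr{Q}_m}\mathcal{F}_{(s,m),(S,Q)}$, a union bound gives
\[
\log\mathcal{N}\big(c_5\epsilon,\{f\in\mathcal{F}_{(s,m)}:\ell_n(f,g)\le2\epsilon\},\ell_n\big)\le\log|\mathscr{S}_s|+\log|\mathscr{Q}_m|+\max_{S,Q}\log\mathcal{N}\big(c_5\epsilon,\{f\in\mathcal{F}_{(s,m),(S,Q)}:\ell_n(f,g)\le2\epsilon\},\ell_n\big),
\]
and $\log|\mathscr{S}_s|=\log\binom{p}{s}\le s\log(ep)$ while $\log|\mathscr{Q}_m|\le\log\binom{n}{m-1}\le m\log(en)$, exactly as in Lemma~\ref{lem:local_ent_iso}.

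For the inner term I would pass to design-point vectors. The image $\mathcal{P}_{n,(S,Q)}:=\{(x_i^\top\beta+u(z_i))_{i=1}^n:\mathrm{supp}(\beta)\subseteq S,\ u\text{ constant on }Q\}$ is the range of a linear map from $\R^{|S|}\times\R^m\cong\R^{s+m}$, hence a linear subspace of $\R^n$ of dimension at most $s+m$; the monotonicity constraint on $u$ only cuts out a convex subcone, which can be discarded for an upper bound. Translating the local ball by $\bm{g}:=(g(x_i,z_i))_{i=1}^n$ and projecting orthogonally onto $\mathcal{P}_{n,(S,Q)}$ (Pythagoras, as in Lemma~\ref{lem:local_ent_iso}) embeds the set into a Euclidean ball of radius $2\sqrt{n}\,\epsilon$ inside a subspace of dimension $\le s+m$, and the standard volumetric estimate for finite-dimensional balls (e.g.\ Problem~2.1.6 in \cite{van1996weak}) yields a $c_5\sqrt{n}\,\epsilon$-net of size at most $(6/c_5)^{s+m}$, i.e.\ an inner log-entropy at most $(s+m)\log(6/c_5)$.

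Finally I would collect the three contributions: the bound is at most $s\log(ep)+m\log(en)+(s+m)\log(6/c_5)$. Using $\log(6/c_5)\ge1$ (valid since $c_5\le1/4$) together with $\log(ep)\ge1$ and $\log(en)\ge1$, one absorbs $s\log(6/c_5)\le s\log(ep)\log(6/c_5)$ and $m\log(6/c_5)\le m\log(en)\log(6/c_5)$, so the bound is at most $\big(1+\log(6/c_5)\big)\big(s\log(ep)+m\log(en)\big)\le2\log(6/c_5)\big(s\log(ep)+m\log(en)\big)$, which is the claim. I do not expect a genuine obstacle: the only point needing a moment's thought is the dimension count $\dim\mathcal{P}_{n,(S,Q)}\le s+m$ once the support and the partition are fixed; after that the finite-dimensional entropy estimate and the two combinatorial factors close the argument, and the constant bookkeeping is routine.
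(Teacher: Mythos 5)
Your proof is correct and follows essentially the same route as the paper's: stratify over supports $S$ and partitions $Q$, pay $s\log(ep)+m\log(en)$ via the union bound, and bound the within-stratum entropy by $(s+m)\log(6/c_5)$ using the dimension count $\dim\mathcal{P}_{n,(S,Q)}\le s+m$ together with the finite-dimensional ball estimate after the Pythagoras/projection step. Your explicit absorption of the $(s+m)\log(6/c_5)$ term into the stated constant $2\log(6/c_5)$ is exactly the bookkeeping the paper leaves implicit, and it is valid since $c_5\le 1/4$ gives $\log(6/c_5)\ge 1$.
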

Hence we can take $\delta_{n,(s,m)}^2\equiv c' \frac{s\log (ep)\wedge \rank(X)+m\log(en)}{n}$ for a large constant $c'>0$.

\begin{lemma}\label{lem:suplinearity_hp}
	(\ref{cond:suplinearity_delta}) holds with $\mathfrak{c},\gamma$ depending on $\mathfrak{h}_0 \in [1,\infty)$ and $L$.
\end{lemma}
\begin{proof}
	For the first condition of (\ref{cond:suplinearity_delta}), note that for any $h \in [1,\mathfrak{h}_0]$ and $\alpha\geq c_7/2$, choose $c'>0$ such that $\alpha c'\geq 2L\vee 2$, it follows that 
	\begin{align*}
	&\sum_{(s',m')\geq (hs,hm)} e^{-\alpha n \delta_{n,(s',m')}^2}=\sum_{s'\geq hs}e^{-\alpha c' (s\log (ep)\wedge \rank(X) )} \sum_{m'\geq hm} e^{-\alpha c' m\log(en)}\\
	&\leq (1-e^{-\alpha c' })^{-1} e^{-(\alpha c'/2\mathfrak{h}_0)h(s\log (ep)\wedge \rank(X)+m\log (en))}\leq 2e^{-\alpha n h\delta_{n,(s,m)}^2/\mathfrak{c}^2}.
	\end{align*}
	The inequality in the middle for the previous display follows as
	\begin{align*}
	&\sum_{s'\geq hs}e^{-\alpha c' (s\log (ep)\wedge \rank(X))}\leq e^{- \alpha c' (hs\log (ep)\wedge \rank(X))+\log p} \\ 
	&\leq e^{- \min\{\alpha c' hs\log (ep)-\log p, \alpha c'  \rank(X)-\log p\}}  \leq e^{- (\alpha c'/2) (hs\log (ep)\wedge \rank(X))}\\
	&\leq e^{- (\alpha c'/2\mathfrak{h}_0) h(s\log (ep)\wedge \rank(X))}.
	\end{align*}
	The second condition of (\ref{cond:suplinearity_delta}) is easy to verify.
\end{proof}

\begin{lemma}\label{lem:submodel_mass_hp}
	Suppose (\ref{cond:hpreg_1}) holds. Then (P2) in Assumption \ref{assump:prior_mass_general} holds.
\end{lemma}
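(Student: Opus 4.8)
The plan is to follow essentially the same route as in the proof of Lemma \ref{lem:submodel_mass_iso}, modularizing the parametric and nonparametric parts of the model $\mathcal{F}_{(s,m)}$. Recall $f_{0,(s,m)}(x,z)=x^\top\beta_{0,s}+u_{0,m}(z)$ with $\beta_{0,s}\in B_0(s)$ and $u_{0,m}\in\mathcal{U}_m$; let $S_0\equiv\mathrm{supp}(\beta_{0,s})$ (with $|S_0|\le s$) and let $Q_{0,m}=\{x_{i(k)}\}_{k=1}^m$ be the change-point set of $u_{0,m}$, with values $\mu_{0,1}\le\cdots\le\mu_{0,m}$. The first step is a simple perturbation bound: I would show that if $\beta^\ast$ is supported on $S_0$ with $\pnorm{\beta^\ast-\beta_{0,s}}{2}\le \delta_{n,(s,m)}/(2\sqrt{c_3})$ (using $X^\top X/n$ normalized so that $\ell_n^2(h_{\beta^\ast},h_{\beta_{0,s}})=(\beta^\ast-\beta_{0,s})^\top(X^\top X/n)(\beta^\ast-\beta_{0,s})\le\pnorm{\beta^\ast-\beta_{0,s}}{2}^2$ — here I need $\pnorm{X^\top X/n}{\mathrm{op}}$ to be controlled; if it is not assumed bounded, I would instead work coordinatewise with $\pnorm{\beta^\ast-\beta_{0,s}}{\infty}$ and the fact that the diagonal of $X^\top X/n$ is $1$, paying an extra $\sqrt{s}$ that is absorbed into constants) and if $u^\ast\in\mathcal{U}_m$ is supported on the same partition $Q_{0,m}$ with $\sup_k|u^\ast(x_{i(k)})-\mu_{0,k}|\le\delta_{n,(s,m)}/(2\sqrt{c_3})$, then $\ell_n^2(f_{\beta^\ast,u^\ast},f_{0,(s,m)})\le\delta_{n,(s,m)}^2/c_3$ by the triangle inequality in $\ell_n$.

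Next I would lower-bound the prior mass of this event. Since $\Pi_{n,(s,m)}$ first picks a random support $S$ of size $s$ and a random change-point set $Q$ of size $m$ and then applies $g^{\otimes s}\otimes\bar g_m$, restricting to the single pair $(S_0,Q_{0,m})$ costs a factor $\binom{p}{s}^{-1}\binom{n}{m-1}^{-1}\ge\exp(-s\log(ep)-m\log(en))$. Conditionally on $(S_0,Q_{0,m})$, the parametric part contributes $\prod_{j\in S_0}\Pi_g(B_1(\beta_{0,s,j},\delta_{n,(s,m)}/(2\sqrt{c_3 s})))\ge (\tau_{s,g})^s(\delta_{n,(s,m)}/(2\sqrt{c_3 s})\wedge 1)^s$, exactly as in the trace-regression and convex cases, using that $g$ is symmetric and non-increasing so that $g(\pnorm{\beta_{0,s}}{\infty}+1)$ is a uniform lower bound on the density over the relevant ball. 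The nonparametric part contributes, as in Lemma \ref{lem:submodel_mass_iso}, at least $(\tau_{m,g})^m(\delta_{n,(s,m)}/\sqrt{c_3}\wedge 1)^m/m!\ge\exp(-m\log(en)-m\log(\tau_{m,g}^{-1}\vee 1)-m\log(\sqrt{c_3}/\delta_{n,(s,m)}\vee 1))$ via the $\bar g_m(\bm\mu)=m!\,g_m(\bm\mu)\bm 1_{\{\mu_1\le\cdots\le\mu_m\}}$ structure. Multiplying these three contributions, the log of the prior mass is bounded below by $-C\big(s\log(ep)+m\log(en)+s\log(\tau_{s,g}^{-1}\vee 1)+m\log(\tau_{m,g}^{-1}\vee 1)+s\log(\cdots)+m\log(\cdots)\big)$.

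The final step is bookkeeping: by the choice $\delta_{n,(s,m)}^2\equiv(\tfrac{4\log(6/c_5)}{c_7}\vee\tfrac1\eta)\cdot\tfrac{s\log(ep)+m\log(en)}{n}$, we have $\sqrt{c_3}/\delta_{n,(s,m)}\lesssim\sqrt{n}$ (using $c_3=1$ in the Gaussian regression setting, $p\ge n$, and $\eta<1/4$), so the two $\log(\cdots)$ terms involving $\delta_{n,(s,m)}$ are dominated by $s\log(ep)+m\log(en)$; and assumption (\ref{cond:hpreg_1}), namely $\tau_{s,g}\ge e^{-\log(ep)/2\eta}$ and $\tau_{m,g}\ge e^{-\log(en)/2\eta}$, gives $s\log(\tau_{s,g}^{-1}\vee 1)\le\tfrac{1}{2\eta}s\log(ep)$ and $m\log(\tau_{m,g}^{-1}\vee 1)\le\tfrac{1}{2\eta}m\log(en)$. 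Hence the log prior mass is bounded below by $-C'(s\log(ep)+m\log(en))$, and choosing the constant in $\delta_{n,(s,m)}^2$ appropriately (i.e. $\ge 2C'$) yields $\Pi_{n,(s,m)}(\{d_n^2(f,f_{0,(s,m)})\le\delta_{n,(s,m)}^2/c_3\})\ge e^{-2n\delta_{n,(s,m)}^2}$, which is (P2). I expect the only mild obstacle to be the treatment of the parametric perturbation when $X^\top X/n$ is only normalized on the diagonal; this is handled by the coordinatewise argument noted above, which is routine and absorbed into the constants depending on $\eta$.
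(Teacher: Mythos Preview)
Your proposal is correct and follows essentially the same decomposition as the paper: pay the combinatorial cost $\binom{p}{s}^{-1}\binom{n}{m-1}^{-1}$ for selecting $(S_0,Q_{0,m})$, then lower-bound the parametric and nonparametric prior masses separately, and finish with bookkeeping using (\ref{cond:hpreg_1}).

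The only place you hesitate---controlling $\ell_n^2(h_{\beta^\ast},h_{\beta_{0,s}})$ when merely the diagonal of $X^\top X/n$ is normalized---is handled in the paper by a cleaner observation you overlooked: since $X^\top X/n$ is positive semidefinite with $\trace(X^\top X/n)=p$, its largest eigenvalue $\sigma_\Sigma^2$ is at most $p$, whence $\ell_n^2(h_{\beta^\ast},h_{\beta_{0,s}})\le \sigma_\Sigma^2\pnorm{\beta^\ast-\beta_{0,s}}{2}^2$ and the extra factor contributes only an $s\log p$ term, absorbed via $p\ge n$ and $\eta<1/4$. Your coordinatewise alternative (essentially $\pnorm{X\gamma}{2}\le \sqrt{n}\pnorm{\gamma}{1}\le \sqrt{ns}\,\pnorm{\gamma}{2}$ from the column normalization) also works and in fact yields the tighter factor $s$ in place of $p$, but the trace bound is the more direct route. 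The paper also splits $\delta_{n,(s,m)}^2=\delta_{n,s}^2+\delta_{n,m}^2$ and allocates each half to the corresponding piece, whereas you use the full $\delta_{n,(s,m)}$ for both; this is an inessential difference.
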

\begin{proof}
	Let $\delta_{n,s}^2\equiv c' (s\log (ep)\wedge \rank(X)) /n$ and $\delta_{n,m}^2\equiv c' m\log(en)/n$. Let $\tau_{s,g}\equiv \sup_{f_{0,(s,m)} } g(\pnorm{\beta_{0,s}}{\infty}+1)$. 
	
	First consider $s\log(e p )\leq \rank(X)$. Using notation in Lemma \ref{lem:local_ent_hp}, 
	\begin{align*}
	&\Pi_{n,(s,m)}(\{f \in \mathcal{F}_{(s,m)}: \ell_n^2(f,f_{0,(s,m)})\leq \delta_{n,(s,m)}^2/c_3\})\\
	& \geq \binom{p}{s}^{-1}\binom{n}{m-1}^{-1}\Pi_{g^{\otimes s}\otimes \bar{g}_m}(\{f \in \mathcal{F}_{(s,m),(S_0,Q_0)}: \ell_n^2(f,f_{0,(s,m)})\leq \delta_{n,(s,m)}^2/c_3\})
	\end{align*}
	where $f_{0,(s,m)} \in \mathcal{F}_{(s,m),(S_0,Q_0)}$. To bound the prior mass of the above display from below, it suffices to bound the product of the following two terms:
	\begin{align}\label{ineq:hpreg_0}
	\pi_s&\equiv \Pi_{g^{\otimes s}}(\{\beta \in B_0(s):\beta_{S_0^c}=0, \ell_n^2(h_\beta,h_{\beta_{0,s}})\leq \delta_{n,s}^2/2c_3\}),\\
	\pi_m&\equiv\Pi_{\bar{g}_m}(\{u \in \mathcal{U}_{m,Q_0}:\ell_n^2(u,u_{0,m})\leq \delta_{n,m}^2/2c_3\}).\nonumber
	\end{align}
	The first term equals 
	\begin{align*}
	&\Pi_{g^{\otimes s}}(\{\beta \in B_0(s): \beta_{S_0^c}=0, \pnorm{X\beta-X\beta_{0,s}}{2}\leq {\sqrt{n}\delta_{n,s}}/{\sqrt{2c_3}}\})\\
	& \geq  \Pi_{g^{\otimes s}}\bigg(\bigg\{\beta \in B_0(s): \beta_{S_0^c}=0, \pnorm{\beta-\beta_{0,s}}{2}\leq \frac{1}{\sigma_\Sigma}\cdot \frac{\delta_{n,s}}{\sqrt{2c_3}}\bigg\}\bigg).
	\end{align*} 
	Here the inequality follows by noting $
	\pnorm{X\beta-X\beta_{0,s}}{2}^2\leq n (\beta-\beta_{0,s})^\top \Sigma (\beta-\beta_{0,s})\leq n\sigma^2_{\Sigma}\pnorm{\beta-\beta_{0,s}}{2}^2$, 
	where $\sigma_\Sigma$ denotes the largest singular value of $X^\top X/n$. Note that $\sigma_\Sigma\leq \sqrt{p}$ since the trace for $X^\top X/n$ is $p$ and the trace of a p.s.d. matrix dominates the largest eigenvalue. The set above is supported on $\R^p_{S_0}$ and hence can be further bounded from below by $
	\tau_{s,g}^s \big( \frac{1}{\sigma_\Sigma}\cdot \frac{\delta_{n,s}}{\sqrt{2c_3}} \wedge 1\big)^{s}v_s$ where $v_s = \mathrm{vol}(B_s(0,1))$. Hence  
	\begin{align*}
	\pi_s \geq  (\tau_{s,g}\wedge 1)^s \bigg( \frac{1}{\sigma_\Sigma}\cdot \frac{\delta_{n,s}}{\sqrt{2c_3}} \wedge 1 \bigg)^{s}v_s\geq e^{ -\frac{1}{2}s \log s- s\log\big(\tau_{s,g}^{-1} \vee 1\big)- \frac{s}{2}\log\big(\frac{2c_3 \sigma_\Sigma^2}{\delta_{n,s}^2} \vee 1\big)},
	\end{align*} 
	where in the last inequality we used that $v_s\geq (1/\sqrt{s})^s$. By repeating the arguments in the proof of Lemma \ref{lem:submodel_mass_iso}, we have 
	\begin{align*}
	\pi_m\geq e^{ - m\log\big(\tau_{m,g}^{-1} \vee 1\big)- \frac{m}{2}\log\big(\frac{2c_3 }{\delta_{n,m}^2} \vee 1\big)}.
	\end{align*}
	Combining above estimates, 
	\begin{align*}
	&\Pi_{n,(s,m)}(\{f \in \mathcal{F}_{(s,m)}: \ell_n^2(f,f_{0,(s,m)})\leq \delta_{n,(s,m)}^2/c_3\})\\ &\geq e^{-2s\log (ep)-m\log(en)-s\log\big(\tau_{s,g}^{-1} \vee 1\big)-m\log\big(\tau_{m,g}^{-1} \vee 1\big)} \times e^{- \frac{s}{2}\log\big(\frac{2c_3 \sigma_\Sigma^2}{\delta_{n,s}^2} \vee 1\big) - \frac{m}{2}\log\big(\frac{2c_3 }{\delta_{n,m}^2} \vee 1\big) }.
	\end{align*} 
	The right side  is bounded from below by $e^{-2n\delta_{n,(s,m)}^2}$, if we require both 
	\begin{align*}
	& \min\bigg\{e^{-s\log(\tau_{s,g}^{-1} \vee 1)},  e^{- s\log\big(\frac{\sqrt{2c_3} \sigma_\Sigma}{\delta_{n,s}} \vee 1\big)} \bigg\}\geq e^{-\frac{1}{2\eta} s\log(ep)},\\
	& \min\bigg\{e^{-m\log(\tau_{m,g}^{-1} \vee 1)},  e^{- m\log\big(\frac{\sqrt{2c_3} }{\delta_{n,m}} \vee 1\big)} \bigg\}\geq e^{-\frac{1}{2\eta} m\log(en)}.
	\end{align*}
	The first terms in the above two lines can be verified by (\ref{cond:hpreg_1}). The other terms in the above two lines do not contribute by noting that $2c_3/\delta_{n,m}^2\leq \frac{2c_3c_7}{4\log(6/c_5)} n\leq (1/2)n\leq en$ since $c_3=1$ (in Gaussian regression model) and $c_7 \in (0,1)$, while $2c_3\sigma_\Sigma^2/\delta_{n,s}^2\leq \sigma_\Sigma^2 n\leq pn\leq p^2$ and $\eta<1/4$.
	
	Next for $s\log (ep)>\rank(X)$, we may proceed with 
	\begin{align*}
	&\Pi_{n,(s,m)}(\{f \in \mathcal{F}_{(s,m)}: \ell_n^2(f,f_{0,(s,m)})\leq \delta_{n,(s,m)}^2/c_3\})\\ &\geq \binom{n}{m-1}^{-1}\Pi\Big(\{f \in \cup_{\abs{S}=s}\mathcal{F}_{(s,m),(S,Q_0)}: \ell_n^2(f,f_{0,(s,m)})\leq \delta_{n,(s,m)}^2/c_3\}\Big). 
	\end{align*}
	To bound the prior mass of the above display from below, it suffices to bound from below the product of $\pi_m$ and
	\begin{align}\label{ineq:hpreg_1}
	\tilde{\pi}_s&\equiv \Pi \Big(\{\beta \in B_0(s): \pnorm{X(\beta-\beta_{0,s})}{}\leq \sqrt{n} \delta_{n,s}/\sqrt{2c_3}\}\Big).
	\end{align}
	Let $U \in \R^{n\times n}$ and $V \in \R^{p\times p}$ give rise to the SVD of $X$: $X=U\Lambda V\equiv U\mathrm{diag}(\sigma_1,\ldots,\sigma_{\rank(X)},0) V $ where $\sigma_1\geq \ldots \geq \sigma_{\rank(X)}>0$ are non-trivial singular values of $X$. It follows by writing $V = (v_1^\top \cdots v_p^\top)^\top$ that 
	\begin{align*}
	&\tilde{\pi}_s \geq \Pi \big({\beta}: \pnorm{\Lambda V(\beta-\beta_{0,s}) }{}\leq \sqrt{n}\delta_{n,s}/\sqrt{2c_3}\big)\\ 
	&=\sum_{\abs{S}=s} \binom{p}{s}^{-1}\Pi \bigg({\beta}: \beta_{S^c}=0,  \sum_{j=1}^{\rank(X)} \sigma_j^2 (v_j^\top (\beta-\beta_{0,s}) ) ^2 \leq n\delta_{n,s}^2/2c_3\bigg)\\ 
	&\geq \sum_{\abs{S}=s} \binom{p}{s}^{-1} \Pi \big({\beta}: \beta_{S^c}=0, \pnorm{\beta-\beta_{0,s}}{2}^2 \leq c'/(2c_3 \sigma_1^2)\big).	
	\end{align*}
	By choosing $c'>2c_3\sigma_1^2 (\pnorm{\beta_{0,s}}{\infty}+1)^2$, the RHS of the previous display can be bounded from below by $g(1)$, as desired. $\pi_m$ can be handled similarly as in the case $s\log(ep)\leq \rank(X)$. 
\end{proof}

\begin{proof}[Proof of Theorem \ref{thm:rate_hp}]
	The claim of the theorem follows by Corollary \ref{cor:regression_model}, Proposition \ref{prop:prior_generic} and Lemmas \ref{lem:local_ent_hp}-\ref{lem:submodel_mass_hp}.
\end{proof}

\subsection{Proof of Theorem \ref{thm:rate_cov}}

\begin{lemma}\label{lem:local_ent_cov}
	For any $\Sigma_0 \in \mathfrak{M}_{(k,s)}$, the following entropy estimate holds: 
	\begin{align*}
	&\log \mathcal{N}\left(c_5\epsilon,\{\Sigma \in \mathfrak{M}_{(k,s)}: \pnorm{\Sigma-\Sigma_{0}}{F}\leq C_L\epsilon \},\pnorm{\cdot}{F}\right) \\
	& \leq ks\log(ep/s) +ks \log(6\sqrt{kL}/c_5\epsilon). 
	\end{align*} 
\end{lemma}
\begin{proof}
	The set involved in the entropy is equivalent to 
	\begin{align}\label{ineq:local_ent_cov_1}
	\left\{\Lambda \in \mathscr{R}_{(k,s)}(L):\pnorm{\Lambda\Lambda^\top-\Lambda_{0} \Lambda_{0}^\top }{F}\leq C_L\epsilon,\pnorm{\cdot}{F}\right\}.
	\end{align}
	We claim that $
	\sup_{\Lambda \in \mathscr{R}_{(k,s)}}\pnorm{\Lambda\Lambda^\top}{F}\leq \sqrt{kL}$. 
	To see this, let $\Lambda \equiv P \Xi Q^\top $ be the singular value decomposition of $\Lambda$, where $P\in \R^{p\times p}, Q\in \R^{k\times k}$ are unitary matrices and $\Xi \in \R^{p\times k}$ is a diagonal matrix. Then $\pnorm{\Lambda \Lambda^\top}{F}^2=\pnorm{\Xi \Xi^\top }{F}^2\leq k L$, proving the claim. Combined with (\ref{ineq:local_ent_cov_1}) and Euclidean embedding, we see that the entropy in question can be bounded as follows: 
	\begin{align*}
	&\log \mathcal{N}\left(c_5\epsilon,\{v \in B_0(ks;pk): \pnorm{v}{2}\leq 2\sqrt{kL}\},\pnorm{\cdot}{2}\right)\\ 
	&\leq \log \bigg[\binom{pk}{ks}\bigg(\frac{6\sqrt{kL}}{c_5\epsilon}\bigg)^{ks}\bigg]\leq ks\log(ep/s) +ks \log(6\sqrt{kL}/c_5\epsilon), 
	\end{align*} 
	where $B_0(s;pk)\equiv \{v \in \R^{pk}: \abs{\mathrm{supp}(v)}\leq s\}$.
\end{proof}

\begin{proof}[Proof of Theorem \ref{thm:rate_cov}]
	Take $\delta_{n,(k,s)}^2=KC' {ks}\log(C' p)/n$ for some $C'\geq e$ depending on $c_5,c_7,L$ and some absolute constant $K\geq 1$. Apparently (\ref{cond:suplinearity_delta}) holds with $\mathfrak{c}=1,\gamma=1,\mathfrak{h}_0=\infty$. The prior $\Pi_{n,(k,s)}$ on $\mathfrak{M}_{(k,s)}$ will be the uniform distribution on a minimal $\sqrt{C' {ks}\log(C' p)/c_3n}$ covering-ball of the set $\{\Sigma \in \mathfrak{M}_{(k,s)}\}$ under the Frobenius norm $\pnorm{\cdot}{F}$. The above lemma entails that the cardinality for such a cover is no more than $e^{C''ks \log(C'' p)}$ for another constant $C''\geq e$ depending on $c_3,c_5,c_7,L$. Hence we have that
	\begin{align*}
	\Pi_{n,(k,s)}(\{\Sigma\in \mathfrak{M}_{(k,s)}:\pnorm{\Sigma-\Sigma_{0,(k,s)}}{F}\leq \delta_{n,(k,s)}^2/c_3\})\geq e^{-C''ks \log(C''p)},
	\end{align*} 
	which can be bounded from below by $e^{-2n\delta_{n,(k,s)}^2}$ by choosing $K$ large enough. The claim of Theorem \ref{thm:rate_cov} now follows from these considerations along with Corollary \ref{cor:rate_cov_general}, Proposition \ref{prop:prior_generic}.
\end{proof}

\subsection{Proof of Theorem \ref{thm:rate_image_polytope}}

\begin{lemma}\label{lem:local_ent_image_polytope}
	For $\theta_0 \in \Theta_m$, we have $
	\log \mathcal{N}\big(c_5\epsilon,\{\theta \in \Theta_m, d_n(\theta,\theta_0)\leq 2\epsilon\}, d_n\big)\leq 4m \log \big(\frac{C_\eta m}{c_5^4\epsilon^4}\big)$. 
\end{lemma}
\begin{proof}
	We first claim that for $\epsilon\leq 1$, 
	\begin{align*}
	\log\mathcal{N}\big(\epsilon, \{\Gamma \in \mathscr{C}_m\}, \lambda(\cdot \Delta\cdot)\big)\leq m\log \left(\frac{9em}{\epsilon^2}\right).
	\end{align*} 
	To see this, fix $\delta>0$ to be chosen later, and partition $[0,1]^2$ into small squares with side length $\delta$. Let $\mathscr{D}_\delta$ be the set of all polytopes in $[0,1]^2$ with its at most $m$ vertices all located on the grid points of these small squares. Apparently $\abs{\Delta_\delta}\leq \binom{ (1+1/\delta)^2}{m}$. Then for each $\Gamma \in \mathscr{C}_m$, let $\Gamma_\delta \in \mathscr{D}_\delta$ be such that $\Gamma_\delta\supset \Gamma$ and that for every vertex $v$ of $\Gamma$, there exists a vertex $v_\delta$ of $\Gamma_\delta$ so that both $v$ and $v_\delta$ are in the same small square, with distance at most $\sqrt{2}\delta$. Then the points on the boundary of $\Gamma_\delta$ is within distance $\sqrt{2}\delta$ to $\Gamma$, and therefore $\lambda(\Gamma_\delta\Delta\Gamma)\leq \sqrt{2}(\sqrt{2}\delta)m=2\delta m$ (the estimate can be done in a conservative way by collapsing the set of vertices in $\Gamma$ that corresponding to the same vertex in $\Gamma_\delta$ into one vertex). Now let $\epsilon = 2\delta m$ yields the claim.
	
	Since 
	\begin{align*}
	d_n^2(\theta_0,\theta_1)\leq C_1^2\big( \abs{\xi_0-\xi_1}^2+\abs{\rho_0-\rho_1}^2+\lambda(\Gamma_0\Delta\Gamma_1)\big)
	\end{align*}
	for some constant $C_1^2>0$ depending only through $\eta$, it follows that 
	\begin{align*}
	&\log \mathcal{N}\big(c_5\epsilon,\{\theta \in \Theta_m, d_n(\theta,\theta_0)\leq 2\epsilon\}, d_n\big)\leq 2\log \mathcal{N}\big(c_5\epsilon/(\sqrt{3}C_1), [\eta,1-\eta],\abs{\cdot}\big)\\
	&\qquad\qquad\qquad +\log\mathcal{N}\big(c_5^2\epsilon^2/(3C_1^2), \{\Gamma \in \mathscr{C}_m,\Gamma \subset [\eta,1-\eta]^m\}, \lambda(\cdot \Delta\cdot)\big)\\
	&\leq 2\log \left(\frac{\sqrt{3}C_1}{c_5\epsilon}\right)+ m \log \left(\frac{81C_1^4 em}{c_5^4\epsilon^4}\right)\leq 4m \log \left(\frac{C_\eta m}{c_5^4\epsilon^4}\right),
	\end{align*}
	as desired.
\end{proof}

Now we take $\delta_{n,m}^2 \equiv C'_\eta \frac{m \log n}{n}$ for some large constant $C'_\eta>0$.

\begin{lemma}\label{lem:submodel_mass_image_polytope}
	For $\theta_0\in\Theta_m$, (P2) is satisfied for $n$ large enough depending on $\theta_0$.
\end{lemma}
\begin{proof}
	Let $\{v_i(\Gamma)\}_{i=1}^m$ be the vertices of $\Gamma \in \mathscr{C}_m$. Using again 
	\begin{align*}
	d_n^2(\theta_0,\theta)\leq C_1^2\big( \abs{\xi_0-\xi}^2+\abs{\rho_0-\rho}^2+\lambda(\Gamma_0\Delta\Gamma)\big),
	\end{align*}
	and that for $n$ large enough depending on $\Gamma_0$, for any $v_i \notin \Gamma_0$ such that $\pnorm{v_i-v_i(\Gamma_0)}{2}\leq \delta_{n,m}^2/(3\sqrt{2}m C_1^2 c_3)$, $\Gamma\equiv\mathrm{conv}(\{v_i\})$ has vertices exactly given by $\{v_i\}$, and $\lambda(\Gamma\Delta\Gamma_0)\leq \sqrt{2}\cdot \big(\delta_{n,m}^2/(3\sqrt{2}m C_1^2 c_3)\big) m=\delta_{n,m}^2/(3C_1^2c_3) $, we have 
	\begin{align*}
	& \Pi_{n,m}\big(\{\theta \in \Theta_m: d_n^2(\theta,\theta_0)\leq \delta_{n,m}^2/c_3\}\big)\\
	& \geq \Pi_\xi\big(\abs{\xi-\xi_0}^2\leq \delta_{n,m}^2/(3C_1^2 c_3)\big)\cdot \Pi_\rho \big(\abs{\rho-\rho_0}^2\leq \delta_{n,m}^2/(3C_1^2 c_3)\big)\\
	&\qquad\qquad \times \Pi_{\Gamma}\big(\pnorm{v_i(\Gamma)-v_i(\Gamma_0)}{2}\leq \delta_{n,m}^2/(3\sqrt{2}m C_1^2 c_3, v_i(\Gamma)\notin \Gamma_0)\big)\\
	&\gtrsim_\eta \delta_{n,m}^2\big(\delta_{n,m}^2/m^{3/2}\big)^{m}\geq \exp(-2n\delta_{n,m}^2),
	\end{align*}
	as long as $C'_\eta>0$ is large enough.
\end{proof}

\begin{proof}[Proof of Theorem \ref{thm:rate_image_polytope}]
	The claim follows by Corollary \ref{cor:rate_image}, Proposition \ref{prop:prior_generic} coupled with Lemmas \ref{lem:local_ent_image_polytope} and \ref{lem:submodel_mass_image_polytope}.
\end{proof}

\subsection{Proof of Theorem \ref{thm:rate_intensity}}

\begin{lemma}\label{lem:local_ent_intensity}
	For any $g \in \mathcal{F}_m$ such that $g\leq f_0$, and any $R\geq \pnorm{f_0}{\infty}\vee 1$, 
	\begin{align*}
	\log \mathcal{N}_{[}\big(c_5\epsilon^2,\{f \in \mathcal{F}_m,R\geq f\geq f_0: \bar{L}_1(f,g)\leq 4\epsilon^2\}, \bar{L}_1\big)\leq 2m \log \bigg(\frac{8e m R^2}{c_5\epsilon^2}\bigg).
	\end{align*} 
\end{lemma}
\begin{proof}[Proof of Lemma \ref{lem:local_ent_intensity}]
	Note that the local entropy with left bracketing in question can be bounded by its global counterpart $\mathcal{N}_{[}\big(c_5\epsilon^2,\{f \in \mathcal{F}_m,\abs{f}\leq R\}, \bar{L}_1\big)$.

	Let $m\geq 2$. Fix $\epsilon>0$, let $\delta^2= c_5\epsilon^2/(2Rm+1)$. Without loss of generality, we assume that $1/\delta^2 \in \N$, and we partition the interval $[0,1)$ into $\cup_{j=1}^{1/\delta^2}I_{j,\delta}\equiv \cup_{j=1}^{1/\delta^2}[(j-1)\delta^2, j\delta^2)$. For any $f \in \mathcal{F}_m$, let $f\equiv \sum_{j=1}^m a_j\bm{1}_{[t_{j-1},t_j)}$ for some  $0=t_0<t_1<\ldots<t_{m-1}<t_m=1$. Then $\{t_1,\ldots,t_{m-1}\}$ must be contained in $m-1$ intervals amongst $\{I_{j,\delta}\}_{j=1}^{1/\delta^2}$, namely, $\{\bar{I}_{k,\delta;f}\}_{k=1}^{m-1}$. Furthermore, $[0,1]\setminus \cup_{k=1}^{m-1} \bar{I}_{k,\delta;f}$ contains at most $m$ intervals. Now define $\bar{f}$ as follows: 
	\begin{align*}
	\bar{f} \equiv \sum_{k=1}^{m-1} (-R)\cdot \bm{1}_{\bar{I}_{k,\delta;f}} + \floor{\frac{f}{\delta^2}} \delta^2 \cdot \bm{1}_{[0,1]\setminus \cup_{k=1}^{m-1}\bar{I}_{k,\delta;f}}. 
	\end{align*} 
	Clearly $\bar{f}\leq f$, and 
	\begin{align*}
	\int_0^1 \big(f(x)-\bar{f}(x)\big)\ \d{x}\leq 2R m\delta^2+\int_{[0,1]\setminus \cup_{k=1}^{m-1}\bar{I}_{k,\delta;f}} \delta^2\ \d{x}\leq c_5 \epsilon^2.
	\end{align*} 
	On the other hand, there are at most $
	\binom{1/\delta^2}{m-1}\cdot \big(\frac{2R}{\delta^2}\big)^m$ many choices of $\bar{f}$, and hence 
	\begin{align*}
	&\log \mathcal{N}_{[}\big(c_5\epsilon^2,\{f \in \mathcal{F}_m,\abs{f}\leq R\}, \bar{L}_1\big)\leq \log \bigg[ \binom{1/\delta^2}{m-1}\cdot \big(\frac{2R}{\delta^2}\big)^m\bigg]\\
	&\leq (m-1)\log \bigg(\frac{e(2Rm+1)}{c_5\epsilon^2(m-1)}\bigg)+m \log \bigg(\frac{2R(2Rm+1)}{c_5\epsilon^2}\bigg)\leq 2m \log \bigg(\frac{8e m R^2}{c_5\epsilon^2}\bigg).
	\end{align*} 
	For $m=1$, it is clear the above bound holds so the proof is complete. 
\end{proof}

Hence we can take $\delta_{n,m}^2 \equiv \big(\frac{4}{c_7}\vee 2\big)\frac{m}{n}\log \big(8e nR^2/c_5\big)$. Clearly (\ref{cond:suplinearity_delta}) is satisfied with $\mathfrak{c}=\gamma=1, \mathfrak{h}_0=\infty$.

\begin{lemma}\label{lem:submodel_suff_prior_intensity}
	Suppose that $g_a$ has full support. For $n$ large enough depending on $f_0$ and the prior $g_a$, (P2) in Assumption \ref{assump:prior_mass_general} restricted to $\{f\geq f_{0,m}, \pnorm{f}{\infty}\leq \pnorm{f_0}{\infty}+1\}$ holds.
\end{lemma}
\begin{proof}
	Let $f_{0,m}\equiv \sum_{j=1}^m a_j^\ast \bm{1}_{[t^\ast_{j-1},t^\ast_j)}$ for some $t^\ast=(t_1^\ast,\ldots,t_{m-1}^\ast)$ with $0=t_0^\ast<t_1^\ast<\ldots<t_{m-1}^\ast<t_m^\ast=1$. Without loss of generality, we may assume that $\min\{ t_j^\ast-t_{j-1}^\ast: j\}>1/(2n\pnorm{f_0}{\infty})$ (otherwise we may merge such short intervals to construct a surrogate $\tilde{f}_{0,m}$, and the total difference between $\tilde{f}_{0,m}$ and $f_{0,m}$ in $L_1$ metric by doing this does not exceed $m/n$ so that there is no effect in the final oracle inequality). Let $u_j^\ast\equiv 2\cdot \bm{1}_{a_{j+1}^\ast<a_j^\ast}-1$. For any $t=(t_1,\ldots,t_{m-1})$ such that $t_j = t_j^\ast+ u_j^\ast \delta$ with $\delta<1/(4n\pnorm{f_0}{\infty}+1)$, and any $a=(a_1,\ldots,a_m)$ such that $a_j\geq a_j^\ast$ and $\max_j\abs{a_j-a_j^\ast}\leq 1/(4n)$, let $f\equiv \sum_{j=1}^m a_j\bm{1}_{[t_{j-1},t_j)}\geq f_{0,m}$. Then 
	\begin{align*}
	\int_0^1 \big(f(x)-f_{0,m}(x)\big)\ \d{x}\leq \frac{1}{4n}+m\cdot \frac{1}{4n\pnorm{f_0}{\infty}+1}\cdot \bigg(\pnorm{f_0}{\infty}+\frac{1}{4n}\bigg)\leq \frac{m}{n} \leq \delta_{n,m}^2/c_3
	\end{align*}
	by the definition of $\delta_{n,m}^2$ and the fact that $c_3=1$. This implies that with $\tau_{g_a}^{\textrm{int}}\equiv g_a(\pnorm{f_0}{\infty}+1)$ (it is easy to see $\pnorm{f_{0,m}}{\infty}\leq \pnorm{f_0}{\infty}$), 
	\begin{align*}
	&\Pi_{n,m} \big(\{f \in \mathcal{F}_m: f\geq f_{0,m}, \bar{L}_1(f,f_{0,m})\leq \delta_{n,m}^2/c_3\}\big)\\ 
	&\geq (4n\pnorm{f_0}{\infty}+1)^{-(m-1)}\bigg(1\wedge \frac{\tau_{g_a}^{\textrm{int}}}{4n}\bigg)^{m}\geq e^{-m \big(\log (4n\pnorm{f_0}{\infty}+1)+\log \big(1\vee \frac{4n}{\tau_{g_a}^{\textrm{int}}}\big)\big) }
	\end{align*} 
	Since $2n\delta_{n,m}^2\geq 4m\log(32en)$, it suffices to require that $
	\log (4n\pnorm{f_0}{\infty}+1)\vee \log \big(1\vee \frac{4n}{\tau_{g_a}^{\textrm{int}}}\big)\leq 2\log(32en)$, 
	which is satisfied for $n$ large.
\end{proof}

\begin{proof}[Proof of Theorem \ref{thm:rate_intensity}]
	Let $R_n\to \infty$ be a sequence such that $\log R_n\lesssim \log n$. We omit the superscript in the constants in the proof. Let $\bar{\mathcal{F}}_n\equiv \{f:[0,1]\to \R: \abs{f}\leq R_n, f \in \mathcal{F}_m\}$ be the localized subset of $\mathcal{F}$. By the decomposition (\ref{ineq:localize_bernstein}), the probability in question can be bounded by 
	\begin{align}\label{ineq:intensity_0}
	& P_{f_0}^{(n)} \bar{\Pi}_n\big(f\geq f_0, f \in \bar{\mathcal{F}}_n: \bar{L}_1(f,f_{0})> C_2 \big(\inf_{g \in \mathcal{F}_m}\bar{L}_1(f_0,g)+\frac{m\log (R_n^2 n)}{n}\big)\big\lvert N\big) \\
	&\qquad\qquad + P_{f_0}^{(n)} \Pi_n\big(f \notin \bar{\mathcal{F}}_n\big\lvert N\big).\nonumber
	\end{align}

	We first handle the first term in (\ref{ineq:intensity_0}). Now Corollary \ref{cor:rate_support_boundary} combined with Lemma \ref{lem:local_ent_intensity} and \ref{lem:submodel_suff_prior_intensity} yields that for $n$ large enough 
	\begin{align*}
	& P_{f_0}^{(n)} \bar{\Pi}_n\bigg(f \geq f_0, f \in \bar{\mathcal{F}}_n: \bar{L}_1(f,f_{0}) > C_2\big( \inf_{g \in \mathcal{F}_m\cap \bar{\mathcal{F}}_n}\bar{L}_1(f_0,g)+ { m\log (R_n^2 n)}/{n}\big)\big\lvert N\bigg)\\
	& \leq C_3e^{-n\epsilon_{n,m}^2/C_3},
	\end{align*} 
	where $\epsilon_{n,m}^2\equiv \max \{\inf_{g \in \mathcal{F}_m\cap \bar{\mathcal{F}}_n}\bar{L}_1(f_0,g), { m\log (R_n^2 n)}/{n}\}$. Here $C_2,C_3>0$ are absolute constants that do not depend on $R_n$. Note that in applying (modified) Lemma \ref{lem:submodel_suff_prior_intensity} we (implicitly) used the fact that the induced localized prior mass satisfies the following: 
	\begin{align*}
	& \bar{\Pi}_{n,m}\big(\{f \in \mathcal{F}_m\cap \bar{\mathcal{F}}_n: f\geq f_{0,m}, \bar{L}_1(f,f_{0,m})\leq \delta_{n,m}^2/c_3\}\big)\\
	&\geq \Pi_{n,m}\big(\{f \in \mathcal{F}_m\cap \bar{\mathcal{F}}_n: f\geq f_{0,m}, \bar{L}_1(f,f_{0,m})\leq \delta_{n,m}^2/c_3\}\big).
	\end{align*} 
	Next we handle the second term in (\ref{ineq:intensity_0}). Applying Lemma \ref{lem:posterior_denum_control} to the localized model with
	\begin{align*}
	\bar{\epsilon}_{n,m}^2\equiv \inf_{g \in \mathcal{F}_m\cap \bar{\mathcal{F}}_n}\bar{L}_1(f_0,g)+ C_4{ m\log (en)}/{n}=\bar{L}_1(f_0,f_{0,m})+C_4{m \log (en)}/{n}
	\end{align*}
	for $C_4>0$ large enough and $n$ large enough, we see that on an event $\mathcal{E}_n$ with $P_{f_0}^{(n)}$ probability at least $1-e^{-C_5n\bar{\epsilon}_{n,m}^2}$, it holds that 
	\begin{align*}
	&\int_{\bar{\mathcal{F}}_n} p_{f}^{(n)}/p_{f_0}^{(n)}\ \d{\bar{\Pi}_n(f)}\geq \lambda_n(m) \int_{f \in\mathcal{F}_m \cap \bar{\mathcal{F}}_n: \bar{L}_1(f,f_0)\leq  \bar{\epsilon}_{n,m}^2 }p_{f}^{(n)}/p_{f_0}^{(n)}\ \d{\bar{\Pi}_{n,m}(f)}\\
	& \gtrsim e^{-C_6 m\log(en)} \times \bar{\Pi}_{n,m}\big(\{f \in \mathcal{F}_m \cap \bar{\mathcal{F}}_n,f\geq f_{0,m}: \bar{L}_1(f,f_{0,m})\leq C_4 \frac{m}{n} \log(en)\}\big)\\ &\gtrsim e^{-C_7 m \log (en) }\big(\Pi_n(\bar{\mathcal{F}}_n)\big)^{-1}
	\end{align*}
	where the last inequality holds for $n$ large enough, and follows essentially from the same argument used in the proof of Lemma \ref{lem:submodel_suff_prior_intensity}. Now we have
	\begin{align*}
	& P_{f_0}^{(n)} \Pi_n\big(f \notin \bar{\mathcal{F}}_n|N\big)\leq P_{f_0}^{(n)} \Pi_n\big(f \notin \bar{\mathcal{F}}_n|N\big)\bm{1}_{\mathcal{E}_n} + P_{f_0}^{(n)}(\mathcal{E}_n^c) \\
	& \leq P_{f_0^{(n)}} \big[\frac{ \int_{f \notin \bar{\mathcal{F}}_n} p_{f}^{(n)}/p_{f_0}^{(n)}\ \d{\Pi_n(f)} }{\int_{\bar{\mathcal{F}}_n} p_{f}^{(n)}/p_{f_0}^{(n)}\ \d{{\Pi}_n(f)} }\bm{1}_{\mathcal{E}_n}\big]+P_{f_0}^{(n)}(\mathcal{E}_n^c)\\ 
	&\leq \frac{1}{\Pi_n(\bar{\mathcal{F}}_n)}\cdot P_{f_0^{(n)}} \big[\frac{ \int_{f \notin \bar{\mathcal{F}}_n} p_{f}^{(n)}/p_{f_0}^{(n)}\ \d{\Pi_n(f)} }{\int_{\bar{\mathcal{F}}_n} p_{f}^{(n)}/p_{f_0}^{(n)}\ \d{\bar{\Pi}_n(f)} }\bm{1}_{\mathcal{E}_n}\big]+P_{f_0}^{(n)}(\mathcal{E}_n^c)\\ 
	&\lesssim e^{C_7 m \log (en) } \cdot \Pi_n(\mathcal{F}\setminus \bar{\mathcal{F}}_n)+ e^{-C_5n\bar{\epsilon}_{n,m}^2}
	\end{align*} 
	Furthermore we have, 
	\begin{align*}
	&\Pi_n(\mathcal{F}\setminus \bar{\mathcal{F}}_n)\leq \sum_{k>m} \lambda_{n}(k) (\int_{\abs{x}>R_n} g(x)\ \d{x})^k\\
	&\lesssim \sum_{k>m} e^{-C_6 (k-1) \log(en)-k \log(\int_{\abs{x}>R_n} g(x)\ \d{x})^{-1}}\lesssim e^{-2C_7 m \log (en)},
	\end{align*} 
	where the last inequality follows as $
	\log(\int_{\abs{x}>R_n} g(x)\ \d{x})^{-1} \geq C' \log (en)$
	holds for a large enough constant $C'>0$. Combining the above estimates concludes the proof.
\end{proof}

\section{Proofs of auxiliary lemmas in Appendix \ref{section:proof_main_result}}\label{section:proof_lemma_main_result}

\begin{proof}[Proof of Lemma \ref{lem:global_test_general}]
	Without loss of generality we assume $d_0=0$. Let $
	\mathcal{F}_j:=\left\{f \in \mathcal{F}: j\epsilon<d_n(f,f_0)\leq 2j\epsilon\right\}$ and $\mathcal{G}_j\subset \mathcal{F}_j$ be the collection of functions that form a minimal $c_5 j\epsilon$ covering set of $\mathcal{F}_j$ under the metric $d_n$. Then by assumption $\abs{\mathcal{G}_j}\leq N(j\epsilon)$. Furthermore, for each $g \in \mathcal{G}_j$, it follows by Lemma \ref{lem:local_test_general} that there exists some test $\omega_{n,j,g}$ such that 
	\begin{align*}
	\sup_{f \in \mathcal{F}:d_n(f,g)\leq c_5d_n(g,f_0)}\big[P_{f_0}^{(n)}\omega_{n,j,g}+P_f^{(n)}(1-\omega_{n,j,g})\big]\leq c_6 e^{-c_7nd_n^2(g,f_0)}. 
	\end{align*} 
	Recall that $g \in \mathcal{G}_j \subset \mathcal{F}_j$, then $d_n(g,f_0)>j\epsilon$. Hence the indexing set above contains $
	\{f \in \mathcal{F}: d_n(f,g)\leq c_5j\epsilon\}$.
	Now we see that  
	\begin{align*}
	P_{f_0}^{(n)}\omega_{n,j,g}\leq c_6e^{-c_7nj^2\epsilon^2},\quad
	\sup_{f \in \mathcal{F}: d_n(f,g)\leq c_5j\epsilon} P_{f}^{(n)}(1-\omega_{n,j,g}) \leq c_6 e^{-c_7 nj^2\epsilon^2}.
	\end{align*} 
	Consider the global test $\phi_n:=\sup_{j\geq 1}\max_{g \in \mathcal{G}_j}\omega_{n,j,g}$, then 
	\begin{align*}
	P_{f_0}^{(n)}\phi_n &\leq P_{f_0}^{(n)} \sum_{j\geq 1}\sum_{g \in \mathcal{G}_j} \omega_{n,j,g} \leq c_6\sum_{j\geq 1}N (j\epsilon) e^{-c_7nj^2\epsilon^2} \\
	&\leq c_6 N (\epsilon)\sum_{j\geq 1} e^{-c_7nj^2\epsilon^2} \leq c_6 N (\epsilon)e^{-c_7 n\epsilon^2}\cdot \big(1-e^{-c_7n\epsilon^2 }\big)^{-1}.
	\end{align*} 
	On the other hand, for any $f \in \mathcal{F}$ such that $d_n(f,f_0)\geq \epsilon$, there exists some $j^\ast \geq 1$ and some $g_{j^\ast} \in \mathcal{G}_{j^\ast}$ such that $d_n(f,g_{j^\ast})\leq j^\ast c_5\epsilon$. Hence 
	\begin{align*}
	P_f^{(n)}(1-\phi_n)\leq P_f^{(n)}(1-\omega_{n,j^\ast,g_{j^\ast}})\leq c_6 e^{-c_7 n(j^\ast)^2\epsilon^2}\leq c_6 e^{-c_7n\epsilon^2}.
	\end{align*} 
	The right hand side is independent of individual $f \in \mathcal{F}$ such that $d_n(f,f_0)\geq \epsilon$ and hence the claim follows. 
\end{proof}

\begin{proof}[Proof of Lemma \ref{lem:posterior_denum_control}]
	WLOG we assume $d_0=0$. By Jensen's inequality, the probability in question is bounded by 
	\begin{align*}
	& P_{f_0}^{(n)} \bigg\{ \int \big( \log ({p_{f_0}^{(n)}}/{p_{f}^{(n)}})-P_{f_0}^{(n)}\log ({p_{f_0}^{(n)}}/{p_{f}^{(n)}}) \big)\ \d{\Pi(f)}  \\
	&\qquad\qquad \qquad\qquad \qquad\qquad \geq  \big(C+c_3)n\epsilon^2 -c_3 n \int d_n^2(f_0,f)\ \d{\Pi(f)}\bigg\}\\
	& \leq P_{f_0}^{(n)} \bigg[ \int \big( \log ({p_{f_0}^{(n)}}/{p_{f}^{(n)}})-P_{f_0}^{(n)}\log ({p_{f_0}^{(n)}}/{p_{f}^{(n)}}) \big)\ \d{\Pi(f)} \geq Cn\epsilon^2 \bigg]\\
	& \leq e^{-C\lambda n\epsilon^2}\cdot c_1 P_{f_0}^{(n)}e^{\lambda \int \big( \log ({p_{f_0}^{(n)}}/{p_{f}^{(n)}})-P_{f_0}^{(n)}\log ({p_{f_0}^{(n)}}/{p_{f}^{(n)}}) \big)\ \d{\Pi(f)}}\\ 
	&\leq
	P_{f_0}^{(n)}\int e^{\lambda \big( \log ({p_{f_0}^{(n)}}/{p_{f}^{(n)}})-P_{f_0}^{(n)}\log ({p_{f_0}^{(n)}}/{p_{f}^{(n)}}) \big)}\ \d{\Pi(f)}\leq \int e^{\psi_{\kappa_g nd_n^2(f_0,f),\kappa_\Gamma}(\lambda)} \d{\Pi(f)},
	\end{align*}
	where the last inequality follows from Fubini's theorem and Assumption \ref{assump:laplace_cond_kl}. Now the condition on the prior $\Pi$ entails that 
	\begin{align*}
	P_{f_0}^{(n)}\bigg(\int  ({p_f^{(n)}}/{p_{f_0}^{(n)}})\ \d{\Pi(f)}\leq e^{-(C+c_3) n\epsilon^2}\bigg)\leq c_1 e^{-C\lambda n\epsilon^2+\psi_{\kappa_g n\epsilon^2,\kappa_\Gamma}(\lambda)}.
	\end{align*}
	The claim follows by choosing $\lambda>0$ small enough depending on $C,\kappa$.
\end{proof}

\begin{proof}[Proof of Proposition \ref{prop:underfit_general}]
	By definition we have $\delta_{n,\tilde{m}}\geq d_n(f_0,f_{0,m})$ and $ \delta_{n,\tilde{m}-1}<d_n(f_0,f_{0,m})$. In this case, the global test can be constructed via $
	\tilde{\phi}_n:=\sup_{m' \in \mathcal{I}, m'\geq j\mathfrak{h}\tilde{m}} \phi_{n,m'}$.
	Then analogous to (\ref{ineq:generic_pdim_3}) and (\ref{ineq:generic_pdim_4}), for any random variable $U \in [0,1]$, we have exponential testability: 
	\begin{align*}
	P_{f_{0,m}}^{(n)}U\cdot  \tilde{\phi}_n &\leq  4c_6e^{-(c_7/2\mathfrak{c}^2)nj \mathfrak{h}\delta_{n,\tilde{m}}^2},\\
	\sup_{f \in \mathcal{F}_{j\mathfrak{h}\tilde{m}}: d_n^2(f,f_{0,m})\geq \mathfrak{c}^2 (j\mathfrak{h})^\gamma \delta_{n,\tilde{m}}^2} P_{f}^{(n)}(1-\tilde{\phi}_n) &\leq 2c_6 e^{-(c_7/\mathfrak{c}^2)nj \mathfrak{h}\delta_{n,\tilde{m}}^2}.
	\end{align*}
	Similar to (\ref{ineq:generic_pdim_5}), there exists an event $\tilde{\mathcal{E}}_n$ with
	\begin{align*}
	P_{f_{0,m}}^{(n)}(\tilde{\mathcal{E}}_n^c) \leq c_1 e^{- C' {c_7 n j\mathfrak{h}\delta_{n,\tilde{m}}^2}/{8c_3\mathfrak{c}^2}}
	\end{align*}
	and on the event $\tilde{\mathcal{E}}_n$, 
	\begin{align*}
	&\int \prod_{i=1}^n \frac{p_f}{p_{f_{0,m}}}\ \d{\Pi_n(f)}\\
	&\geq \lambda_{n}(m) e^{-{c_7n j\mathfrak{h}\delta_{n,\tilde{m}}^2}/{4\mathfrak{c}^2}} \Pi_{n,m}(\{f \in \mathcal{F}_m: d_n^2(f,f_{0,m})\leq {c_7 j\mathfrak{h}\delta_{n,\tilde{m}}^2}/{8c_3\mathfrak{c}^2} \}).
	\end{align*} 
	Repeating as in (\ref{ineq:generic_pdim_6}), 
	\begin{align*}
	&P_{f_{0,m}}^{(n)} \Pi_n\big(f \in \mathcal{F}: d_n^2(f,f_{0,m})>\mathfrak{c}^4 (2j\mathfrak{h})^\gamma d_n^2(f_0,f_{0,m}) \big\lvert X^{(n)}\big)(1-\tilde{\phi}_n)\bm{1}_{\tilde{\mathcal{E}}_n} \\ 
	&\leq \frac{ e^{ {c_7n j\mathfrak{h}\delta_{n,\tilde{m}}^2}/{4\mathfrak{c}^2}} }{\lambda_{n}(m) \Pi_{n,m}(\{f \in \mathcal{F}_m: d_n^2(f,f_{0,m})\leq {c_7 j\mathfrak{h}\delta_{n,\tilde{m}}^2}/{8c_3\mathfrak{c}^2} \}) }\\ 
	&\qquad\qquad \times \int_{ f \in \mathcal{F}: d_n^2(f,f_{0,m})> \mathfrak{c}^4 (2j\mathfrak{h})^\gamma d_n^2(f_0,f_{0,m})  } P_f^{(n)}(1-\tilde{\phi}_n)\ \d{\Pi_n(f)}\\ 
	&\leq (\cdots)\times \bigg(\sup_{ f \in \mathcal{F}_{j\mathfrak{h}\tilde{m}}: d_n^2(f,f_{0,m})\geq \mathfrak{c}^2 (j\mathfrak{h})^\gamma \delta_{n,\tilde{m}}^2 }P_f^{(n)}(1-\tilde{\phi}_n) + \Pi_n\big(\mathcal{F}\setminus \mathcal{F}_{j\mathfrak{h} \tilde{m}}\big)\bigg) \\ 
	&\leq C e^{-(c_7/4\mathfrak{c}^2)nj \mathfrak{h}\delta_{n,\tilde{m}}^2}.
	\end{align*} 
	Here the third line is valid since $
	\mathfrak{c}^4 (2j\mathfrak{h})^\gamma d_n^2(f_0,f_{0,m}) > \mathfrak{c}^4 (2j\mathfrak{h})^\gamma \delta_{n,\tilde{m}-1}^2 \geq \mathfrak{c}^2 (j\mathfrak{h})^\gamma \delta_{n,\tilde{m}}^2$
	by the right side of (\ref{cond:suplinearity_delta}), which entails $\delta_{n,\tilde{m}}^2 \leq \mathfrak{c}^2 2^\gamma \delta_{n,\tilde{m}-1}^2$. The fourth line uses exponential testability and assumption (P1), together with the fact that $\delta_{n,\tilde{m}}\geq \delta_{n,m}$. (\ref{ineq:underfit_oracle}) follows from exponential testability, probability estimate for $\mathcal{E}_n^c$.
\end{proof}

\section{Some formal connections with frequentist theory for $M$-estimators}\label{section:MLE}

In this section, we establish some formal structural similarities between the Bayes theory developed in this paper under the local Gaussianity condition Assumption \ref{assump:laplace_cond_kl}, and the frequentist theory for $M$-estimators.

Let us consider the simplest setup where only one big model $\mathcal{F}$ is available, and we consider the sieved MLE $\hat{f}_n$ for illustration of the Gaussian concentration technique. To this end, let $\delta_n>0$ be determined by the entropy condition
\begin{align}\label{cond:MLE_entropy}
\log \mathcal{N}(\delta_n, \mathcal{F}, d_n)\leq \kappa \cdot n\delta_n^2,
\end{align}
where $\kappa>0$ is a small enough constant depending on the constants in Assumption \ref{assump:laplace_cond_kl}. The sieved MLE $\hat{f}_n$ is defined by $
\hat{f}_n \equiv \arg\max_{f \in \mathcal{F}_{\delta_n}} \log p_f^{(n)}(X^{(n)})$, 
where $\mathcal{F}_{\delta_n}$ is a minimal $\delta_n$-net of $\mathcal{F}$ under $d_n$.
\begin{proposition}\label{prop:MLE}
	Suppose the local Gaussianity condition Assumption \ref{assump:laplace_cond_kl} and the entropy condition (\ref{cond:MLE_entropy}) hold. Then the sieved MLE defined above satisfies $
	P_{f_0}^{(n)}\big(d_n^2(\hat{f}_n,f_0)>\delta_n^2\big)\leq \exp(-\kappa' n\delta_n^2)$, 
	where $\kappa'>0$ is a constant depending on the constants in Assumption \ref{assump:laplace_cond_kl}.
\end{proposition}

The entropy condition (\ref{cond:MLE_entropy}) used for the sieved MLE is of global type since the construction of the net $\mathcal{F}_{\delta_n}$ does not allow information on $f_0$. Results of this type in the context of Gaussian regression and density estimation have long been known in the literature; we only refer the readers to \cite{van1996weak,van2000empirical}. Our result here seems to yield some new results for other locally Gaussian experiments considered in Section \ref{section:models}.

The structural similarity of Theorem \ref{thm:general_ms} (when only one model is used) and Proposition \ref{prop:MLE} is obvious: both assertions hold under the same local Gaussianity structure of the experiment and the entropy condition, and the posterior distribution in Theorem \ref{thm:general_ms} and the sieved MLE in Proposition \ref{prop:MLE} both enjoy Gaussian tail behavior. Furthermore, the proofs for both results use (one-sided) Gaussian concentration in an essential way.


\begin{proof}[Proof of Proposition \ref{prop:MLE}]
	Let $S_j\equiv \{f \in \mathcal{F}_{\delta_n}: 2^{j-1}\delta_n \leq d_n(f,f_0)\leq 2^{j}\delta_n\}$. If $\hat{f}_n \in S_j$, then since $\log p_{f_0}^{(n)}/p_{\hat{f}_n}^{(n)}\leq 0$, it follows that 
	\begin{align*}
	\max_{f \in S_j} \big(P_{f_0}^{(n)}\log ({p_{f_0}^{(n)}}/{p_{f}^{(n)}})-\log ({p_{f_0}^{(n)}}/{p_{f}^{(n)}})\big)\geq P_{f_0}^{(n)}\log ({p_{f_0}^{(n)}}/{p_{\hat{f}_n}^{(n)}})\geq c_22^{2j-2} n \delta_n^2.
	\end{align*}
	This implies that 
	\begin{align*}
	& P_{f_0}^{(n)}\big(d_n(\hat{f}_n,f_0)>\delta_n\big)\\
	& \leq \sum_{j=1}^\infty P_{f_0}^{(n)}\bigg(\max_{f \in S_j} \big(P_{f_0}^{(n)}\log ({p_{f_0}^{(n)}}/{p_{f}^{(n)}})-\log ({p_{f_0}^{(n)}}/{p_{f}^{(n)}})\big)\geq c_2 2^{2j-2} n\delta_n^2\bigg)\\
	&\leq  \sum_{j=1}^\infty \sum_{f \in S_j} P_{f_0}^{(n)}\bigg( P_{f_0}^{(n)}\log ({p_{f_0}^{(n)}}/{p_{f}^{(n)}})-\log ({p_{f_0}^{(n)}}/{p_{f}^{(n)}})\geq c_2 2^{2j-2} n\delta_n^2\bigg) \\
	&\leq \sum_{j=1}^\infty N(\delta_n) e^{-C_1 2^{2j} n \delta_n^2}\leq  e^{-C_2 n \delta_n^2+\log N(\delta_n)}\leq e^{-C_3 n \delta_n^2},
	\end{align*}
	as desired.
\end{proof}

\section{More examples}\label{section:more_examples}

This section contains addition examples,  including (i) regression models without boundedness restrictions, (ii) density estimation in location mixtures, (iii) estimation of piecewise constant signals in the Gaussian autoregression model and (iv) subset selection for sparse approximation of regression functions. The main purpose of (i) and (ii) is to demonstrate how the localization principle (cf. Section \ref{section:localization}) can be applied in situations where local Gaussianity may fail over the entire parameter space, but still essentially holds on suitably localized subsets of the parameter space. The purpose of (iii) is to perform some explicit calculations without losing additional logarithmic factors, when the parameter space is non-compact. The purpose of (iv) is to demonstrate how to adapt the machinery in the paper to complicated model structures that are non-nested.

\subsection{Removing boundedness restrictions in Section \ref{section:regression} }

The boundedness assumption in many examples in Section \ref{section:regression} is imposed for simplicity. Below we will remove the boundedness restriction in the binary regression model as a proof of concept. 

Let $n\geq 3$. Consider fitting $X_i\sim_{\mathrm{i.i.d.}} \mathrm{Bern}(\theta_i)$ by piecewise constant model $\Theta\equiv \{\theta \in [0,1]^n\}=\cup_{m=1}^n \Theta_m$, where $\Theta_m\equiv \{\theta \in \Theta \textrm{ has at most }m\textrm{ constant pieces}\}$.  The model selection prior $\Lambda_{n}$ on $m$ is chosen as
\begin{align}
\lambda_n(m)\propto \exp(-c^{\textrm{bin}}m \log (en)).
\end{align}
For the selected model $\Theta_m$, we use the prior $\Pi_{n,m}$ which first randomly selects $m-1$ change points from $\{2,\ldots,n-1\}$, and then assigns a product prior with density $g^{\otimes (m-1)}$ where $g$ is a density on $[0,1]$. 

\begin{proposition}
	Suppose $\theta_0 \in \Theta_m$ and $\theta_0 \in [\eta,1-\eta]^n$ for some $\eta>0$. If $g$ is such that $\int_{x \in [0,t]\cup [1-t,1]} g(x)\ \d{x}\leq e^{-{1/t^C}}$ for some large constant $C>0$ and $t>0$ small. Then there exists $C'>0$ (depending on $\eta$ and the prior) such that $
	P_{\theta_0}^{(n)} \Pi_n\big(\theta \in \Theta: \pnorm{\theta-\theta_0}{2}^2>C' m \log^{C'} n/n\big)\to 0$. 
\end{proposition}

The boundedness restrictions in other Laplace/Poisson models can be removed in a completely similar fashion so we omit these digressions.

\begin{proof}
	Let $\delta_{n,m}^2\equiv cm\log^c n/n$ for some large constant $c>0$. Let the localized parameter spaces be defined by $\bar{\Theta}_n\equiv \{\theta \in \Theta: w_n\leq \theta_1,\ldots,\theta_n\leq 1-w_n\}$, where $w_n\equiv 1/\log n$. By the decomposition (\ref{ineq:localize_bernstein}),
	\begin{align}\label{ineq:binary_1}
	&P_{\theta_0}^{(n)}\Pi_n\big(\theta \in \Theta: \pnorm{\theta-\theta_0}{2}^2>  \delta_{n,m}^2\big\lvert X^{(n)}\big)\\
	&\leq P_{\theta_0}^{(n)}\bar{\Pi}_n\big(\theta \in \bar{\Theta}_n: \pnorm{\theta-\theta_0}{2}^2>  \delta_{n,m}^2\big\lvert X^{(n)}\big)+ P_{\theta_0}^{(n)}\Pi_n(\theta \notin \bar{\Theta}_n|X^{(n)}).\nonumber
	\end{align}
	
	For the first term in (\ref{ineq:binary_1}), we use Theorem \ref{thm:general_ms}. By the proof of Lemma \ref{lem:bernstein_regression}, for any $\theta_0,\theta_1 \in \bar{\Theta}_n$,
	\begin{align*}
	w_n^2\log(1/w_n)\pnorm{\theta_0-\theta_1}{2}^2\lesssim n^{-1}P_{\theta_0}^{(n)}\log (p_{\theta_0}^{(n)}/p_{\theta_1}^{(n)})\lesssim (w_n\log(1/w_n))^{-1} \pnorm{\theta_0-\theta_1}{2}^2.
	\end{align*}
	Similarly we may verify the local Gaussianity condition with constants $\kappa=(\kappa_g,\kappa_\Gamma)$ depending polynomially on $w_n$. So Assumption \ref{assump:laplace_cond_kl} is verified by choosing $\{c_i\}$ and $\kappa$ (or its inverse) on the order of $\mathcal{O}(w_n^{C_1})$ for some $C_1>0$. Assumption \ref{assump:local_ent_general} can be verified immediately using the similar arguments as in Lemma \ref{lem:local_ent_iso}. Assumption \ref{assump:prior_mass_general} follows by similar (and simpler) arguments in Lemma \ref{lem:submodel_mass_iso} and the fact that $\bar{\Pi}_{n,m}(A)\geq \Pi_{n,m}(A)$ for any $A$. Hence, the first term on the RHS of (\ref{ineq:binary_1}) is bounded by
	\begin{align*}
	\exp\big(C_2 \log(1/\omega_n)-n\delta_{n,m}^2 \omega_n^{C_2}\big),
	\end{align*}
	which is $o(1)$ by our choice of $w_n$ and $c>0$ large enough.

	We handle the second term on the right hand side of (\ref{ineq:binary_1}) below. By applying Lemma \ref{lem:posterior_denum_control} to the localized model with $\epsilon^2\equiv \delta_{n,m}^2$, we see that on an event $\mathcal{E}_n$ with $P_{\theta_0}^{(n)}$ probability at least $1-e^{- m \log^{C} n\cdot w_n^{C_3}}=1-o(1)$, 
	\begin{align*}
	& \int_{\bar{\Theta}_n} p_{\theta}^{(n)}/p_{\theta_0}^{(n)}\ \d{\bar{\Pi}_n(\theta)}\geq \lambda_n(m) \int_{\theta \in \bar{\Theta}_n: \pnorm{\theta-\theta_0}{2}^2\leq  \delta_{n,m}^2/c_3 }p_{\theta}^{(n)}/p_{\theta_0}^{(n)}\ \d{\bar{\Pi}_{n,m}(\theta)}\\ 
	&\gtrsim e^{-m\log^{c}n \cdot w_n^{C_4}} \cdot  \bar{\Pi}_{n,m}\big(\{\theta \in \Theta_m \cap \bar{\Theta}_n: \pnorm{\theta-\theta_{0}}{2}^2\leq \delta_{n,m}^2/c_3\}\big)\\ 
	&\gtrsim e^{-m\log^cn \cdot w_n^{C_4}-m\log n/C_5 }\big(\Pi_n(\bar{\Theta}_n)\big)^{-1} \gtrsim e^{-m\log^{C_6} n/C_6 }\big(\Pi_n(\bar{\Theta}_n)\big)^{-1} 
	\end{align*}
	by choosing $c>0$ large enough. 
	Now we have that 
	\begin{align*}
	& P_{\theta_0}^{(n)} \Pi_n\big(\theta \notin \bar{\Theta}_n|X^{(n)}\big) \leq  P_{\theta_0}^{(n)} \Pi_n\big(\theta \notin \bar{\Theta}_n|X^{(n)}\big)\bm{1}_{\mathcal{E}_n}+ P_{\theta_0}^{(n)}(\mathcal{E}_n^c)\\
	&= P_{\theta_0^{(n)}} \bigg[\frac{ \int_{\theta \notin \bar{\Theta}_n} p_{\theta}^{(n)}/p_{\theta_0}^{(n)}\ \d{\Pi_n(\theta)} }{\int_{\Theta} p_{\theta}^{(n)}/p_{\theta_0}^{(n)}\ \d{\Pi_n(\theta)} }\bm{1}_{\mathcal{E}_n}\bigg]+P_{\theta_0}^{(n)}(\mathcal{E}_n^c)\\
	&\leq \frac{1}{\Pi_n(\bar{\Theta}_n)}\cdot P_{\theta_0^{(n)}} \bigg[\frac{ \int_{\theta \notin \bar{\Theta}_n} p_{\theta}^{(n)}/p_{\theta_0}^{(n)}\ \d{\Pi_n(\theta)} }{\int_{\bar{\Theta}_n} p_{\theta}^{(n)}/p_{\theta_0}^{(n)}\ \d{\bar{\Pi}_n(\theta)} }\bm{1}_{\mathcal{E}_n}\bigg]+P_{\theta_0}^{(n)}(\mathcal{E}_n^c)\\
	& \lesssim e^{m\log^{C_6} n/C_6} \cdot \Pi_n(\Theta\setminus \bar{\Theta}_n)+ o(1),
	\end{align*}
	where in the last inequality we used a previous inequality and Fubini's theorem. On the other hand, 
	\begin{align*}
	&\Pi_n(\Theta\setminus \bar{\Theta}_n) \leq \sum_{k\geq 1} \lambda_n(k) k \int_{x \in [0,w_n]\cup [1-w_n,1]} g(x)\ \d{x}\\
	&\lesssim  \int_{x \in [0,w_n]\cup [1-w_n,1]} g(x)\ \d{x}\leq e^{-\log^{C_7}n/C_7}
	\end{align*} 
	for some large $C_7>0$ by the assumption on $g$. 
\end{proof}

\subsection{Density estimation in location mixtures}

Consider estimation of a density $f_0$ on $\R$ from the class of location mixtures $\cup_{m=1}^\infty\mathcal{F}_m$ where $\mathcal{F}_m$ consists densities of the type 
\begin{align*}
h(x;m,\mu,w,\sigma)\equiv\sum_{j=1}^m w_j\psi_\sigma(x-\mu_j),
\end{align*} 
where $\sigma>0$, $w_j\geq 0$, $\sum_{j=1}^m w_j =1$, $\mu_j \in \R$ and $
\psi_\sigma(x)\equiv  e^{-{x^2}/{2\sigma^2}}/\sqrt{2\pi \sigma^2}$. This problem has received considerable attention, see e.g. \cite{ghosal2001entropies,rousseau2010rates,kruijer2010adaptive,scricciolo2016rates,donnet2018posterior} and references therein for some Bayesian developments. The model selection prior $\Lambda_{n}$ on $m$ is chosen as
\begin{align}
\lambda_n(m)\propto \exp(-c^{\textrm{mix}}m \log (en)).
\end{align}
A prior $\Pi_{n,m}$ on the model $\mathcal{F}_m$ is naturally induced by a product prior $\Pi_w\otimes \Pi_\mu\otimes \Pi_\sigma$. For simplicity, we assume that $\Pi_w$ has the standard Dirichlet distribution, $\Pi_\mu, \Pi_\sigma$ have Lebesgue density $g_\mu^{\otimes m}, g_\sigma$ with the following properties: $g_\mu$ has full support on $\R$ such that $-\log g_\mu(x)\asymp \log(x)$ as $x \to \infty$, and $-\log g_\sigma(x)\asymp \log(1/x)$ as $x\to 0$ and $-\log g_\sigma(x)\asymp \log(x)$ as $x \to \infty$.

\begin{proposition}\label{prop:mixture}
	Suppose that $f_0 \in \mathcal{F}_m$, and the priors are specified as above. Then there exist $C>0, \gamma>0$ depending only on the priors such that $
	P_{f_0}^{(n)} \Pi_n\big(f \in \mathcal{F}: h^2(f,f_{0})> C{m\log^\gamma n}/n\big\lvert X^{(n)}\big) \to 0$. 
\end{proposition}

Proposition \ref{prop:mixture} says that the posterior distribution under such hierarchical priors adapts to the finite mixtures at a nearly parametric rate. Although this result does not seem to be explicitly spelled out in the literature, we believe that it can also be derived along the lines, e.g. \cite{kruijer2010adaptive}. Indeed, \cite{kruijer2010adaptive} proved adaptive behavior of the posterior contraction rates with respect to the local smoothness of the density, under similar hierarchical priors. It is clear from the above proposition that adaptation to the smoothness of the density can be accomplished once the quantity $\inf_{f \in \mathcal{F}_m} h^2(f,f_0)$ can be shown to be adaptive to the smoothness of $f_0$. This has been the main focus of \cite{kruijer2010adaptive} (in Kullback-Leibler divergence). The main purpose here, instead of repeating along the lines of \cite{kruijer2010adaptive}, rests in demonstrating how the localization principle can be used in the mixture model. 

It can also be seen immediately from the proof that the Gaussian kernel can be replaced by any kernel of form considered in \cite{kruijer2010adaptive}.

\begin{proof}[Proof of Proposition \ref{prop:mixture}]
	Let $\bar{\mathcal{F}}_n\equiv \{h(\cdot;m,\mu,w,\sigma):  \mu \in [-b_n,b_n]^m, \sigma \in [\underline{\sigma}_n,\bar{\sigma}_n]\}$ where $b_n\asymp (\log n)^{\gamma_1}, \bar{\sigma}_n\asymp\underline{\sigma}_n^{-1}\asymp (\log n)^{\gamma_2}$ for a sufficiently large $\gamma_1>\gamma_2$. For any $f \in \bar{\mathcal{F}}_n$, define $\tilde{f} \equiv f\bm{1}_{[-2b_n,2b_n]}+f_0\bm{1}_{\R\setminus [-2b_n,2b_n]}$, and $f^\ast = \tilde{f}/\int \tilde{f}$. Note that
	\begin{align*}
	\int_{\R}\tilde{f}(x)\ \d{x} = 1-\int_{\R\setminus [-2b_n,2b_n]} (f+f_0)(x)\ \d{x} = 1+\mathcal{O}(e^{-b_n^{2}/(2\bar{\sigma}_n^2)}),
	\end{align*}
	since 
	\begin{align*}
	\int_{\R\setminus [-2b_n,2b_n]} f(x)\ \d{x}&\lesssim\bigg( \sum_{j} w_j\bigg) \int_{2b_n}^\infty  \psi_{\sigma}(x-b_n)\ \d{x}\\
	&\lesssim \int_{b_n/\bar{\sigma}_n}^\infty e^{-x^2/2}\ \d{x}\lesssim e^{-b_n^2/(2\bar{\sigma}_n^2)},
	\end{align*} 
	and for $n$ large
	\begin{align*}
	\int_{\R\setminus [-2b_n,2b_n]} f_0(x)\ \d{x}\lesssim e^{-b_n^2/(2\bar{\sigma}_n^2)}.
	\end{align*} 
	Now define $\bar{\mathcal{F}}^\ast_n$ to be the set containing all $f^\ast$ defined as above from some $f \in \bar{\mathcal{F}}_n$.  Note that for any $f \in \bar{\mathcal{F}}_n$, we have that 
	\begin{align*}
	h^2(f,f_0)\lesssim h^2(f^\ast,f_0)+h^2(f^\ast,f) \lesssim h^2(f^\ast,f_0)+ \mathcal{O}(e^{-b_n^2/(2\bar{\sigma}_n^2)}).
	\end{align*}
	Then for a large enough constant $C>0$, by the decomposition (\ref{ineq:localize_bernstein}), we have for $n$ large, 
	\begin{align*}
	&P_{f_0}^{(n)} \Pi_n\bigg(f \in \mathcal{F}: h^2(f,f_{0})> C\frac{m\log^\gamma n}{n}\bigg\lvert X^{(n)}\bigg)\\ 
	&\leq P_{f_0}^{(n)} \bar{\Pi}_n\bigg(f \in \bar{\mathcal{F}}_n: h^2(f^\ast,f_{0})> C_1\frac{m\log^\gamma n}{n} \bigg\lvert X^{(n)}\bigg)+P_{f_0}^{(n)} \Pi_n\big(f \notin \bar{\mathcal{F}}_n\big\lvert X^{(n)}\big),
	\end{align*}
	which can be bounded by
	\begin{align}\label{ineq:mixture_1}
	&P_{f_0}^{(n)} \bar{\Pi}_n^\ast\big(f^\ast \in \bar{\mathcal{F}}_n^\ast: h^2(f^\ast,f_{0})> C_1{m\log^\gamma n}/{n} \big\lvert X^{(n)}\big)\\
	&\qquad + P_{f_0}^{(n)} \Pi_n^\ast\big(f^\ast \notin \bar{\mathcal{F}}_n^\ast\big\lvert X^{(n)}\big)+ {1}/{n}\nonumber
	\end{align}
	where $\Pi_n^\ast, \bar{\Pi}_n^\ast$ are the natural induced priors from $\Pi_n, \bar{\Pi}_n$. The last inequality follows by noting that 
	\begin{align*}
	P_{f_0}^{(n)}(\max_{1\leq i\leq n} \abs{X_i}>2b_n)\lesssim ne^{-b_n^2/(2\bar{\sigma}_n^2)}\leq 1/(2n)
	\end{align*}
	for $\gamma_1\gg \gamma_2$.
	
	We handle the first term on the right hand side of (\ref{ineq:mixture_1}). To this end, we first verify the local Gaussianity condition Assumption \ref{assump:laplace_cond_kl}. Clearly for any $f_0^\ast,f_1^\ast \in \bar{\mathcal{F}}^\ast_n$,  
	\begin{align*}
	\sup_{x \in \R} \biggabs{\frac{f_0^\ast(x)}{f_1^\ast(x)}} \leq  \sup_{x \in [-2b_n,2b_n]} \biggabs{\frac{f_0(x)}{f_1(x)}}\cdot \frac{1+\mathcal{O}(e^{-b_n^2/(2\bar{\sigma}_n^2)})}{1-\mathcal{O}(e^{-b_n^2/(2\bar{\sigma}_n^2)})}\lesssim \frac{\bar{\sigma}_n}{\underline{\sigma}_n} e^{(3b_n/\underline{\sigma}_n)^2}. 
	\end{align*} 
	By Lemma 8 of \cite{ghosal2007posterior}, 
	\begin{align*}
	h^2(f_0^\ast,f_1^\ast) &\leq \frac{1}{n} P_{f_0^\ast}^{(n)} \log ({P_{f_0^\ast}^{(n)}}/{P_{f_1^\ast}^{(n)}})\lesssim h^2(f_0^\ast,f_1^\ast)\big(1+\log\pnorm{{f_0^\ast}/{f_1^\ast} }{\infty}\big)\\
	&\lesssim h^2(f_0^\ast,f_1^\ast)\big(1+(b_n/\underline{\sigma}_n)^2+\log(\bar{\sigma}_n/\underline{\sigma}_n)\big),\\
	\mathrm{Var}_{f_0^\ast} \big(\log ({f_0^\ast}/{f_1^\ast})\big) & \lesssim h^2(f_0^\ast,f_1^\ast)\big(1+\log\pnorm{{f_0^\ast}/{f_1^\ast} }{\infty}\big)^2\\
	&\lesssim h^2(f_0^\ast,f_1^\ast)\big(1+(b_n/\underline{\sigma}_n)^2+\log(\bar{\sigma}_n/\underline{\sigma}_n)\big)^2.
	\end{align*} 
	By the classical Bernstein inequality, the local Gaussianity condition on $\bar{\mathcal{F}}^\ast_n$ holds with $c_1=c_2=1$, $c_3 \asymp \kappa_\Gamma\asymp \big(1+(b_n/\underline{\sigma}_n)^2+\log(\bar{\sigma}_n/\underline{\sigma}_n)\big)$ and $\kappa_g \asymp \big(1+(b_n/\underline{\sigma}_n)^2+\log(\bar{\sigma}_n/\underline{\sigma}_n)\big)^2$.
	
	Next we verify Assumption \ref{assump:local_ent_general}. Let $\delta_{n,m}^2\equiv C' \frac{m}{n}\log n$ for some large constant $C'>0$. Since the Hellinger distance is bounded by the square root of total variational distance, we have 
	\begin{align*}
	\log \mathcal{N}(c_5\epsilon, \bar{\mathcal{F}}^\ast_n\cap \mathcal{F}_m, h)\leq \log \mathcal{N}(c_5^2\epsilon^2, \bar{\mathcal{F}}^\ast_n\cap \mathcal{F}_m, d_{\mathrm{TV}}).
	\end{align*}
	By Lemma 3 of \cite{kruijer2010adaptive}, for any $f_0^\ast,f_1^\ast \in \bar{\mathcal{F}}_n^\ast\cap \mathcal{F}_m$ that are defined through $f_i = h(\cdot;m, \mu^i,w^j,\sigma^j) (j=0,1)$, 
	\begin{align*}
	d_{\mathrm{TV}}(f_0^\ast,f_1^\ast) &\lesssim \mathcal{O}(e^{-b_n^2/(2\bar{\sigma}_n^2)}) + \pnorm{w^0-w^1}{1}+\pnorm{\psi}{\infty}\sum_{i=1}^{m} \frac{w^0_i\wedge w^1_i}{\sigma^0\wedge \sigma^1} \abs{\mu^0_i-\mu^1_i}+ \frac{\abs{\sigma^0-\sigma^1}}{\sigma^0\wedge \sigma^1}\\
	&\leq C_2\big(e^{-b_n^2/(2\bar{\sigma}_n^2)} + \pnorm{w^0-w^1}{1}+\underline{\sigma}_n^{-1} \pnorm{\mu^0-\mu^1}{1}+ \underline{\sigma}_n^{-1}\abs{\sigma^0-\sigma^1}\big).
	\end{align*} 
	Here $C_2>0$ is an absolute constant. Now for any $1\geq \epsilon^2\geq 4C_2 e^{-b_n^2/(2\bar{\sigma}_n^2)}/c_5^2$, with $\Delta^m$ denoting the unit simplex in  $\R^m$ and using Lemma 5 of \cite{kruijer2010adaptive}, we have 
	\begin{align*}
	&\log \mathcal{N}(c_5^2\epsilon^2, \bar{\mathcal{F}}^\ast_n\cap \mathcal{F}_m, d_{\mathrm{TV}}) \\ 
	&\leq \log \mathcal{N}\bigg(\frac{c_5^2\epsilon^2}{4C_2}, \Delta^{m}, \pnorm{\cdot}{1}\bigg)+ \log \mathcal{N}\bigg(\frac{c_5^2\epsilon^2\underline{\sigma}_n}{4C_2}, [-b_n,b_n]^{m}, \pnorm{\cdot}{1}\bigg) \\
	&\qquad\qquad + \log \mathcal{N}\bigg(\frac{c_5^2\epsilon^2\underline{\sigma}_n}{4C_2}, [\underline{\sigma}_n,\bar{\sigma}_n], \abs{\cdot}\bigg)\\
	& \leq m \log\bigg(\frac{20C_2}{c_5^2\epsilon^2}\bigg)+ \log \bigg(\frac{m!(b_n+1)^{m}(4C_2)^{m}}{(c_5^2\epsilon^2\underline{\sigma}_n)^{m}}\bigg)+ \log \bigg(\frac{4C_2(\bar{\sigma}_n-\underline{\sigma}_n)}{c_5^2\epsilon^2\underline{\sigma}_n}\bigg).
	\end{align*} 
	Using that $c_5\asymp (c_3\kappa_\Gamma)^{-1}\wedge (c_3\kappa_g)^{-1}$ and $\log (m!)\lesssim m\log m$, we have 
	\begin{align*}
	\log \mathcal{N}(c_5\epsilon^2, \bar{\mathcal{F}}^\ast_n\cap \mathcal{F}_m, d_{\mathrm{TV}}) &\lesssim m \bigg(\log m+ \log \bigg(\frac{C_3 b_n\vee (\bar{\sigma}_n-\underline{\sigma}_n)}{c_5^2\epsilon^2 \underline{\sigma}_n}\bigg)\bigg)\\
	& \lesssim m \log n\leq (c_7/2) n\delta_{n,m}^2.
	\end{align*} 
	It is easy to check that $\epsilon^2$ hits the boundary $\delta_{n,m}^2$ by choosing $\gamma>0$ large enough.
	
	We continue to verify Assumption \ref{assump:prior_mass_general}. As before, it suffices to control from below the quantity $
	\Pi_{n,m}\big(\{f \in \bar{\mathcal{F}}_n\cap \mathcal{F}_m, h^2(f,f_{0})\leq \delta_{n,m}^2/(2c_3)\}\big)$. Again by Lemma 3 of \cite{kruijer2010adaptive}, for any $f_1,f_2 \in \bar{\mathcal{F}}_n\cap \mathcal{F}_m$ with $f_i=h(\cdot;m,\mu^i,w^j,\sigma^j)(j=1,2)$, we have 
	\begin{align*}
	h^2(f_1,f_2)\lesssim d_{\textrm{TV}}(f_1,f_2)\leq C_4\big( \pnorm{w^1-w^2}{1}+\underline{\sigma}_n^{-1} \pnorm{\mu^1-\mu^2}{1}+ \underline{\sigma}_n^{-1}\abs{\sigma^1-\sigma^2}\big).
	\end{align*} 
	In view of Lemma 6 of \cite{kruijer2010adaptive}, the above display implies
	\begin{align*}
	&\Pi_{n,m}\big(\{f \in \bar{\mathcal{F}}_n\cap \mathcal{F}_m, h^2(f,f_{0})\leq \delta_{n,m}^2/(2c_3)\}\big)\\
	& \geq \Pi_w \big(\Delta_m(w^{0},\delta_{n,m}^2/(6C_4 c_3))\big)\\
	&\qquad\qquad\times  \prod_{j=1}^m \Pi_\mu\bigg(\abs{\mu_j-\mu_j^{0}}\leq \frac{\delta_{n,m}^2\underline{\sigma}_n}{6C_4 m c_3}\bigg) \Pi_\sigma \bigg(\abs{\sigma-\sigma^{0}}\leq \frac{\delta_{n,m}^2\underline{\sigma}_n}{6C_4 c_3}\bigg)\\
	&\gtrsim e^{-C_5 m \log n}\geq e^{-2n\delta_{n,m}^2}
	\end{align*}
	Now apply Theorem \ref{thm:general_ms}, we see that the first term on the right hand side of (\ref{ineq:mixture_1}) can be bounded by $
	\exp(-C\log^{\gamma'} n)$ for some $\gamma'>0$ if $\gamma>0$ is chosen large enough.
	
	Next we handle the second term on the RHS of (\ref{ineq:mixture_1}). By applying Lemma \ref{lem:posterior_denum_control} to the localized model with $\epsilon^2\equiv \delta_{n,m}^2$ and using the same arguments as before, on an event $\mathcal{E}_n$ with $P_{f_0}^{(n)}$ probability at least $1-e^{-C_6 m\log^{\gamma''} n}$, 
	\begin{align*}
	&\int_{\bar{\mathcal{F}}_n^\ast} p_{f^\ast}^{(n)}/p_{f_0}^{(n)}\ \d{\bar{\Pi}_n^\ast(f^\ast)}\geq \lambda_n(m) \int_{f \in \bar{\mathcal{F}}_n\cap \mathcal{F}_m, h^2(f,f_{0})\leq \delta_{n,m}^2/c_3 }p_{f^\ast}^{(n)}/p_{\theta_0}^{(n)}\ \d{\bar{\Pi}^\ast_{n,m}(f^\ast)}\\
	& \gtrsim e^{-C_7m\log^{C_8} n} \cdot (\Pi_n^\ast(\bar{\mathcal{F}}_n))^{-1} {\Pi}_{n,m}^\ast\big(\{f \in \bar{\mathcal{F}}_n\cap \mathcal{F}_m, h^2(f,f_{0})\leq \delta_{n,m}^2/c_3\}\big)\\
	& \gtrsim e^{-m\log^{C_9} n}(\Pi_n^\ast(\bar{\mathcal{F}}_n))^{-1}.
	\end{align*}
	Similar as above, we have
	\begin{align*}
	&P_{f_0}^{(n)} \Pi_n^\ast\big(f^\ast \notin \bar{\mathcal{F}}_n^\ast\big\lvert X^{(n)}\big) \\
	&\leq \frac{1}{\Pi_n(\bar{\mathcal{F}}_n^\ast)}\cdot P_{f_0^{(n)}} \bigg[\frac{ \int_{f^\ast \notin \bar{\mathcal{F}}_n^\ast} p_{f^\ast}^{(n)}/p_{f_0}^{(n)}\ \d{\Pi_n^\ast(f^\ast)} }{\int_{\bar{\mathcal{F}}_n^\ast} p_{f^\ast}^{(n)}/p_{f_0}^{(n)}\ \d{\bar{\Pi}_n^\ast(f^\ast)} }\bm{1}_{\mathcal{E}_n}\bigg]+P_{f_0}^{(n)}(\mathcal{E}_n^c)\\ 
	&\lesssim e^{  m \log^{C_9} n } \cdot \Pi_n(\mathcal{F}\setminus \bar{\mathcal{F}}_n)+ e^{-C_6m \log^{\gamma''}n}.
	\end{align*}
	Furthermore, for $\gamma_1,\gamma_2$ large enough, 
	\begin{align*}
	&\Pi_n(\mathcal{F}\setminus \bar{\mathcal{F}}_n)\leq \Pi_\sigma \big(\sigma\notin [\underline{\sigma}_n,\bar{\sigma}_n]\big)+ \sum_{m=1}^\infty \lambda_n(m) \Pi_\mu\bigg(\max_{1\leq j\leq m} \abs{\mu_j}>b_n\bigg) \\ 
	&\lesssim e^{-\log^{(C_9+1)} n}+ \sum_{m=1}^\infty e^{-C_{10}(m-1)\log n} m \bigg(\int_{\R\setminus [-b_n,b_n]} g_\mu(x)\bigg)\lesssim e^{-\log^{C_{11}}n}.
	\end{align*}
	Hence $
	P_{f_0}^{(n)} \Pi_n^\ast\big(f^\ast \notin \bar{\mathcal{F}}_n^\ast\big\lvert X^{(n)}\big)=o(1)$ and the proof is complete.
\end{proof}

\subsection{Estimation of piecewise constant signals in the Gaussian autoregression model}
Consider fitting the Gaussian autoregression model (cf. Section \ref{section:gaussian_autoreg}) by the class of piecewise constant functions $\mathcal{F}\equiv \cup_{m=1}^\infty \mathcal{F}_m\equiv  \{f: f=\sum_{j=1}^m a_j \bm{1}_{[t_{j-1},t_j)}, -\infty=t_0<t_1<\ldots<t_{m-1}<t_m=\infty, \abs{a_j}\leq M\}$. Consider the following model selection prior $\Lambda_n$ on the model index $\mathcal{I}\equiv \N$:
\begin{align}\label{eqn:prior_aut}
\lambda_{n}(m)\propto \exp\big(-c\cdot m  \log (en)\big),
\end{align}
where $c>0$ is a constant to be specified later. Similar to the development in Section \ref{section:application_intensity}, we choose the prior $\Pi_{n,m}^t$ on $(t_1,\ldots,t_{m-1})$ with density $\bm{t}=(t_1,\ldots,t_{m-1})\mapsto (m-1)! g_t^{\otimes (m-1)} \bm{1}_{t_1<\ldots<t_{m-1}}(\bm{t})$, and the prior $\Pi_{n,m}^a$ on $(a_1,\ldots,a_m)$ with a product density $g_a^{\otimes m}$. Here we assume that $g_t$ is symmetric and non-increasing on $[0,\infty)$, and $g_a$ is uniform on $[-M,M]$ for simplicity. The difference in the Gaussian autoregression example, compared with the results in Section \ref{section:application_intensity}, is that the metric $d_{r,M}$ is defined on the entire real line $\R$. As commented on page 210 of \cite{ghosal2007convergence}, ``\emph{....The logarithmic factor in the convergence rate appears to be a consequence of the fact that the regression functions are defined on the full real line...}''. Below we perform some explicit computation to address this non-compact issue, with a particular goal of avoiding additional logarithmic factors (compared with the results in Section \ref{section:application_intensity}) in the contraction rates.

\begin{proposition}\label{prop:rate_gaussian_autoreg}
	Suppose that $M>\pnorm{f_0}{\infty}$, and that the prior density $g_t$ satisfies $
	\limsup_{x \to \infty}\frac{1}{x^2} \log (1\vee \frac{1}{g_t(x)})<\infty$. 
	Then there exists some $c>0$ in (\ref{eqn:prior_aut}) such that
	\begin{align*}
	& P_{f_0}^{(n)} \Pi_n\big(f \in \mathcal{F}: d_{r,M}^2(f,f_{0})> C_1(\epsilon_{n,m}^{\mathrm{aut}})^2\big\lvert X^{(n)}\big) \leq C_2 e^{-n (\epsilon_{n,m}^{\mathrm{aut}})^2/C_2}.\nonumber
	\end{align*}
	Here $
	(\epsilon_{n,m}^{\mathrm{aut}})^2\equiv \max\{\inf_{g \in \mathcal{F}_m}d_{r,M}^2(f_0,g),{ m\log n}/{n}\}$, and the constants $C_i(i=1,2)$ depend on $M$.
\end{proposition}

Note that the condition on $g_t$ is quite mild: it essentially requires that the tail of $g_t$ is not lighter than Gaussian. 

\begin{lemma}\label{lem:local_ent_gaussian_autoreg}
	For any $g \in \mathcal{F}_m$, and $\epsilon \in (0,1/e)$, $
	\log \mathcal{N}\big(c_5\epsilon,\{f \in \mathcal{F}_m,d_{r,M}(f,g)\leq 2\epsilon\}, d_{r,M}\big)\lesssim m\log \left(\frac{C_M \log(1/\epsilon)}{\epsilon^4}\right)$. 
\end{lemma}
\begin{proof}
	We only need to consider global entropy $\mathcal{N}\big(c_5\epsilon,\{f \in \mathcal{F}_m\}, d_{r,M}\big)$.

	Let $m\geq 2$. Fix $\epsilon\in (0,1/e)$, let $R_\epsilon= \lceil M+\sqrt{2\log (24M^2)+4\log (1/(c_5\epsilon))}\rceil$ and $\delta^2= \frac{c_5^2\epsilon^2}{4(2M^2+1)R_\epsilon}$. We partition the interval $[-R_\epsilon,R_\epsilon]$ into small intervals $\{ I_{j,\delta}\}_{j=1}^{N_\delta}$ of length $R_\epsilon\delta^2/m$ (ignoring the rounding issue here). For any $f \in \mathcal{F}_m$, let $f\equiv \sum_{j=1}^m a_j\bm{1}_{[t_{j-1},t_j)}$ for some $-\infty=t_0<t_1<\ldots<t_{m-1}<t_m=\infty$. Then $\{t_1,\ldots,t_{m-1}\}\cap [-R_\epsilon,R_\epsilon]$ must be contained in at most $m-1$ intervals amongst $\{I_{j,\delta}\}_{j=1}^{N_\delta}$, namely, $\{\bar{I}_{k,\delta;f}\}_{k=1}^{m_f}$. Furthermore, $[-R_\epsilon,R_\epsilon]\setminus \cup_{k=1}^{m_f} \bar{I}_{k,\delta;f}$ contains at most $m$ intervals. Now define $\bar{f}$ as follows: 
	\begin{align*}
	\bar{f} \equiv M\cdot \bm{1}_{\R \setminus [-R_\epsilon,R_\epsilon]}+\sum_{k=1}^{m_f} M\cdot \bm{1}_{\bar{I}_{k,\delta;f}} + \floor{\frac{f}{\delta}} \delta \cdot \bm{1}_{[-R_\epsilon,R_\epsilon]\setminus \cup_{k=1}^{m_f}\bar{I}_{k,\delta;f}}.
	\end{align*} 
	Then using the well-known fact that $\int_t^\infty \phi(x)\ \d{x}\leq e^{-t^2/2}/(\sqrt{2\pi} t) (t>0)$, we have 
	\begin{align*}
	&\int_{\R} \big(f(x)-\bar{f}(x)\big)^2r_M(x)\ \d{x}\\
	&\leq 4M^2\int_{\R \setminus [-R_\epsilon,R_\epsilon]} r_M(x)\ \d{x}+4M^2 m(R_\epsilon \delta^2/m)+\int_{[-R_\epsilon,R_\epsilon]\setminus \cup_{k=1}^{m_f}\bar{I}_{k,\delta;f}} \delta^2\ \d{x}\\
	&\leq 8M^2\int_{R_\epsilon}^\infty \phi(x-M)\ \d{x} +(4M^2+2)R_\epsilon \delta^2 \leq c_5^2 \epsilon^2
	\end{align*} 
	by our choice of $R_\epsilon$ and $\delta$. On the other hand, there are at most $
	\binom{2m/\delta^2}{m-1}\cdot \big(\frac{2M}{\delta}\big)^m$ many choices of $\bar{f}$, and hence 
	\begin{align*}
	\log \mathcal{N}\big(c_5\epsilon,\{f \in \mathcal{F}_m\}, d_{r,M}\big)\leq \log \bigg[ \binom{2m/\delta^2}{m-1}\cdot \big(\frac{2M}{\delta}\big)^m\bigg]\lesssim m\log \bigg(\frac{C_M \log(1/\epsilon)}{\epsilon^4}\bigg).
	\end{align*} 
	For $m=1$, we define $
	\bar{f} \equiv M\cdot \bm{1}_{\R \setminus [-R_\epsilon,R_\epsilon]}+ \floor{\frac{f}{\delta}} \delta \cdot \bm{1}_{[-R_\epsilon,R_\epsilon]}$, 
	and repeat the above calculation to see that the entropy bound holds.
\end{proof}

Hence we can take $\delta_{n,m}^2=C {m\log n}/{n}$ for some large constant $C>0$. 

\begin{lemma}\label{lem:submodel_mass_gaussian_autoreg}
	Let $M>\pnorm{f_0}{\infty}$. Suppose $g_t$ is such that $
	\limsup_{x \to \infty}\frac{1}{x^2} \log (1\vee \frac{1}{g_t(x)})<\infty$. 
	For $n$ large enough depending on $\pnorm{f_0}{\infty}$ and $M$, (P2) in Assumption \ref{assump:prior_mass_general} holds.
\end{lemma}
\begin{proof}
	The proof uses similar ideas as that of Lemma \ref{lem:submodel_suff_prior_intensity}. Let $f_{0,m}\equiv \sum_{j=1}^m a_j^\ast \bm{1}_{[t^\ast_{j-1},t^\ast_j)}$ for some $t^\ast=(t_1^\ast,\ldots,t_{m-1}^\ast)$ with $-\infty=t_0^\ast<t_1^\ast<\ldots<t_{m-1}^\ast<t_m^\ast=\infty$. Without loss of generality, we may assume that $\min\{t_j^\ast-t_{j-1}^\ast: 2\leq j\leq m-1\}>1/(4nM^2)$ (otherwise we may merge such short intervals to construct a surrogate $\tilde{f}_{0,m}$, and the total difference between $\tilde{f}_{0,m}$ and $f_{0,m}$ in squared $L_2$ metric by doing this does not exceed $m/n$ so that there is no effect in the final oracle inequality). For any $t=(t_1,\ldots,t_{m-1})$ such that $\abs{t_j-t_j^\ast}<1/(8nM^2)$ where $\abs{t_j^\ast}\leq L_n$ with $L_n$ specified later on, and any $a=(a_1,\ldots,a_m)$ such that $\max_j\abs{a_j-a_j^\ast}\leq 1/\sqrt{n}$, let $f\equiv \sum_{j=1}^m a_j\bm{1}_{[t_{j-1},t_j)}$. Then, $\pnorm{f}{\infty}\leq M$ for $n$ large enough depending only through $\pnorm{f_0}{\infty}$ and $M$. Now with $L_n\equiv M+\sqrt{2\log (8M^2)+2\log n}$, 
	\begin{align*}
	&\int_{-\infty}^\infty \big(f(x)-f_{0,m}(x)\big)^2r_M(x)\ \d{x}\\
	&\leq 8M^2\int_{L_n}^\infty \phi(x-M)\ \d{x} + \int_{-L_n}^{L_n}\big(f(x)-f_{0,m}(x)\big)^2r_M(x)\ \d{x} \\
	&\leq \frac{1}{n}+\big(\frac{1}{n}+4M^2\cdot m\cdot \frac{1}{8nM^2}\big)\leq \frac{3m}{n} \leq \delta_{n,m}^2/c_3
	\end{align*} 
	by choosing the constant $C=C_M>0$ in the definition of $\delta_{n,m}^2$ large enough. This implies that 
	\begin{align*}
	&\Pi_{n,m} \big(\{f \in \mathcal{F}_m:  d_{r,M}^2(f,f_{0,m})\leq \delta_{n,m}^2/c_3\}\big)\\ 
	&\geq g_t(L_n)^{m-1}(16nM^2)^{-(m-1)}\bigg(\frac{2}{\sqrt{n}}\bigg)^{m}\\
	&\geq e^{-m \big(\log g_t(L_n)^{-1}+\log (16nM^2)+\log \big(\sqrt{n}/2\big)\big) }\geq e^{-2n\delta_{n,m}^2}
	\end{align*} 
	by the assumption on $g_t$ and again choosing the constant $C=C_M>0$ in the definition of $\delta_{n,m}^2$ large enough.
\end{proof}

\begin{proof}[Proof of Proposition \ref{prop:rate_gaussian_autoreg}]
	Proposition \ref{prop:rate_gaussian_autoreg} follows from Corollary \ref{cor:rate_auto} combined with Lemmas \ref{lem:local_ent_gaussian_autoreg} and \ref{lem:submodel_mass_gaussian_autoreg}.
\end{proof}


\subsection{Subset selection for sparse approximation of functions}

Consider Gaussian regression with random design $Y_i = f_0(X_i)+\epsilon_i(1\leq i\leq n)$. We assume that $X_i$'s are i.i.d. uniformly distributed on $[0,1]$ and are independent of $\epsilon_i$'s for simplicity of discussion. Let $\{\phi_k\}_{k=1}^\infty$ be an orthonormal basis of $L_2([0,1])$. Let $\bm{N}\equiv \{N_1,N_2,\ldots\}\subset \N$. For any $\bm{\gamma}\equiv (\gamma_0,\gamma_1,\ldots)$, let the $\bm{\gamma}$-sparse approximation space $\mathcal{S}(\bm{\gamma},\bm{N})\equiv \{f\in L_2([0,1]): \min_{\ell_j\leq N_j, 1\leq j\leq k}\min_{(a_{\ell_1},\ldots,a_{\ell_k})} \pnorm{f-\sum_{j=1}^k a_{\ell_j} \phi_{\ell_j}}{L_2([0,1])}\leq \gamma_k, k =0,1,\ldots\}$. For any $\bm{\gamma}$ and $k \in \N$, let $\bm{\gamma}^{(k)}\equiv (\gamma_0,\gamma_1,\ldots,\gamma_{k-1},0,0,\ldots)$, and $\mathcal{F}_k\equiv \mathcal{S}(\bm{\gamma}^{(k)},\bm{N})$. Clearly $\mathcal{F}_1\subset \mathcal{F}_2\subset \cdots$. We use the model selection prior:
\begin{align}\label{eqn:prior_ss}
\lambda_{n}(k)\propto \exp\big(-c\cdot k \log(e n)\big).
\end{align}
For each model $\mathcal{F}_k$, we use the prior $\Pi_{n,k}$ which first picks randomly a subset $I \subset \{1,\ldots,N_k\}$ with cardinality $k$, then puts a product prior $g^{\otimes \abs{I}}$ on the coefficients $(a_j)_{j \in I}$. We assume for simplicity that $g$ is symmetric and non-decreasing on $[0,\infty)$. Note that in Section 6.3 of \cite{yang1999model}, the model index corresponds to $(k,I)$ in our notation.

\begin{proposition}
	Let $f_0 \in \mathcal{S}(\bm{\gamma},\bm{N})$ be such that $\sup_k \abs{\int f_0 \phi_k}<\infty$. Suppose the priors are specified as above and $g$ satisfies $g\big(\sup_k \abs{\int f_0 \phi_k}+1\big)>0$. Then if $\log N_k\lesssim \log k$, with $\epsilon_{n,k}^2\equiv \gamma_k + k \log (N_k\vee n)/n$, for $n$ large,
	\begin{align*}
	P_{f_0}^{(n)}\Pi_n\big(f: L_2^2(f,f_0)>C_1 \epsilon_{n,k}^2\big\lvert X^{(n)},Y^{(n)}\big)\leq C_2 e^{-n\epsilon_{n,k}^2/C_2}.
	\end{align*}
	The constants $C_i(i=1,2)$ do not depend on $k$.
\end{proposition}

\begin{proof}
	We only sketch the proof. For the entropy condition, we claim that for any $g \in \mathcal{F}_k$, 
	\begin{align*}
	\log \mathcal{N}(c_5\epsilon, \{f \in \mathcal{F}_k: L_2(f,g)\leq 2\epsilon\},L_2)\leq C_{c_5} k \log (e N_k).
	\end{align*}
	To see this, the entropy can be bounded by 
	\begin{align*}
	\log \bigg[\binom{N_k}{k}\max_{I\subset \{1,\ldots,N_k\},\abs{I}=k} \mathcal{N}(c_5\epsilon, \{f \in \mathcal{F}_{k,I}:L_2(f,g)\leq 2\epsilon\},L_2)\bigg], 
	\end{align*}
	where $\mathcal{F}_{k,I}\equiv \{f =\sum_{\ell_j \in I} a_{\ell_j} \phi_{\ell_j}\}$. Now we may use the standard entropy bound for Euclidean balls to conclude.  The sufficient mass condition can be checked along similar lines as many examples before, by using $f_{0,k}\in \mathcal{F}_k$ as the best linear approximation amongst $\{\sum_{\ell_j \in I} a_{\ell_j} \phi_{\ell_j}: I \subset \{1,\ldots,N_k\},\abs{I}=k\}$, and $\delta_{n,k}^2\equiv C k \log (N_k\vee n)/n$ for a large enough constant $C>0$.
\end{proof}

It is straightforward from here to compute a more concrete contraction rate by specifying concrete orders of $\bm{\gamma},\bm{N}$. Details are omitted.

The above proposition holds for a pre-specified $\bm{N}$. Let us now consider `adaptation' problem with respect to $\bm{N}$. We will consider this in the framework of Corollary 2 of \cite{yang1999model}. Let $\bm{N}^{(1)} \equiv (N_1^{(1)},N_2^{(1)},\ldots)$ and $\bm{N}^{(2)}\equiv (N_1^{(2)},N_2^{(2)},\ldots)$ where $N_k^{(2)}\geq N_k^{(1)}$ and $\log N_k^{(i)} \lesssim \log k$. In this case, we  may formulate formally two models: $\tilde{\mathcal{F}}_1\equiv \mathcal{S}(\bm{\gamma},\bm{N}^{(1)})$ and $\tilde{\mathcal{F}}_2\equiv \mathcal{S}(\bm{\gamma},\bm{N}^{(1)}) \cup \mathcal{S}(\bm{\gamma},\bm{N}^{(2)})$, and we put a uniform prior on the index $\{1,2\}$. The prior $\tilde{\Pi}_i$ on $\tilde{\mathcal{F}}_i$ is given by $\sum_k \lambda_n(k) \Pi_{n,k}(\bm{N}^{(i)})$ as specified above in (\ref{eqn:prior_ss}) and satisfies the conditions in the proceeding proposition (so the prior on $\tilde{\mathcal{F}}_2$ only charges mass on $\mathcal{S}(\bm{\gamma},\bm{N}^{(2)})$).  Let $\gamma_k = k^{-\alpha}$ for $\alpha>0$. 

\begin{proposition}
	Consider the above setup. Let $f_0 \in L_2([0,1])$ be such that $\sup_k \abs{\int f_0 \phi_k}<\infty$. Then with $\epsilon_{n,\alpha}^2\equiv (\log n/n)^{2\alpha/(2\alpha+1)}$, for $f \in \mathcal{S}(\bm{\gamma},\bm{N}^{(1)}) \cup \mathcal{S}(\bm{\gamma},\bm{N}^{(2)})$, 
	\begin{align*}
	P_{f_0}^{(n)}\Pi_n\big(f: L_2^2(f,f_0)>C_1 \epsilon_{n,\alpha}^2 \big\lvert X^{(n)},Y^{(n)}\big)\leq C_2 e^{-n\epsilon_{n,\alpha}^2 /C_2}
	\end{align*}
	holds for $n$ large enough.
\end{proposition}
\begin{proof}
	Let $f_{0,i}$ be the best linear approximation amongst $\{\sum_{\ell_j \in I} a_{\ell_j} \phi_{\ell_j}: I \subset \{1,\ldots,N_{k_n}^{(i)}\},\abs{I}=k_n\}$, so $\pnorm{f_0-f_{0,i}}{L_2([0,1])}^2\leq \gamma_{k_n}^2 (i=1,2)$, where $k_n=(n/\log n)^{1/(1+2\alpha)}$. In particular, write $f_{0,i}=\sum_{\ell_j^{(i)} \in I^{(i)}} a_{\ell_j^{(i)}}\phi_{\ell_j^{(i)}}$. Using the result on page 1586 of \cite{yang1999information}, $\log \mathcal{N}(c_5\epsilon, \tilde{\mathcal{F}}_i, L_2)\lesssim \epsilon^{-1/\alpha}\log(1/\epsilon)$. So we may take $\delta_{n,i}^2\equiv C(n/\log n)^{-2\alpha/(2\alpha+1)}$ for $i=1,2$ and a large constant $C>0$. To verify the sufficient mass condition, note that 
	\begin{align*}
	&\tilde{\Pi}_i(\{f \in \tilde{\mathcal{F}}_i: L_2^2(f,f_{0,i})\leq \delta_{n,i}^2/c_3\})\\
	&\geq \lambda_n(k_n)\cdot\binom{N_{k_n}}{k_n}^{-1}\big(\delta_{n,i}/\sqrt{c_3}\big)^{k_n}g\big(\sup_k \abs{\int f_0 \phi_k}+1\big)^{k_n}\geq e^{-2n\delta_{n,k_n}^2}
	\end{align*}
	by choosing $C>0$ large enough. 
\end{proof}

The proposition shows that under the specified prior, it is indeed possible to achieve adaptive rate over $\mathcal{S}(\bm{\gamma},\bm{N}^{(1)}) \cup \mathcal{S}(\bm{\gamma},\bm{N}^{(2)})$. It is straightforward to extend this result to multiple lists of models so we omit the details.

\section*{Acknowledgements}
The author is indebted to Chao Gao, Johannes Schmidt-Hieber, and the anonymous referees for numerous comments and suggestions that lead to substantial improvements of an earlier version of the paper.  The author would also like to thank Jon Wellner for constant support and continuous encouragement as this work developed.

\bibliographystyle{amsalpha}
\bibliography{mybib}

\end{document}